\newcommand{\bbN}{{\mathbb{N}}}
\newcommand{\bbR}{{\mathbb{R}}}
\newcommand{\cA}{{\mathcal A}}
\newcommand{\cB}{{\mathcal B}}
\newcommand{\cE}{{\mathcal E}}
\newcommand{\cF}{{\mathcal F}}
\newcommand{\cG}{{\Gamma}}
\newcommand{\cH}{{\mathcal H}}
\newcommand{\cK}{{\mathcal K}}
\newcommand{\cP}{{\mathcal P}}
\newcommand{\cV}{{\mathcal V}}
\newcommand{\cX}{{\mathcal X}}
\newcommand{\cY}{{\mathcal Y}}
\newcommand{\no}{\nonumber}
\newcommand{\lb}{\label}
\newcommand{\wti}{\widetilde  }
\newcommand{\hatt}{\widehat}
\newcommand{\lp}{L^p(\Gamma)}
\newcommand{\elpee}{L^p(\Gamma)}
\newcommand{\elpgamma}{L^{{p}/{\gamma}}(\Gamma)}
\newcommand{\wpee}{\hatt W^{1, p}(\Gamma)}
\newcommand{\elq}{L^q(\Gamma)}
\newcommand{\elr}{L^r(\Gamma)}
\newcommand{\elpeet}[1]{C([0, {#1}], \elpee )}
\newcommand{\wpeet}[1]{C([0, {#1}], \wpee )}
\newcommand{\doublehookrightarrow}{%
    \mathrel{\mathrlap{{\mspace{4mu}\lhook}}{\hookrightarrow}}
}
\newcommand{\el}[1]{L^{#1}(\Gamma)}
\numberwithin{equation}{section}
\newcommand{\dom}{\operatorname{dom}}
	\newcommand\sel[1]{ #1}
\newcommand{\lap}{\partial_x^2}
\theoremstyle{plain}
\newtheorem{theorem}{Theorem}[section]
\newtheorem{lemma}{Lemma}[section]
\newtheorem{corollary}{Corollary}[section]
\newtheorem{proposition}{Proposition}[section]
\theoremstyle{definition}
\newtheorem{definition}{Definition}[section]
\newtheorem{remark}{Remark}[section]
\newtheorem{hypothesis}{Hypothesis}[section]
\begin{document}

\allowdisplaybreaks

\setcounter{tocdepth}{4}
\setcounter{secnumdepth}{4}

\numberwithin{equation}{section}
\allowdisplaybreaks

\title[Chemotaxis on metric graphs]{Well--posedness of Keller--Segel systems on compact metric graphs}

\author[H. Shemtaga]{Hewan Shemtaga}
\address{Department of Mathematics and Statistics,
	Auburn University, Auburn, AL 36849, USA}
\email{hms0069@auburn.edu}

\author[W.\ Shen]{Wenxian Shen}
\address{Department of Mathematics and Statistics,
	Auburn University, Auburn, AL 36849, USA}
\email{wenxish@auburn.edu}

\author[S.\ Sukhtaiev]{Selim Sukhtaiev	}
\address{Department of Mathematics and Statistics,
	Auburn University, Auburn, AL 36849, USA}
\email{szs0266@auburn.edu}


\date{\today}
\thanks{2020 {\it  Mathematics Subject Classification}:
	35Q92, 92C17, 35P05}
\thanks {We thank Gregory Berkolaiko for numerous discussions. HS and SS were supported in part by NSF grant DMS-2243027, Simons Foundation grant MP-TSM-00002897, and by the Office of the Vice President for Research \& Economic Development (OVPRED) at Auburn University through the Research Support Program grant.}

\keywords{Neumann--Kirchhoff Laplacian, Heat Semigroup, Chemotaxis, Quantum Graphs}

\begin{abstract}
Chemotaxis phenomena govern the directed movement of micro-organisms in response to chemical stimuli. In this paper, we investigate two Keller--Segel systems of reaction-advection-diffusion equations modeling chemotaxis on thin networks. The distinction between two systems is driven by the rate of diffusion of the chemo-attractant. The intermediate rate of diffusion { is}  modeled by a coupled pair of parabolic equations, while the rapid rate is described by a parabolic equation coupled with an elliptic one. Assuming the polynomial rate of growth of the chemotaxis sensitivity coefficient, we prove local well-posedness of both systems on compact metric graphs, and, in particular, prove existence of unique classical solutions. This is achieved by constructing  sufficiently regular mild solutions via analytic semigroup methods and combinatorial description of the heat kernel on metric graphs. The regularity of mild solutions is shown by applying abstract semigroup results to semi-linear parabolic equations on compact graphs. In addition, for logistic type Keller--Segel systems we prove global well-posedness and, in some special cases, global uniform boundedness of solutions.
\end{abstract}

\maketitle

{\scriptsize{\tableofcontents}}


\section{Introduction}

\subsection{Overview} Biological species often exhibit tendency to drift, spread or localize within complex media subject to external signals. This phenomenon plays a vital role in formation, ordering, and stability of  patterns observed experimentally and numerically in models of bio-engineering (dermal tissue recovery \cite{MaKu}), ecology (population dynamics \cite{KJP}), and social sciences (e.g., urban crime dynamics \cite{RW1, RW2}). The external signals and cues in question are generated by an attractant or a repellent facilitating the migration. In the biological context, the directed movement of bacteria, cells or other microorganisms in response to chemical attractant is often referred to as chemotaxis, which is a widely studied phenomenon in pure and applied mathematics \cite{MR3351175, MR3698165, MR2409228, MR2448428, MR2013508, MR3925816, KS1, MR3294344, MR3620027, MR3397319, MR2334836, MR1654389, MR3147229, MR2445771, MR2644137, MR2754053, MR2825180, MR3115832, MR3210023, MR3462549, MR3335922, MR3286576}. 

The two quantities central to the mathematical modeling of such processes are the density of bio-species $u(t,x)$ and the concentration of attracting (repelling) substance $v(t,x)$.  Their coupled evolution is described by the Keller--Segel model, that is, by a pair of nonlinear partial differential equations stemming from the diffusion equations
\begin{equation}
\begin{cases}\lb{cases1}
\partial_tu+\partial_ xJ_{u,v}-\varphi(u,v)=0,\\
\tau \partial_tv+\partial_ x J_v-\psi (u,v)=0,\\
\end{cases}
\end{equation}
where 
\begin{itemize}
\item  $ J_{u,v}=-\partial_x u+\chi(u,v)\partial_xu$ is the density flux which consist of the taxis-flux term $\chi(u,v)\partial_xv$ that governs the population drifts in response to attractant $v$ and the standard flux term $-\partial_x u$ given by Fick's law,
\item  $ J_v= - \partial_x v$   is the standard flux term describing diffusion of the chemo-attractant by Fick's law; importantly, in the case of rapid diffusion, that is, $0<\tau\ll 1$ the second equation \eqref{cases1} can be approximated by $\partial_{xx}^2 v-\psi (u,v)=0$, 
\item $\varphi(u,v)$, $\psi(u,v)$ describe the rates at which the biospecies, respectively, the chemo-attractant are produced. 
\end{itemize}
In this paper, we provide a systematic treatment of general chemotaxis systems with moderate and  fast rates of diffusion of the attractant on arbitrary compact metric graphs. Concretely, we focus on the following two systems of reaction--advection--diffusion equations of parabolic-parabolic type 
\begin{equation}
\begin{cases}\lb{1.2}
	\partial_tu=\partial_ x(\partial_x u-\chi(u,v)\partial_x v)+\varphi(u,v),\\
	\tau \partial_tv=\partial_ {xx}^2 v+\psi (u,v),\\
\end{cases}
\end{equation}
and parabolic-elliptic type
\begin{equation}
\begin{cases} \lb{1.3}
	\partial_tu=\partial_ x(\partial_x u-\chi(u,v)\partial_x v)+\varphi(u,v),\\
	0=\partial_ {xx}^2 v-\sigma v +\psi(u), \\
\end{cases}
\end{equation}
with nonlinear terms $\chi, \varphi, \psi$ that exhibit polynomial growth conditions at infinity and vanish to the first order at zero, see Hypothesis \ref{hyp} for more details.

\subsection{Description chemotaxis models on graphs} Throughout this paper we assume that $\cG=(\cV, \cE)$ is a compact metric graph. That is, the set of vertices $\cV$ is finite, the set edges $\cE$ is finite and consists of edges of finite length. We assume that $\cG$ is a connected and oriented graph, that is, each edge $e$ is equipped with the initial vertex $i(e)\in\cV$ and the terminal vertex $t(e)\in \cV$. An edge $e\in\cE$ is identified with the interval $(0,|e|)$  so that every point on the graph is described by a pair of coordinates $(e, \xi)$, $\xi\in (0,|e|)$. A function $u:\Gamma\rightarrow \bbR$ (resp. $u:\bar\Gamma\rightarrow\bbR$)  is given by a collection of functions $\{u_e| e\in\cE\}$  where $u_e:(0,|e|)\rightarrow\bbR$
(resp. $u_e:[0,|e|]\rightarrow\bbR$).  Differentiation and integration are defined edge-wise with respect to the chosen orientation of edges, that is, $\partial_x u({ e,x}):=\partial_x u_e(x),$ $x\in (0,e)$ and 
$
\int_{\Gamma} u(x)dx:= \sum\limits_{e\in\cE} \int_0^{|e|} u_e(x)dx.
$
Throughout this paper, $\partial_{\nu} u_e{(\vartheta)}$ (with $\vartheta=(e,0)$ or $\vartheta=(e,|e|)$)  denotes the inward~pointed normal derivative of $u$  at the end points of the edge $e$, i.e., 
\begin{equation}
	\partial_{\nu}u (\vartheta)=\begin{cases}
		u_e'(0)& \vartheta=({e,0}),\\
		-u_e'(|e|)&  \vartheta=({ e, |e|}).
	\end{cases} 
\end{equation} 
The functional spaces are often considered as direct sums of the corresponding functional spaces on individual edges. That is,  if  $u_e\in \cX(0, |e|)$ for all $e\in\cE$ then we write $u\in\hatt \cX(\Gamma)$ where $\cX$ is one of the functional spaces $C_0^{\infty}$, $C$, $C^{\nu}$, $L^p$, $W^{k,p}$. 


The class of chemotaxis models under investigation is parameterized by a scalar $\tau\geq 0$ and given by
\begin{equation}
\label{parabolic-parabolic-eq}
\begin{cases}
u_t=\partial_x\big(\partial_{x} u-f_1(u,v)\partial_xv\big)+f_2(u,v), \quad {t>0,\,\, x\in\Gamma,}\cr
\tau v_t=\partial_{xx}^2 v+f_3(u,v), \quad  { t>0,\,\, x\in\Gamma,}\cr
u(0, x)=u_0(x),\ v(0,x)=v_0(x),\quad { x\in\Gamma,} \cr
\end{cases}
\end{equation}
where the functions
\begin{equation}\no
u:[0,\infty)\times \overline\Gamma\rightarrow\bbR, v:[0,\infty)\times \overline\Gamma\rightarrow\bbR, 
\end{equation} are subject to natural Neumann--Kirchhoff vertex conditions for $t>0$, that is, they obey
\begin{align}
	&\text{continuity at vertices:\ }u_e( t,\vartheta)=u_{e'}({ t,} \vartheta),\
{ v_e( t, \vartheta)=v_{e'}({t,}\vartheta)},\
  \vartheta\sim e, e',\lb{contcond}\\
	&\text{current conservation:\ }\sum_{e\sim \vartheta}\partial_{\nu}u_e({ t,}\vartheta)=0, \,\,\,  \  { \sum_{e\sim \vartheta}\partial_{\nu}v_e({t,} \vartheta)=0}, \ \lb{fluxcond}
\end{align}
here and throughout the paper $\vartheta\sim e$ means that edge $e$ is incident with vertex $\vartheta$, see Figure \ref{fig1}, where, for example, $\vartheta_1\sim e_k$, $k=1,2,3$. 
The first condition \eqref{contcond} guarantees continuity of $u$ over the graph $\Gamma$ considered as a metric space. {The second condition \eqref{fluxcond} yields preservation of current through the vertex, and, in particular, facilitates preservation of total mass for the minimal model, i.e.,  $f_2, f_3\equiv 0$. }
\begin{figure}
	\includegraphics[scale=0.2]{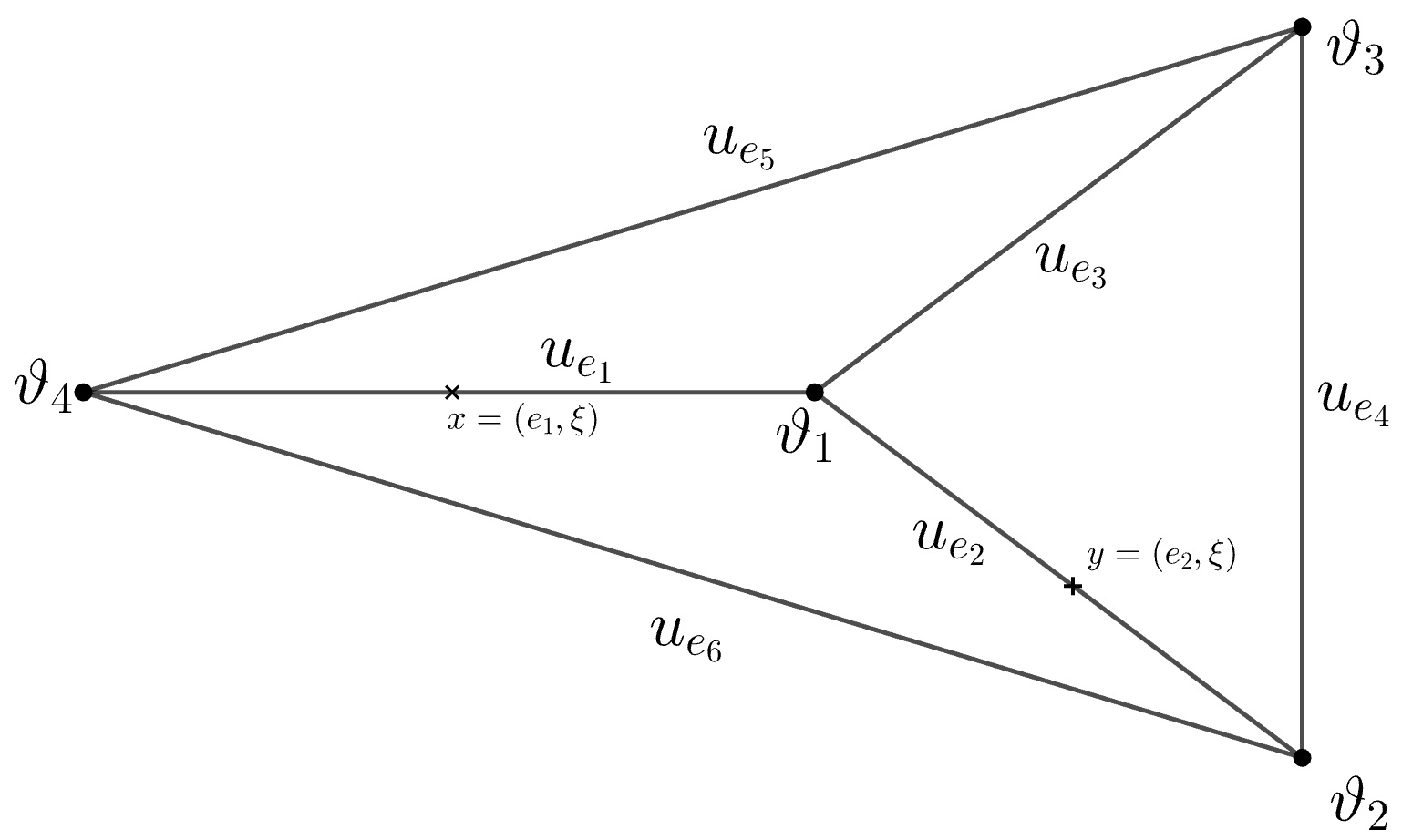}
	\caption{A metric graph with 6 edges and  4 vertices.  A function $u$ is given by a vector $u_{e_1}, ..., u_{e_6}$.}\lb{fig1}
\end{figure}

 The distinction between two regimes $\tau=0$ and $\tau>0$ corresponding to parabolic--elliptic and parabolic--parabolic PDEs, respectively,  plays an important role throughout the paper.  
The nonlinear terms $f_1, f_2, f_3$ under consideration satisfy the following assumptions. 
\begin{hypothesis}\lb{hyp}
Let $f_k:\bbR^2\rightarrow\bbR$ and denote 
\begin{equation}
g_1(s,{ r}):=s^{-1}f_1(s,{ r}), g_2(s,{r}):=s^{-1}f_2(s,{ r}), g_3(s,{ r}):=s^{-1}\partial_{ r}f_1(s,{ r}). 
\end{equation}
For $k=1,2,3$, assume that $f_k, g_k\in C^{1,1}(\bbR^2)$ and that
\begin{align}\lb{polbnd}
&\sum_{\substack{|\alpha|\leq 1\\ 1\leq k\leq 3}}|D^{\alpha} f_k(s,{ r})|+|D^{\alpha} g_k(s,{ r})| \leq C(1+|s|^{\mu_1}+|{ r}|^{\mu_2}),\ (s,{ r})\in\bbR^2,
\end{align}
for some $C=C(f_1, f_2, f_3, \mu_1, \mu_2)>0$, $\mu_1, \mu_2\geq 0$ with $\gamma:=\mu_1+\mu_2\geq 1$.   

If $\tau=0$ assume that  $f_3(u,v)=-\sigma v+f(u)$ with $\sigma>0$.  Finally, throughout this paper we assume that $p> 8\gamma$.   
\end{hypothesis} 
\begin{remark}The polynomial growth assumption \eqref{polbnd} is used to prove various $L^p-L^q$ mapping properties of the nonlinear operators of the type $(u,v)\mapsto D^{\alpha}(f(u,v)D^{\beta}v)$ that naturally stem from  the right-hand side of \eqref{parabolic-parabolic-eq}, see, for example, Propositions \ref{intconv}, \ref{prop1.3} and Appendix \ref{app:LpLq}. This level of generality includes, in particular, the following models of chemotaxis: the minimal model \cite{MR3698165, MR2448428}, the density and signal dependent sensitivity models \cite[(M2b), (M3a)]{MR2448428}, the cell kinetics model \cite[(M8)]{MR2448428}, the nonlinear diffusion \cite{MR3537839} and secretion \cite{MR4043491} models as well as some others, cf. \cite[Table 1]{MR4188348}.  
\end{remark}

\subsection{Main results} Our first main result concerns well-posedness of the chemotaxis model on compact  metric graphs in two regimes: $\tau=0$ and $\tau>0$. 
In the following,   $\cX^{\alpha}_p$ ($\alpha\in(0,1)$, $1\leq p< \infty$) denotes the fractional power spaces generated by the Neumann--Kirchhoff Laplacian on $\hatt L^p(\Gamma)$.  We provide a more detailed discussion of functional spaces on graphs in Appendix \ref{app:FunctionalSpaces}.\footnote{A more  comprehensive introduction into differential operators and their spectral theory on metric graphs can be found, for example, in \cite{BK}}

\begin{theorem}\lb{Lp}Assume Hypothesis \ref{hyp} and fix arbitrary $r\geq1$, $\beta\in(0,1/8)$, $\nu<\beta$. 

(1) Assume that $\tau=0$.  Then for arbitrary $u_0\in \hatt L^p(\Gamma)$ there exists $T_{\max}=T_{\max}(u_0)\in (0,\infty]$ such  that \eqref{parabolic-parabolic-eq} has a unique classical solution 
 $(u(t,x),v(t,x))=(u(t, x; u_0),v(t, x; u_0))$, $t\in [0, T_{\max})$ satisfying the Neumann--Kirchhoff vertex conditions { \eqref{contcond}, \eqref{fluxcond}} for $t>0$, $u,v\in \hatt C^{1,2}( (0, T_{\max})\times \overline{\Gamma})$, and 
		\begin{align}\lb{rubp}
		u\in C^{{\beta}}((0, T_{\max}), \hatt C^{\nu}(\overline{\Gamma}))\cap  C([0, T_{\max}), \hatt L^p(\Gamma))\cap C^{\beta}((0,  T_{\max}), \cX^{\beta}_r).
	\end{align}	
	If $T_{\max}<\infty$ then $\limsup\limits_{t\rightarrow T_{\max}^-}\|u(t,\cdot)\|_{\elr}=\infty$. Finally, if  
 {$u_0\geq 0$}, then $u,v$ are non-negative as well.

(2) Assume that $\tau>0$ and let $(u_0, v_0)\in \hatt L^p(\Gamma)\times \wpee$. Then there exists $T_{\max}=T_{\max}(u_0, v_0)\in (0,\infty]$ such  that \eqref{parabolic-parabolic-eq} has a unique classical solution 
\begin{equation}
(u(t,x),v(t,x))=(u({ t,x}; u_0, v_0),v({ t,x}; u_0, v_0)), t\in [0, T_{\max}),
\end{equation}
 satisfying the  Neumann--Kirchhoff vertex conditions  {\eqref{contcond}, \eqref{fluxcond}}  for $t>0$,  $u,v\in \hatt C^{1,2}( (0, T_{\max})\times \overline{\Gamma})$, and 
	\begin{align}
	\label{rmilduvsolsmax-u}
		u\in C([0, T_{\max}), \hatt L^p(\Gamma))\cap  C^{\beta}((0, T_{\max}), \cX^{\beta}_r)\cap C^{\beta}((0, T_{\max}), \hatt C^{\nu}(\overline{\Gamma})),
\end{align}
\begin{align}
\label{rmilduvsolsmax-v}
		{ v\in	C([0,T], \hatt L^p(\Gamma))\cap C^{1+\beta} ((0,T_{\max}), \cX^{\beta}_r)\cap C^{\beta}((0, T_{\max}), \hatt C^{2+\nu}(\overline{\Gamma})). }
\end{align}	
If  $T_{\max}<\infty$  then $\limsup\limits_{t\rightarrow T_{\max}^-}\|(u(t,\cdot),v(t,\cdot))\|_{L^p(\Gamma)\times\wpee}=\infty.$ Finally, if  $u_0\geq 0, v_0\ge 0$, then $u,v$ are non-negative.
\end{theorem}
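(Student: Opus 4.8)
The plan is to recast \eqref{parabolic-parabolic-eq} as an abstract semilinear parabolic problem on the scale of fractional power spaces of the (shifted) Neumann--Kirchhoff Laplacian and then run the classical analytic-semigroup program: a local mild solution built by a contraction argument that tolerates the rough datum $u_0\in\hatt L^p(\Gamma)$, a parabolic bootstrap promoting it to a classical solution, the blow-up alternative via a continuation argument, and positivity from the sub-Markovian character of the Kirchhoff heat semigroup. Set $A:=-\partial_{xx}^2+1$ with domain the $\hatt W^{2,p}(\Gamma)$-functions obeying \eqref{contcond}, \eqref{fluxcond}; then $-A$ generates a bounded analytic semigroup on $\hatt L^p(\Gamma)$, $\cX^\alpha_p=\dom(A^\alpha)$, and the vertex conditions are encoded in $\dom(A)$. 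When $\tau=0$, Hypothesis \ref{hyp} gives $f_3(u,v)=-\sigma v+f(u)$ with $\sigma>0$, so the second equation is solved by the elliptic operator $v=S(u):=(-\partial_{xx}^2+\sigma)^{-1}f(u)$ subject to Neumann--Kirchhoff conditions, and elliptic regularity on $\Gamma$ shows $S$ maps the $\hatt L^q$-scale boundedly into the $\hatt W^{2,q}$-scale; substituting $v=S(u)$ reduces \eqref{parabolic-parabolic-eq} to the single equation $u_t=-Au+F(u)$, with $F$ collecting the shift $u$, the taxis term $-\partial_x\big(f_1(u,S(u))\,\partial_x S(u)\big)$ and the reaction $f_2(u,S(u))$. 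When $\tau>0$ I would instead work on the product space $\hatt L^p(\Gamma)\times\wpee$ with the diagonal sectorial operator $\mathbb{A}:=\diag\big(A,\,\tfrac1\tau(-\partial_{xx}^2+1)\big)$ and unknown $(u,v)$, the additional parabolic smoothing of the $v$-component compensating for the derivative lost in the term $\partial_x(f_1\,\partial_x v)$.

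\textbf{Lipschitz property of the nonlinearity.} The analytic heart of the proof is that $F$ (resp. the product nonlinearity when $\tau>0$) is well defined and locally Lipschitz from a suitable fractional power space into $\hatt L^r(\Gamma)$ (resp. into the product base space), with Lipschitz constant uniform on bounded sets. Here I would use the embedding $\cX^\alpha_r\hookrightarrow\hatt C^1(\overline{\Gamma})$, valid once $\alpha>\tfrac12+\tfrac1{2r}$, so that for $\alpha$ close to $1$ the functions and their first derivatives are continuous up to the vertices; the outermost derivative in $\partial_x\big(f_1(u,v)\,\partial_x v\big)$ is absorbed by measuring $F$ in a space one derivative weaker, equivalently by factoring $A^{1/2}$ out of the semigroup in the variation-of-constants formula. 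The polynomial bound \eqref{polbnd} together with the $L^p$--$L^q$ mapping properties of operators of the form $(u,v)\mapsto\partial_x^{j}\big(f(u,v)\,\partial_x^{k}v\big)$ recorded in Propositions \ref{intconv}, \ref{prop1.3} and Appendix \ref{app:LpLq} then deliver the required estimates; the standing assumptions $p>8\gamma$, $\beta\in(0,1/8)$, $\nu<\beta$ are precisely what keeps all exponents in these embeddings, and in the time-singular Duhamel integrals below, admissible.

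\textbf{Local solution, bootstrap, blow-up, positivity.} With the Lipschitz bound in hand, local existence and uniqueness of a mild solution follow from the contraction mapping principle applied to $u(t)=e^{-tA}u_0+\int_0^t e^{-(t-s)A}F(u(s))\,ds$ in a small ball of $C([0,T],\hatt L^p(\Gamma))\cap\{\,t\mapsto t^{\theta}u(t)\in C([0,T],\cX^\alpha_r)\,\}$, using the smoothing estimates $\|A^\alpha e^{-tA}\|\le Ct^{-\alpha}$ and their $\hatt L^p$--$\cX^\alpha_r$ refinements coming from the heat-kernel bounds for the Neumann--Kirchhoff semigroup; the exponent windows are nonempty exactly by Hypothesis \ref{hyp}. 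A standard parabolic bootstrap---feeding the now H\"older-in-time map $s\mapsto F(u(s))$ back into the Duhamel formula and invoking the H\"older smoothing of analytic semigroups---upgrades the mild solution to $u(t)\in\dom(A)$ for $t>0$, produces the time regularity claimed in \eqref{rubp}, \eqref{rmilduvsolsmax-u}, \eqref{rmilduvsolsmax-v}, and yields $u,v\in\hatt C^{1,2}((0,T_{\max})\times\overline{\Gamma})$ solving \eqref{parabolic-parabolic-eq} classically with the Neumann--Kirchhoff conditions (built into $\dom(A)$). Defining $T_{\max}$ as the supremum of existence times, the blow-up alternative is the usual contradiction: if $T_{\max}<\infty$ while $\|u(t)\|_{\elr}$ (resp. $\|(u(t),v(t))\|_{\hatt L^p(\Gamma)\times\wpee}$) stayed bounded on $[0,T_{\max})$, the uniform lower bound on the local existence time would permit continuation past $T_{\max}$. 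Finally, positivity follows because $e^{t\partial_{xx}^2}$ on $\Gamma$ is positivity preserving---its combinatorial kernel being nonnegative---so a truncation of the nonlinearity exploiting that $f_1,f_2,f_3$ vanish to first order at $0$ keeps the orbit in the cone $\{u\ge0\}$, and for $\tau=0$ one also has $v=S(u)\ge0$ by positivity of $(-\partial_{xx}^2+\sigma)^{-1}$.

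\textbf{Main difficulty.} The crux is the taxis term $\partial_x\big(f_1(u,v)\,\partial_x v\big)$: it costs two derivatives on $v$ and involves a nonlinear composition, so one must reconcile the embedding $\cX^\alpha_r\hookrightarrow\hatt C^1(\overline{\Gamma})$---which forces $\alpha$ close to $1$ and hence a strong singularity $t^{-\alpha}$ in the Duhamel integral---with the requirement that this singularity remain integrable after the polynomial nonlinearity is applied, all the while respecting the Kirchhoff vertex conditions that govern how $\partial_x$ maps between the fractional power spaces on $\Gamma$. Arranging for all of this exponent bookkeeping to close simultaneously is exactly why Hypothesis \ref{hyp} imposes $p>8\gamma$; verifying the underlying $L^p$--$L^q$ estimates for $(u,v)\mapsto\partial_x^{j}\big(f(u,v)\,\partial_x^{k}v\big)$ on the graph is the technical core and is relegated to Appendix \ref{app:LpLq}.
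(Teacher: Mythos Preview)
Your overall architecture is right, and you correctly identify the taxis term as the crux, but the way you propose to handle it does not close. You want to run the contraction in a space that embeds into $\hatt C^1(\overline\Gamma)$, i.e.\ in $\cX^\alpha_r$ with $\alpha>\tfrac12+\tfrac1{2r}$, while simultaneously absorbing the outer $\partial_x$ into the semigroup. But once you factor $A^{1/2}$ out, the Duhamel integrand measured in $\cX^\alpha_r$ carries the singularity $(t-s)^{-\alpha-1/2}$ (times an $L^p$--$L^q$ correction of size at most $(t-s)^{-(2\gamma-1)/(2p)}$), and this is \emph{not} integrable for any $\alpha>1/2$. In other words, the two requirements you list at the end---the $C^1$ embedding and the integrability of the time singularity---are genuinely incompatible in a single step; no choice of exponents under Hypothesis~\ref{hyp} makes them coexist. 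The paper runs the fixed point at \emph{low} regularity: the contraction is set up directly in $C([0,T],L^p)\times C([0,T],\wpee)$ (Propositions~\ref{mild-solution-prop1}, \ref{mild-solution-prop2}), using only that $f_1(u,v)\partial_x v\in L^{p/(2\gamma)}$ together with the estimate $\|e^{t\Delta}\partial_x\|_{L^{p/(2\gamma)}\to L^p}\le Ct^{-1/2-(2\gamma-1)/(2p)}$ from Theorem~\ref{lplq estimate}. This yields a mild solution with $u\in C^\beta((0,T_{\max}),\cX^\beta_r)$ only for $\beta<1/8$, which---as the paper explicitly notes---does \emph{not} embed into $\hatt C^1$.

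The passage from this low-regularity mild solution to a classical $C^{1,2}$ solution is therefore not a ``standard parabolic bootstrap'' and is the step your proposal is missing. Feeding $s\mapsto F(u(s))$ back into Duhamel cannot help, because $F$ contains $\partial_x\big(f_1(u,v)\partial_xv\big)=\partial_uf_1\,u_xv_x+\partial_vf_1\,(v_x)^2+f_1\,v_{xx}$, and at the regularity you have, $u_x$ is not even in $L^p$; you are stuck below the $C^1$ threshold. The paper's device is to \emph{linearize}: once the mild solution $(u,v)$ is in hand with H\"older-in-time coefficients, rewrite the $u$-equation as the linear non-autonomous problem $u_t=(\Delta-\sigma)u+a(t,x)u_x+b(t,x)u$ with $a,b$ given by \eqref{abcoef}, and invoke Theorem~\ref{evolution-operator-thm}. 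That theorem is proved separately and shows, crucially, that the evolution operator $T(t,\kappa)$ for this linear problem maps $L^q$ into $\cX^\gamma_q$ for \emph{every} $\gamma\in(0,1)$, with the explicit bound $\|T(t,\kappa)\|\le C(t-\kappa)^{-\gamma}$. A Gronwall argument (Step~1 of the proof of Theorem~\ref{Lp}) then identifies the mild solution with the classical solution of the linear problem, which is where the $C^{1,2}$ regularity comes from. This linearization-plus-evolution-operator step is the genuinely new input; without it the argument does not reach classical solutions.
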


Our second principal result concerns global existence of unique classical solutions for logistic-type models given by
\begin{equation}
	\label{logparabolic-parabolic-eq}
	\begin{cases}
		u_t=\partial_x\big(\partial_{x} u-\chi u\partial_xv\big)+g(u),\quad { t>0,\,\, x\in\Gamma,} \cr
		\tau v_t=\partial_{xx}^2 v-v+u, \quad { t>0,\,\, x\in\Gamma,}\cr
		u(0, x)=u_0(x),\ v(0,x)=v_0(x),\quad {x\in\Gamma}\cr
	\end{cases}
\end{equation}
{ subject to the Neumann--Kirchhoff vertex conditions  \eqref{contcond}, \eqref{fluxcond} for $t>0$.}
\begin{theorem}
	\label{global-existence-thm}
	Let $g: \bbR\rightarrow \bbR$ and $\wti  g(s):=s^{-1}g(s)$. Assume that $g, \wti g\in C^1(\bbR)$ and suppose that for some constants $C>0$, $k\ge 0$, $l\ge 0$, $\gamma\geq 1$ one has
	\begin{align}
	& |g(s)|+|g'(s)|+|\wti g(s)|+|\wti g'(s)|\leq C(1+|s^{\gamma}|),\\
	&g(s)\leq k+ls,\text{ for all\ } s\geq 0.
	\end{align}

(1) Assume that $\tau=0$. Let $u_0\in   \hatt L^p(\Gamma)$  with $u_0\geq 0$.   Then the chemotaxis system \eqref{logparabolic-parabolic-eq} has a unique global non-negative classical solution $(u(t,x),v(t,x))=(u(t,x; u_0),v(t,x; u_0))$,  satisfying  the Neumann--Kirchhoff vertex conditions { \eqref{contcond}, \eqref{fluxcond}}  for $t>0$, $u,v\in \hatt C^{1,2}( (0, \infty)  \times \overline{\Gamma})$.  Suppose, in addition, that for some $m\geq 0$ and $\varepsilon>0$ one has
		\begin{align}
		&g(s)\leq k+ls-m s^{1+\varepsilon},\text{ for all\ } s\geq 0. \lb{gest3}
	\end{align}
If either $m>0$ or $k=l=m=0$ then the solution is globally uniformly bounded, that is, 
	\begin{align}
			&\sup_{t\geq 0}\|u(t, \cdot)\|_{\hatt L^{\infty}(\Gamma)}<\infty. \lb{1.18}
		\end{align}
(2) Assume that $\tau>0$. Let    $(u_0, v_0)\in \hatt L^p(\Gamma)\times \wpee$, $u_0\geq 0, v_0\ge 0$. 
Then the chemotaxis system \eqref{logparabolic-parabolic-eq}  has a unique global non-negative classical solution 
\begin{equation}
(u(t,x),v(t,x))=(u(t,x; u_0,v_0),v(t,x; u_0,v_0)), u,v\in \hatt C^{1,2}( (0, \infty)\times \overline{\Gamma}),
\end{equation}
satisfying the Neumann--Kirchhoff vertex conditions { \eqref{contcond}, \eqref{fluxcond}}   for $t>0$. Suppose, in addition, that $g$ satisfies \eqref{gest3}. If either $m>0$ or $k=l=m=0$ then the solution is globally uniformly bounded, that is, 
\begin{align}
	&\sup_{t\geq 0}\left(\|u(t, \cdot)\|_{\hatt L^{\infty}(\Gamma)}+\|v(t, \cdot)\|_{\hatt L^{\infty}(\Gamma)}\right)<\infty. \lb{1.20}
\end{align}

\end{theorem}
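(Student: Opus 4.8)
The plan is to upgrade the local well-posedness of Theorem \ref{Lp} to a global statement by establishing a priori bounds that rule out the blow-up alternatives $\limsup_{t\to T_{\max}^-}\|u(t,\cdot)\|_{\elr}=\infty$ (case $\tau=0$) and $\limsup_{t\to T_{\max}^-}\|(u,v)\|_{L^p(\Gamma)\times\wpee}=\infty$ (case $\tau>0$). Since $g(s)\le k+ls$ and $\wti g\in C^1$, the nonlinearity $g$ falls under Hypothesis \ref{hyp} (with $f_1=\chi u$, $f_2=g(u)$, $f_3=-v+u$), so Theorem \ref{Lp} applies and gives, for each admissible initial datum, a unique nonnegative classical solution on a maximal interval $[0,T_{\max})$ with the stated regularity. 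The main work is therefore the derivation of the global-in-time estimates.

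First I would treat the case $\tau=0$. Integrating the $u$-equation over $\Gamma$ and using the Neumann--Kirchhoff conditions \eqref{fluxcond} to kill the boundary terms from $\partial_x(\partial_x u-\chi u\partial_x v)$, one gets $\frac{d}{dt}\int_\Gamma u = \int_\Gamma g(u) \le k|\Gamma| + l\int_\Gamma u$, whence $\|u(t,\cdot)\|_{\hatt L^1(\Gamma)}$ grows at most exponentially and stays finite on any finite interval; this is the mass-control step. Next, the elliptic equation $-\partial_{xx}^2 v + v = u$ on $\Gamma$ with Kirchhoff conditions is solved by $v=(I-\Delta)^{-1}u$, and elliptic regularity on the compact graph gives $\|v\|_{\wpee}\le C\|u\|_{\hatt L^p(\Gamma)}$ (and better, $\|v\|_{\hatt W^{2,q}}\le C\|u\|_{\hatt L^q}$), so $v$ is slaved to $u$ and one only needs an $L^r$-bound on $u$. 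To obtain it, I would run a standard Moser/Alikakos-type iteration or a direct $L^p$-energy estimate: test the $u$-equation with $u^{p-1}$, integrate by parts (boundary terms vanish by Kirchhoff), and use $\int_\Gamma \partial_x(\chi u\partial_x v)u^{p-1} = -\frac{p-1}{p}\chi\int_\Gamma \partial_x(u^p)\,\partial_x v \cdot \frac1{\,} = \frac{p-1}{p}\chi\int_\Gamma u^p\,\partial_{xx}^2 v = \frac{p-1}{p}\chi\int_\Gamma u^p(v-u)$; the crucial negative term $-\frac{p-1}{p}\chi\int_\Gamma u^{p+1}$ dominates when combined with the logistic term $\int_\Gamma g(u)u^{p-1}\le \int_\Gamma(k+lu)u^{p-1}$, yielding a differential inequality $\frac{d}{dt}\|u\|_p^p \le C_p\|u\|_p^p + C_p$ (using also the Gagliardo--Nirenberg/Sobolev embedding on $\Gamma$ to absorb the gradient term $\|\partial_x u^{p/2}\|_2^2$). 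This closes a finite-time bound in every $L^r$, contradicting the blow-up alternative, so $T_{\max}=\infty$. For the uniform bound \eqref{1.18} under the stronger dissipation \eqref{gest3}, the extra term $-m\int_\Gamma u^{p+\varepsilon}u^{p-1}$ (or $-m\int u^{p+1}$ already from the chemotactic cross-term when $\chi>0$) turns the differential inequality into $\frac{d}{dt}y \le C_p - \delta_p y^{1+\varepsilon/p}$ type, which forces $\limsup_{t\to\infty}\|u(t,\cdot)\|_p \le M_p$ with $M_p$ controlled uniformly in $p$; letting $p\to\infty$ (or a final Moser step from a uniform $L^{p_0}$-bound with $p_0$ large) gives $\sup_{t\ge0}\|u(t,\cdot)\|_{\hatt L^\infty(\Gamma)}<\infty$, and then $\|v\|_{\hatt L^\infty}\le C\|u\|_{\hatt L^\infty}$. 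The degenerate case $k=l=m=0$ is handled separately: then $g(s)\le 0$ for $s\ge0$, so the mass is nonincreasing and a simpler comparison/energy argument gives the uniform bound.

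For the case $\tau>0$ the structure is the same but $v$ now solves its own parabolic equation $\tau v_t=\partial_{xx}^2 v - v + u$, so one cannot simply slave $v$ to $u$; instead I would use the heat semigroup $e^{t\Delta/\tau}$ (more precisely $e^{t(\Delta-1)/\tau}$) on $\hatt L^p(\Gamma)$ and the smoothing estimates $\|e^{t\Delta}\|_{\hatt L^q\to\hatt W^{1,p}}\le Ct^{-\frac12-\frac12(\frac1q-\frac1p)}$ available on compact graphs (as used to prove Theorem \ref{Lp}) to get, by Duhamel, $\|v(t,\cdot)\|_{\wpee}\le \|v_0\|_{\wpee}+C\int_0^t (t-s)^{-\rho}\|u(s,\cdot)\|_{\hatt L^p}\,ds$ for a suitable $\rho<1$. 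Combining this with the same $L^p$-energy inequality for $u$ (where now $\partial_{xx}^2 v = \tau v_t + v - u$ appears, and the $v_t$ term is controlled through the coupled estimate or by working with the energy functional $\int_\Gamma u^p + $ a multiple of $\int_\Gamma|\partial_x v|^2$ as is classical for parabolic--parabolic Keller--Segel), one obtains a closed Gronwall-type system in $(\|u\|_p,\|v\|_{\wpee})$ with at most exponential growth, hence finiteness on finite intervals and global existence; the uniform bounds \eqref{1.20} under \eqref{gest3} follow as before from the dissipative term, now tracking both $\|u\|_{\hatt L^\infty}$ and $\|v\|_{\hatt L^\infty}$ via the $v$-equation's own maximum-principle/semigroup estimate $\|v(t)\|_\infty \le e^{-t/\tau}\|v_0\|_\infty + \sup_{s\le t}\|u(s)\|_\infty$.

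The main obstacle I anticipate is the $L^p$-energy estimate for $u$ in the presence of the chemotactic drift: one must carefully integrate by parts on the graph (checking that \emph{all} vertex boundary terms cancel by virtue of continuity \eqref{contcond} and the Kirchhoff condition \eqref{fluxcond}, including the terms coming from $\chi u\partial_x v$ — this uses that $u$ and $v$ are continuous across vertices and that $\sum_{e\sim\vartheta}\partial_\nu u_e=\sum_{e\sim\vartheta}\partial_\nu v_e=0$), and then balance the "bad" term $\frac{p-1}{p}\chi\int_\Gamma u^p v$ (which is of one higher power in $u$ only after using the $v$-equation) against the "good" terms. In one space dimension the Gagliardo--Nirenberg inequality $\|w\|_{L^{2}}^{2}\le \varepsilon\|\partial_x w\|_{L^2}^2 + C_\varepsilon\|w\|_{L^1}^2$ on each edge (summed over $\cE$, with the boundary matching handled by the embedding $\wpee\hookrightarrow \hatt C(\overline\Gamma)$) makes this work, and the key sign-definite term $-\frac{p-1}{p}\chi\int_\Gamma u^{p+1}$ is exactly what allows the iteration to close uniformly in $p$; getting the constants to be summable/uniform over the finitely many edges and across vertices is the technically delicate point, but it is dimension-one and compact, so no genuinely new difficulty beyond bookkeeping arises.
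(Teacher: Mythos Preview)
Your overall architecture---local existence from Theorem \ref{Lp}, then $L^1\to L^p\to L^\infty$ a priori bounds via testing with $u^{p-1}$ and Gagliardo--Nirenberg---matches the paper. But there is a sign error at the heart of your $\tau=0$ estimate that reverses the mechanism. After multiplying the $u$-equation by $u^{p-1}$ and integrating, the chemotactic contribution to $\tfrac1p\tfrac{d}{dt}\int_\Gamma u^p$ is $-\int_\Gamma u^{p-1}\partial_x(\chi u\partial_x v)$, which (with $\partial_{xx}^2 v=v-u$) equals $+\tfrac{(p-1)\chi}{p}\int_\Gamma u^{p+1}-\tfrac{(p-1)\chi}{p}\int_\Gamma u^p v$. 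For $\chi>0$ the $\int u^{p+1}$ term enters with a \emph{positive} sign: it is the bad term, not a ``crucial negative term''. The good term is the diffusion $-\tfrac{4(p-1)}{p^2}\int_\Gamma|\partial_x u^{p/2}|^2$, and it is this term that absorbs $\int u^{p+1}$ (not the other way around). The paper (Lemma \ref{Lp-boundedness-lm}) closes the inequality by Gagliardo--Nirenberg in the form $\int u^{p+1}\le \varepsilon\int|\partial_x u^{p/2}|^2+C_\varepsilon(\int u^{p-1})^{\theta}$ and then runs an induction on $p\in\bbN$, feeding in the bound on $\int u^{p-1}$ from the previous step. Your differential inequality $\tfrac{d}{dt}\|u\|_p^p\le C_p\|u\|_p^p+C_p$ does not follow from the sign you claim; without the inductive GN absorption it does not close.

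Two further deviations from the paper are worth noting. For $\tau>0$ you propose substituting $\partial_{xx}^2 v=\tau v_t+v-u$ into the $u$-energy, which introduces $v_t$ and forces a coupled-functional argument. The paper avoids this entirely: it first proves $\sup_t\|v(t)\|_{\hatt W^{1,q}}<\infty$ for \emph{every} $q\ge 1$ directly from the $L^1$ bound on $u$ via the one-dimensional smoothing $\|\partial_x e^{t(\Delta-\omega)}\|_{L^1\to L^q}\le Ct^{-1+\frac{1}{2q}}e^{-\delta t}$ (Corollary \ref{cor2.1}), and then in the $u$-estimate writes $(p-1)\chi\int u^{p-1}u_xv_x\le\tfrac{p-1}{2}\int u^{p-2}u_x^2+\tfrac{(p-1)\chi^2}{2}\int u^p v_x^2$ by Young, handling $\int u^p|v_x|^2\le\int u^{p+1}+C\int|v_x|^{2(p+1)}$ with the last integral already bounded. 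For the final $L^\infty$ bound the paper does not iterate in $p$ but inserts the uniform $L^q\times\hatt W^{1,q}$ bounds into the Duhamel representation \eqref{reshur} and uses $\cX^\beta_q\hookrightarrow L^\infty$; your Moser-type iteration is a legitimate alternative, but the semigroup route is shorter once the time-uniform $L^p$--$L^q$ estimates of Corollary \ref{cor2.1} are in hand.
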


{Theorem \ref{Lp} is proved  by showing 
  the existence,  uniqueness, and regularity  of mild solutions  of \eqref{parabolic-parabolic-eq} subject to \eqref{contcond}, \eqref{fluxcond}. Theorem  \ref{global-existence-thm} is proved via  the boundedness of
$\|u(t,\cdot)\|_{\hatt L^q(\Gamma)}+\|v(t,\cdot)\|_{\hatt L^q(\Gamma)}$ for all $q\ge 1$.  The proofs of  the existence,  uniqueness, and regularity  of mild solutions\footnote{ see Section \ref{outline1} for a more detailed outline of the proof} of  \eqref{parabolic-parabolic-eq} subject to \eqref{contcond}, \eqref{fluxcond},  and
  the boundedness\footnote{ see Section \ref{outline2} for a more detailed outline of the proof} of
$\|u(t,\cdot)\|_{\hatt L^q(\Gamma)}+\|v(t,\cdot)\|_{\hatt L^q(\Gamma)}$ 
 heavily rely on
various quantitative and qualitative properties of the analytic semigroup  $e^{\Delta t}$ generated by  Neumann--Kirchhoff Laplacian and  the evolution operator  generated by linear non-autonomous parabolic equations on metric graphs. One of the important results along this line is an $L^p-W^{1, q}$ bound for the norm of the  heat semigroup, which we present next.}  

\begin{theorem}\lb{lplq estimate}
	Let $\{e^{\Delta t}\}_{t\geq 0}$ be analytic semigroup generated by the Neumann--Kirchhoff Laplacian in $\hatt L^p(\Gamma)$, $1\leq p<\infty$. Let $q\in [p, \infty)$ and  {$T>0$}. There exists a constant $C=C(\sel{ T}, p, q, \Gamma)>0$  such that the following statements hold. 
\begin{itemize}
\item[(1)]
For arbitrary $u\in C_0^\infty(\Gamma)$ and   ${t\in(0, T]}$ one has
	\begin{align}
		&\|e^{\Delta t}\partial_x u\|_{\hatt L^q(\Gamma)}\leq  Ct^{-\frac12-\frac12\left(\frac1p-\frac1q \right)}\|u\|_{\hatt L^p(\Gamma)},\label{tgradlplq}\\
		&\|\partial_xe^{\Delta t} u\|_{\hatt L^q(\Gamma)}\leq Ct^{-\frac12-\frac12\left(\frac1p-\frac1q \right)}\|u\|_{\hatt L^p(\Gamma)}.\label{tgradlplqnew}
	\end{align}
	In particular, the operators $e^{\Delta t}\partial_x$, $\partial_xe^{\Delta t}$ originally defined on $C_0^{\infty}(\Gamma)$ can be uniquely extended to bounded operators in\footnote{ $\cB(\cX, \cY)$ denotes the space of bounded linear operators between Banach spaces $\cX, \cY$} $\cB(\hatt L^p(\Gamma), \hatt L^q(\Gamma))$ with 
	\begin{align}
	&\|e^{\Delta t}\partial_x \|_{\cB(\hatt L^p(\Gamma), \hatt L^q(\Gamma))}\leq Ct^{-\frac12-\frac12\left(\frac1p-\frac1q \right)},\ {t\in(0, T]},\lb{normtdx}\\
	&\|\partial_x e^{\Delta t} \|_{\cB(\hatt L^p(\Gamma),\hatt L^q(\Gamma))}\leq Ct^{-\frac12-\frac12\left(\frac1p-\frac1q \right)},\ {t\in(0, T]}. \lb{normtdxnew}
	\end{align}

\item[(2)]For arbitrary $u\in\hatt L^p(\Gamma)$, ${t\in(0, T]}$  one has
\begin{equation}
	\lb{new-norm-estimate1}
	||e^{\Delta  t}u||_{\hatt L^q(\Gamma)} \leq C t^{-\frac12(\frac1p -\frac1q)}||u||_{\hatt L^p(\Gamma)},
\end{equation}
and
\begin{equation}
	\lb{new-norm-estimate2}
	\|e^{\Delta t} \|_{\cB(\hatt L^p(\Gamma), \hatt W^{1,q}(\Gamma))}\leq C\max \left\{t^{-\frac12-\frac12\left(\frac1p-\frac1q \right)}, t^{{-\frac12} \left(\frac1p-\frac1q \right)}\right\}. 
\end{equation}
\end{itemize}
\end{theorem}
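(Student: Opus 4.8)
The plan is to reduce the $L^p$--$W^{1,q}$ bounds on the whole graph to the corresponding bounds for the heat semigroup on an interval (or on the line) by exploiting the combinatorial/path-sum description of the heat kernel on $\Gamma$. Concretely, I would write the Neumann--Kirchhoff heat kernel $p_t^\Gamma(x,y)$ as a (locally finite, exponentially convergent) sum over combinatorial paths between the edge containing $x$ and the edge containing $y$, each term being a Gaussian-type kernel $c_{\text{path}}\, p_t^{\mathbb{R}}(\ell_{\text{path}}(x,y))$ with $\ell_{\text{path}}$ the length of the path and $|c_{\text{path}}|\le 1$ (scattering coefficients at the vertices are bounded by $1$ in modulus). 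This representation is exactly the "combinatorial description of the heat kernel" advertised in the abstract, so I assume it is established earlier. The key analytic input is then the classical one-dimensional estimate: for the free (or Neumann/reflected) heat kernel on $\mathbb{R}$, $\|p_t^{\mathbb{R}}\|_{L^s(\mathbb{R})}\le C_s\, t^{-\frac12(1-\frac1s)}$ and $\|\partial_x p_t^{\mathbb{R}}\|_{L^s(\mathbb{R})}\le C_s\, t^{-1-\frac12(1-\frac1s)}$ for $1\le s\le\infty$.

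\textbf{Main steps.} First I would prove the $L^p$--$L^q$ bound \eqref{new-norm-estimate1}: by Young's convolution inequality along each edge, $\|e^{\Delta t}u\|_{\hat L^q}\lesssim \|K_t\|_{L^s}\|u\|_{\hat L^p}$ with $1+\tfrac1q=\tfrac1s+\tfrac1p$, where $K_t$ is the path-sum kernel; summing the geometric/exponential series over paths (short paths near the diagonal dominate; the tail contributes a convergent series uniformly bounded for $t\in(0,T]$) yields $\|K_t\|_{L^s}\le C\,t^{-\frac12(\frac1p-\frac1q)}$, which is \eqref{new-norm-estimate1}. Second, for the gradient bounds \eqref{tgradlplq}--\eqref{tgradlplqnew}, I would differentiate the kernel representation. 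For \eqref{tgradlplqnew} one has $\partial_x e^{\Delta t}u = (\partial_x K_t)\ast u$ edge-wise, and $\partial_x K_t$ is a path-sum of terms $c_{\text{path}}\,\partial_x p_t^{\mathbb{R}}(\ell_{\text{path}})$; applying Young with the one-dimensional gradient estimate above, $\|\partial_x K_t\|_{L^s}\le C\,t^{-1-\frac12(1-\frac1s)}$, and absorbing one half-power of $t$ via $t^{-1/2}\cdot t^{-\frac12(\frac1p-\frac1q)}$ after choosing $s$ appropriately gives the exponent $-\tfrac12-\tfrac12(\tfrac1p-\tfrac1q)$. For \eqref{tgradlplq} I would instead move the derivative onto the kernel by an integration by parts on each edge — $e^{\Delta t}\partial_x u = -(\partial_y K_t)\ast u$ plus boundary terms at the vertices — and observe that, because of the current-conservation (Kirchhoff) condition, the vertex boundary contributions telescope/cancel when summed over edges meeting a vertex (together with continuity), leaving only the convolution with $\partial_y K_t$, which is estimated exactly as before. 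The uniform extension to $\cB(\hat L^p,\hat L^q)$ and the estimates \eqref{normtdx}--\eqref{normtdxnew} then follow by density of $C_0^\infty(\Gamma)$. Finally, \eqref{new-norm-estimate2} is immediate: combine \eqref{new-norm-estimate1} for the $L^q$ part with \eqref{tgradlplqnew} for the $\partial_x$ part of the $W^{1,q}$ norm and take the maximum of the two time-weights.

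\textbf{Main obstacle.} The delicate point is making the path-sum/kernel manipulations rigorous and the constants uniform in $t\in(0,T]$: one must verify that the series over combinatorial paths converges absolutely in $L^s$ uniformly down to $t=0^+$ (here the exponential suppression $e^{-\ell_{\text{path}}^2/4t}$ is helpful for long paths, but there are infinitely many paths of comparable short length near a vertex of high degree, so one needs the $|c_{\text{path}}|\le1$ bound together with a counting estimate on the number of paths of a given length), and that differentiating term-by-term and integrating by parts edge-wise is justified — in particular that the Kirchhoff vertex terms genuinely cancel. The derivative-onto-kernel step for \eqref{tgradlplq} is where the graph structure (as opposed to a single interval) really enters, and handling the vertex boundary terms carefully is the crux; everything else is a reduction to one-dimensional heat-kernel estimates plus Young's inequality. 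I would also need a short argument that the reflected/periodized pieces of the one-dimensional model do not spoil the estimates, which is routine since finitely many reflections only multiply constants.
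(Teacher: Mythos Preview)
Your proposal follows essentially the same route as the paper: path-sum heat kernel, integration by parts to transfer $\partial_x$ onto the kernel variable, Young's inequality edge-wise against the one-dimensional Gaussian (and its derivative), and a counting argument to sum over paths. Two points worth flagging.

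First, your stated reason for the vanishing of the vertex boundary terms in the integration-by-parts step is not the right one. The Kirchhoff condition on the kernel concerns $\sum_{e\sim\vartheta}\partial_\nu K_t$, not $K_t$ itself, and the boundary terms you produce are $K_t(x,\vartheta)u_e(\vartheta)$ with orientation-dependent signs; there is no telescoping from Kirchhoff plus continuity. The correct (and much simpler) reason is that $u\in C_0^\infty(\Gamma)=\bigoplus_e C_0^\infty(e)$ already vanishes at every vertex, so the boundary terms are identically zero. This is exactly how the paper proceeds.

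Second, your description of the path-sum convergence is correct in spirit but underspecified at the crucial point. The paper makes the counting explicit by splitting paths according to combinatorial length $m\le R$ versus $m>R$, with the threshold $R=(2\ell_+^2+8T\log(\deg)+1)\ell_-^{-2}$ chosen so that for $m>R$ the Gaussian factor $e^{-(m\ell_-)^2/8t}$ dominates the growth $\deg^m$ uniformly in $t\in(0,T]$; the short-path piece is a finite sum handled directly by Young. This split is where the $T$-dependence of the constant $C$ enters, which you should expect since the semigroup is not contractive from $L^p$ to $L^q$ for $p<q$ on a compact graph.
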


The analogous estimates for the heat semigroup on $\bbR^n$ and on bounded domains (with Neumann boundary conditions) have been obtained in \cite{MR3620027} and \cite{MR2146345}, respectively. In the settings of non-compact metric graphs, the inequality \eqref{new-norm-estimate1} with $t\in(0, T]$ replaced by $t>0$ and $T-$independent   constant $C$ has been established in \cite{BFM} for unbounded graphs\footnote{In the setting of compact graphs the constant $C$ must be $T$-dependent. Indeed, $u\equiv 1$ does not satisfy the inequality with any $T$-independent $C$ for $p<q$. However, Corollary \ref{cor2.1} does provide some variations of \eqref{tgradlplq}--\eqref{new-norm-estimate2} the operator $e^{(\Delta-\sigma)t}$ with $\sigma\gg 1$ and time-independent constant $C$.}. In this context, we also mention the estimates on the heat kernel discussed in \cite[Proposition 2.2]{MR3698165}. Our proof of Theorem \ref{lplq estimate} employs the combinatorial description of the heat kernel given, discussed, for example, in \cite{BFM, MR1748215}, and integration by parts. 

Another important object in the proof of Theorem \ref{Lp} is the evolution operator for a general linear non-autonomous parabolic equation on a compact graph which is of independent interest. 
\begin{theorem}\lb{evolution-operator-thm}
Let $0< \kappa<T$, $\sigma>0$, and $\psi_0\in \hatt L^q(\Gamma)$ for some $q\geq 1$.  Assume that for some  $\beta>0$  the mappings $[\kappa,T)\ni t\mapsto a(t,\cdot )\in \hatt L^q(\Gamma)$ and $[\kappa,T)\ni t\mapsto b(t,\cdot )\in \hatt L^q(\Gamma)$ belong to  ${ C^\beta}\big([\kappa, T), \hatt L^q(\Gamma)\big)$. Let us consider the non-autonomous parabolic equation on an arbitrary compact metric graph $\Gamma$
\begin{equation}\lb{evolution-main-eq}
\begin{cases}
\partial_t\Psi=(\Delta-\sigma)\Psi+a(t,x)\partial_x\Psi+b(t,x)\Psi,\  \, \,
{ t>\kappa,}\,\,  x\in \Gamma,\\
\sum\limits_{\vartheta \sim e} \partial_{\nu}\Psi_e(t, \vartheta)=0,\ \Psi_e(t, \vartheta)=\Psi_{e'}(t, \vartheta),\ e, e'\sim\vartheta,\,\, t>\kappa\\
\Psi({\kappa}, \cdot)=\psi_0(\cdot)\in \hatt L^q(\Gamma). 
\end{cases}
\end{equation}

Then  the following assertions hold. For arbitrary $\gamma\in(0,1)$, $\tau\in(\kappa, T)$ and some $\varepsilon>0$  there exists a unique ${ \Psi(t)= \Psi(t; \psi_0,\kappa)}$ such that 
\begin{align}
\begin{split}\lb{evolution-eq1}
&\Psi\in C((\kappa, \tau], {\hatt C^{\varepsilon}}(\overline{\Gamma}))\cap C([\kappa, \tau], \el{q}),\,\,  \Psi(t)\in \cX^{\gamma }_q\,\, {\rm for}\,\,  t\in(\kappa,\tau],\\
&\Psi(t)=e^{(\Delta-\sigma)(t-\kappa)}\psi_0+\int_{\kappa}^{t}e^{(\Delta-\sigma)(t-\kappa-s)}\left({  a( s},\cdot)\partial_x\Psi{ (s)}+{  b( s},\cdot)\Psi{(s)}\right)ds,\  t\in[\kappa, \tau],
\end{split}
\end{align}
where the integral converges in $\hatt L^q(\Gamma)$.  Moreover, for $t\in(\kappa,\tau]$ the evolution operator $T(t,\kappa)$  defined by
\begin{align}
\label{evolution-eq2}
&T(t,\kappa): \elq\rightarrow \cX^{\gamma}_q,\ T(t,\kappa)\psi_0={\Psi(t;\psi_0,\kappa)}, \psi_0\in \hatt L^q(\Gamma),
\end{align}
is bounded, i.e., $T(t,\kappa)\in \cB(\hatt L^q(\Gamma), \cX^{\gamma}_q)$,  with $\|T(t,\kappa)\|_{\cB(\hatt L^q(\Gamma), \cX^{\gamma}_q)}\leq C(t-\kappa)^{-\gamma}.$ 

In addition, if the mappings $t\mapsto a(t, \cdot), b(t,\cdot)$ belong to ${ C^{\beta}}\big([\kappa, T), \hatt C(\hatt \Gamma)\big)$ then {$\Psi(t,x)=\Psi(t,x;\psi_0,\kappa)$ defined by
$\Psi(t,x;\psi_0,\kappa):=\Psi(t;\psi_0,\kappa)(x)$}  is a unique classical solution of the Cauchy problem
\eqref{evolution-main-eq} on the interval $[\kappa,{ T)}$.
\end{theorem}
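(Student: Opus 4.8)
\textbf{Proof proposal for Theorem \ref{evolution-operator-thm}.}

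The plan is to realize $\Psi$ as a fixed point of the integral operator on the right-hand side of \eqref{evolution-eq1} and then bootstrap its regularity. First I would fix $\gamma\in(0,1)$ and an auxiliary exponent $\delta\in(\gamma,1)$ with $\delta<\tfrac12+\gamma$ small enough that $\cX^{\delta}_q\hookrightarrow \hatt C^{\varepsilon}(\overline{\Gamma})$ for some $\varepsilon>0$ (possible since fractional power spaces of the Neumann--Kirchhoff Laplacian embed into little H\"older spaces once the fractional order exceeds $\tfrac{1}{2q}$ — this is the graph analogue of the standard Sobolev embedding, established in Appendix \ref{app:FunctionalSpaces}). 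On the Banach space $Z_{T'}:=\{\Psi\in C([\kappa,T'],\el q): \sup_{\kappa<t\le T'}(t-\kappa)^{\delta}\|\Psi(t)\|_{\cX^{\delta}_q}<\infty\}$ with its natural weighted norm, define the map
\begin{equation}\no
(\cF\Psi)(t):=e^{(\Delta-\sigma)(t-\kappa)}\psi_0+\int_{\kappa}^{t}e^{(\Delta-\sigma)(t-s)}\big(a(s,\cdot)\partial_x\Psi(s)+b(s,\cdot)\Psi(s)\big)\,ds .
\end{equation}
The key analytic input is Theorem \ref{lplq estimate}: the smoothing bounds $\|e^{(\Delta-\sigma)t}\partial_x\|_{\cB(\el q,\cX^{\delta}_q)}\lesssim t^{-\frac12-\delta}$ and $\|e^{(\Delta-\sigma)t}\|_{\cB(\el q,\cX^{\delta}_q)}\lesssim t^{-\delta}$ (the factor $e^{-\sigma t}$ only helps, and can be used as in the footnote's Corollary \ref{cor2.1} to get $T$-uniform constants), combined with $\|\partial_x\Psi(s)\|_{\el q}\le \|\Psi(s)\|_{\cX^{\delta}_q}$ for $\delta\ge\tfrac12$ (or $\delta>\tfrac1{2q}$ via the embedding). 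Hölder's inequality applied to the integral $\int_\kappa^t (t-s)^{-\frac12-\delta}(s-\kappa)^{-\delta}ds = B(\tfrac12-\delta,1-\delta)(t-\kappa)^{\frac12-2\delta}$ — which is finite precisely when $\delta<\tfrac12$, forcing a two-step choice $\tfrac1{2q}<\delta<\tfrac12<$ (second pass) or a standard iteration of the contraction over short time — shows $\cF:Z_{T'}\to Z_{T'}$ and that $\cF$ is a contraction for $T'-\kappa$ small, because $a,b$ are bounded on $[\kappa,T']$ and the map is linear (so contraction is automatic once the operator norm of the Volterra part is $<1$, which holds for small $T'$). Linearity also gives uniqueness on $[\kappa,T']$ directly, and the solution is then extended to all of $[\kappa,\tau]$ by the usual continuation argument since the bound is linear in $\psi_0$ and the length of the existence interval does not shrink.

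Next I would upgrade regularity. Once $\Psi\in Z_\tau$ is known, the continuity $\Psi\in C([\kappa,\tau],\el q)$ follows from strong continuity of the semigroup and dominated convergence in the Duhamel integral; the assertion $\Psi(t)\in\cX^{\gamma}_q$ for $t>\kappa$ with the quantitative bound $\|T(t,\kappa)\|_{\cB(\el q,\cX^\gamma_q)}\le C(t-\kappa)^{-\gamma}$ comes from re-running the estimates above with $\gamma$ in place of $\delta$ (now using that $\Psi(s)$ already lies in $\cX^{\delta}_q$, so the forcing term $a\partial_x\Psi+b\Psi$ lies in $\el q$ with a controlled singular weight). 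The Hölder-in-time continuity into $\hatt C^{\varepsilon}(\overline\Gamma)$ on $(\kappa,\tau]$ follows by writing increments $\Psi(t_2)-\Psi(t_1)$, splitting the Duhamel integral at $t_1$, and using the standard analytic-semigroup estimates $\|(e^{(\Delta-\sigma)h}-I)A^{-\theta}\|\lesssim h^{\theta}$ together with the smoothing bounds — exactly as in Henry's or Lunardi's treatment of abstract semilinear parabolic equations, adapted here by quoting Theorem \ref{lplq estimate} for the graph-specific mapping properties.

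Finally, for the classical-solution claim under the stronger hypothesis $a,b\in C^{\beta}([\kappa,T),\hatt C(\overline\Gamma))$: here $\Psi\in C^{\varepsilon}$ into $\hatt C^\varepsilon(\overline\Gamma)$ already shown implies $x\mapsto a(t,x)\partial_x\Psi(t,x)+b(t,x)\Psi(t,x)$ is, for each fixed $t$, continuous on $\overline\Gamma$, and is Hölder continuous in $t$ with values in $\hatt C(\overline\Gamma)$ because both $\Psi$ and $\partial_x\Psi$ inherit Hölder-in-time regularity from the fractional-space bounds (using $\cX^{\gamma'}_q\hookrightarrow \hatt C^{1}$ type embeddings for $\gamma'$ close to $1$, on each edge). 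Then the abstract parabolic regularity theorem for $\partial_t\Psi=(\Delta-\sigma)\Psi+F(t)$ with $F\in C^{\beta}((\kappa,T),\el q)\cap C((\kappa,T),\hatt C(\overline\Gamma))$ — again Henry/Lunardi, valid because $\Delta-\sigma$ generates an analytic semigroup with the required resolvent estimates on the graph — yields $\Psi\in C^1$ in $t$ and $\Psi(t)\in\dom(\Delta)$ with $\partial_t\Psi=(\Delta-\sigma)\Psi+F$ pointwise; membership in $\dom(\Delta)$ encodes precisely the continuity and Kirchhoff conditions at the vertices and $\hatt C^2$ regularity on edges, giving the classical solution. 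I expect the main obstacle to be the bookkeeping in the two-step choice of fractional exponents: one must simultaneously keep the exponent above the Hölder-embedding threshold $\tfrac1{2q}$ (to control $\partial_x\Psi$ and to land in $\hatt C^\varepsilon$) and below $\tfrac12$ (to keep the Beta-function integral convergent), and then iterate to reach the target regularity $\gamma$ — this is routine but must be done carefully so that the constants remain $T$-independent where claimed, which is where the $\sigma>0$ shift and Corollary \ref{cor2.1} are genuinely used rather than cosmetic.
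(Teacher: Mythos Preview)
Your overall strategy --- a direct fixed-point argument in a weighted space $Z_{T'}$ with weight $(t-\kappa)^{\delta}$ --- is a legitimate alternative to what the paper does, but the exponent bookkeeping as written does not close, and the issue is not merely cosmetic.

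The problem is your use of the bound $\|e^{(\Delta-\sigma)t}\partial_x\|_{\cB(\el q,\cX^{\delta}_q)}\lesssim t^{-\frac12-\delta}$. That estimate applies to $e^{(\Delta-\sigma)(t-s)}\partial_x(\cdot)$ acting on an $L^q$ function, but the integrand is $e^{(\Delta-\sigma)(t-s)}\big(a(s)\partial_x\Psi(s)\big)$, and you cannot commute $\partial_x$ past $a(s)$ since $a$ is only in $L^q$. Your fallback, $\|\partial_x\Psi(s)\|_{\el q}\le\|\Psi(s)\|_{\cX^\delta_q}$, does not help either: $a(s)\in L^q$ and $\partial_x\Psi(s)\in L^q$ only gives $a(s)\partial_x\Psi(s)\in L^{q/2}$, so the product fails to land in $L^q$. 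The correct move is to take $\delta$ with $2\delta-\tfrac1q>1$, i.e.\ $\delta>\tfrac12+\tfrac1{2q}$, so that $\cX^\delta_q\hookrightarrow\hatt C^1(\overline\Gamma)$; then $\partial_x\Psi(s)\in L^\infty$, $a(s)\partial_x\Psi(s)\in L^q$, and the relevant semigroup bound is the plain $\|e^{(\Delta-\sigma)t}\|_{\cB(\el q,\cX^\delta_q)}\lesssim t^{-\delta}$. The Beta integral becomes $\int_\kappa^t(t-s)^{-\delta}(s-\kappa)^{-\delta}ds=(t-\kappa)^{1-2\delta}B(1-\delta,1-\delta)$, finite for $\delta<1$, and after multiplying by $(t-\kappa)^\delta$ you get the factor $(t-\kappa)^{1-\delta}\to0$, giving the contraction without any two-step iteration. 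This is precisely the embedding the paper exploits (see \eqref{vpsi}).

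For comparison, the paper takes a different route: it does \emph{not} build the fixed point directly in a weighted space. Instead it (i) invokes Henry's abstract existence theory (Theorem~\ref{general-existence-thm} and \cite[Theorem~7.1.1(b)]{Henry}) to get solutions for initial data in $\dom(\Delta)$ lying in $\cX^\gamma_q$ for every $\gamma\in(0,1)$; (ii) proves the Lipschitz bound \eqref{evolbound} for such smooth data via the singular Gronwall inequality (Proposition~\ref{propHenry}); and (iii) extends $T(t,\kappa)$ to all of $L^q$ by density of $\dom(\Delta)$, checking the limit still satisfies the integral equation. Your approach is more self-contained and avoids citing Henry's $\cX^\alpha\to\cX^\gamma$ regularity as a black box; the paper's approach buys a cleaner separation between existence and the quantitative $(t-\kappa)^{-\gamma}$ estimate, and reuses the appendix machinery it has already set up.
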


The two key features of the above result that play an important role in the sequel are boundedness of the evolution operator $T(t, \kappa)$ from $\hatt L^q(\Gamma)$ to $\cX^{\gamma}_p$ for {\it arbitrary} $\gamma\in(0,1)$ and the explicit estimate on its norm. We refer the reader to the classical text \cite{Henry} where a version of Theorem \ref{evolution-main-eq} is discussed for parabolic equations of the type $u_t=Au+B(t)u$ on abstract Banach spaces and the evolution operator is shown to act boundedly from $\cX^{\alpha}_p$ to itself (whereas in our case, that is for special parabolic equation, we prove boundedness from $\cX^{0}_p$ to $\cX^{\gamma}_p$). 


To conclude the introduction, let us provide a brief review of related recent results. The vast mathematical literature on chemotaxis models,  Keller--Segel systems includes \cite{MR3351175, MR3698165, MR4434022, MR4646576, PYZ, MR2448428, MR2013508,  IsSh1, IsSh2,  MR3925816, KS1, MR4612099, MR3026554, MR3544323,   MR3294344, MR3620027, MR4019929, MR4506792, MR4470553, MR4421515, MR4392478, MR4104955, MR3397319, MR4019929,  MR2334836, WaYaGa, MR2445771, MR2644137, MR2754053, MR2825180, MR3115832, MR3210023}. A large portion of these papers deals with versions of \eqref{parabolic-parabolic-eq} posed on $\Omega\subseteq\bbR^d$, $d\geq 1$ with specific choices of non-linear terms $f_1, f_2, f_3$ and boundary conditions (typically Neumann); often only one of the two cases, $\tau>0$ or $\tau=0$, is considered. 

One of the main goals of the current work is to prove well-posedness of parabolic-parabolic and parabolic-elliptic Keller--Segel systems on metric graphs. Our motivation is threefold. First, such systems are widely used in bio-engineering models of dermal tissue reparation \cite{HKZ, MaKu, SRD}, where the propagation of fibroblasts (the cells responsible for reparation of dermal tissue) through artificial scaffolds (a network of fibers) is an important topic. In this context, the edges of the graph represent individual fibers whose crossings form the vertices of the model network. Second, in connection with the applied studies, the numerical analysis of the Keller--Segel model on networks has been used to demonstrate interesting dependence of the dynamics on the geometry of the graph \cite{MR3149314, Bretti}. The current work naturally supplements the numerical studies with the rigorous proof of global well-posedness for rather general non-linearities and completely general compact graphs. Our third motivation is to link a large body of work on elliptic differential operators on metric graphs see, for example, \cite{MR2279143, MR2459860, MR1109248, MR2151598, MR1853353, MR3889868, MR2423580, KS2003, MR2022737, Kuch04, VONBELOW1985309, MR0932369} to a rapidly developing theory of parabolic operators  \cite{BFM, MR4581124, MR3698165, MR3431130, MR3243602} and evolution equations on metric graphs. The two papers on evolution equations that are especially closely related to ours are \cite{BFM, MR3698165}. In \cite{MR3698165} the authors considered \eqref{parabolic-parabolic-eq} with $f_1(u,v)=u$, $f_2(u,v)=0$, $f_3(u,v)=u-v$, $\tau\geq 0$ and constructed, via a fixed point argument, a mild solution of the parabolic-parabolic system in the space $L^{\infty}\left((0, T); \hatt C(\Gamma)\right)\times L^{\infty}\left((0, T); \hatt W^{1,\infty}(\Gamma)\right)$ and a global mild solution of the parabolic-elliptic system in $L^{\infty}\left((0, \infty); \hatt C(\Gamma)\right)\times L^{\infty}\left((0, T); \hatt C^{2}(\Gamma)\right)$. In \cite{MR3698165}, the authors also provided a discussion (stemming from \cite{MR1171460, Roth}) of the heat kernel on compact graphs. An in-depth analysis of the heat semigroup, however, appeared  in \cite{BFM} where the authors established analyticity, ultracontractivity, and some heat kernel estimates all of which are employed in our work. We supplement the ultracontractivity estimate in \cite[eq. (4.1)]{MR3698165} by { its} analogue for the operator $e^{t(\Delta-\sigma)}\partial_x$, see Theorem \ref{lplq estimate}. This is a key technical estimate that allows us to treat the chemotaxis sensitivity term $\partial_x(\chi(u,v)\partial_xv)$ which critically distinguishes the Keller--Segel model from the usual reaction-diffusion system.

\smallskip

{\bf Outline of the paper.} In Section \ref{Sec:LptoLq} we discuss the operator $e^{t\Delta}\partial_x$ as a mapping between $\hatt {L}^p(\Gamma)$ and ${\hatt {L}^q(\Gamma)}$ {and prove Theorem \ref{lplq estimate}. In Section \ref{evolution-sec}, we study linear non-autonomous parabolic equations of the form \eqref{evolution-main-eq} on metric graphs and
prove Theorem \ref{evolution-operator-thm}}.  In Sections \ref{mildsol} and \ref{classolsec} we give the proofs of Theorems \ref{Lp} and \ref{global-existence-thm} correspondingly, in particular, subsections \ref{outline1}, \ref{outline2} contain brief outline { of } our proofs. In Appendix \ref{app:FunctionalSpaces} we recall fractional power spaces and interpolation theorems on metric graphs, in Appendix \ref{app:LpLq} we discuss $L^p-L^q$ mapping properties of nonlinear mappings arising in the right-hand side of \eqref{parabolic-parabolic-eq}, in Appendix \ref{appA} we  summarize some important facts about general semi-linear parabolic equations on compact metric graphs.  {Throughout the rest of this paper, if no confusion occurs, we write $\hatt L^p(\Gamma)$  and $\hatt L^q(\Gamma)$ as $L^p(\Gamma)$ and $L^q(\Gamma)$, respectively.}

\section{$L^p-L^q$  estimates for the operator $e^{\Delta t}\partial_x$ and proof of Theorem \ref{lplq estimate}}\label{Sec:LptoLq}

In this section we derive  principal estimates on the norm of the operators $e^{\Delta t}$ and   $e^{\Delta t}\partial_x$, which
will be used to establish well-posedness of \eqref{parabolic-parabolic-eq} subject to \eqref{contcond},\eqref{fluxcond}, and prove Theorem \ref{lplq estimate}.

We first recall from \cite{BFM} the construction of the analytic semigroup or heat semigroup $e^{\Delta t}$.  Let $\Delta$ be the  Neumann--Kirchhoff Laplacian in  $L^p(\Gamma)$, $1\leq p<\infty$ defined as follows
\begin{align}
\begin{split}\lb{apnum}
&\Delta:\dom(\Delta)\subset \lp \rightarrow \lp,\ \Delta u:=\lap u,\  u\in \dom(\Delta),\\
&\dom(\Delta):=\left\{u\in \hatt W^{2,p}(\Gamma):  \sum\limits_{\vartheta \sim e} \partial_{\nu}u_e(\vartheta)=0,\ 
u_e(\vartheta)=u_{e'}(\vartheta), \vartheta\sim e, e'\right\}.
\end{split}
\end{align}

For a given edge $e$ we let $-e$ denote the same edge equipped with opposite orientation, i.e., $t(\pm e)=i(\mp e)$. The graph consisting of all edges $\pm e$ is denoted by $\wti \cG$. We say that edges $e, e'\in \cE(\wti \cG)$ are connected by a path $P$ along $m\in\bbN_0$  edges whenever there exists $m+1$ edges $(e_0, e_1,..., e_m)\subset (\cE(\wti \cG))^{m+1}$ with $e_0:= e$, $e_m:= e'$ and $t(e_k)=i(e_{k+1})$, $0\leq k\leq m-1$, the length of such path is dented by $|P|:=\sum_{k=0}^{m-1}|e_k|$. The collection of all paths connecting $e, e'$ along $m$ edges is denoted by $\cP_{e, e'}(m)$. The scattering coefficient matrix of size $2|\cE|\times 2|\cE|$ (here $|\cE|$ denotes the number of edges of $\Gamma$)  between edges $e, e'\in \cE(\wti \cG)$ is defined via 
\begin{equation}
	S_{e, e'}=
	\begin{cases}
		\frac{2}{{ \deg(t(e))}}, &e\not=-e', t(e)=i(e'),\\
		\frac{2}{{ \deg(t(e))}}-1, &e=-e', t(e)=i(e'),\\
		0, &\text{ otherwise},
	\end{cases}
\end{equation}
where ${ \deg(t(e))}$ denotes the number of edges incident to $t(e)$. 
\begin{figure}[h]
	\includegraphics[scale=0.2]{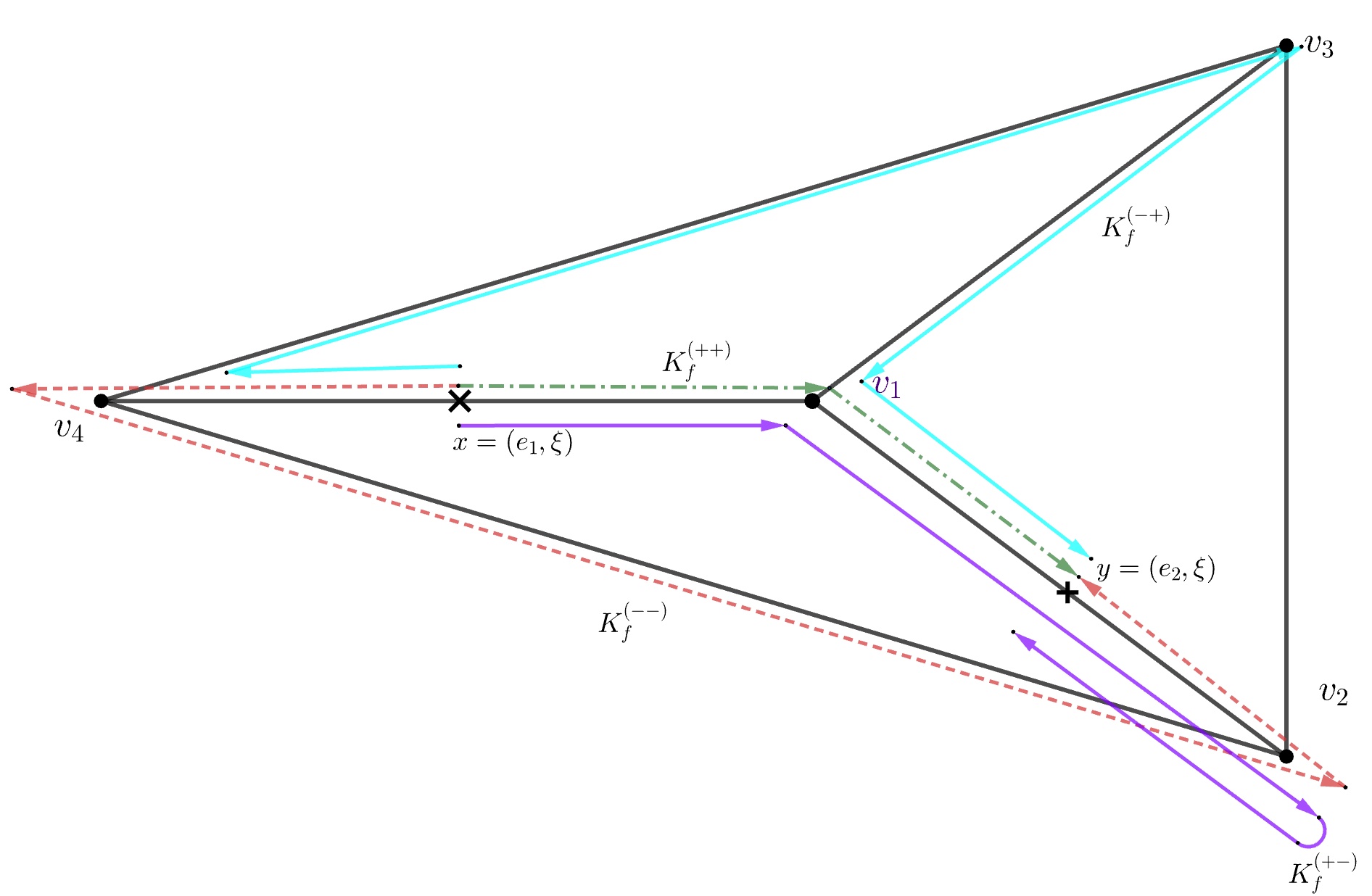}
	\caption{The orientation of edge $e_1$ is given by the choice $i(e_1)=v_4, t(e_1)=v_1$. The orientation of edge $e_2$ is given by the choice $ i(e_2)=v_1, t(e_2)=v_2$.  The four paths shown illustrate four (out of infinitely many) itineraries $x=(e_1, \xi)$, $y=(e_2, \xi')$ that appear in \eqref{itin}. For example, the red dotted path corresponds to $K_f^{(--)}$ while the green one corresponds to $K_f^{(++)}$.}\lb{fig2}
\end{figure}

The scattering coefficient along the path $P=(e_0,..., e_m)\in \cP_{e, e'}(m)$ is given by  $S_P:={ \Pi_{k=0}^{m-1}}S_{e_k, e_{k+1}}$. With this notation in hand let us recall the definition of a convolution operator on a metric graph with kernel $K_f(x,y), x,y\in\Gamma$ for  $f:\bbR\rightarrow\bbR$, $f\in L^1(\bbR)$ satisfying 
\begin{equation}
\sum_{m=0}^{\infty}3^m \|f\|_{L^1(m, \infty)}<\infty. 
\end{equation}
Following \cite[Definition 3.4]{BFM} we  define kernel $K_f$ by the formulas
\begin{align}
\begin{split}\lb{intker}
&K_f: \Gamma\times \Gamma\rightarrow \bbR,\  K_f=K_f^{(++)}+K_f^{(+-)}+K_f^{(-+)}+K_f^{(--)},
\end{split}
\end{align}
where
\begin{align}
&K_f^{(++)}((e, \xi), (e', \xi')):= \delta_{e,e'}f(\xi'-\xi)+\sum_{\substack{m\in\bbN\\ P\in\cP_{e, e'}(m)}} S_Pf(\xi'+|P|-\xi),\lb{pp}
\end{align}
with $\delta_{e,e'}=0$ if the edges $e$ and $e'$ are different, and $\delta_{e, e'}=1$ otherwise, 
\begin{align}
&K_f^{(+-)}((e, \xi), (e', \xi')):= \sum_{\substack{m\in\bbN\\ P\in\cP_{e, -e'}(m)}} S_Pf(|e'|-\xi'+|P|-\xi),\\
& K_f^{(-+)}((e, \xi), (e', \xi')):= \sum_{\substack{m\in\bbN\\ P\in\cP_{-e, e'}(m)}} S_Pf(\xi'+|P|-(|e|-\xi)),\\
&K_f^{(--)}((e, \xi), (e', \xi')):= \sum_{\substack{m\in\bbN\\ P\in\cP_{-e, -e'}(m)}} S_Pf(|e'|-\xi'+|P|-(|e|-\xi)).
\end{align}
The above four terms represent sums along all possible finite paths on $\wti \Gamma$ connecting points $(e, \xi)\in \Gamma$ and $(e', \xi')\in\Gamma$, see Figure \ref{fig2}. Such paths are grouped into four categories depending on their itinerary which we briefly describe as follows
\begin{align}
\begin{split}\lb{itin}
&K^{(++)}_f\text{ terms correspond to paths } (e, \xi)\rightarrow(e, |e|)\rightarrow...(e', 0)\rightarrow(e', \xi'),\\
&K^{(+-)}_f \text{ terms correspond to paths } (e, \xi)\rightarrow(e, |e|)\rightarrow...(e', |e'|)\rightarrow(e', \xi'),\\
&K^{(-+)}_f \text{ terms correspond to paths } (e, \xi)\rightarrow(e, 0)\rightarrow...(e', 0)\rightarrow(e', \xi'),\\
&K^{(--)}_f \text{ terms correspond to paths } (e, \xi)\rightarrow(e, 0)\rightarrow...(e', |e'|)\rightarrow(e', \xi').
\end{split}
\end{align}

Then the convolution operator $f*$ is defined by
\begin{equation}
(f*u)(e, \xi):=\int_{\Gamma}K_f((e,\xi), (e',\xi'))u(e',\xi')d(e',\xi'), \,\,\, u\in \lp,
\end{equation}
or simply
\begin{equation}\lb{hsg}
(	f*u)(\xi):=\int_{\Gamma}K_f(\xi,\xi')u(\xi')d\xi'. 
\end{equation}

The Neumann--Kirchhoff Laplacian generates analytic semigroup $e^{t\Delta}$ {in $L^p(\Gamma)$}  defined by the following convolution operator
\begin{equation}\lb{1.17}
(e^{\Delta t}u)(\xi):=\int_{\Gamma} K_t(\xi, \xi')u(\xi')d\xi', \text{ with\  }K_t(\xi, \xi'):=K_{f_t}(\xi, \xi'), \,\, t>0,
\end{equation}
where
\begin{equation}
\lb{f-t-eq}
f_t(x):=\frac{1}{\sqrt{4\pi t}}e^{-\frac{x^2}{4t}},\,\,\, x\in\bbR,\,\,\,  t>0,
\end{equation}
see \cite[Theorem 4]{BFM}. With these preliminary discussions we are now ready to prove Theorem \ref{lplq estimate}.

\begin{proof} [Proof of Theorem \ref{lplq estimate}]
	(1)
	Let $ u \in C_0^{\infty}(\Gamma)$.   Using \eqref{1.17}, integrating by parts, and noting that the boundary terms vanish,  we obtain
	\begin{equation}
		(e^{\Delta t}\partial_x u)(x)=\int_{\Gamma} K_t(x, y)\partial_y u(y)dy = - \int_{\Gamma} (\partial_y K_t(x, y) )u(y)dy.
	\end{equation}
	
	Recalling  \eqref{intker}, it suffices to show 
	\begin{equation}
		\bigg \|\int_{\Gamma} \partial_y K_t^{(i,j)}(x, y) u(y)dy \bigg \|_{L^q(\Gamma, dx)} \leq { C(T, p, q, \Gamma)}t^{-\frac12-\frac12\left(\frac1p-\frac1q  \right)}\|u\|_{\el{p}},\  i,j\in\{+,-\}
	\end{equation}
	{for all $t\in(0, T]$ and some $C(T, p, q, \Gamma)>0$.}
	
	Below we will derive this inequality for $i=j=+$, the other three cases can be proved  similarly. Using \eqref{pp} we obtain
	\begin{align}
		\lb{i12}
		& \bigg \|\int_{\Gamma} \partial_y K_t^{(++)}(x, y) u(y)dy \bigg \|_{L^q(\Gamma, dx)}\nonumber \\ 
		&=\sum_{e\in\cE}  \bigg\|\sum_{e'\in\cE} \int_{e'} \partial_y \big( \delta_{e,e'}{f_t}(y-x)+\sum_{\substack{m\in\bbN\\ P\in\cP_{e, e'}(m)}}S_P{f_t}(y+|P|-x)   \big) u(y) dy \bigg\|_{L^q(e, dx)}\nonumber \\
		& \leq \sum_{e\in\cE}\bigg \| \int_{e}  \partial_y {f_t}(y-x)u(y) dy \bigg \|_{L^q(e, dx)} +
		{ \sum_{e,e'\in\cE}}\bigg \| \int_{e'} \sum_{\substack{m\in\bbN\\ P\in\cP_{e, e'}(m)}}S_P \partial_y {f_t}(y+|P|-x) u(y) dy \bigg \|_{L^q(e, dx)}\nonumber\\
		&=I_1+I_2,
	\end{align} 
	{where $f_t$ is as in \eqref{f-t-eq}.}
	To estimate $I_1$ let us fix an arbitrary edge $e \in \cE$. Then one has
	\begin{align}
		\bigg\|\int_{e} \partial_y {f_t}(y-x)u(y) dy \bigg \|_{L^q(e,dx)} & \leq  \bigg\|\int_{e} \frac{|y-x|}{2t\sqrt{4\pi t}} e^{\frac{-(y-x)^2}{4t}} |u(y)| dy \bigg \|_{L^q(e, dx)}\nonumber \\
		& \leq \bigg\|\int_{\mathbb{R}} \frac{|y-x|}{2t\sqrt{4\pi t}} e^{\frac{-(y-x)^2}{4t}} |{ \tilde u_e(y)}| dy \bigg \|_{L^q(\mathbb{R}, dx)}, \label{conv_elq}
	\end{align}
	where ${ \tilde u_e(y)}$ is a function defined on $\mathbb{R}$ such that ${ \tilde  u_e}(y)= u(y)$ for $y \in [0, |e|]$ and ${ \tilde u_e}(y)=0$, $y\in\bbR\setminus[0, |e|]$. 	Next, introducing an auxiliary function 
	\begin{equation}\lb{hzt}
		h_t(z):=\frac{|z|}{2t\sqrt{4\pi t}} e^{\frac{-z^2}{4t}},
	\end{equation}
	we notice that the right-hand side of \eqref{conv_elq} is equal to $\|h_t*|u_e|\|_{L^q(\mathbb{R})}$, hence, for $r\in[1,\infty)$ satisfying $q^{-1} +1 = r^{-1} + p^{-1}$ one has
	\begin{align}
		\lb{I1bounde}
		\bigg\|\int_{e} \partial_y{ f_t}(y-x)u(y) dy \bigg \|_{L^q(e,dx)}&\leq \|h_t*|{ \tilde u_e}|\|_{L^q(\mathbb{R})} \leq  \|h_z \|_{L^r(\mathbb{R})}  \| { \tilde u_e}\|_{L^p(\mathbb{R})}\nonumber\\
		&=\|h_z \|_{L^r(\mathbb{R})}  \| u\|_{L^p(e)}\leq   C t^{-\frac{1}{2}-\frac{1}{2}(\frac1p-\frac1q)}\| u\|_{L^p(e)},  
	\end{align}
	where we used 
	\begin{align}
		\|h_t \|_{L^r(\mathbb{R})} &= C t^{-\frac{1}{2}-\frac{1}{2}(\frac1p-\frac1q)},\ C=\frac{2^{1/r} \||y| e^{-y^2} \|_{L^r(\bbR, dy)}}{2\sqrt\pi}.
	\end{align}
	Summing over all (finitely many) edges $e\in\cE$ and using \eqref{I1bounde} we obtain
	\begin{equation}\lb{I1boundG}
		I_1\leq  C t^{-\frac{1}{2}-\frac{1}{2}(\frac1p-\frac1q)}\| u\|_{L^p(\Gamma)}
	\end{equation}
	for $C=C(p,q, \Gamma)$, $t>0$. 
	
	Let us now derive a similar estimate for $I_2.$ To that end, let us fix  a pair of edges $e,e'\in\cE$ and a path $P$, $|P|>0$, connecting $e, e'$. Then for $x\in e$ one has
	\begin{align}
		\lb{1.8}
		& \int_{e'} \big| S_P \partial_y {f_t}(y+|P|-x) u(y)\big| dy\leq \int_{e'}\frac{|y+|P|-x|}{2t\sqrt{4\pi t}} e^{\frac{-(y+|p|-x)^2}{4t}}   |u(y)| dy\nonumber \\
		& \leq \frac1 {\sqrt{t}} \int_{e'} \bigg(\frac{|y+|p|-x|}{2\sqrt{t}} e^{\frac{-(y+|p|-x)^2}{8t}}\bigg) \bigg(\frac{1}{\sqrt{4\pi t}}e^{\frac{-(y+|p|-x)^2}{8t}}  |u(y)|\bigg) dy\nonumber \\
		& \leq \frac1 {\sqrt{t}}C_0 \int_{e'}  \frac{1}{\sqrt{4\pi t}}e^{\frac{-(y+|p|-x)^2}{8t}}  |u(y)|dy,\ t>0,
	\end{align}
	where
	\begin{equation}
		C_0:=\sup_{z\geq 0, t>0}\bigg(\frac{z}{2\sqrt{t}} e^{\frac{-z^2}{8t}}\bigg) \leq  \sqrt{2} \sup_{\zeta \in [0,\infty)} \big(\zeta e^{-\zeta^2} \big) <\infty.
	\end{equation}
	Let us notice that the sum appearing in $I_2$ is taken over all paths in $\cE(\wti \Gamma)$ connecting $e$ and $e'$. In general, the number of such paths grows exponentially with respect to the length of the path, that is, the number of paths of length $m\in\bbN$ can be as high as $\deg^m$   where $\deg$ is the largest vertex degree. The argument we give below stems from the fact that this rapid growth is balanced by an even faster decay of the heat kernel which is of order $e^{-m^2}$. To facilitate this argument we split the sum in $I_2$ into the sum over paths of large length and those whose length is order one. Concretely,  let 
	\begin{equation}
		R:=(2\ell_+^2 + \sel{8T}\log{(\deg)}+1)\ell_-^{-2},
	\end{equation}
	where $\ell_{\pm}$ are lengths of the longest and shortest edges. For $e, e'\in\cE$ and $x\in e$, using \eqref{1.8}, we obtain
	\begin{align}
		& \sum_{\substack{m\in\bbN\\ P\in\cP_{e, e'}(m)}} \int_{e'} \big| S_P \partial_y f(y+|P|-x) u(y)\big| dy   \leq   \frac{C_0} {\sqrt{t}} \sum_{\substack{m\in\bbN\\ P\in\cP_{e, e'}(m)}} \int_{e'}  \frac{1}{\sqrt{4\pi t}}e^{\frac{-(y+|P|-x)^2}{8t}}  |u(y)|dy \nonumber\\
		& \leq \frac{C_0} {\sqrt{t}} \bigg(\sum_{\substack{m \leq R\\ P\in\cP_{e, e'}(m)}} \int_{e'}  \frac{1}{\sqrt{4\pi t}}e^{\frac{-(y+|P|-x)^2}{8t}}  |u(y)|dy + \sum_{\substack{m > R\\ P\in\cP_{e, e'}(m)}} \int_{e'}  \frac{1}{\sqrt{4\pi t}}e^{\frac{-(y+|P|-x)^2}{8t}}  |u(y)|dy \bigg)\nonumber\\
		&= \frac{C_0} {\sqrt{t}} \bigg(A+B\bigg). \lb{abformula}
	\end{align}
	\sel{Let us note that for a path $P\in\cP_{e,e'}(m)$ one has  $|P| \geq m\ell_-$ and $(y-x)|P| \geq -m\ell_+^2$. Then for $t\in(0,T]$ one has}
	\begin{align}
		B=&\sum_{\substack{m > R \\ P\in\cP_{e, e'}(m)}} \int_{e'}  \frac{1}{\sqrt{4\pi t}}e^{\frac{-(y+|P|-x)^2}{8t}}  |u(y)|dy \nonumber\\
		& \leq \sum_{m > R} \deg^m \int_{e'}  \frac{1}{\sqrt{4\pi t}}e^{\frac{-(y-x)^2 -2(y-x)|P|-|P|^2}{8t}}  |u(y)|dy \nonumber\\
		& \leq \sum_{m > R} \deg^m \int_{e'}  \frac{1}{\sqrt{4\pi t}}e^{\frac{-(y-x)^2 +2m\ell_+^2-(ml_-)^2}{8t}}  |u(y)|dy \nonumber\\
		&\leq  \sum_{m\in\bbN,m > R }  e^{\frac{m(2\ell_+^2+8t\log(\deg) - R\ell_-^2)}{8t}} \int_{e'}  \frac{1}{\sqrt{4\pi t}}e^{\frac{-(y-x)^2}{8t}}  |u(y)|dy \nonumber \\
		&\leq  \sum_{m\in\bbN }  e^{-\frac{m}{8t}} \int_{e'}  \frac{1}{\sqrt{4\pi t}}e^{\frac{-(y-x)^2}{8t}}  |u(y)|dy \leq \frac{1}{1-e^{-\frac{1}{8t}}} \int_{\mathbb{R}}  \frac{1}{\sqrt{4\pi t}}e^{\frac{-(y-x)^2}{8t}}  |{ \tilde u_{e'}(y)}|dy,\lb{aux1.9}
	\end{align}
	where, as before, we let  ${ \tilde u_{e'}}$ denote  the extension of $u$ by zero outside of $(0, |e'|)$.   Next, introducing an auxiliary function 
	\begin{equation}
		\varphi_t(z):=\frac{1}{\sqrt{4\pi t}}e^{-z^2/8t}, \text{ with\  } \|\varphi_t\|_{L^r(\bbR)}\leq C t^{{-\frac12}+\frac1{2r}}, r\geq 1, C=C(r)>0, 
	\end{equation}
	we use \eqref{aux1.9} to obtain
	\begin{align}
		\lb{bterm}
		\|B\|_{L^q(e)}&\leq \frac{\|\varphi_t*(|u_{e'}|)\|_{L^q(e)}}{1-e^{-\frac{1}{8t}}}\leq \frac{\|\varphi_t\|_{L^r(\bbR)} \|{ \tilde u_{e'}}\|_{L^p(e')}}{1-e^{-\frac{1}{8t}}}\nonumber\\
		& \leq C\frac{t^{{-\frac12}+\frac1{2r}}}{1-e^{-\frac{1}{8t}}}{ \|\tilde u_{e'}\|_{L^p(e')}},
	\end{align}
	where $1+q^{-1}=r^{-1}+p^{-1}$. 
	
	In order to estimate term $A$ in \eqref{abformula} we change variables $z= x-y-|P|$ and obtain
	\begin{align}
		\lb{finpart}
		& \sum_{\substack{m\in\bbN, m \leq R\\ P\in\cP_{e, e'}(m)}} \int_{e'}  \frac{1}{\sqrt{4\pi t}}e^{\frac{-(y+|P|-x)^2}{8t}}  |u(y)|dy =\sum_{\substack{m\in\bbN, m \leq R\\ P\in\cP_{e, e'}(m)}} \int_{\bbR}  \frac{1}{\sqrt{4\pi t}}e^{\frac{-(y+|P|-x)^2}{8t}}  |{ \tilde u_{e'}(y)}|dy\nonumber \\
		&\leq  \sum_{\substack{m\in\bbN, m \leq R\\ P\in\cP_{e, e'}(m)}}\int_{\mathbb{R}}  \frac{1}{\sqrt{4\pi t}}e^{\frac{-z^2}{8t}}  |{ \tilde u_{e'}}(x-z-|P|)|dz, 
	\end{align}
	for arbitrary $x\in e$. Then one has
	\begin{align}\lb{aterm}
		\|A\|_{L^q(e)}\leq  \sum_{\substack{m\in\bbN, m \leq R\\ P\in\cP_{e, e'}(m)}} C\| \varphi_t\|_{L^r(\mathbb{R})}\| { \tilde u_{e'}}(\cdot-|P|) \|_{L^p(\mathbb{R})}
		\leq C(\sel{T}, p,q, \Gamma)t^{-\frac12(\frac1p-\frac1q)} { \|u\|_{L^p(e')}.}
	\end{align}
	Finally, combining \eqref{I1boundG}, \eqref{abformula}, \eqref{bterm}, \eqref{aterm} we obtain
	\begin{align}
		\|e^{(\Delta -\sigma )t}\partial_x u\|_{L^q(\Gamma)}\leq C t^{-1/2}\left(t^{-\frac{1}{2}(\frac1p-\frac1q)}+\frac{t^{-\frac{1}{2}(\frac1p-\frac1q)}}{1-e^{-\frac{1}{8t}}}\right)\| u\|_{L^p(\Gamma)}, \sel{t\in(0, T]},
	\end{align}
	where $C=C(T, p, q, \Gamma)$ as asserted in \eqref{tgradlplq} and \eqref{normtdx}. The proof of  \eqref{tgradlplqnew} and \eqref{normtdxnew} is analogous.

	(2) the derivation of \eqref{new-norm-estimate1} is analogous to that of \eqref{tgradlplq}. The inequality \eqref{new-norm-estimate2}
	follows from \eqref{normtdxnew} and  \eqref{new-norm-estimate1}.
\end{proof}

\begin{corollary}\lb{cor2.1}Assume the setting of Theorem \ref{lplq estimate}. Then there exists $\sigma_0$ such that the following assertions hold. For every $\sigma>\sigma_0$ there exists $\delta\in(0,\sigma)$\footnote{\sel{in general, $\delta\ll 1$}} and $C=C(\sigma, \delta, p, q, \Gamma)>0$ such that for all $t>0$ one has
	\begin{align}
		&\lb{nnew-norm-estimate1}
		||e^{(\Delta -\sigma )t}||_{\cB(  L^p(\Gamma),  L^q(\Gamma))} \leq Ce^{-\delta t} t^{-\frac12(\frac1p -\frac1q)},\\
		&\|e^{(\Delta-\sigma) t}\partial_x \|_{\cB(  L^p(\Gamma),   L^q(\Gamma))}\leq Ce^{-t\delta}t^{-\frac12-\frac12\left(\frac1p-\frac1q \right)}, \lb{nnormtdx}\\
			&\|\partial_xe^{(\Delta-\sigma) t} u\|_{\cB(  L^p(\Gamma),   L^q(\Gamma))}\leq Ce^{-t\delta}t^{-\frac12-\frac12\left(\frac1p-\frac1q \right)}.\label{ntgradlplqnew}
		\end{align}
\end{corollary}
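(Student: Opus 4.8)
The plan is to deduce everything from Theorem~\ref{lplq estimate} by means of the elementary factorization $e^{(\Delta-\sigma)t}=e^{-\sigma t}e^{\Delta t}$, splitting the time range into a short-time window $t\in(0,1]$, where Theorem~\ref{lplq estimate} applies directly (with $T=1$), and a long-time tail $t\ge1$, where the semigroup property together with the uniform boundedness of $\{e^{\Delta t}\}_{t\ge0}$ on $L^q(\Gamma)$ takes over.

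First I would record the short-time bounds. Taking $T=1$ in Theorem~\ref{lplq estimate}, estimates \eqref{new-norm-estimate1}, \eqref{normtdx}, \eqref{normtdxnew} give, for $t\in(0,1]$,
\[
\|e^{\Delta t}\|_{\cB(L^p(\Gamma),L^q(\Gamma))}\le Ct^{-\frac12(\frac1p-\frac1q)},\qquad
\|e^{\Delta t}\partial_x\|_{\cB(L^p(\Gamma),L^q(\Gamma))}+\|\partial_xe^{\Delta t}\|_{\cB(L^p(\Gamma),L^q(\Gamma))}\le Ct^{-\frac12-\frac12(\frac1p-\frac1q)}.
\]
Since $e^{-\sigma t}\le e^{-\delta t}$ for every $0<\delta<\sigma$ and $t>0$, multiplying these bounds by $e^{-\sigma t}$ yields \eqref{nnew-norm-estimate1}--\eqref{ntgradlplqnew} restricted to $(0,1]$.

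For the long-time tail, the key input is that the Neumann--Kirchhoff heat semigroup is an $L^q$-contraction for every $q\in[1,\infty]$: $e^{\Delta t}$ is positivity preserving and $e^{\Delta t}\mathbf{1}=\mathbf{1}$, hence a contraction on $L^\infty(\Gamma)$ and, by duality (self-adjointness of $\Delta$ in $L^2$) and Riesz--Thorin interpolation, on all $L^q(\Gamma)$; see \cite{BFM}. Using this together with the semigroup identities $e^{\Delta t}=e^{\Delta(t-1)}e^{\Delta}$, $e^{\Delta t}\partial_x=e^{\Delta(t-\frac12)}\bigl(e^{\Delta/2}\partial_x\bigr)$ and $\partial_xe^{\Delta t}=\bigl(\partial_xe^{\Delta/2}\bigr)e^{\Delta(t-\frac12)}$, and bounding the remaining evolution factor in the appropriate $\cB(L^q,L^q)$ or $\cB(L^p,L^p)$ norm by $1$ while estimating the fixed-time operator by the short-time bounds at $t=1$ or $t=\tfrac12$, I get a constant $C=C(p,q,\Gamma)$ with
\[
\|e^{\Delta t}\|_{\cB(L^p(\Gamma),L^q(\Gamma))}+\|e^{\Delta t}\partial_x\|_{\cB(L^p(\Gamma),L^q(\Gamma))}+\|\partial_xe^{\Delta t}\|_{\cB(L^p(\Gamma),L^q(\Gamma))}\le C,\qquad t\ge1.
\]
Writing $e^{-\sigma t}=e^{-\delta t}e^{-(\sigma-\delta)t}$ and observing that $\sup_{t\ge1}e^{-(\sigma-\delta)t}t^{\frac12+\frac12(\frac1p-\frac1q)}<\infty$ for any $0<\delta<\sigma$, the long-time bound is promoted to exactly the form demanded in \eqref{nnew-norm-estimate1}--\eqref{ntgradlplqnew}. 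Combining the two regimes and choosing, say, $\sigma_0=0$ and $\delta\in(0,\sigma)$ (with $\delta$ small so that the long-time constant stays tame, consistent with the footnote) completes the argument.

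The proof is essentially bookkeeping once Theorem~\ref{lplq estimate} is in hand; the only step that is genuinely \emph{used} rather than re-derived is the uniform $\cB(L^q(\Gamma),L^q(\Gamma))$-boundedness of the heat semigroup, so I regard the treatment of the long-time tail as the main (and rather mild) obstacle. If one prefers to avoid quoting contractivity, the same conclusion follows from the spectral-gap decay $e^{\Delta t}\to P_0$ (the projection onto constants) in $\cB(L^q(\Gamma),L^q(\Gamma))$, which again yields uniform boundedness of $\{e^{\Delta t}\}_{t\ge1}$; since only boundedness is needed, the contraction route is the shortest.
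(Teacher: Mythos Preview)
Your proof is correct. The paper takes a related but slightly different route: instead of invoking the $L^q$-contractivity of the Neumann--Kirchhoff heat semigroup, it writes $t=\tau+n$ with $\tau\in(0,1]$, $n\in\bbN_0$, and iterates the bound $\|e^{\Delta}\|_{\cB(L^p,L^p)}\le C(1)$ coming from Theorem~\ref{lplq estimate} itself (with $p=q$, $T=1$), obtaining $\|e^{(\Delta-\sigma)t}\|_{\cB(L^p,L^q)}\le (C(1))^{n-1}C(2)\,e^{-\sigma t}(1+\tau)^{-\frac12(\frac1p-\frac1q)}$. Since the constant $C(1)$ produced by Theorem~\ref{lplq estimate} may exceed $1$, the paper is forced to take $\sigma_0=\log\max(C(1),C(2))$ so that the geometric factor $(C(1))^{n}$ is absorbed by $e^{-\sigma n}$; this is precisely why a strictly positive $\sigma_0$ appears in the statement. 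Your use of the Markov/contractive property (with the reference to \cite{BFM}) replaces $C(1)$ by $1$ and therefore gives the marginally sharper conclusion $\sigma_0=0$, while the paper's argument has the virtue of being entirely self-contained, relying only on the estimates already proved in Theorem~\ref{lplq estimate}.
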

\begin{proof}Let us prove \eqref{nnew-norm-estimate1}, the other two inequalities can be proved analogously. Let $C(T)>0$ abbreviate the constant $C(T, p,q, \Gamma)$ from Theorem \ref{lplq estimate}. For arbitrary $t>0$ let us write $t=\tau +n$ for $\tau\in(0, 1]$ and $n\in\bbN_0$. 
	
	If $n=0$ then by \eqref{new-norm-estimate1} one has
	\begin{align}
	||e^{(\Delta -\sigma )t}||_{\cB( L^p(\Gamma), L^q(\Gamma))}\leq C(1) e^{-\sigma t} t^{-\frac12(\frac1p -\frac1q)}.
	\end{align} 
Assume now that $n\geq 1$ then one has
\begin{align}
	&||e^{(\Delta -\sigma )t}||_{\cB( L^p(\Gamma), L^q(\Gamma))} =	||e^{(\Delta -\sigma )(\tau+n)}||_{\cB( L^p(\Gamma), L^q(\Gamma))}\\
	&\leq (C(1))^{n-1} e^{-\sigma t}\|e^{  \Delta(\tau+1)  }||_{\cB( L^p(\Gamma), L^q(\Gamma))}\\
	&\leq (C(1))^{n-1} C(2)e^{-\sigma t}(1+\tau)^{-\frac12\left(\frac1p-\frac1q\right)}. 
\end{align}
Without loss of generality we may assume that $\alpha:=\log(\max(C(1), C(2)))>0$. Then for $\sigma> \alpha$ the above chain of inequalities yields
\begin{align}
	||e^{(\Delta -\sigma )t}||_{\cB( L^p(\Gamma), L^q(\Gamma))}&=	||e^{(\Delta -\sigma )(\tau+n)}||_{\cB( L^p(\Gamma), L^q(\Gamma))}\leq e^{n\alpha-\sigma n-\sigma \tau}(1+\tau)^{-\frac12\left(\frac1p-\frac1q\right)}\\
	& =Ce^{-(\sigma-\alpha)t}\leq C e^{-(\sigma-\alpha)t/2}t^{{-\frac12\left(\frac1p-\frac1q\right)}}, 
\end{align}
where $C=C(\alpha,\sigma, p,q)>0$. 
\end{proof}

\section{Linear non-autonomous parabolic equations   and proof of Theorem \ref{evolution-operator-thm}} \label{evolution-sec}

In this section,  we study the linear non-autonomous parabolic equation \eqref{evolution-main-eq} and prove Theorem \ref{evolution-operator-thm}.

\begin{proof} [Proof of Theorem \ref{evolution-operator-thm}]
Let us define an operator
\begin{equation}
	\label{veq}
	 V(t)\psi:=a({  t,\cdot}) \psi_x+b({  t,\cdot})\psi, \quad \psi\in \cX^{\alpha}_q.
\end{equation}
We notice that if $\alpha>0$ is such that $ \cX^{\alpha}_q\hookrightarrow \hatt C^1(\Gamma)$ then  one has
\begin{align}
\|V(t)\psi\|_{\elq}\leq C \|\psi\|_{\cX^{\alpha}_q}\left( \|a(t,\cdot)\|_{\elq}+\|b(t,\cdot)\|_{\elq}\right),
\end{align}
and
\begin{equation}\lb{vee1}
\cV\in C^{\beta}\left((0, T), \cB(\cX^{\alpha}_q,  L^q(\Gamma)) \right),
\end{equation} 
where we used H\"older continuity of $a,b$ in time. 

Next, consider  integral equation \eqref{evolution-eq1}. Our first objective is to prove existence of solutions with initial data drawn from a fractional power space $\cX^{\alpha}_q$. To that end, fix arbitrary $\psi_0\in\cX^{\alpha}_q$. Then the  H\"older continuity of $t\mapsto V(t)$ on $[\kappa,\tau]\subset (0,T)$ and
Theorem \ref{general-existence-thm}\footnote{we also invoke \cite[Cor. 3.3.5]{Henry} to get solution $\Psi$ not just on a small interval near $\tau$ but on $[\tau, T)$.} yield the existence of a unique $\Psi= \Psi(t; \psi_0,\kappa)$ such that 
\begin{align}
&\hspace{1.5cm}\Psi\in C([\kappa, \tau], \elq), \Psi(t; \psi_0,\kappa)\in \cX^{\alpha }_q,\\
&\Psi(t;\psi_0,\kappa)=e^{(\Delta-\sigma)(t-\kappa)}\psi_0+\int_{\kappa}^{t}e^{(\Delta-\sigma)(t-\kappa-s)}V(s)\Psi(s;\psi_0,\kappa)ds, t\in[\kappa, \tau],\lb{psiint}
\end{align}
where the integral converges in $\elq$. By \cite[Theorem 7.1.1 (b)]{Henry}, for arbitrary $\gamma\in(0,1)$ we have  $\Psi(t;\psi_0,\kappa)\in \cX^{\gamma}_q$ whenever $\psi_0\in \dom(\Delta)$. This gives rise to the evolution operator $T(t,\kappa)$  defined by
\begin{align}\lb{ttau}
&T(t,\kappa): \dom(T(t,\kappa)):=\dom(\Delta)\subset  \elq\rightarrow \cX^{\gamma}_q,\quad  T(t,\kappa)\psi_0=\Psi(t;\psi_0,\kappa).
\end{align}

We then show that $T(t,\tau)$ can be extended to a bounded linear operator in $\cB(\elq, \cX^{\gamma}_q)$ satisfying 
$$\|T(t,\kappa)\|_{\cB(\elq, \cX^{\gamma}_q)}\leq C(t-\kappa)^{-\gamma}.
$$
To that end, we may assume without loss of generality that $1<2\gamma-\frac1q$ so that $\cX^{\gamma}_q\hookrightarrow  \hatt C^1(\Gamma)$. We  first show that there exists a constant $C>0$ such that 
\begin{equation}\lb{evolbound}
\left\| \Psi(t; f,\kappa)-\Psi(t; g,\kappa) \right\|_{\cX^{\gamma}_q}\leq C (t-\kappa)^{-\gamma}\|f-g\|_{\elq},
\end{equation}
for all $t\in(\kappa,\tau]$, $f,g\in \dom(\cA_q)$. Since $\Psi(\cdot; f,\kappa)$, $\Psi(\cdot; g,\kappa)$ satisfy  \eqref{psiint}, one has
\begin{align}\lb{gronprep1}
&\|\Psi(t;f,\kappa)-\Psi(t;g,\kappa)\|_{\cX^{\gamma}_q}\leq C\|(\sigma-\Delta)^{\gamma}e^{(\Delta-\sigma)(t-\kappa)}(f-g)\|_{\elq}\nonumber\\
&\quad +C\int_{\kappa}^t\|(\sigma-\Delta)^{\gamma}e^{(\Delta-\sigma)(t-\kappa-s)}V(s)(\Psi(s;f,\kappa)-\Psi(s;g,\kappa))\|_{\elq}ds,
\end{align}  
and, using \eqref{fracpow}, 
\begin{align}
\lb{psiminuspsi}
&\|\Psi(t;f,\kappa)-\Psi(t;g,\kappa)\|_{\cX^{\gamma}_q}\leq C(t-\kappa)^{-\gamma}\|(f-g)\|_{\elq}\nonumber\\
&\quad +C \int_{\kappa}^t(t-\kappa-s)^{-\gamma}\|V(s)(\Psi(s;f,\kappa)-\Psi(s;g,\kappa))\|_{\elq}ds.
\end{align} 
We claim 
\begin{equation}\lb{vpsiqg}
\|V(s)(\Psi(s;f,\kappa)-\Psi(s;g,\kappa))\|_{\elq}\leq C\|(\Psi(s;f,\kappa)-\Psi(s;g,\kappa))\|_{\cX^{\gamma}_q}. 
\end{equation}
Indeed,  one has
\begin{align}
\lb{vpsi}
&\|V(s)(\Psi(s;f,\kappa)-\Psi(s;g,\kappa))\|_{\elq}\nonumber\\
&\leq \| a(s,\cdot)(\Psi_x(s;f,\kappa)-\Psi_x(s;g,\kappa))\|_{\elq} + \|b(s,\cdot)(\Psi(s;f,\kappa)-\Psi(s;g,\kappa))\|_{\elq}\nonumber\\
&\leq C{ (\|a(s,\cdot)\|_{\elq}+\|b(s,\cdot)\|_{\elq})}\|(\Psi(s;f,\kappa)-\Psi(s;g,\kappa))\|_{\cX^{\gamma}_q},
\end{align}
where in the last step we used the embedding $\cX^{\gamma}_q\hookrightarrow \hatt C^1(\Gamma)$.  Since $s\mapsto a(s,\cdot)$, $s\mapsto b(s,\cdot)$ are H\"older continuous on $[\kappa,\tau]$, the required inequality \eqref{vpsiqg} follows from \eqref{vpsi}. Finally, combining  \eqref{psiminuspsi}, \eqref{vpsiqg} and Gronwall's inequality  \eqref{gronwell1}, \eqref{gronwell2} we obtain \eqref{evolbound}.

Next, we construct a bounded extension of $T(t,\tau)$ introduced in \eqref{ttau}. Let $\psi_n\in\dom(\cA_q)$ be such that $\|\psi_n-\psi_0\|_{\elq}\rightarrow 0$ as $ n\rightarrow\infty$. Then as we just proved one has $\Psi(t;\psi_n,\kappa)\in \cX^{\gamma}_q$ and 
\begin{align}\lb{paslim}
\Psi(t;\psi_n,\kappa)=e^{(\Delta-\sigma)(t-\kappa)}\psi_n+\int_{\kappa}^{t}e^{(\Delta-\sigma)(t-\kappa-s)}V(s)\Psi(s; \psi_n,\kappa)ds, t\in [\kappa, \tau].
\end{align}	
Using \eqref{evolbound} with $f=\psi_n$, $g=\psi_m$ we infer that the sequence $\{\Psi(t; \psi_n,\kappa)\}_{n=1}^{\infty}$ converges in $\cX^{\gamma}_q$ to $\Psi_\infty(t)$ for $t\in(\tau, \kappa]$. To pass to the limit in \eqref{paslim} we observe that 
\begin{align}
&\|V(s)\Psi(s;\psi_n,\kappa)\|_{\elq}\leq  \|V(s)\|_{\cB(\cX^{\gamma}_q, \elq)}\|\Psi(s; \psi_n,\kappa)\|_{\cX^{\gamma}_q}\nonumber\\
&\quad \leq  C (s-\kappa)^{{-\gamma}}\|V(s)\|_{\cB(\cX^{\gamma}_q, \elq)}\|\psi_n\|_{\elq}\leq  C (s-\kappa)^{{-\gamma}},  s\in(\kappa, \tau],
\end{align}
where we used continuity of $s\mapsto V(s)\in \cB(\cX^{\gamma}_q, \elq)$, boundedness of  $\{\|\psi_n\|_{\elq}\}_{n=1}^{\infty}$, and $\gamma\in(0,1)$. Finally, recalling that $\psi_n\rightarrow \psi_0$ in $\elq$, we pass to the limit as  $n\rightarrow\infty$ in \eqref{paslim} which yields
\begin{align}
\label{approx-eq1}
\Psi_\infty(t)=e^{(\Delta-I)(t-\kappa)}\psi_0+\int_{\kappa}^{t}e^{(\Delta-I)(t-\kappa-s)}V(s)\Psi_\infty(s)ds,\  t\in [\kappa, \tau].
\end{align}	
We claim  that $\Psi_\infty(t)$ does not depend on the choice of the approximating sequence $\{\psi_n\}$ but only on $\psi_0$. To that end,
let $\wti \psi_n\in\dom(\cA_q)$ be such that $\|\wti \psi_n-\psi_0\|_{\elq}\rightarrow 0, n\rightarrow\infty$. Proceeding as above, we note that  $\{\Psi(t;\wti  \psi_n,\kappa)\}_{n=1}^{\infty}$ converges in $\cX^{\gamma}_q$ to $\wti \Psi_\infty(t)$ for $t\in(\kappa,\tau]$
	and 
	\begin{align}
	\label{approx-eq2}
	\tilde \Psi_\infty(t)=e^{(\Delta-\sigma)(t-\kappa)}\psi_0+\int_{\kappa}^{t}e^{(\Delta-\sigma)(t-\kappa-s)}V(s)\tilde \Psi_\infty(s)ds,\  t\in [\kappa, \tau].
	\end{align}	
	By \eqref{approx-eq1} and \eqref{approx-eq2}, 
	\begin{equation}
	\Psi_\infty(t)-\wti\Psi_\infty(t)=\int_{\kappa}^t e^{(\Delta-\sigma)(t-\kappa-s)}V(s)\big(\Psi_\infty(s)-\wti\Psi_\infty(s)\big)ds,\  t\in (\kappa,\tau],
	\end{equation}
	hence, for $t\in (\kappa, \tau]$ one has
	\begin{align}
	&\|\Psi_\infty(t)-\wti \Psi_\infty(t)\|_{\elq} \le C\int_\kappa^t \|\Psi_\infty(s)-\wti \Psi_\infty(s)\|_{\mathcal{X}_q^\gamma}ds\nonumber\\
	&\quad=C\int_\kappa^t \lim_{n\to\infty} \|\Psi(s;\psi_n)-\Psi(s;\tilde\psi_n)\|_{\mathcal{X}_q^\gamma} ds\nonumber\\
&\quad \le  C\int_{\kappa}^t (s-\kappa)^{-\gamma}\lim_{n\to\infty}\|\psi_n-\wti\psi_n\|_{L^q}ds=0. 
	\end{align}
	It then follows that
$
	\Psi_\infty(t)= \tilde \Psi_\infty(t), t\in[\kappa,\tau].
$
Then we define
$
	T(t,\kappa)\psi_0:=\lim_{n\to\infty}T(t,\kappa)\psi_n,
$
	and note that $T(t,\kappa)$ is a single-valued linear operator defined on $\elq$. Moreover, denoting $\Psi(t;\psi_0,\kappa):=T(t,\kappa)\psi_0$ one has
	$$
	\Psi(t;\psi_0,\kappa)=e^{(\Delta-\sigma)(t-\kappa)}\psi_0+\int_\kappa^t  e^{(\Delta-\sigma)(t-\kappa-s)} V(s) \Psi(s;\psi_0,\kappa)ds,\  t\in [\kappa,\tau],
	$$
	and by \eqref{evolbound} with $f=\psi_0$, $g=0$
	$$
	\left\| T(t,\kappa)\psi_0\right\|_{\cX^{\gamma}_q}=\left\| \Psi(t; \psi_0,\kappa) \right\|_{\cX^{\gamma}_q}\leq C (t-\kappa)^{-\gamma}\|\psi_0\|_{\elq}.
	$$
This gives a unique solution of \eqref{evolution-eq1} with initial condition in $\elq$ as well as the asserted bound for the norm of the evolution operator. 

Let us now prove the last part of the theorem. Since $s\mapsto V(s)\in\cB(\cX^{\alpha}_q, \elq)$ is H\"older continuous on $[\kappa, \tau]$, Theorem \ref{strong-solu-thm} with $f(t,x, u,v):=a(t,x)v+b(t,x)u,
$
yields a strong solution, cf. Definition \ref{strong-solution-def}, ${  \Psi(t,x; \psi_0,\kappa):=\Psi(t;\psi_0,\kappa)(x)}$ of \eqref{evolution-main-eq} in $\elq$. We claim that this solution is in fact classical. To prove this claim, let us fix an arbitrary $\varepsilon\in(0, { T-\kappa})$ and note that ${  \Psi_0:=}\Psi(t,\cdot; \kappa+\varepsilon, \psi_0)\in \cX^{\gamma}_q$ for all $\gamma\in(0,1)$. Finally, by picking $\gamma>0$ so that $2\gamma-p^{-1}>1$ and using Theorem  \ref{general-existence-thm} with initial time $\kappa+\varepsilon$ we conclude that 
$
{  \Psi(\cdot+\varepsilon,\cdot;\psi_0,\kappa)=}\Psi(\cdot, \cdot;\Psi_0,\kappa+\varepsilon)\in\hatt C^{1,2}({  (\kappa+\varepsilon,T)} \times\overline\Gamma),
$
as required. 
\end{proof}

\section{Local existence of classical solutions for general chemotaxis model}\lb{mildsol}

The main purpose of this section is to provide the proof of Theorem \ref{Lp}. 

\subsection{Outline of the proof of Theorem \ref{Lp}}\lb{outline1}Noting that it suffices to consider two cases, $\tau=0$ and $\tau=1$ (the proof for $\tau>0$ is similar), we outline the strategy of the proof next. To begin, we observe that the Duhamel's principle applied to the reaction-advection-diffusion system  \eqref{parabolic-parabolic-eq} with $\tau=1$ gives rise to a system of integral equations
\begin{equation}
	\label{tauone}
	\begin{cases}
		u(t)=e^{(\Delta-\sigma)t}u_0-\int_{0}^{t}  e^{(\Delta-\sigma)(t-s)}\partial_{x}\big(f_1(u(s), v(s))\partial_xv(s)\big)ds\cr
		\hspace{3cm}+ \int_{0}^{t}  e^{(\Delta-\sigma)(t-s)}(f_2(u(s), v(s))+\sigma u(s))ds,\cr
		v(t)=e^{(\Delta-\sigma I)t}v_0+ \int_{0}^{t}  e^{(\Delta-\sigma)(t-s)}\left(f_3(u(s), v(s))+\sigma v(s)\right)ds, \cr
	\end{cases}
\end{equation}
and, similarly, in case $\tau=0,$ to the following integral equation
 \begin{align}
 	\begin{cases}\lb{duh}
 		&u(t)=e^{(\Delta-\sigma)t}u_0+\int_{0}^{t}  e^{(\Delta-\sigma)(t-s)}\partial_x(f_1(u(s), v(s))\partial_x v(s))ds\\
 		&\hspace{4cm}+\int_{0}^{t}  e^{(\Delta-\sigma)(t-s)}\left(f_1(u(s), v(s))+\sigma v(s)\right)ds,\\
 		&v(s):=-(\Delta-\sigma )^{-1}{f(u(s))},
 	\end{cases}
 \end{align}
where  we used  $f_3(u,v)=f(u)-\sigma v$  {  in \eqref{duh}} (see Hypothesis \ref{hyp}).

Our first objective, see Section \ref{sec:inteq}, is to  show that the integrals in \eqref{tauone}, \eqref{duh} are convergent in suitable Sobolev and fractional power spaces $\cX^{\beta}_p$ whenever $u\in \elpeet{T}$, $v\in\wpeet{T}$, see Propositions \ref{intconv}, \ref{prop1.3}. In Section \ref{sec:inteq}, we also show that these integrals, as functions of $t$, are H\"older continuous, see Proposition \ref{prop1.4}. Our proofs rely on the $L^p-L^q$ mapping properties of nonlinear operators discussed in Appendix \ref{app:LpLq}. 

Our second objective is to construct solutions of \eqref{tauone} in 
{\begin{equation}\lb{intdata1}
	C([0, T], L^p(\Gamma)\times L^p(\Gamma))\cap C((0, T_{\max}), L^p(\Gamma)\times\wpee),
\end{equation} with initial data $(u_0, v_0)\in L^p(\Gamma)\times\wpee$} and solutions of \eqref{duh} in
\begin{equation}\lb{intdata2}
	\text{$C([0, T_{\max}), L^p(\Gamma))$ with initial data $u_0\in\elpee$,}
\end{equation}  
for a maximal $T_{\max}\in(0,\infty]$ via a fixed point argument and an extension argument. {These solutions will be referred to as {\it mild solutions of \eqref{parabolic-parabolic-eq} subject to \eqref{contcond},\eqref{fluxcond}}}. To that end, for $T>0$ and $(u_0, v_0)\in L^p(\Gamma)\times\wpee$, we formally define a pair of mappings 
\begin{align}
	&\Lambda(u_0,v_0): (u,v)\mapsto \Lambda (u,v;u_0,v_0)=(\Phi (u,v;u_0)), \Psi(u,v;v_0)) \label{lambda}
\end{align}
for  $ (u,v)\in C([0, {T]}, L^p(\Gamma)\times\wpee)$, and
\begin{align}
	&\Theta(u_0): u\mapsto \Theta(u;u_0)\quad {\rm for}\quad u\in C([0, {T]}, L^p(\Gamma))\label{theta}
\end{align}
by the following formulas
\begin{align}
	\begin{split}\label{phi}
		&(\Phi(u,v;u_0)(t)=e^{(\Delta-\sigma)t}u_0-\int_{0}^{t}  e^{(\Delta-\sigma)(t-s)}\partial_{x}\big(f_1(u(s), v(s))\partial_xv(s)\big)ds\\
		&\hspace{4cm}+ \int_{0}^{t}  e^{(\Delta-\sigma)(t-s)}(f_2(u(s), v(s))+\sigma u(s))ds,\\
	\end{split}\\
	&(\Psi(u,v;v_0))(t)=e^{(\Delta-\sigma I)t}v_0+ \int_{0}^{t}  e^{(\Delta-\sigma)(t-s)}\left(f_3(u(s), v(s))+\sigma v(s)\right)ds,\label{psi}\\
	\begin{split}
		&(\Theta(u;u_0))(t):= (\Phi(u, -(\Delta-\sigma )^{-1}f(u))(t),\lb{geq} t\in {[0,T]}.
	\end{split}
\end{align}
In Section \ref{Sec:mildsol}, we prove that $\Lambda, \Theta$ are contractions for small $T$ provided that {$v_0\in \cX_p^{\frac{1}{2}+\epsilon}$, for some $\epsilon\in(0,1)$} (see Proposition  \ref{mild-solution-prop1}). {Then we use the fixed points of these mappings  together with some approximation arguments, to obtain a unique solution $(u(t),v(t))=(u(t;u_0,v_0),v(t;u_0,v_0))$ to \eqref{tauone} (resp. a unique solution $(u(t),v(t))=(u(t;u_0),v(t;u_0))$ to \eqref{duh}) on a small interval $[0,T]$ for initial data $(u_0, v_0)\in\el{p}\times\wpee$, see Proposition  \ref{mild-solution-prop2}.
By  an extension argument we obtain a unique solution $(u(t),v(t))=(u(t;u_0,v_0),v(t;u_0,v_0))$ to \eqref{tauone} (resp. a unique solution $(u(t),v(t))=(u(t;u_0),v(t;u_0))$ to \eqref{duh}) { on the maximal interval  $[0,T_{\max})=[0,T_{\max}(u_0,v_0))$ (resp.   on the maximal interval $[0,T_{\max})=[0,T_{\max}(u_0))$} with the following regularity properties (see Theorem \ref{thm3.1}),}
\begin{align*}
		u\in {C^{{ \beta}}}((0, T_{\max}), \hatt C^{\nu}(\overline{\Gamma}))\cap { C^{\beta}}((0,  T_{\max}), \cX^{\beta}_r)\text{\ for arbitrary\ } \, r\ge 1,\,\,\, 0<\nu<\beta<\frac{1}{8},
	\end{align*}	
and
\begin{align*}
	&v\in	{C^{1+\beta}} ((0,T_{\max}), \cX^{\beta}_r)\cap {C^{\beta}}((0, T_{\max}), \hatt C^{2+\nu}(\overline{\Gamma}))\quad \text{when}\,\,  (u,v)\,\, \text{ is the solution of \eqref{tauone}},
\end{align*}	
\begin{align*}
	&v\in  {C^{\beta}}((0, T_{\max}), \hatt C^{2+\nu}(\overline{\Gamma}))\quad \text{when},\,  (u,v)\,\, \text{ is the solution of  \eqref{duh}}.
\end{align*}	
 This range of values of parameter $\beta$ does not yield the embedding $\cX^{\beta}_p\hookrightarrow \hatt C^1(\Gamma)$, which is an obstacle to showing that the obtained mild solutions are actually classical solutions of \eqref{parabolic-parabolic-eq} subject to \eqref{contcond},\eqref{fluxcond}.

Our third, and final, objective is to show higher { time and space}  regularity of the mild  solution { and then prove Theorem \ref{Lp}.}  To that end, we formally rewrite \eqref{parabolic-parabolic-eq} as follows
\begin{align}
	&u_t=(\Delta -\sigma)u-a(t,x)\partial_xu+b(t,x)u,\ \ \  \label{lintauone} \\
	&\tau v_t=\partial_{xx}^2 v+f_3(u,v),\label{lintauone2}
\end{align}
where
\begin{align}
	\begin{split}\lb{abcoef}
		&a(t,\cdot)=\partial_uf_1(u(t),v(t))\partial_x v(t),\\
		&b(t,\cdot)=\left(\frac{f_2(u,v)+\sigma u- f_1(u,v)\partial_{xx}v-\partial_vf_1(u,v)(\partial_{x}v)^2}{u}\right).
	\end{split}
\end{align}
Applying Theorem \ref{evolution-operator-thm}, 
we  first show the existence and uniqueness of classical solutions to the above linear system. We then show that the obtained classical solution to the linear system coincides with the mild solution 
of    \eqref{parabolic-parabolic-eq} subject to \eqref{contcond},\eqref{fluxcond} from the previous step. To carry out this item we use explicit estimate on the norm of the evolution operator from Theorem \ref{evolution-operator-thm} and a version of Gronwall's inequality recalled in Proposition \ref{propHenry}.  { This line of arguments leads to $(u(t),v(t))\in \cX_r^\alpha$ for arbitrary $t\in (0,T_{\max})$, $r\ge 1$ and $\alpha\in (0, 1)$. This regularity is enough for us to show  that the obtained mild solutions are actually classical solutions of \eqref{parabolic-parabolic-eq} subject to \eqref{contcond}, \eqref{fluxcond}.}

\subsection{Rigorous formulation of integral equations}\label{sec:inteq}

In Propositions \ref{intconv}, \ref{prop1.3}, \ref{prop1.4} below we prove that the integral in \eqref{phi} converges absolutely in $\elpee$, the integral in  \eqref{psi} converges in $\wpee$ 
both to elements of $\cX^{\beta}_p(\Gamma)$. We also examine regularity properties of the mappings $t\mapsto \Phi(t), \Psi(t), \Theta(t)$ in $L^p(\Gamma)$ and in $\cX_p^{\beta}$ for a suitable range of parameters $\beta, p$.
\begin{hypothesis}\label{hypnew3.1}
Assume Hypothesis \ref{hyp}. Let us fix $
	T>0$, $u\in \elpeet{T}$, $v\in\wpeet{T}$, $t\in(0,T)$, $s\in(0,t). $
\end{hypothesis}

\begin{proposition}\lb{intconv}  Assume Hypothesis \ref{hypnew3.1}. Then one has 
	\begin{equation}\lb{f1dlpprop}
	f_1(u(s), v(s))\partial_xv(s)\in L^{\frac{p}{2\gamma}}(\Gamma).
	\end{equation}
	 The operator $e^{(\Delta-\sigma)(t-s)}\partial_x$ originally defined on $\hatt C_0^{\infty}(\Gamma)$ can be extended to a bounded linear operator in $\cB(L^{\frac{p}{2\gamma}}(\Gamma), \elpee)$. For such an extension one has
		\begin{align}
			&F_1(s):= e^{(\Delta-\sigma)(t-s)}\partial_x\big(f_1(u(s), v(s))\partial_xv(s) \big)\in\elpee,\lb{f1dlpprop2}
		\end{align}
and, in addition, we have
	\begin{align}
			&F_2(s):=e^{(\Delta-{\sigma}I)(t-s)}(f_2(u(s),v(s))+\sigma u(s))\in \wpee,\lb{f1dlpprop4}\\
			&F_3(s):=e^{(\Delta-{ \sigma }I)(t-s)}{ \big(f_3(u(s),v(s))+\sigma u(s)\big)}\in \wpee,\lb{f1dlpprop3}
			\end{align}
	and the functions $(0,t)\ni s\mapsto F_1(s)\in\elpee$, $(0,t)\ni s\mapsto  F_2(s)\in \wpee$, $(0,t)\ni s\mapsto  F_3(s)\in \wpee$ are integrable on $(0,t)$.

\end{proposition}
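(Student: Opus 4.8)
The proposition decomposes into three types of claims: (i) an $L^p$--$L^q$ mapping property for the nonlinear ``Keller--Segel term'' $f_1(u,v)\partial_x v$, namely \eqref{f1dlpprop}; (ii) boundedness of the relevant linear operators (extension of $e^{(\Delta-\sigma)(t-s)}\partial_x$ and the plain heat semigroup applied to the lower-order reaction terms), giving the membership statements \eqref{f1dlpprop2}--\eqref{f1dlpprop3}; and (iii) integrability in $s$ of the three functions $F_1,F_2,F_3$ on $(0,t)$. I would treat these in that order.

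\textbf{Step 1: the nonlinear term lies in $L^{p/(2\gamma)}$.} Here I would invoke Hypothesis \ref{hyp} directly: by the polynomial bound \eqref{polbnd}, $|f_1(s,r)|\le C(1+|s|^{\mu_1}+|r|^{\mu_2})$, so pointwise on $\Gamma$ one has $|f_1(u,v)|\lesssim 1+|u|^{\mu_1}+|v|^{\mu_2}$. Since $u(s)\in\hatt L^p(\Gamma)$ we get $|u(s)|^{\mu_1}\in L^{p/\mu_1}(\Gamma)$, and since $v(s)\in\hatt W^{1,p}(\Gamma)\hookrightarrow\hatt C(\overline\Gamma)$ (one-dimensional Sobolev embedding on each edge, compact graph) we get $|v(s)|^{\mu_2}\in L^\infty(\Gamma)$, hence $f_1(u(s),v(s))\in L^{p/\gamma}(\Gamma)$ using $\gamma=\mu_1+\mu_2$ and, again, that $v$ is bounded. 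Also $\partial_x v(s)\in\hatt L^p(\Gamma)$. Then H\"older's inequality with exponents chosen so that $\tfrac{\gamma}{p}+\tfrac1p=\tfrac{\gamma+1}{p}\le \tfrac{2\gamma}{p}$ (valid since $\gamma\ge1$) gives $f_1(u(s),v(s))\partial_x v(s)\in L^{p/(2\gamma)}(\Gamma)$; strictly this gives membership in the better space $L^{p/(\gamma+1)}$, and $L^{p/(\gamma+1)}(\Gamma)\hookrightarrow L^{p/(2\gamma)}(\Gamma)$ on the finite-measure space $\Gamma$. This is essentially the content of Appendix \ref{app:LpLq}, which I would cite for the precise statement.

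\textbf{Step 2: boundedness of the linear operators and the membership claims.} For \eqref{f1dlpprop2}: Theorem \ref{lplq estimate}(1) (equivalently \eqref{normtdx}, or Corollary \ref{cor2.1}'s \eqref{nnormtdx} to carry the $-\sigma$) shows $e^{(\Delta-\sigma)(t-s)}\partial_x$, initially on $\hatt C_0^\infty(\Gamma)$, extends to a bounded operator $L^{p/(2\gamma)}(\Gamma)\to\hatt L^p(\Gamma)$ with norm $\le C(t-s)^{-\frac12-\frac12(\frac{2\gamma}{p}-\frac1p)}e^{-\delta(t-s)}$; applying it to the element from Step 1 yields $F_1(s)\in\hatt L^p(\Gamma)$. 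Here I should note that $p>8\gamma$ from Hypothesis \ref{hyp} guarantees $p/(2\gamma)\ge 1$ (indeed $>4$), so the source space is a genuine Lebesgue space. For \eqref{f1dlpprop4}, \eqref{f1dlpprop3}: by \eqref{polbnd} and the embedding $\hatt W^{1,p}(\Gamma)\hookrightarrow L^\infty(\Gamma)$ one checks $f_2(u(s),v(s))+\sigma u(s)$ and $f_3(u(s),v(s))+\sigma u(s)$ lie in $\hatt L^{p/\gamma}(\Gamma)\subseteq\hatt L^{1}(\Gamma)$, actually in $\hatt L^{p/\gamma}(\Gamma)$; then the smoothing of the analytic semigroup $e^{(\Delta-\sigma)(t-s)}$ maps $\hatt L^{p/\gamma}(\Gamma)$ into $\dom((\sigma-\Delta)^{\theta})=\cX^\theta_{p/\gamma}$ for every $\theta<1$, and choosing $\theta$ with $2\theta-\gamma/p>1$ gives the embedding $\cX^\theta_{p/\gamma}\hookrightarrow\hatt W^{1,p/\gamma}(\Gamma)\hookrightarrow\hatt W^{1,p}(\Gamma)$ on the finite graph (or one can use the $L^p$--$W^{1,q}$ estimate \eqref{new-norm-estimate2} directly). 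Hence $F_2(s),F_3(s)\in\hatt W^{1,p}(\Gamma)$. Finally, by \cite[Theorem 7.1.1]{Henry} the smoothing also places $F_1(s),F_2(s),F_3(s)$ in $\cX^\beta_p$ for the stated $\beta$; this uses that $\beta<1/8$ and $p>8\gamma$ so the exponents in the semigroup estimates leave room for an extra $(t-s)^{-\beta}$ factor.

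\textbf{Step 3: integrability in $s$ on $(0,t)$.} Continuity of $s\mapsto F_j(s)$ on $(0,t)$ follows from strong continuity of the semigroup together with continuity of $s\mapsto u(s)\in\hatt L^p$, $s\mapsto v(s)\in\hatt W^{1,p}$ (given) and the local Lipschitz / polynomial-growth structure of the $f_k,g_k$ from Hypothesis \ref{hyp}. For integrability near the endpoint $s\to t^-$ one collects the singular factors: from Step 2, $\|F_1(s)\|_{\hatt L^p}\le C(t-s)^{-\frac12-\frac12(\frac{2\gamma}{p}-\frac1p)}\sup_{[0,T]}\big(1+\|u\|_{\hatt L^p}^{\mu_1}+\|v\|_\infty^{\mu_2}\big)\|v\|_{\hatt W^{1,p}}$, and the exponent $\tfrac12+\tfrac12\cdot\tfrac{2\gamma-1}{p}$ is strictly less than $1$ precisely because $p>8\gamma>2\gamma-1$ — in fact it is $<\tfrac12+\tfrac{1}{16}$, comfortably integrable; similarly $\|F_2(s)\|_{\hatt W^{1,p}},\|F_3(s)\|_{\hatt W^{1,p}}\le C(t-s)^{-\frac12-\frac12(\frac{\gamma}{p}-\frac1p)}(\cdot)$ with an even milder singularity. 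Near $s\to 0^+$ there is no singularity. Hence each $\|F_j(s)\|$ is bounded by a locally integrable function of $s$ on $(0,t)$, giving integrability, and in fact absolute convergence of the integrals in \eqref{phi}, \eqref{psi} in the respective spaces.

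\textbf{Main obstacle.} The delicate point is bookkeeping the exponents: one must simultaneously (a) keep the source Lebesgue exponent $\ge 1$ (this is where $p>8\gamma$ enters), (b) keep the time-singularity exponent of each operator $<1$ so the $s$-integral converges, and (c) leave enough margin to absorb the extra $(\sigma-\Delta)^\beta$ needed for $\cX^\beta_p$-membership, all while the nonlinearity eats a factor $2\gamma$ off the integrability exponent. The quantitative $L^p$--$L^q$ estimates of Theorem \ref{lplq estimate} / Corollary \ref{cor2.1} and the $L^p$--$L^q$ Nemytskii-type bounds of Appendix \ref{app:LpLq} are exactly what make this margin computation go through; everything else is routine semigroup theory as in \cite{Henry}.
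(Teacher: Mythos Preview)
Your proposal is correct and follows essentially the same approach as the paper: the paper simply cites the packaged Propositions \ref{prop3.1new} and \ref{cor3.1} from Appendix \ref{app:LpLq} (together with Theorem \ref{lplq estimate}) for exactly the estimates you derive inline, and the integrability exponents you compute, $\tfrac{p+2\gamma-1}{2p}$ for $F_1$ and $\tfrac{p+\gamma-1}{2p}$ for $F_2,F_3$, coincide with the paper's.

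One small slip to fix: in Step 2 the chain $\cX^\theta_{p/\gamma}\hookrightarrow\hatt W^{1,p/\gamma}(\Gamma)\hookrightarrow\hatt W^{1,p}(\Gamma)$ is written with the last embedding in the wrong direction, since $p/\gamma\le p$ means $\hatt W^{1,p}(\Gamma)\hookrightarrow\hatt W^{1,p/\gamma}(\Gamma)$ on a finite-measure graph, not conversely. The alternative you already mention---applying \eqref{new-norm-estimate2} directly to map $\hatt L^{p/\gamma}(\Gamma)$ into $\hatt W^{1,p}(\Gamma)$---is the correct route, and is precisely what the paper's Proposition \ref{cor3.1}(1) encodes. (Also, the $\cX^\beta_p$ membership you mention at the end of Step 2 is not part of this proposition; it is the content of the next one, Proposition \ref{prop1.3}.)
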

\begin{proof}
Inclusions \eqref{f1dlpprop}, \eqref{f1dlpprop2} follow from Proposition \ref{prop3.1new} with $F=f_1$ and 
	\begin{equation}\lb{rest}
	R:=\max_{s\in[0,T]} \|(u(s), v(s))\|_{L^{p}(\Gamma)\times\wpee}<\infty.
\end{equation} 
Next, 
{ by \eqref{ehu} and \eqref{3.23}} one has $F_1\in C\big((0,t), \elpee \big)$. To prove that $F_1$ is integrable we write
	\begin{align*}
		&\int_0^t \| F_1(s) \|_{L^p(\Gamma)}ds  \leq \int_0^t \big\|e^{(\Delta-\sigma)(t-s)}\partial_x \big(f_1(u(s), v(s))\partial_xv(s) \big)\big\|_{L^p(\Gamma)}  ds\nonumber\\
		& \leq C \int_0^t (t-s)^{-\frac{p+2\gamma-1}{2p}}\|(u(s), v(s))\|_{L^{p}(\Gamma)\times\wpee} ds\nonumber\\
&  \leq  C  \int_0^t (t-s)^{-\frac{p+2\gamma-1}{2p}}ds<\infty,
	\end{align*}
	where we used \eqref{3.22}. 

Letting  ${ F(u,v)=f_2(u,v)+\sigma u}$ in Proposition \ref{cor3.1}  and $R$ be as in \eqref{rest}, then \eqref{efbdd} yields \eqref{f1dlpprop4},  and { \eqref{eflip} and  \eqref{eminusone}}  yield the continuity of the mapping
$
(0, t)\ni s\mapsto F_2(s)\in \wpee.
$
To prove { the integrability of }  $F_2$ we employ { \eqref{efbdd} }
to obtain
\begin{equation*}
\left\|F_2(s)\right\|_{\wpee}\leq C\max\{(t-s)^{-\frac{p+\gamma-1}{2p}}, (t-s)^{-\frac{\gamma-1}{2p}}\} \|(u(s), v(s))\|_{L^{p}(\Gamma)\times\wpee}
\end{equation*}
and then use the dominated convergence theorem together with \eqref{rest} to get
\begin{equation*}
\int_0^t\max\{(t-s)^{-\frac{p+\gamma-1}{2p}}, -(t-s)^{\frac{\gamma-1}{2p}}\}ds<\infty.
\end{equation*}
A similar argument yields  {\eqref{f1dlpprop3}}. 
\end{proof}

\begin{proposition}\lb{prop1.3} Assume Hypothesis \ref{hypnew3.1}. Let $u_0\in L^p(\Gamma)$ and
$v_0\in W^{1,p}(\Gamma)$.  Then for $\beta\in(0,1/4), t>0$ one has
\begin{align}
&(\Phi(u,v;u_0))(t)\in \cX^{\beta}_p,\lb{xbeta}\\
&(\Psi(u,v;v_0))(t)\in  \cX^{{\frac{1}{2}+\beta}}_p,\lb{xbeta2}\\
&(\Theta(u;u_0))(t)\in \cX^{\beta}_p.\lb{xbetanew}
\end{align}	
\end{proposition}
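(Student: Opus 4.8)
\textbf{Approach.} The plan is to write each of $\Phi$, $\Psi$, $\Theta$ as $e^{(\Delta-\sigma)t}(\text{initial datum})$ plus a Duhamel integral, estimate the fractional norm $(\sigma-\Delta)^{\theta}$ of each piece, and check that the resulting power of $(t-s)$ in the integrand is integrable near $s=t$. The key analytic inputs are: the smoothing estimate $\|(\sigma-\Delta)^{\theta}e^{(\Delta-\sigma)\rho}\|_{\cB(L^p,L^p)}\le C\rho^{-\theta}$ (i.e. \eqref{fracpow}); the $L^{p}$--$L^{q}$ bounds for $e^{(\Delta-\sigma)\rho}\partial_x$ from Corollary \ref{cor2.1} (which give, via the embedding $L^{p/2\gamma}(\Gamma)\hookrightarrow$ appropriate negative Sobolev scale, a bound of the form $\|e^{(\Delta-\sigma)\rho}\partial_x w\|_{L^p}\le C\rho^{-\frac{p+2\gamma-1}{2p}}\|w\|_{L^{p/2\gamma}}$, exactly the exponent already used in the proof of Proposition \ref{intconv}); and the $L^p$--$L^q$ nonlinear mapping estimates of Appendix \ref{app:LpLq} (Propositions \ref{prop3.1new}, \ref{cor3.1}) bounding $\|f_1(u,v)\partial_x v\|_{L^{p/2\gamma}}$ and $\|f_j(u,v)+\sigma u\|_{L^p}$ in terms of $R$ from \eqref{rest}.

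\textbf{Key steps for \eqref{xbeta}.} Fix $\beta\in(0,1/4)$. Apply $(\sigma-\Delta)^{\beta}$ to \eqref{phi}. For the semigroup term, $\|(\sigma-\Delta)^\beta e^{(\Delta-\sigma)t}u_0\|_{L^p}\le Ct^{-\beta}\|u_0\|_{L^p}<\infty$ for $t>0$. For the advection integral, combine $(\sigma-\Delta)^{\beta}$ with the $e^{(\Delta-\sigma)(t-s)}\partial_x$ bound and the nonlinear estimate \eqref{f1dlpprop}: the integrand is bounded by $C(t-s)^{-\beta-\frac{p+2\gamma-1}{2p}}$ times a constant depending on $R$; this is integrable on $(0,t)$ provided $\beta+\frac{p+2\gamma-1}{2p}<1$, which holds since $\beta<1/4$ and $p>8\gamma$ forces $\frac{p+2\gamma-1}{2p}<\frac12+\frac{2\gamma}{2p}<\frac12+\frac{1}{8}$, leaving room below $1$. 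For the reaction integral, use $\|(\sigma-\Delta)^{\beta}e^{(\Delta-\sigma)(t-s)}\|_{\cB(L^p,L^p)}\le C(t-s)^{-\beta}$ and $\|f_2(u,v)+\sigma u\|_{L^p}\le C(R)$, giving integrand $\le C(t-s)^{-\beta}$, integrable since $\beta<1$. Since $(\sigma-\Delta)^{\beta}$ applied to $(\Phi(u,v;u_0))(t)$ lands in $L^p$, we conclude $(\Phi(u,v;u_0))(t)\in\cX^{\beta}_p$ by definition of the fractional power space.

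\textbf{Steps for \eqref{xbeta2} and \eqref{xbetanew}.} For $\Psi$: apply $(\sigma-\Delta)^{\frac12+\beta}$ to \eqref{psi}. The datum term needs $v_0\in\cX^{\frac12+\beta}_p$, consistent with the hypothesis $v_0\in W^{1,p}(\Gamma)$ together with the embedding $W^{1,p}(\Gamma)\hookrightarrow\cX^{\frac12+\beta}_p$ for $\beta$ small enough (this is where $\beta<1/4$ gets used for the $v$-component, via $\frac12+\beta<\frac12+\frac14$ and the Sobolev identification in Appendix \ref{app:FunctionalSpaces}), so $\|(\sigma-\Delta)^{\frac12+\beta}e^{(\Delta-\sigma)t}v_0\|_{L^p}\le Ct^{0}\cdot\|v_0\|$ — actually bounded uniformly, but in any case finite for $t>0$. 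The Duhamel term gives integrand $\le C(t-s)^{-\frac12-\beta}\|f_3(u,v)+\sigma u\|_{L^p}\le C(R)(t-s)^{-\frac12-\beta}$, integrable since $\frac12+\beta<1$. Hence $(\Psi(u,v;v_0))(t)\in\cX^{\frac12+\beta}_p$. For $\Theta$: by \eqref{geq}, $\Theta(u;u_0)=\Phi(u,-(\Delta-\sigma)^{-1}f(u);u_0)$, so the estimate \eqref{xbetanew} follows from \eqref{xbeta} once we verify that $v(s):=-(\Delta-\sigma)^{-1}f(u(s))$ has the regularity required to invoke Propositions \ref{prop3.1new}, \ref{cor3.1}; since $(\Delta-\sigma)^{-1}$ maps $L^p(\Gamma)$ into $\dom(\Delta)\subset W^{2,p}(\Gamma)\hookrightarrow W^{1,p}(\Gamma)$ and $\|f(u(s))\|_{L^p}\le C(R)$, we get $\|v(s)\|_{W^{1,p}}\le C(R)$, which supplies the uniform bound playing the role of \eqref{rest} for the parabolic--elliptic case.

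\textbf{Main obstacle.} The crux is the bookkeeping of exponents: verifying that every Duhamel integrand has a singularity of order strictly less than $1$ at $s=t$ \emph{simultaneously} for the advection term (where the $e^{(\Delta-\sigma)(t-s)}\partial_x$ smoothing costs an extra half-power and the nonlinearity forces working in $L^{p/2\gamma}$, costing $\frac{2\gamma-1}{2p}$ more) and for the target fractional order ($\beta$, resp. $\frac12+\beta$). This is exactly where the standing assumption $p>8\gamma$ and the restriction $\beta<1/4$ are consumed; the rest is an assembly of the already-established linear smoothing bounds (Corollary \ref{cor2.1}, \eqref{fracpow}) and nonlinear $L^p$--$L^q$ bounds (Appendix \ref{app:LpLq}) with the dominated convergence / direct integrability argument, analogous to the proof of Proposition \ref{intconv}.
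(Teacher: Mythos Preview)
Your overall strategy matches the paper's: split $\Phi$, $\Psi$, $\Theta$ into semigroup plus Duhamel pieces, hit with $(\sigma-\Delta)^{\theta}$, and check the singularity at $s=t$ is integrable. Two concrete slips, however, need fixing.

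\textbf{(i) The reaction terms are only in $L^{p/\gamma}$, not $L^p$.} You assert $\|f_2(u,v)+\sigma u\|_{L^p}\le C(R)$ (and likewise for $f_3$), citing Appendix~\ref{app:LpLq}. But Proposition~\ref{lem3.1} only gives $\cF(U)\in L^{p/\gamma}$, and in general $f_2(u,v)\notin L^p$ when $\mu_1>1$ (which the hypothesis permits). The paper handles this via \eqref{3.19}: the $L^{p/\gamma}\to L^p$ smoothing of $e^{(\Delta-\sigma)(t-s)}$ costs an extra $(t-s)^{-(\gamma-1)/(2p)}$, so the reaction integrand is bounded by $C(t-s)^{-\frac12-\beta-\frac{\gamma-1}{2p}}$, still integrable because $\frac12+\beta+\frac{\gamma-1}{2p}<1$ for $\beta<1/4$ and $p>8\gamma$. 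Your integrand $C(t-s)^{-\beta}$ is too optimistic; the bookkeeping you flag as the ``main obstacle'' needs this correction for $I_2$ and $I_3$ as well, not only for the advection term. The same remark applies to your $\Theta$ argument, where $f(u)\in L^{p/\gamma}$ rather than $L^p$; Corollary~\ref{cor3.2} absorbs this issue via Proposition~\ref{prop2.1}.

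\textbf{(ii) The embedding $W^{1,p}(\Gamma)\hookrightarrow\cX^{\frac12+\beta}_p$ is false.} It goes the other way: $\cX^{\frac12+\beta}_p\hookrightarrow \hatt W^{1,p}(\Gamma)$. Functions in $\cX^{\frac12+\beta}_p$ inherit (partial) Neumann--Kirchhoff vertex conditions from $\dom(\Delta)$, while a generic $v_0\in\hatt W^{1,p}(\Gamma)$ does not---see Remark~4.1 following Proposition~\ref{mild-solution-prop1}. The correct (and simpler) reason $e^{(\Delta-\sigma)t}v_0\in\cX^{\frac12+\beta}_p$ for $t>0$ is analyticity: $v_0\in L^p$ already suffices, via \eqref{fracpow}, to get $\|(\sigma-\Delta)^{\frac12+\beta}e^{(\Delta-\sigma)t}v_0\|_{L^p}\le Ct^{-\frac12-\beta}\|v_0\|_{L^p}$. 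Your parenthetical ``in any case finite for $t>0$'' is the right instinct; just drop the embedding claim.
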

\begin{proof}
We first prove \eqref{xbeta}.
Let us recall $F_1, F_2$ from \eqref{f1dlpprop}, \eqref{f1dlpprop4} and notice that
	\begin{align}
\label{phithreeterms}
			&(\Phi(u,v;u_0)(t))=e^{(\Delta-\sigma)t}u_0-\int_{0}^{t}  F_1(s)ds+ \int_{0}^{t}  F_2(s)ds.
	\end{align}
Since $e^{(\Delta-\sigma)t}$ is an analytic semigroup in $\elpee$, we have 
\begin{equation}
\label{u-0-term-eq}
{e^{(\Delta-\sigma)t}u_0\in \cX^{\alpha}_p,\  \alpha>0.}
\end{equation}
 By Proposition \ref{prop3.1new}, specifically \eqref{3.22}, with $F:=f_1$ , we have $F_1(s)\in\cX^{\beta}_p$ and
	\begin{align*}
			&\int_0^t \| F_1(s) \|_{\cX^{\beta}_p}ds  \leq \int_0^t \left\|e^{(\Delta-\sigma)(t-s)}\partial_x \big(f_1(u(s),v(s))\partial_x v(s)\big)\right\|_{\cX_p^{\beta}}  ds\nonumber\\
			& \leq 	C\int_0^t (t-s)^{^{-\beta-\frac{p+2\gamma-1}{2p}} }\|(u(s), v(s))\|_{L^{p}(\Gamma)\times\wpee}ds<\infty,	
	\end{align*}
where  the last inequality holds due to  $\beta+(p+2\gamma-1)/2p<1$. Hence, the second term in  \eqref{phithreeterms} belongs to $\cX^{\beta}_p$. 
 By Proposition \ref{cor3.1}, specifically \eqref{3.19}, with $F(u,v)=f_2(u,v)+\sigma u$, we have $F_2(s)\in\cX^{{\frac{1}{2}+\beta}}_p$ and 
	\begin{align}
\label{f-2-term-eq}
		&\int_{0}^{t} \|F_2(s)\|_{\cX^{{\frac{1}{2}+\beta}}_p} ds  \leq C\int_{0}^{t} (t-s)^{{ -\frac{1}{2}}-\beta-\frac{\gamma-1}{2p}} (1+\|(u(s), v(s))\|_{L^{p}(\Gamma)\times\wpee}) ds<\infty,
	\end{align}
	where we used $\frac{1}{2}+\beta+(\gamma-1)/(2p)<1$.  Hence, the third term in  \eqref{phithreeterms} belongs to $\cX^{{\frac{1}{2}+\beta}}_p$.   { It then follows that \eqref{xbeta} holds}.

{Next,  note that
\begin{align*}
			&(\Psi(u,v;v_0)(t))=e^{(\Delta-\sigma)t}v_0+ \int_{0}^{t}  F_3(s)ds.
	\end{align*}
Then \eqref{xbeta2} follows from the arguments of \eqref{u-0-term-eq} and \eqref{f-2-term-eq}.}

 Finally,  \eqref{xbetanew} follows from \eqref{geq}, \eqref{xbeta}, and Corollary \ref{cor3.2}.
\end{proof}

\begin{proposition}\lb{prop1.4}  Assume Hypothesis \ref{hypnew3.1}. 
Let $\beta\in(0,1/8)$, $u_0\in L^p(\Gamma)$, and $v_0\in W^{1,p}(\Gamma)$. Then one has
{
	\begin{align}
\label{Phi-Theta-regularity-eq}
\Phi(u,v;u_0),  \Theta(u;u_0)\in C\left([0,T], \elpee\right) \cap  {  C^{\beta}}\left((0, T], \cX_p^{\beta}\right),
	\end{align}
and  if $v_0\in \cX_p^{\frac{1}{2}+\beta}$, then
\begin{align}
\label{Psi-regularity-eq}
\Psi(u,v;v_0)\in    C\left([0,T], \hatt W^{1,p}(\Gamma)\right)\cap {C^{\beta}}\left((0, T], \cX_p^{\frac{1}{2}+\beta}\right).
	\end{align}
}
\end{proposition}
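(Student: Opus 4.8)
The plan is to write each of $\Phi(u,v;u_0)$, $\Theta(u;u_0)$, $\Psi(u,v;v_0)$ as a \emph{free} term of the form $e^{(\Delta-\sigma)t}(\text{initial datum})$ plus one or two Duhamel integrals, and to verify, for each summand separately, the two claimed properties: continuity into $\elpee$ (resp.\ $\hatt W^{1,p}(\Gamma)$) on the closed interval $[0,T]$, and $\beta$-Hölder continuity into the fractional power space on $(0,T]$. By Corollary \ref{cor3.2}, $-(\Delta-\sigma)^{-1}f(u)\in\wpeet{T}$ whenever $u\in\elpeet{T}$, so the statement for $\Theta(u;u_0)$ will follow from that for $\Phi$ with $v$ replaced by $-(\Delta-\sigma)^{-1}f(u)$; hence I would only treat $\Phi$ and $\Psi$.

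\emph{The free terms.} For $\Phi$ this is $t\mapsto e^{(\Delta-\sigma)t}u_0$: strong continuity of the analytic semigroup on $\elpee$ puts it in $C([0,T],\elpee)$, while for $t>0$ it is real-analytic into any $\cX_p^\gamma$ (this follows from $\|(\sigma-\Delta)^{1+\beta}e^{(\Delta-\sigma)t}u_0\|_{\elpee}\le Ct^{-1-\beta}\|u_0\|_{\elpee}$, cf.\ \cite[Theorem~1.4.3]{Henry}), hence in particular $\beta$-Hölder into $\cX_p^\beta$. For $\Psi$ this is $t\mapsto e^{(\Delta-\sigma)t}v_0$, and the hypothesis $v_0\in\cX_p^{\frac12+\beta}$ is exactly what is used: since $(\sigma-\Delta)^{\frac12+\beta}$ commutes with the semigroup, $\|e^{(\Delta-\sigma)t}v_0-v_0\|_{\cX_p^{\frac12+\beta}}=\|e^{(\Delta-\sigma)t}(\sigma-\Delta)^{\frac12+\beta}v_0-(\sigma-\Delta)^{\frac12+\beta}v_0\|_{\elpee}\to0$, giving $C([0,T],\cX_p^{\frac12+\beta})\subset C([0,T],\hatt W^{1,p}(\Gamma))$ (using $\cX_p^{\frac12+\beta}\hookrightarrow\hatt W^{1,p}(\Gamma)$ from Appendix \ref{app:FunctionalSpaces}), and the $\beta$-Hölder regularity into $\cX_p^{\frac12+\beta}$ on $(0,T]$ follows as before.

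\emph{The Duhamel integrals.} Write $\Phi(u,v;u_0)(t)=e^{(\Delta-\sigma)t}u_0-\int_0^tF_1(s)\,ds+\int_0^tF_2(s)\,ds$ with $F_1,F_2$ as in Proposition \ref{intconv}, and $g_1(s):=\partial_x\bigl(f_1(u(s),v(s))\partial_xv(s)\bigr)$, $g_2(s):=f_2(u(s),v(s))+\sigma u(s)$, so $F_i(s)=e^{(\Delta-\sigma)(t-s)}g_i(s)$. Continuity of $t\mapsto\int_0^tF_i(s)\,ds$ on $(0,T]$ into $\elpee$, and the limit $\int_0^tF_i\,ds\to0$ as $t\to0^+$, are immediate from the integrability (with singularity orders $<1$) already established in Proposition \ref{intconv}; this yields the $C([0,T],\elpee)$ part. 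For the $\beta$-Hölder bound into $\cX_p^\beta$ I would use, for $0<t_1<t_2\le T$, the standard decomposition
\[
\int_0^{t_2}e^{(\Delta-\sigma)(t_2-s)}g_i(s)\,ds-\int_0^{t_1}e^{(\Delta-\sigma)(t_1-s)}g_i(s)\,ds
=\int_{t_1}^{t_2}e^{(\Delta-\sigma)(t_2-s)}g_i(s)\,ds+\bigl(e^{(\Delta-\sigma)(t_2-t_1)}-I\bigr)\int_0^{t_1}e^{(\Delta-\sigma)(t_1-s)}g_i(s)\,ds.
\]
Applying $(\sigma-\Delta)^\beta$ to the first term and the smoothing bounds \eqref{3.22}, \eqref{3.19} of Propositions \ref{prop3.1new}, \ref{cor3.1} gives $C\int_{t_1}^{t_2}(t_2-s)^{-\beta-\kappa_i}\,ds\le C(t_2-t_1)^{1-\beta-\kappa_i}$, where $\kappa_1=\tfrac{p+2\gamma-1}{2p}$, $\kappa_2=\tfrac{\gamma-1}{2p}$. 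For the second term I would insert $(\sigma-\Delta)^\beta(\sigma-\Delta)^{-\beta}$, use $\|(e^{(\Delta-\sigma)\delta}-I)(\sigma-\Delta)^{-\beta}\|_{\cB(\elpee)}\le C\delta^\beta$ (cf.\ \cite[Theorem~1.4.3]{Henry}) together with \eqref{3.22}, \eqref{3.19} now used at order $2\beta$, obtaining $C(t_2-t_1)^\beta\int_0^{t_1}(t_1-s)^{-2\beta-\kappa_i}\,ds$. Combined with the free term this gives $\Phi\in C^\beta((0,T],\cX_p^\beta)$. The integral $\int_0^tF_3(s)\,ds$ in $\Psi$, with $g_3(s)=f_3(u(s),v(s))+\sigma u(s)$, is handled identically with target space $\cX_p^{\frac12+\beta}$, using \eqref{3.19} at orders $\tfrac12+\beta$ and $\tfrac12+2\beta$ in place of $\beta$ and $2\beta$; the same reasoning also gives $\int_0^tF_3\,ds\to0$ in $\cX_p^{\frac12+\beta}\hookrightarrow\hatt W^{1,p}(\Gamma)$ as $t\to0^+$.

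\emph{Main obstacle.} The only real work is the exponent bookkeeping in the last step: the first-term bound needs $1-\beta-\kappa_i\ge\beta$ and the second-term integral needs $2\beta+\kappa_i<1$, and the order-$2\beta$ versions of \eqref{3.22}, \eqref{3.19} must themselves be valid — all three amount to $2\beta+\kappa_i<1$. The worst case is $\kappa_1=\tfrac12+\tfrac{2\gamma-1}{2p}$, where the extra $\tfrac12$ is precisely the cost of the derivative $\partial_x$ in front of the taxis nonlinearity, and it is exactly here that the standing assumption $p>8\gamma$ (which forces $\tfrac{2\gamma-1}{2p}<\tfrac18$) together with the restriction $\beta<\tfrac18$ closes the estimate: then $2\beta+\kappa_1<\tfrac14+\tfrac58<1$, with room to spare. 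For $\Psi$ the analogous worst exponent is $\tfrac12+\tfrac{\gamma-1}{2p}$, handled the same way.
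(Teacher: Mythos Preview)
Your proposal is correct and follows essentially the same approach as the paper: the same splitting into free term plus Duhamel integrals, the same two-piece decomposition $\int_{t_1}^{t_2}+(e^{(\Delta-\sigma)(t_2-t_1)}-I)\int_0^{t_1}$ for the Hölder estimate, and the same use of \eqref{3.22}, \eqref{3.19} (the paper packages your second-piece argument as \eqref{3.23} and \eqref{eminusone}). Your exponent bookkeeping in the ``Main obstacle'' paragraph is more explicit than the paper's, which is helpful; note the minor slip $g_3(s)=f_3(u(s),v(s))+\sigma v(s)$ rather than $+\sigma u(s)$ (the paper itself is inconsistent on this point), but the estimate is unaffected.
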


\begin{proof}
 We first prove the H\"older continuity.
Recalling   \eqref{phithreeterms} and \eqref{f1dlpprop2}, \eqref{f1dlpprop4}, \eqref{f1dlpprop3}, let us denote
\begin{align}
\begin{split}\lb{theis}
&(I_0(u_0))(t):= e^{(\Delta-\sigma)t}u_0, \ I_1(t)=\int_0^t e^{(\Delta-\sigma)(t-s)}\partial_x(f_1(u(s), v(s))\partial_x v(s))ds,\\
&I_k(t)=\int_0^t e^{(\Delta-\sigma)(t-s)}(f_k(u(s), v(s))+\sigma u(s)) ds,\ k=2,3,
\end{split}	
\end{align}
so that  
$$(\Phi(u,v;u_0))(t)=(I_0(u_0))(t){-}I_1(t)+I_2(t), \quad (\Psi(u,v;v_0))(t)=(I_0(v_0))(t)+I_3(t).
$$

We will show H\"older continuity of each $I_k$ separately. Throughout the rest of this proof we fix  $t\in(0,T)$ and $h\in (0,T-t)$.

For $I_0$ term one has
\begin{align}
\label{I-0-cont-eq}
	\|I_0(u_0)(t+h)-I_0(u_0)(t)\|_{\cX^{{ \alpha}}_p}&=\|(e^{(\Delta-\sigma)h}-I)(\sigma-\Delta)^{{\alpha}} e^{(\Delta-\sigma)t}u_0\|_{L^p(\Gamma)}\nonumber\\
	&\underset{\eqref{fracpow2}}{\leq} C h^{{\alpha}}\|(\sigma-\Delta)^{{\alpha}}e^{(\Delta-\sigma)t} u_0\|_{\cX^{\alpha}_p}\nonumber\\
&\leq C h^{\alpha}t^{-2\alpha}\|u_0\|_{\elpee},\  \alpha\in (0,1),
\end{align}
which implies H\"older continuity of $I_0$ { in $\cX^\alpha$  with exponent $\alpha$ for any $\alpha\in (0,1)$}. 

To treat $I_1$ we use Proposition \ref{prop3.1new} with $F=f_1$. First, let us notice that
\begin{align*}
	I_1(t+h)-I_1(t)= \int_0^t (e^{(\Delta-I)h}-I)&e^{(\Delta-I)(t-s)}\partial_x \cH(U(s))ds \nonumber\\
	&{+} \int_t^{t+h} e^{(\Delta-I)(t-s+h)}\partial_x \cH(U(s))ds
\end{align*}
where $\cH$ is as in Proposition \ref{prop3.1new}. 
Then by \eqref{3.23} one has
\begin{align*}
&\int_0^t \|(e^{(\Delta-\sigma)h}-I)e^{(\Delta-\sigma)(t-s)}\partial_x \cH(U(s))\|_{\cX^{\beta}} ds\leq \nonumber\\
&\leq C\int_0^t h^{\beta}(t-s)^{^{-2\beta-\frac{p+2\gamma-1}{2p}} }\|(u(s), v(s))\|_{L^{p}(\Gamma)\times\wpee}  ds\leq Ch^{\beta}, 
\end{align*}
and by \eqref{3.22}
\begin{align*}
	& \int_t^{t+h} \|e^{(\Delta-\sigma)(t-s+h)}\partial_x \cH(U(s))\|_{\cX^{\beta}}ds\nonumber\\
	&\leq  C\int_t^{t+h} (t-s+h)^{^{-\beta-\frac{p+2\gamma-1}{2p}} }\|(u(s), v(s))\|_{L^{p}(\Gamma)\times\wpee}ds< Ch^{\frac12-\beta}.
\end{align*}
{ Therefore  
\begin{equation}
\label{I-1-cont-eq}
\|I_1(t+h)-I_1(t)\|_{\cX^\beta}\le C h^\beta,\  0<h\ll 1.
\end{equation}
}

To treat $I_2$ we note
\begin{align*}
		I_2(t+h)-I_2(t)
		=& \int_0^t (e^{(\Delta-\sigma)h}-I)e^{(\Delta-I)(t-s)}(f_2(u(s),v(s))+\sigma u)ds\\
		&{ +} \int_t^{t+h} e^{(\Delta-\sigma)(t-s+h)}(f_2(u(s),v(s))+\sigma u(s))ds.
\end{align*}
Then Proposition \ref{cor3.1}, concretely  \eqref{3.19}, with $F(u,v)=f_2(u,v)+\sigma u$  yields
\begin{align*}
	&\int_0^t \|(e^{(\Delta-\sigma)h}-I)e^{(\Delta-I)(t-s)}(f_2(u(s),v(s))+\sigma u)\|_{\cX^{{\frac{1}{2}+\beta}}_p} ds\nonumber\\
&  \underset{(A.11)}\le \int_0^t h^\beta  \|e^{(\Delta-I)(t-s)}(f_2(u(s),v(s))+\sigma u)\|_{\cX^{\frac{1}{2}+2\beta}_p} ds\nonumber\\
&\underset{\eqref{3.19}}\leq C \int_0^t h^{\beta}(t-s)^{-\frac{1}{2}-2\beta-\frac{\gamma-1}{2p}}(1+\|(u(s), v(s))\|_{L^{p}(\Gamma)\times\wpee})ds<C h^{\beta},
\end{align*}
and 
\begin{align*}
	&\int_t^{t+h} \|e^{(\Delta-\sigma)(t-s+h)}(f_2(u(s),v(s))+\sigma u)\|_{\cX^{\frac12+\beta}_p} ds \nonumber\\
	&\underset{\eqref{3.19}}{\leq } C\int_t^{t+h} (t-s+h)^{^{-\beta-\frac12-\frac{\gamma-1}{2p}} }(1+\|(u(s), v(s))\|_{L^{p}(\Gamma)\times\wpee})ds< Ch^{\frac12-\beta}.
\end{align*}
{ Therefore 
\begin{equation}
\label{I-2-cont-eq}
\|I_2(t+h)-I_2(t)\|_{\cX^{\frac{1}{2}+\beta}_p}\le C h^\beta, \, 0<h\ll 1.
\end{equation}
}

{ By the similar arguments of  \eqref{I-2-cont-eq}, we have
\begin{equation}
\label{I-3-cont-eq}
\|I_3(t+h)-I_3(t)\|_{\cX^{\frac{1}{2}+\beta}_p}\le C h^\beta, \, 0<h\ll 1.
\end{equation}
}

{ By \eqref{I-0-cont-eq}, \eqref{I-1-cont-eq}, and \eqref{I-2-cont-eq}, we have
  $\Phi(u,v;u_0)\in { C^{\beta}}\left((0,T], \cX_p^{\beta}\right)$.
By \eqref{I-0-cont-eq} with $u_0$ being replaced by $v_0$ and \eqref{I-3-cont-eq}, we have
$\Psi(u,v;v_0)\in {C^{\beta}}\left((0, T], \cX_p^{\frac{1}{2}+\beta}\right)$.
 By   \eqref{geq} and  Corollary  \ref{cor3.2}, we have  $\Theta(u;u_0)\in {C^{\beta}}\left((0,T], \cX_p^{\beta}\right)$. }

\smallskip

Next, we  prove  $\Phi(u,v;u_0),\Theta(u;u_0)\in C\left([0,T], \elpee\right)$ and  {if $v_0\in \cX_p^{\frac{1}{2}+\beta}$, then}
$\Psi(u,v;v_0)\in   C\big([0,T], \hatt W^{1,p}(\Gamma)\big)$. 
Note that  
$$
\cX^\beta \hookrightarrow L^p(\Gamma),\quad \cX^{\frac{1}{2}+\beta}\hookrightarrow W^{1,p}(\Gamma).
$$
It then suffices to prove that $\Phi(u,v;u_0)(t)$ and $\Theta(u;u_0)(t)$ are continuous at $t=0$ in $L^p(\Gamma)$, and
$\Psi(u,v;v_0)(t)$ is continuous at $t=0$ in $W^{1,p}(\Gamma)$ provided $v_0\in \cX_p^{\frac{1}{2}+\beta}$.

  Note that
\begin{align}
\lb{cont1}
&\|(\Phi(u,v;u_0)(t)-(\Phi(u,v;u_0)(0)\|_{L^p(\Gamma)}=\|(\Phi(u,v;u_0)(t)-u_0\|_{L^p(\Gamma)}\nonumber\\
&\leq \|e^{(\Delta-\sigma)t}u_0-u_0\|_{L^p(\Gamma)} +\int_{0}^{t} \|e^{(\Delta-\sigma)(t-s)}\partial_x(f_1(u(s), v(s))\partial_x v(s))\L_{L^p(\Gamma)}ds \nonumber\\ &\quad + \int_{0}^{t} \|e^{(\Delta-\sigma)(t-s)}(f_2(u(s), v(s))+\sigma u(s))\|_{L^p(\Gamma)}ds.
\end{align}
{ By the continuity of $e^{(\Delta-\sigma)t}$ at $t\ge 0$, we have
\begin{equation}
\label{cont001}
\lim_{t\to 0} \|e^{(\Delta-\sigma)t}u_0-u_0\|_{L^p(\Gamma)}=0.
\end{equation}}
Using Proposition \ref{prop3.1new}, concretely \eqref{3.22}, with $F=f_1$ we obtain
\begin{align}
\lb{cont01}
&\int_{0}^{t} \|e^{(\Delta-\sigma)(t-s)}\partial_x(f_1(u(s), v(s))\partial_x v(s))\|_{L^p(\Gamma)}ds\nonumber\\
&\leq C \int_0^t (t-s)^{-\frac{p+2\gamma-1}{2p}}\|(u(s), v(s))\|_{\elpee\times\wpee}ds\rightarrow 0\,\, {\rm as}\,\, t\rightarrow0.
\end{align}
Using Proposition \ref{cor3.1}, specifically \eqref{efbdd}, with $F(u,v)=f_2(u,v)+\sigma u$ one obtains
\begin{align}
\lb{cont02}
&\int_{0}^{t} \|e^{(\Delta-\sigma)(t-s)}(f_2(u(s),v(s))+\sigma u(s))\|_{\elpee}ds\nonumber\\
&\leq C \int_0^t \max((t-s)^{-\frac{p+\gamma-1}{2p}}, (t-s)^{-\frac{\gamma-1}{2p}})(1+\|(u(s), v(s))\|_{\elpee\times\wpee})ds\nonumber\\
&\quad \rightarrow 0\,\, {\rm as}\,\,  t\rightarrow0.
\end{align}
It then follows that  $\Phi(u,v;u_0)(t)$ is continuous at $t=0$ in  $L^p(\Gamma)$.

 Similarly, it can be proved that $\Theta(u;u_0)(t)$ is continuous at $t=0$ in  $L^p(\Gamma)$.

To prove the continuity of $\Psi(u,v;v_0)(t)$ at $t=0$ in $\wpee$ we first note
\begin{align*}
 &\|(\Psi(u,v;v_0))(t)-(\Psi(u,v;v_0))(0)\|_{W^{1,p}(\Gamma)}\leq \\
 &\|e^{(\Delta-\sigma)t }v_0-v_0\|_{W^{1,p}(\Gamma)}+\left\|\int_0^t e^{(\Delta-\sigma)(t-s)}\Big(f_3(u(s),v(s))+\sigma v(s)\Big)ds\right\|_{W^{1,p}(\Gamma)}.
\end{align*}
{Furthermore, for $v_0\in \cX_p^{\frac{1}{2}+\beta}$ one has
\begin{align}
\label{new-cont01}
\|e^{(\Delta -\sigma)t} v_0-v_0\|_{W^{1,p}(\Gamma)}&\le C\| (\sigma-\Delta)^{\frac{1}{2}+\beta}\big( e^{(\Delta -\sigma)t}v_0-v_0\big)\|_{L^p(\Gamma)}\nonumber\\
&= C\| e^{(\Delta-\sigma)t} (\sigma-\Delta)^{\frac{1}{2}+\beta}v_0-(\sigma-\Delta)^{\frac{1}{2}+\beta}v_0\|_{L^p(\Gamma)}\nonumber\\
&  \to 0\quad {\rm as }\quad t\to 0,
\end{align} }
{where in the first inequality we used the embedding $\cX_p^{\beta+\frac{1}{2}}\hookrightarrow \wpee$.}
{ By \eqref{efbdd} with $F(u,v)=f_3(u,v)+\sigma v$, we have
\begin{align}
\label{new-cont02}
&\int_{0}^{t} \|e^{(\Delta-\sigma)(t-s)}(f_3(u(s),v(s))+\sigma v(s))\|_{W^{1,p}(\Gamma)}ds\nonumber\\
&\leq C \int_0^t \max((t-s)^{-\frac{p+\gamma-1}{2p}}, (t-s)^{-\frac{\gamma-1}{2p}})(1+\|(u(s), v(s))\|_{\elpee\times\wpee})ds\nonumber\\
&\quad \rightarrow 0\quad {\rm as}\quad t\rightarrow0.
\end{align}
Therefore, 
$\Psi(u,v;v_0)(t)$ is continuous at $t=0$ in $W^{1,p}(\Gamma)$ { provided $v_0\in \cX_p^{\frac{1}{2}+\beta}$.}  This completes the proof of the proposition.}
\end{proof}

\subsection{The fixed point argument and an extension argument}\label{Sec:mildsol}

In this subsection, we show existence of mild solutions of \eqref{tauone} and \eqref{duh}. 

For fixed  $T>0$, $R>0$ let us introduce
\begin{equation}\lb{btr}
	B_k(T,R):=\left\{ u\in C\big([0,T], \hatt W^{k,p}(\Gamma) \big) : \|u\|_{L^{\infty}_t W^{k,p}_x}:=\max_{t\in[0,T]}\|u(t)\|_{\hatt W^{k,p}(\Gamma)}\leq R \right\}, k=0,1.
\end{equation}
Below, for example in Proposition  \ref{mild-solution-prop1}, we consider $B_0(T, R)\times B_1(T, R)$ as a complete metric space with respect to the metric
\begin{equation}
\rho((u_1, v_1), (u_2,v_2))=\|u_1-u_2\|_{L^{\infty}_t L^p_x}+\|v_1-v_2\|_{L^{\infty}_t W^{1,p}_x},
\end{equation}
$(u_k,v_k)\in B_0(T, R)\times B_1(T, R)$. 

Let us fix $(u_0, v_0)\in\elpee\times\wpee$, recall  \eqref{lambda}, \eqref{theta}, and note that by Proposition \ref{prop1.4}, { if $v_0\in\cX_p^{\frac{1}{2}+\beta}$},  one has
\begin{align}
&\Lambda(u_0,v_0): C([0, T], L^p(\Gamma)\times\wpee)\rightarrow C([0, T], L^p(\Gamma)\times\wpee),\lb{4.39}\\
&\Theta(u_0): C([0, T], L^p(\Gamma))\rightarrow C([0, T], L^p(\Gamma)).\lb{4.40}
\end{align}

\begin{proposition}\lb{mild-solution-prop1}
Assume Hypothesis \ref{hyp}.  Then for arbitrary $R{ >} \|(u_0, v_0)\|_{\elpee\times\wpee}$ there exists $T(R)>0$ such that the following assertions hold: 
\begin{enumerate}
\item\label{item1lem2.1}  { If {$v_0\in \cX_p^{\frac{1}{2}+\epsilon}$  for some $0<\epsilon\ll 1$}, then } $\Lambda(u_0,v_0)$ is a contraction mapping on $B_0(T(R), R)\times B_1(T(R), R)$,
\item\label{item2lem2.1}  $\Theta(u_0)$ is a contraction mapping on $B_0(T(R), R)$.
\end{enumerate}
\end{proposition}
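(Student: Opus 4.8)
The plan is to verify the two hypotheses of Banach's fixed point theorem on the complete metric space $B_0(T,R)\times B_1(T,R)$ (resp.\ $B_0(T,R)$), namely that $\Lambda(u_0,v_0)$ (resp.\ $\Theta(u_0)$) maps this set into itself and is a strict contraction, both for $T=T(R)$ sufficiently small. That the images in fact lie in $C([0,T],\elpee)$, $C([0,T],\hatt W^{1,p}(\Gamma))$ has already been recorded in Proposition~\ref{prop1.4} (it is precisely here, through \eqref{new-cont01}, that the extra assumption $v_0\in\cX_p^{1/2+\epsilon}$ is needed), so only the magnitude of the $L^\infty_tL^p_x$ and $L^\infty_tW^{1,p}_x$ norms of the images and the Lipschitz estimate for the difference remain to be controlled.

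\emph{Invariance of the ball.} Decomposing $(\Phi(u,v;u_0))(t)$ as in \eqref{phithreeterms}, the free term obeys $\|e^{(\Delta-\sigma)t}u_0\|_{\elpee}\le e^{-\sigma t}\|u_0\|_{\elpee}\le\|u_0\|_{\elpee}$ since the heat semigroup is $L^p$-contractive. For the taxis (Duhamel) term I would combine the membership $f_1(u(s),v(s))\partial_xv(s)\in L^{p/(2\gamma)}(\Gamma)$ and the uniform bound $\sup_{s\in[0,T]}\|f_1(u(s),v(s))\partial_xv(s)\|_{L^{p/(2\gamma)}(\Gamma)}\le C(R)$ from Propositions~\ref{intconv} and \ref{prop3.1new} with the smoothing estimate \eqref{normtdx} of Theorem~\ref{lplq estimate} to obtain
\[
\Big\|\int_0^t e^{(\Delta-\sigma)(t-s)}\partial_x\big(f_1(u(s),v(s))\partial_xv(s)\big)\,ds\Big\|_{\elpee}\le C(R)\int_0^t(t-s)^{-\frac{p+2\gamma-1}{2p}}\,ds\le C(R)\,T^{\frac{p-2\gamma+1}{2p}},
\]
which is finite and $\to0$ as $T\to0$ because the standing assumption $p>8\gamma$ forces $\tfrac{p+2\gamma-1}{2p}<1$; the constant $C(R)$ is finite thanks to the polynomial bounds of Hypothesis~\ref{hyp} and the embedding $\hatt W^{1,p}(\Gamma)\hookrightarrow\hatt C(\overline\Gamma)$. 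A parallel but easier estimate for the $f_2$-Duhamel term via Proposition~\ref{cor3.1} (no derivative loss, hence a better time exponent) shows the two integral contributions are $\le R-\|u_0\|_{\elpee}$ once $T=T(R)$ is small, so $\Phi(u,v;u_0)\in B_0(T,R)$. For the $v$-component one has $\|e^{(\Delta-\sigma)t}v_0-v_0\|_{\hatt W^{1,p}(\Gamma)}\to0$ as $t\to0$ by \eqref{new-cont01}, while the $f_3$-Duhamel term is $O(T^{\delta})$ in $\hatt W^{1,p}(\Gamma)$ by \eqref{new-cont02}, so $\Psi(u,v;v_0)\in B_1(T,R)$ for $T$ small. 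For $\Theta$, the substitution $v=-(\Delta-\sigma)^{-1}f(u)$ together with Corollary~\ref{cor3.2}, which bounds $\|v\|_{\cX_p^{\alpha}}$ in terms of $\|u\|_{\elpee}\le R$, reduces the claim to the estimate just obtained for the $u$-component.

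\emph{The contraction estimate.} Given $(u_i,v_i)\in B_0(T,R)\times B_1(T,R)$, $i=1,2$, the free terms cancel in the difference, so it remains to bound the Duhamel terms of $\Phi(u_1,v_1;u_0)-\Phi(u_2,v_2;u_0)$ and $\Psi(u_1,v_1;v_0)-\Psi(u_2,v_2;v_0)$. The key input is the Lipschitz continuity of the nonlinear maps on bounded sets: writing
\[
f_1(u_1,v_1)\partial_xv_1-f_1(u_2,v_2)\partial_xv_2=\big(f_1(u_1,v_1)-f_1(u_2,v_2)\big)\partial_xv_1+f_1(u_2,v_2)\,\partial_x(v_1-v_2),
\]
and using the $C^{1,1}$ regularity and polynomial bounds of Hypothesis~\ref{hyp}, the mean value theorem, H\"older's inequality in the Lebesgue exponents and $\hatt W^{1,p}(\Gamma)\hookrightarrow\hatt C(\overline\Gamma)$, I would bound the $L^{p/(2\gamma)}(\Gamma)$-norm of the right-hand side by $C(R)\big(\|u_1-u_2\|_{\elpee}+\|v_1-v_2\|_{\hatt W^{1,p}(\Gamma)}\big)$ --- this is the content of the Lipschitz parts of Propositions~\ref{prop3.1new}, \ref{cor3.1}. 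Applying \eqref{normtdx} and \eqref{new-norm-estimate1} and integrating the resulting integrable time singularity then gives
\[
\rho\big(\Lambda(u_1,v_1;u_0,v_0),\Lambda(u_2,v_2;u_0,v_0)\big)\le C(R)\,T^{\delta}\,\rho\big((u_1,v_1),(u_2,v_2)\big),\qquad \delta=\delta(p,\gamma)>0,
\]
and choosing $T=T(R)$ with $C(R)T^{\delta}<1$ makes $\Lambda(u_0,v_0)$ a contraction; the same argument with $v_i$ replaced by $-(\Delta-\sigma)^{-1}f(u_i)$ and Corollary~\ref{cor3.2} supplying $\|v_1-v_2\|_{\cX_p^{\alpha}}\le C(R)\|u_1-u_2\|_{\elpee}$ makes $\Theta(u_0)$ a contraction on $B_0(T(R),R)$.

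\emph{The main difficulty.} The only genuinely delicate term is the taxis term $e^{(\Delta-\sigma)(t-s)}\partial_x(f_1(u,v)\partial_xv)$, for which one must simultaneously (i) absorb the derivative loss in $\partial_x$ through the smoothing bound \eqref{normtdx}, which costs a half power of $(t-s)^{-1}$; (ii) place the product $f_1(u,v)\partial_xv$ in the low Lebesgue exponent $L^{p/(2\gamma)}(\Gamma)$ dictated by the polynomial growth of $f_1$ --- available only because $v\in\hatt W^{1,p}(\Gamma)$, so $\partial_xv\in\hatt L^p(\Gamma)$ and $v\in\hatt C(\overline\Gamma)$; and (iii) keep the combined time exponent $\tfrac{p+2\gamma-1}{2p}$ strictly below $1$, which is exactly guaranteed by the standing assumption $p>8\gamma$. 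The careful bookkeeping of the Lebesgue exponents when splitting $f_1(u_1,v_1)\partial_xv_1-f_1(u_2,v_2)\partial_xv_2$ for the contraction estimate is the other point requiring attention; by contrast the $f_2$- and $f_3$-Duhamel terms carry no derivative loss and are routine.
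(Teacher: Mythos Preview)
Your proposal is correct and follows essentially the same route as the paper: invariance via the smoothing estimates \eqref{cont001}--\eqref{cont02} and \eqref{new-cont01}--\eqref{new-cont02} (with the $v_0\in\cX_p^{1/2+\epsilon}$ hypothesis entering exactly where you say), and the contraction via the Lipschitz bounds \eqref{lipcontedh} and \eqref{efdlip} from Propositions~\ref{prop3.1new} and \ref{cor3.1}, followed by integration of the time singularity. One small slip: Corollary~\ref{cor3.2} gives the bound on $\|\cF_3(u)\|_{\wpee}$ and $\|\cF_3(u_1)-\cF_3(u_2)\|_{\wpee}$, not in $\cX_p^\alpha$; but $\wpee$ is precisely the norm you need for the $v$-slot, so the argument goes through unchanged.
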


\begin{proof} 
	\eqref{item1lem2.1}. { First we note that  $\cX_p^{\frac{1}{2}+\epsilon}\hookrightarrow\wpee$, hence, the mapping properties \eqref{4.39}, \eqref{4.40} of $\Lambda, \Theta$ hold as asserted. Next,} we show that $\Lambda(u_0,v_0)$ maps $B_0(T(R), R)\times B_1(T(R), R)$ into itself.
For $(u,v)\in B_0(T(R), R)\times B_1(T(R), R)$ we have
\begin{align*}
\|(\Phi (u,v;u_0))(t)\|_{L^p(\Gamma)}& \leq \|e^{(\Delta-\sigma)t}u_0\|_{L^p(\Gamma)}+ \int_{0}^{t} \|e^{(\Delta-\sigma)(t-s)}\partial_x(f_1(u(s), v(s))\partial_x v(s))ds \nonumber\\ &\quad + \int_{0}^{t} \|e^{(\Delta-\sigma)(t-s)}(f_2(u(s), v(s))+\sigma u(s))\|_{L^p(\Gamma)}ds
\end{align*}
{ and
\begin{align*}
\|(\Psi(u,v;v_0))(t)\|_{\wpee}&\le \|e^{(\Delta-\sigma I)t}v_0\|_{\wpee}\\
&\quad + \int_{0}^{t}  \|e^{(\Delta-\sigma)(t-s)}\left(f_3(u(s), v(s))+\sigma v(s)\right)\|_{\wpee} ds.
\end{align*}}
Then by  \eqref{cont001}, \eqref{cont01}, and \eqref{cont02}, for sufficiently small $T(R) $ one has 
\begin{equation}
\|(\Phi (u,v;u_0))(t)\|_{\elpee}\leq R,\ t\in [0, T(R)].
\end{equation}
{ Similarly,  by \eqref{new-cont01} and \eqref{new-cont02}, 
$$
\|(\Psi(u,v;v_0))(t)\|_{\hatt W^{1,p}(\Gamma)}\le R\quad\,\, t\in [0,T(R)]
$$
for $T(R)$ sufficiently small.  Therefore, $\Lambda(u_0,v_0)$ maps $B_0(T, R)\times B_1(T, R)$ into itself for 
$0<T\ll1 $.}

To show that  $\Lambda(u_0,v_0): B_0(T(R), R)\times B_1(T, R) \to  B_0(T,R)\times B_1(T,R)$   is a contraction mapping 
for $0<T\ll1 $, let us first observe that
\begin{align}
 \label{phipsidif}
		& \|(\Phi(u_1,v_1;u_0))(t) - (\Phi(u_2, v_2;u_0))(t)\|_{L^p(\Gamma)} +\|(\Psi(u_1,v_1;v_0))(t) - (\Psi(u_2, v_2;v_0))(t)\|_{\wpee} \nonumber\\
		& \leq \int_{0}^{t} \|e^{(\Delta-\sigma)(t-s)}\partial_x(f_1(u_1(s), v_1(s))\partial_x v_1(s)) - e^{(\Delta-\sigma)(t-s)}\partial_x(f_1(u_2(s), v_2(s))\partial_x v_2(s))\|_{L^p(\Gamma)}ds \nonumber\\ 
		&+ \int_{0}^{t} \|e^{(\Delta-\sigma)(t-s)}(f_2(u_1(s), v_1(s))+\sigma u_1(s))-e^{(\Delta-\sigma)(t-s)}(f_2(u_2(s), v_2(s))+\sigma u_2(s))\|_{L^p(\Gamma)}ds\nonumber\\
		&+\int_{0}^{t} \|e^{(\Delta-\sigma)(t-s)}({f_3}(u_1(s), v_1(s))+\sigma v_1(s))-e^{(\Delta-\sigma)(t-s)}({f_3}(u_2(s), v_2(s))+\sigma v_2(s))\|_{\wpee}ds\nonumber\\
		&=I+II+III.
\end{align}
Using Proposition \ref{prop3.1new}, concretely \eqref{lipcontedh}, with $F=f_1$  one obtains
\begin{equation}\label{onephipsi}
I\leq C(R)\int_0^t(t-s)^{-\frac{p+2\gamma-1}{2p}}\|(u_1(s), v_1(s))-(u_2(s), v_2(s))\|_{{\elpee\times\wpee}}ds.
\end{equation}
Using Proposition \ref{cor3.1}, concretely \eqref{efdlip}, with $F(u,v)=f_2(u,v)+\sigma u$ one obtains
\begin{equation}\label{twophipsi}
II\leq C(R)\int_0^t(t-s)^{-\frac{p+\gamma-1}{2p}}\|(u_1(s), v_1(s))-(u_2(s), v_2(s))\|_{{\elpee\times\wpee}}ds.
\end{equation}
Using Proposition \ref{cor3.1}, concretely \eqref{efdlip} again, with $F(u,v)=f_3(u,v)+\sigma v$ one obtains
\begin{equation}\label{threephipsi}
III\leq C(R)\int_0^t(t-s)^{-\frac{p+\gamma-1}{2p}}\|(u_1(s), v_1(s))-(u_2(s), v_2(s))\|_{{\elpee\times\wpee}}ds.
\end{equation}
Combining \eqref{phipsidif}, \eqref{onephipsi}, \eqref{twophipsi}, \eqref{threephipsi} and choosing $T(R)$ sufficiently small we obtain 
\begin{align}
&\|(\Phi(u_1,v_1;u_0))(t) - (\Phi(u_2, v_2;u_0))(t)\|_{L^p(\Gamma)} +\|(\Psi(u_1,v_1;v_0))(t) - (\Psi(u_2, v_2;v_0))(t)\|_{\wpee} \\
&\quad\leq \frac 12 \|(u_1(t), v_1(t))-(u_2(t), v_2(t))\|_{{\elpee\times\wpee}},\  t\in [0, T(R)]\lb{4.46}
\end{align}
as asserted. 

Part \eqref{item2lem2.1} follows from part \eqref{item1lem2.1}, \eqref{geq}, and Corollary \ref{cor3.2}. 
\end{proof}

{\begin{remark}
Recall \eqref{lambda} and note that $t\mapsto \Psi(u, v, v_0)(t)$ is considered as a mapping into $\wpee$. In view of these facts, it is natural to conjecture that the condition $v_0\in \cX_p^{\frac{1}{2}+\epsilon}$ in Theorem \ref{mild-solution-prop1} (1) should be relaxed to $v_0\in\wpee$. We claim, however, that the latter inclusion is not sufficient for $\Psi(u, v, v_0)(t)$ to be continuous in $\wpee$ at $t=0$. Indeed, $\Psi(u, v, v_0)(t)$, $t>0$ satisfies Neumann--Kirchhoff vertex conditions. If $t\mapsto\Psi(u, v, v_0)(t)$ was continuous in $\wpee$ at $t=0$ then $v_0$ would have to satisfy the same vertex conditions (since Dirichlet and Neumann traces are continuous with respect to $\wpee$ norm). Picking $v_0\in \wpee$ that does not satisfy the Neumann--Kirchhoff vertex conditions yields a contradiction. 

That said, our next result shows that the requirement $v_0\in \cX_p^{\frac{1}{2}+\epsilon}$ is not necessary for existence of mild solutions of \eqref{tauone}, \eqref{duh} that are continuous in $\wpee$ for strictly positive time $t\in(0, T)$ but not $t=0$, in which case the continuity is claimed only with respect $\el{p}$ norm. 
\end{remark}
}
\begin{proposition}\lb{mild-solution-prop2}
Assume Hypothesis \ref{hyp}. Let $\beta\in(0, 1/8)$ and suppose that the initial data satisfies   $\|(u_0, v_0)\|_{\elpee\times\wpee}< R$. 
\begin{itemize}
\item[(1)]  There exists $T>0$ such that integral equation \eqref{tauone} has a unique solution $(u(t),v(t))=(u(t;u_0,v_0),v(t;u_0,v_0))$ satisfying
\begin{align}
\begin{split}\lb{milduvsols}
&(u,v) \in  { C([0, T], L^p(\Gamma)\times L^p(\Gamma))\cap  C^{\beta}((0, T], \cX^{\beta}_p \times \cX_p^{\frac{1}{2}+\beta})},\\
&\sup_{t\in(0,T]}\|u(t), v(t)\|_{{L^p(\Gamma)}\times \hatt W^{1,p}(\Gamma)}\leq R.
\end{split}
\end{align}

\item[(2)] There is $T>0$ such that \eqref{duh} has a unique solution $(u(t),v(t))=(u(t;u_0),v(t;u_0))$ satisfying 
\begin{align}
\begin{split}\lb{mildusol}
&u\in C([0, T], L^p(\Gamma))\cap C^{\beta}((0, T], \cX^{\beta}_p),\\
&\hspace{1.5cm}\max_{t\in[0,T]}\|u(t)\|_{L^p(\Gamma)}\leq R. 
\end{split}
\end{align}
\end{itemize}
\end{proposition}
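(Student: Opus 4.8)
\emph{Overall strategy.} Both parts will come from the Banach fixed point theorem. Proposition \ref{mild-solution-prop1} already supplies, for $0<T\ll1$, the contraction maps $\Theta(u_0)$ on $B_0(T,R)$ and $\Lambda(u_0,v_0)$ on $B_0(T,R)\times B_1(T,R)$; it therefore remains only to pass from the fixed points to the asserted regularity (via Propositions \ref{prop1.3}--\ref{prop1.4}), to record uniqueness via a singular Gronwall argument, and — for \eqref{tauone} — to dispose of the auxiliary hypothesis $v_0\in\cX_p^{\frac{1}{2}+\epsilon}$ under which Proposition \ref{mild-solution-prop1}(1) is stated.

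\emph{Part (2).} Given $R>\|u_0\|_{\elpee}$, pick $T=T(R)$ as in Proposition \ref{mild-solution-prop1}(2). The contraction principle produces a unique $u\in B_0(T,R)$ with $u=\Theta(u;u_0)$, and then $v(s):=-(\Delta-\sigma)^{-1}f(u(s))$ turns $(u,v)$ into a solution of \eqref{duh}, with $\max_{[0,T]}\|u(t)\|_{\elpee}\le R$ built into $B_0(T,R)$. Since $u$ is the fixed point, the statement of Proposition \ref{prop1.4} for $\Theta(u;u_0)$ gives $u\in C([0,T],\elpee)\cap C^\beta((0,T],\cX_p^\beta)$, which is \eqref{mildusol}. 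Uniqueness inside $B_0(T,R)$ is automatic; uniqueness in the larger class $\{u\in C([0,T],\elpee):\max\|u(t)\|_{\elpee}\le R\}$ follows by subtracting two solutions, inserting the Lipschitz bound for $\Theta$ behind \eqref{onephipsi}--\eqref{threephipsi}, and invoking the singular Gronwall inequality of Proposition \ref{propHenry}.

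\emph{Part (1).} If $v_0\in\cX_p^{\frac{1}{2}+\epsilon}$ with $0<\epsilon\ll1$, the same scheme applies verbatim with $\Lambda(u_0,v_0)$ on $B_0(T,R)\times B_1(T,R)$: the fixed point $(u,v)$ solves \eqref{tauone}, obeys $\sup_{(0,T]}\|(u(t),v(t))\|_{\elpee\times\wpee}\le R$, and, because $u=\Phi(u,v;u_0)$ and $v=\Psi(u,v;v_0)$, Proposition \ref{prop1.4} places it in $C([0,T],\elpee\times\elpee)\cap C^\beta((0,T],\cX_p^\beta\times\cX_p^{\frac{1}{2}+\beta})$, which is \eqref{milduvsols}; uniqueness is again by \eqref{onephipsi}--\eqref{threephipsi} and Gronwall. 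For a general $v_0\in\wpee$ one approximates $v_0$ by $v_0^{(n)}\in\cX_p^{\frac{1}{2}+\epsilon}$ with $v_0^{(n)}\to v_0$ in $\elpee$, solves \eqref{tauone} for each $n$ by the previous step, and passes to the limit. Since the contraction constants in Proposition \ref{mild-solution-prop1} are insensitive to the initial datum, one natural route is to reorganize the fixed point in a time-weighted ball (weight $t^\theta$, $\theta\in(0,1)$, absorbing $t^\theta\|e^{(\Delta-\sigma)t}v_0\|_{\wpee}\le Ct^{\theta-\frac{1}{2}}\|v_0\|_{\elpee}$), so that the existence time and radius become uniform in $n$; then \eqref{onephipsi}--\eqref{threephipsi} with Gronwall make $\{(u^{(n)},v^{(n)})\}$ Cauchy, Proposition \ref{intconv} with dominated convergence lets one pass to the limit in \eqref{tauone}, the limit inherits $\elpee\times\elpee$-continuity at $t=0$, and its time-positive Hölder regularity follows from Propositions \ref{prop1.3}--\ref{prop1.4} applied to the limit itself (their estimates on compact subintervals of $(0,T]$ need only $u_0,v_0\in\elpee$).

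\emph{Main obstacle.} The two contraction arguments are routine; the delicate step is Part (1) for a general $v_0\in\wpee$. Here $\cX_p^{\frac{1}{2}+\epsilon}$ is \emph{not} dense in $\wpee$ (continuity at the vertices is a closed condition), and $t\mapsto e^{(\Delta-\sigma)t}v_0$ is unbounded in $\wpee$ as $t\downarrow0$; so one cannot approximate $v_0$ in $\wpee$, must run the fixed point and the limiting argument in the correct time-weighted space, can only assert $\wpee$-regularity of $v$ for $t>0$ with mere $\elpee$-continuity at $t=0$, and must keep every constant uniform along the approximating sequence.
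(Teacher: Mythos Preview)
Your Part~(2), and Part~(1) under the extra hypothesis $v_0\in\cX_p^{\frac12+\epsilon}$, match the paper. For general $v_0\in\wpee$ the paper's route is shorter and rather different from your time-weighted scheme: it asserts that $\cX_p^{\frac12+\beta}$ is dense in $\hatt W^{1,p}(\Gamma)$, picks $v_n\to v_0$ in $\hatt W^{1,p}$ (so $\|(u_0,v_n)\|_{\elpee\times\wpee}<R$ eventually and Proposition~\ref{mild-solution-prop1} furnishes solutions on a single interval $[0,T(R)]$), and then---instead of a Cauchy/Gronwall limit in $n$---extracts a subsequential limit $(u^*,v^*)$ by Arzel\`a--Ascoli, using the uniform $C^\beta((0,T],\cX_p^\beta\times\cX_p^{\frac12+\beta})$ bounds from Proposition~\ref{prop1.4} together with the compact embeddings $\cX_p^\beta\doublehookrightarrow L^p$ and $\cX_p^{\frac12+\beta}\doublehookrightarrow \hatt W^{1,p}$; passing to the limit in~\eqref{tauone} needs only $e^{(\Delta-\sigma)t}v_n\to e^{(\Delta-\sigma)t}v_0$ in $L^p$, and uniqueness is read off~\eqref{phipsidif}--\eqref{4.46}. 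Your doubt about the density step is legitimate (vertex continuity is closed in $\hatt W^{1,p}$ and the paper offers no justification), but note that the Arzel\`a--Ascoli mechanism actually only consumes $\sup_n\|v_n\|_{\wpee}<R$ together with $v_n\to v_0$ in $L^p$, which is weaker than full $\hatt W^{1,p}$-density. Either way, your weighted-space alternative is workable but requires rerunning the entire contraction argument in a new function space and tracking an extra exponent; the paper's compactness argument is more direct.
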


\begin{proof}
(1) First, note that  $\cX_p^{\frac{1}{2}+\beta}$  is dense in $\hatt W^{1,p}(\Gamma)$. Hence there is a sequence $\{v_n\}\subset \cX_p^{\frac{1}{2}+\beta}$ such that
$$
\|v_n- v_0\|_{\hatt W^{1,p}(\Gamma)}\to 0\quad {\rm as}\quad n\to\infty.
$$
Then there is $N\in\mathbb{N}$ such that
 $$\|(u_0, v_n)\|_{\elpee\times\wpee}< R,\,\, n\ge N.
$$
By Proposition \ref{mild-solution-prop1},  there is $T=T(R)>0$ such that
$\{\Lambda(u_0,v_n)\}_{n\ge N}$  is a family of uniform contractions on  $B_0(T, R)\times B_1(T, R)$.
Then by fixed point theorem,  the integral equation \eqref{tauone} with $v_0$ being replaced by $v_n$ ($n\ge N$)  has a unique solution $(u_n(t),v_n(t))=(u(t;u_0,v_n),v(t;u_0,v_n))$ satisfying 
\begin{align*}
&(u_n,v_n)\in C([0, T], L^p(\Gamma)\times\wpee)\cap { C^{\beta}}((0, T], \cX^{\beta}_p\times \cX^{{\frac{1}{2}+\beta}}_p),\\
&\hspace{1.5cm}\max_{t\in[0,T]}\|u_n(t), v_n(t)\|_{{L^p(\Gamma)}\times \hatt W^{1,p}(\Gamma)}\leq R
\end{align*}
 for $n\ge N$.


Next, let us note that\footnote{the first embedding holds for $\beta>\frac{1}{2p}$. Since $p\geq 8$, the first embedding holds for $\beta\in(\frac{1}{16}, \frac18)$. Throughout the rest of the proof we assume this range for $\beta$ without loss of generality.} 
{
$$
\cX^{\beta}_p\hookrightarrow \hatt C({\overline\Gamma})\hookrightarrow L^p(\Gamma)\quad {\rm and}\quad \cX^{\frac{1}{2}+\beta }_p\hookrightarrow \hatt C^1({\overline\Gamma})  \hookrightarrow W^{1,p}(\Gamma).
$$

By Arzela–Ascoli Theorem there exists $(u^*,v^*)\in C((0,T],L^p(\Gamma)\times \wpee)$} and a subsequence
$(u_{n_k}(t),v_{n_k}(t))$ such that
$$
\lim_{n_k\to\infty} (u_{n_k}(t),v_{n_k}(t))=(u^*(t),v^*(t)) \quad {\rm in}\quad L^p(\Gamma)\times W^{1,p}(\Gamma),
$$
locally uniformly in $(0,T]$, and
$$
\sup_{t\in (0,T]}\|(u^*(t),v^*(t))\|_{L^p(\Gamma)\times W^{1,p}(\Gamma)}\le R.
$$
 Note also that
$$
\lim_{n\to\infty} e^{(\Delta -\sigma)t}v_n=e^{(\Delta-\sigma)t}v_0 \quad {\rm in}\quad L^p(\Gamma)
$$
uniformly in $t\in [0,T]$.
It then follows that $(u^*(t),v^*(t))$ satisfies 
\begin{align*}
	u^*(t)&=e^{(\Delta-\sigma)t}u_0-\int_{0}^{t}  e^{(\Delta-\sigma)(t-s)}\partial_{x}\big(f_1(u^*(s), v^*(s))\partial_xv^*(s)\big)ds\\
		&\quad + \int_{0}^{t}  e^{(\Delta-\sigma)(t-s)}(f_2(u^*(s), v^*(s))+\sigma u^*(s))ds,\\
	v^*(t)& =e^{(\Delta-\sigma I)t}v_0+ \int_{0}^{t}  e^{(\Delta-\sigma)(t-s)}\left(f_3(u^*(s), v^*(s))+\sigma v^*(s)\right)ds
\end{align*}
for $t\in (0,T]$.  Therefore, 
 $(u(t;u_0,v_0),v(t;u_0,v_0)):=(u^*(t),v^*(t))$ is a solution of the integral equation \eqref{tauone}
satisfying \eqref{milduvsols}.  {The uniqueness of such solutions follows readily from the estimates similar to \eqref{phipsidif}, \eqref{4.46}. }

(2)  The fixed point theorem for contraction mapping $\Theta(u_0): B_0(T, R)\rightarrow B_0(T, R)$ for $0<T\ll 1$, as in Proposition  \ref{mild-solution-prop1}, yields \eqref{mildusol}. 
\end{proof}

\begin{theorem}\lb{thm3.1} Assume Hypothesis \ref{hyp} and let $\beta\in(0, 1/8)$.
\begin{enumerate}
\item Let $u_0\in \elpee$. Then there exists $T_{\max}=T_{\max}(u_0)\in (0,\infty]$ such that integral equation \eqref{duh} has a unique  solution  $(u(t),v(t))=(u(t;u_0),v(t;u_0))$ satisfying 
\begin{align}\lb{ubp}
u(\cdot)\in   C([0,  T_{\max}), \elpee)\cap C^{\beta}((0,  T_{\max}), \cX^{\beta}_p),\ u(0;u_0)=u_0.
\end{align}	
If $T_{\max}<\infty$ then $\limsup\limits_{t\rightarrow T_{\max}^-}\|u(t)\|_{\elpee}=\infty$.   Moreover, for arbitrary $r\geq 1$ and $\nu <\beta$, {
\begin{align}\lb{rubp-1}
		u\in C^{{ \beta}}((0, T_{\max}), \hatt C^{\nu}(\overline{\Gamma}))\cap C^{\beta}((0,  T_{\max}), \cX^{\beta}_r),
	\end{align}	
(hence \eqref{rubp} holds), and
\begin{align}
\label{rvbp-1}
	&v\in	C^{\beta}((0, T_{\max}), \hatt C^{2+\nu}(\overline{\Gamma})).
\end{align}	
}

\item Let $(u_0, v_0)\in\elpee\times\wpee$. Then there exists $T_{\max}=T_{\max}(u_0,v_0)\in (0,\infty]$ such that the integral equation \eqref{tauone} has a unique  solution  $$(u(t),v(t))=(u(t;u_0,v_0),v(t;u_0,v_0))$$ satisfying 
\begin{align}
\begin{split}\label{milduvsolsmax}
&(u,v) \in C([0, T_{\max}), L^p(\Gamma))   \times L^p(\Gamma) )\cap C^{0,\beta}((0, T_{\max}), \cX^{\beta}_p\times \cX^{{\frac{1}{2}+\beta}}_p),
\end{split}
\end{align}	
with $u(0;u_0)=u_0, v(0;v_0)=v_0$. If  $T_{\max}<\infty$  then 
\begin{equation}
\limsup\limits_{t\rightarrow T_{\max}^-}\|(u(t),v(t))\|_{L^p(\Gamma)\times\wpee}=\infty.
\end{equation} 
 Furthermore, 
 for $r\geq 1$, $\nu< \beta$, $u, v$ satisfy {
	\begin{align}
	\label{rmilduvsolsmax-1}
		u\in C^{\beta}((0, T_{\max}), \cX^{\beta}_r)\cap C^{\beta}((0, T_{\max}), \hatt C^{\nu}(\overline{\Gamma})),
\end{align}
and
\begin{align}
\label{rmilduvsolsmax-2}
	&v\in	C^{1+\beta} ((0,T_{\max}), \cX^{\beta}_r)\cap C^{\beta}((0, T_{\max}), \hatt C^{2+\nu}(\overline{\Gamma}))
\end{align}	
(hence 
\eqref{rmilduvsolsmax-u} and \eqref{rmilduvsolsmax-v} hold).}
\end{enumerate}	

\end{theorem}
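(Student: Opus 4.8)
The plan is to first construct a unique maximal mild solution by patching together the local solutions of Proposition~\ref{mild-solution-prop2}, and then to bootstrap its regularity. Throughout I may assume, as in the footnote to Proposition~\ref{mild-solution-prop2}, that $\beta\in(\tfrac1{16},\tfrac18)$, so that $\cX^\beta_p\hookrightarrow\hatt C(\overline{\Gamma})\hookrightarrow\elpee$ and $\cX^{1/2+\beta}_p\hookrightarrow\hatt C^1(\overline{\Gamma})\hookrightarrow\wpee$; the assertions for smaller $\beta$ follow from the embeddings $\cX^{\beta'}_r\hookrightarrow\cX^\beta_r$ for $\beta'>\beta$. Proposition~\ref{mild-solution-prop2} gives a solution of \eqref{duh} (resp. \eqref{tauone}) with the regularity \eqref{mildusol} (resp. \eqref{milduvsols}) on some $[0,T]$, and any two such solutions agree on the overlap of their intervals by the Lipschitz-type bounds behind \eqref{phipsidif}--\eqref{4.46} together with Gronwall's inequality; this yields a well-defined maximal $T_{\max}\in(0,\infty]$ and a unique solution on $[0,T_{\max})$ satisfying \eqref{ubp}, resp. \eqref{milduvsolsmax}. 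For the blow-up alternative I would argue by contradiction: if $T_{\max}<\infty$ and, in case (1), $\limsup_{t\to T_{\max}^-}\|u(t)\|_{\elpee}<\infty$, then for $t_0<T_{\max}$ close enough to $T_{\max}$ the datum $u(t_0)$ lies in a fixed ball of $\elpee$, and since the local existence time in Proposition~\ref{mild-solution-prop2} depends only on a bound for the initial data, one obtains a solution on $[t_0,t_0+\delta]$ with $\delta>0$ independent of $t_0$, contradicting maximality. In case (2) the same argument applies with $(u(t_0),v(t_0))\in\elpee\times\wpee$ in place of $u(t_0)$, using $v(t_0)\in\cX^{1/2+\beta}_p\hookrightarrow\wpee$ for $t_0>0$; the $\el{r}$ version of the blow-up criterion for general $r\ge1$ then follows by the same restart argument combined with the parabolic smoothing $\el{r}\to\elpee$.

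Next I would bootstrap the spatial integrability. Fix $0<\kappa<T'<T_{\max}$. On $[\kappa,T']$ we already have $u\in C^\beta([\kappa,T'],\cX^\beta_p)$, hence $u$ bounded on $[\kappa,T']\times\overline{\Gamma}$, and, when $\tau>0$, $v\in C^\beta([\kappa,T'],\cX^{1/2+\beta}_p)$, hence $\partial_xv(t)\in\elpee$ uniformly. Because $u$ is bounded, the polynomial bounds of Hypothesis~\ref{hyp} give $f_1(u,v)\partial_xv$ and $f_2(u,v)+\sigma u,\ f_3(u,v)+\sigma v$ in $\el{m}$ with $m$ improving at each step; feeding these into the Duhamel formulas \eqref{duh}, \eqref{tauone} and invoking the smoothing estimates $\|e^{(\Delta-\sigma)\rho}\partial_x\|_{\cB(\el{m},\cX^\beta_q)}\le C\rho^{-1/2-\beta-\frac12(1/m-1/q)}$ and $\|e^{(\Delta-\sigma)\rho}\|_{\cB(\el{m},\cX^\alpha_q)}\le C\rho^{-\alpha-\frac12(1/m-1/q)}$ (Theorem~\ref{lplq estimate}, Corollary~\ref{cor2.1}, analyticity of $e^{(\Delta-\sigma)t}$, and the $L^p$--$L^q$ properties of the nonlinearities from Appendix~\ref{app:LpLq}), one checks that for $\beta<\tfrac18$ the resulting time-singularities stay integrable, so after finitely many iterations $u(t)\in\cX^\beta_r$ and, if $\tau>0$, $v(t)\in\cX^{1/2+\beta}_r$ for every prescribed $r\ge1$ and all $t\in[\kappa,T']$, with bounds depending only on $\kappa,T',R$. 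Re-running the time-increment splitting of Proposition~\ref{prop1.4} with $p$ replaced by the integrability reached at the previous step upgrades these to $u\in C^\beta((0,T_{\max}),\cX^\beta_r)$ and $v\in C^\beta((0,T_{\max}),\cX^{1/2+\beta}_r)$; choosing $r$ with $2\beta-\tfrac1r>\nu$ (possible since $\nu<\beta$) gives $\cX^\beta_r\hookrightarrow\hatt C^\nu(\overline{\Gamma})$, whence $u\in C^\beta((0,T_{\max}),\hatt C^\nu(\overline{\Gamma}))$. This establishes \eqref{rubp-1} and the $\cX^\beta_r$/$\hatt C^\nu$ parts of \eqref{rmilduvsolsmax-1}, hence \eqref{rubp} and \eqref{rmilduvsolsmax-u}.

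For the regularity of $v$ I would split into the two regimes. When $\tau=0$, $v(t)=-(\Delta-\sigma)^{-1}f(u(t))$; since $u(t)\in\hatt C^\nu(\overline{\Gamma})$ and $f\in C^{1,1}$ we get $f(u(t))\in\hatt C^\nu(\overline{\Gamma})$, and elliptic regularity on the graph (Corollary~\ref{cor3.2}, boundedness of $(\Delta-\sigma)^{-1}\colon\hatt C^\nu(\overline{\Gamma})\to\hatt C^{2+\nu}(\overline{\Gamma})$) transfers the time-Hölder continuity of $u$ to $v$, giving \eqref{rvbp-1}. When $\tau>0$ I would view the second equation of \eqref{parabolic-parabolic-eq} as the semilinear parabolic problem $v_t=(\Delta-\sigma)v+N(t,v)$ with $N(t,v):=f_3(u(t),v)+\sigma v$; since $u\in C^\beta((0,T_{\max}),\hatt C^\nu(\overline{\Gamma}))$ and $f_3\in C^{1,1}$, $N$ is locally Lipschitz in $v$ and Hölder in $t$ on each $[\kappa,T']\subset(0,T_{\max})$, while $v(\kappa)\in\cX^{1/2+\beta}_r$ for all $r$, so the abstract semilinear parabolic results of Appendix~\ref{appA} (Theorems~\ref{general-existence-thm}, \ref{strong-solu-thm}) yield $v\in C^{1+\beta}((0,T_{\max}),\cX^\beta_r)\cap C^\beta((0,T_{\max}),\hatt C^{2+\nu}(\overline{\Gamma}))$, which is \eqref{rmilduvsolsmax-2}, hence \eqref{rmilduvsolsmax-v}.

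The main obstacle will be the bootstrap step: the chemotactic flux $\partial_x\!\big(f_1(u,v)\partial_xv\big)$ costs a full spatial derivative, so the iteration closes only because of the sharp short-time bound \eqref{tgradlplq}--\eqref{normtdx} for $e^{(\Delta-\sigma)t}\partial_x$, and the polynomial growth of $f_1,f_2,f_3$ (governed by $\gamma$, with the standing assumption $p>8\gamma$ keeping the relevant exponents admissible) must be tracked carefully so that none of the time-singularities $\rho^{-1/2-\beta-\frac12(1/m-1/q)}$ becomes non-integrable; moreover, for $\tau>0$ the estimates for $u$ and $v$ are coupled through $\partial_xv$ and have to be iterated simultaneously.
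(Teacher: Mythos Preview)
Your overall architecture---local existence via Proposition~\ref{mild-solution-prop2}, patching to a maximal interval, blow-up alternative by the standard restart argument, and then a bootstrap---matches the paper's, and your treatment of the maximal solution and blow-up criterion is essentially what the paper calls ``standard extension arguments.'' The differences lie in how the bootstrap and the $v$-regularity are executed.

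For the bootstrap the paper does \emph{not} feed improved integrability back into Duhamel. Instead it uses the embeddings $\cX^\beta_p\hookrightarrow \hatt W^{2\beta\theta,p}(\Gamma)\hookrightarrow \hatt L^q(\Gamma)$ (Theorem~\ref{analytic-semigroup-thm2}, with $\tfrac1q>\tfrac1p-\tfrac14$) to pass directly from $u\in C^\beta((0,T_{\max}),\cX^\beta_p)$ to $u\in C^\beta((0,T_{\max}),\hatt L^q(\Gamma))$, iterates along a finite chain $p=\tilde p_0<\cdots<\tilde p_N=r$, and then \emph{restarts} the existence theory at time $\varepsilon>0$ with initial datum $u(\varepsilon)\in\hatt L^r(\Gamma)$, invoking \eqref{ubp} with $p$ replaced by $r$ to obtain $u\in C^\beta((\varepsilon,T_{\max}),\cX^\beta_r)$. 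This is considerably shorter than your proposed direct Duhamel iteration, since it reuses the entire local theory rather than re-deriving the $L^m$--$\cX^\beta_q$ estimates at each step; your approach should also work but is more laborious, and in the $\tau>0$ case you must indeed iterate $u$ and $v$ simultaneously as you note.

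For $v$ when $\tau=0$ you invoke a Schauder-type mapping $(\Delta-\sigma)^{-1}\colon \hatt C^\nu(\overline\Gamma)\to\hatt C^{2+\nu}(\overline\Gamma)$, but Corollary~\ref{cor3.2} only gives $\wpee$ bounds and no such Schauder estimate is proved in the paper. The paper instead uses Proposition~\ref{prop2.1} to get $v\in C^\beta((0,T_{\max}),\hatt W^{2,r}(\Gamma))$ for all $r$, embeds $\hatt W^{2,r}\hookrightarrow\hatt C^{\tilde\nu}$ for $\tilde\nu<2-\tfrac1r$, and then reads off $v_{xx}=\sigma v-f(u)\in C^\beta((0,T_{\max}),\hatt C^\nu(\overline\Gamma))$ directly from the elliptic equation to conclude $v\in C^\beta((0,T_{\max}),\hatt C^{2+\nu}(\overline\Gamma))$. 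For $\tau>0$ the paper treats the $v$-equation as a \emph{linear inhomogeneous} problem $w_t=w_{xx}+F(t)$ with $F(t)=f_3(u(t),v(t))$ already known and H\"older in $t$, applies Theorem~\ref{strong-solu-thm} and \cite[Lemma~3.5.1]{Henry} to get $w\in C^{1+\beta}$ into $\cX^\beta_p$, identifies $w=v$ via uniqueness of the integral equation, and again uses $v_{xx}=v_t-f_3(u,v)$ to reach $\hatt C^{2+\nu}$. Your semilinear route via Theorem~\ref{general-existence-thm} would also give the $C^{1+\beta}(\cX^\beta_r)$ claim, but viewing the equation as inhomogeneous (since $u,v$ are already fixed by the mild solution) is cleaner and avoids checking the Lipschitz hypothesis on $N(t,v)$.
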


\begin{proof}
 (1)  First of all,  the existence of solution \eqref{ubp} to \eqref{duh} for large $p$  follows from Proposition \ref{mild-solution-prop2}  and standard extension arguments.

Next,  let us prove \eqref{rubp-1} for arbitrary $r\geq 1$, { which together with \eqref{ubp} implies \eqref{rubp}}. Due to compactness of the graph, it suffices to verify the assertion for $r>p$. To that end, combining \eqref{sobolev-embedding-eq3-0} and \eqref{sobolev-embedding-eq5-0} we note that for any $q> p$, $\beta\in (0,\frac{1}{8})$, and  $\theta\in (0,1)$ with $\frac{1}{q}>\frac{1}{p}-\frac14$, one has
	$$
	u(\cdot;u_0)\in C^{\beta}((0,T_{\max}),\hatt L^{q}(\Gamma)).
	$$
Next, for the partition $p=\tilde p_0<\tilde p_1<\tilde p_2<\cdots<\tilde p_N=r$ such that
	$$
	\frac{1}{\tilde q_i}>\frac{1}{\tilde q_{i-1}}-\frac{1}{4},\quad i=1,2,\cdots,N,
	$$
one obtains
	\begin{equation}
		\label{bootstrap-eq3}
		u(\cdot;u_0)\in C^{{ \beta}}((0,T_{\max}),\hatt L^{\tilde q_i}(\Gamma)),\quad i=1,2,\cdots,N.
	\end{equation}
Hence, 	$u(\cdot;u_0)\in C^{\beta}((0,T_{\max}),\hatt L^{r}(\Gamma))$ for arbitrary $r\geq 1$. Then using $u(\varepsilon, u_0)$, $\varepsilon>0$ as an initial condition and employing \eqref{ubp} with $p=r$ we obtain 
\begin{equation}\lb{bootstrap-eq0}
u(\cdot; u_0)\in C^{\beta}((\varepsilon,  T_{\max}(u_0, v_0)), \cX^{\beta}_r),
\end{equation}
as asserted. Finally, \eqref{sobolev-embedding-eq5} and \eqref{bootstrap-eq0} with  $0<\nu<\frac{1}{8}$, $\theta\in (0,1)$, $\beta\in (\nu,\frac{1}{8})$, and $r>1$ such that $\nu<2\beta\theta-r^{-1}$, one obtains $u(\cdot;u_0)\in C^{\beta}((0,T_{\max}),\hatt C^\nu(\overline{\Gamma})).$	\eqref{rubp-1} then follows.

{  Now, we prove \eqref{rvbp-1}. Recall that   $v(t;u_0)= -(\Delta-\sigma )^{-1}f(u(t;u_0))$. By \eqref{rubp-1} and Proposition \ref{prop2.1}, we have
$$
v\in	 C^{\beta}((0, T_{\max}), \hatt W^{2,r}(\Gamma)).
$$
By \eqref{sobolev-embedding-eq2}, 
we have
\begin{equation}
\label{aux-eq11}
\hatt{W}^{2,r}(\Gamma)\hookrightarrow \hatt{C}^{\tilde \nu} (\overline \Gamma)
				\quad\tilde  \nu <2-\frac{1}{r}.
\end{equation}
It then follows that
\begin{equation}
\label{rvbp-1-1}
v\in	 C^{\beta}((0, T_{\max}), \hatt{C}^{\tilde \nu}(\overline \Gamma))\quad \forall\,\tilde\nu <2-\frac{1}{r}.
\end{equation}
Note that
$$
v_{xx}=v-f(u).
$$
This together with \eqref{rubp-1} and \eqref{rvbp-1-1} implies \eqref{rvbp-1}.
}

(2)  First, as in (1),  the existence of solution  \eqref{milduvsolsmax}) to \eqref{tauone} for large $p$  follows from Proposition \ref{mild-solution-prop2}  and standard extension arguments.  Property  \eqref{rmilduvsolsmax-1}  follows from the arguments of \eqref{rubp-1}.

Next, we prove \eqref{rmilduvsolsmax-2}.
To this end,  we first prove
\begin{equation}\lb{vw2p}
{
	v\in C^{\beta} ((0,T_{\max}), \hatt W^{2,p}(\Gamma))\cap C^{1+\beta} ((0,T_{\max}), \cX^{\beta}_p).}
\end{equation}
For arbitrary $[\kappa,\tau]\subset (0, T_{\max}(u_0, v_0))$,  consider an auxiliary non-homogeneous Cauchy problem
\begin{equation}\lb{auxcauchy}
w_t=w_{xx}+f_3(u,v), w(\tau)=v(\tau), t\geq \tau. 
\end{equation}
We claim this equation posses unique strong solution satisfying $w\in C^{1+\beta}([\kappa,\tau], \cX^{\beta}_p)$ and that this solution coincides with $v$. To prove the former claim let us note that $F(s):=f_3(u(s), v(s))$ is a H\"older continuous mapping from $[\kappa,\tau]$ to $\elpee$ due to \eqref{milduvsolsmax} and $\cX^{\beta}_p\hookrightarrow C^{\nu}$ for some $\nu>0$. Hence, by Theorem \ref{strong-solu-thm} item \eqref{thma1item2}, a unique strong solution of \eqref{auxcauchy} exists and satisfies the integral equation
\begin{equation}\lb{vewint}
w(t)=e^{\Delta (t-\tau)}{ w({\tau})}+\int_{\tau}^te^{\Delta(t-s-\tau)}f_3(u(s), v(s))ds, t\in [\tau, \kappa]. 
\end{equation}
Hence, to prove $w\in C^{1+\beta}([\kappa,\tau], \cX^{\beta}_p)$ it suffices to show that the integral term, as a function of $t$, in the right-hand side above belongs  to $C^{1+\beta}([\kappa,\tau], \cX^{\beta}_p)$. This, in turn, follows from H\"older continuity of $F$ combined with \cite[Lemma 3.5.1]{Henry}. Therefore, \eqref{auxcauchy} shows that the mapping $t\mapsto w_{xx}(t)$ belongs to $C^{\beta}([\kappa,\tau], \elpee)$, hence, $w\in C^{\beta}([\kappa,\tau], \hatt W^{2,p}(\Gamma))$. Since both $v$ and $w$ satisfy \eqref{vewint}, by Theorem \ref{strong-solu-thm}, they coincide, thus \eqref{vw2p} holds as asserted.

{
By \eqref{rmilduvsolsmax-1} and \eqref{vw2p},  repeating the arguments of  \eqref{vw2p}, we have
\begin{equation}\lb{vw2p-1}
	v\in C^{\beta} ((0,T_{\max}), \hatt W^{2,r}(\Gamma))\cap C^{1+\beta} ((0,T_{\max}), \cX^{\beta}_r)\quad \forall\, r>1.
\end{equation}
Note that
$$
v_{xx}=v_t-f_3(u,v).
$$
This together with \eqref{vw2p-1} and \eqref{aux-eq11}  implies  \eqref{rmilduvsolsmax-2}.}
\end{proof}

\subsection{Proof of Theorem \ref{Lp}}

In this subsection, we prove Theorem \ref{Lp}.

\begin{proof}[Proof of Theorem \ref{Lp}]
We divide the proof of the theorem into three steps.

\smallskip

\noindent {\bf Step 1.}  In this step,  let $(u(t), v(t))$ be as in Theorem \ref{thm3.1}, that is, $(u(t),v(t))$ is the  solution to \eqref{duh}  if $\tau=0$ and to \eqref{tauone} if $\tau=1$. 
 In both cases one has
 \begin{align}
\lb{raz}
		u(t)&= e^{(\Delta-\sigma)(t-\kappa)}u(\kappa)-\int_{\kappa}^{t}  e^{(\Delta-\sigma)(t-\kappa-s)}\partial_x\left(f_1(u(s), v(s))\partial_x v(s)\right)ds\nonumber\\
		&\quad+\int_{\kappa}^{t}  e^{(\Delta-\sigma)(t-\kappa-s)}\left(\frac{f_2(u(s),v(s))+\sigma u(s)}{u(s)}\right)u(s), t\in[\kappa, T_{\max}), 
\end{align}
with the integral convergent in $\elq$ for arbitrary $q\in[1, p]$, $\kappa>0$.

 Let $\kappa\in (0,T_{max})$.
We prove  the existence and uniqueness of  classical solutions of
\begin{equation}
\label{aux-eq1}
\begin{cases}
	u_t=(\Delta -\sigma)u-a(t, x)\partial_xu+b(t,x)u,\quad x\in \Gamma\cr
 \sum\limits_{\vartheta \sim e} \partial_{\nu}u_e(\vartheta)=0,\ u_e(\vartheta)=u_{e'}(\vartheta), v_e(\vartheta)=v_{e'}(\vartheta), e\sim \vartheta, e'\sim\vartheta,\cr
u(\kappa,x)=u(\kappa)(x),
\end{cases}
\end{equation}
where $a(t,x)=a(t)(x)$ and $b(t,x)=b(t)(x)$ and $a(t),b(t)$  are given as in \eqref{abcoef}.
By Theorem \ref{thm3.1}, we have
$$
a(t,\cdot),b(t,\cdot)\in \hatt C(\overline\Gamma)\,\,\,{\rm for}\,\,  t\in (0,T_{\max}),
$$
$$
a(t,\cdot), b(t,\cdot)\in C^{\beta}((0,T_{\max}), \hatt C(\overline \Gamma)),\, 0<\beta<\frac{1}{8},
$$
and
$$u(\kappa)\in \cX_r^\beta,\, r\ge 1,\,\, 0<\beta<\frac{1}{8},
$$

Let $\Psi(t; u(\kappa)):=T(t,\kappa)u(\kappa)$, where the evolution operator  $T(t, \kappa)$ is as in  Theorem \ref{evolution-operator-thm}. We will show that $u(t)=\Psi(t, u(\kappa))$, $t\in[\kappa, \tau]$, for every $\kappa\in (0,T_{\max})$ and $\tau$ sufficiently close to $\kappa$. 
To that end, abbreviating $\Psi(s)=\Psi(s; u(\kappa))$ we note that for  $t>\kappa$ one has
\begin{equation}\lb{dva}
	\Psi(t)=e^{(\Delta-I)(t-\kappa)}u(\kappa)-\int_{\kappa}^{t}e^{(\Delta-\sigma)(t-\kappa-s)}a(s)\partial_s\Psi(s)ds+\int_{\kappa}^{t}e^{(\Delta-\sigma)(t-\kappa-s)}b(s)\Psi(s)ds
\end{equation}
Let us also notice that
\begin{align}\lb{tri}
&a\partial_s\Psi=\partial_x\left(f_1(\Psi,v)\partial_xv\right)-\frac{\partial_vf_1(\Psi,v)(\partial_xv)^2+f_1(\Psi,v)\partial_{xx}v}{\Psi} \Psi,\\
&b\Psi=\left(\frac{f_2(u,v)+\sigma u}{u}\right)\Psi-\frac{\partial_vf_1(u,v)(\partial_xv)^2+f_1(u,v)\partial_{xx}v}{u} \Psi.
\end{align}

Combining \eqref{raz}, \eqref{dva}, \eqref{tri} we arrive at
\begin{align}
\lb{umpsi}
u(t)-\Psi(t)=&\int_{\kappa}^t e^{(\Delta-\sigma)(t-\kappa-s)}\partial_x\big(f_1(\Psi(s), v(s))-f_1(u(s), v(s))\partial_xv(s)\big)ds\nonumber\\
&+\int_{\kappa}^t e^{(\Delta-\sigma)(t-\kappa-s)}\left(\frac{f_2(u(s),v(s))+\sigma u(s)}{u(s)}\right)(u(s)-\Psi(s))ds\nonumber\\
&+\int_{\kappa}^t e^{(\Delta-\sigma)(t-\kappa-s)}\left(\frac{\partial_vf_1(u,v)}{u}-\frac{\partial_vf_1(\Psi,v)}{\Psi} \right)(\partial_xv)^2\Psi(s)ds\nonumber\\
&+\int_{\kappa}^t e^{(\Delta-\sigma)(t-\kappa-s)}\left(\frac{f_1(u,v)}{u}-\frac{f_1(\Psi,v)}{\Psi} \right)\partial_{xx}v\Psi(s)ds\nonumber\\
&=I+II+III+IV.
\end{align}
Our next objective is to estimate $\elq$ norm of both sides of the above identity and employ Gronwall-type inequality from Proposition \ref{propHenry}. To estimate each term we use Proposition \ref{kprop3.1new} with special choices of $F$ as listed below
\begin{align}
&\text{I term:}\ F(u,\phi)=f_1(u,\phi), \phi:=v, \psi=\partial_xv,\\
&\text{II term:}\ F(u,\phi)= u,  \psi=(f_2(u(s),v(s))+\sigma u(s))(u(s))^{-1}, \\
&\text{III term:}\ F(u,\phi)= u^{-1}\partial_vf_1(u, \phi),\ \phi=v,  \psi=(\partial_x v)^2\Psi,\\
&\text{IV term:}\ F(u,\phi)= \partial_vf_1(u, \phi),\ \phi=v,\  \psi=(\partial_x v)^2\Psi,
\end{align}
to arrive at 
\begin{equation}
\|u(t)-\Psi(t)\|_{\elq}\leq C\int_{\kappa}^t(t-\kappa-s)^{-\alpha}\|u(s)-\Psi(s)\|_{\elq}ds,\  C>0, \alpha\in(0,1).
\end{equation}
Thus by Proposition \ref{propHenry} $u(t)=\Psi(t)$ for $t\geq \kappa$ with sufficiently small $|t-\kappa|$.  Hence, by Theorem \ref{evolution-operator-thm},  $u=\Psi\in \hatt  C^{1,2}([\kappa, \tau]\times\overline{\Gamma})$. 

\noindent{\bf Step 2.} The uniqueness follows from the fact that classical solutions satisfy the integral equation \eqref{parabolic-parabolic-eq}.

\noindent {\bf Step 3.} { In this step, we prove that if $u_0\ge 0$ and $v_0\ge 0$, then $u(t)\ge 0$ and $v(t)\ge 0$ for all $t\in (0,T_{\max})$. We prove this for the case $\tau>0$. The case $\tau=0$ can be proved similarly.
Note that for any $u_0\in L^p(\Gamma)$, $v_0\in \hatt W^{1,p}(\Gamma)$  with $u_0\ge 0, v_0\ge 0$,  there are $u_n\in C(\overline \Gamma)$, 
$v_n\in C^1(\overline\Gamma)$ such that  $u_n\ge 0,v_n\ge 0$ and 
$$
\lim_{n\to\infty} \Big(\|u_n-u_0\|_{L^p(\Gamma)}+\|v_n-v_0\|_{\hatt W^{1,p}(\Gamma)}\Big)=0.
$$
It is not difficult to see that
$$
\lim_{n\to \infty} \|u(t;u_n,v_0)-u(t;u_0,v_0)\|_{L^p(\Gamma)}+\|v(t;u_n,v_n)-v(t;u_0,v_0)\|_{L^p(\Gamma)}=0\quad \forall\, t\in [0,T_{\max}).
$$
By Theorem \ref{general-existence-thm} (3), we have $u(t;u_n.v_n)\ge 0$ and $v(t;u_0,v_n)\ge 0$. It then follows that $u(t;u_0,v_0)\ge 0$ and $v(t;u_0,v_0)\ge 0$.}

\end{proof}

\section{Global well-posedness of logistic-type Keller--Segel systems}\lb{classolsec}
The main goal of this section is to provide the proof of Theorem \ref{global-existence-thm}. Let us outline the strategy of the proof. 

\subsection{Outline of the proof of Theorem \ref{global-existence-thm}}\lb{outline2}

Employing local existence results from Section \ref{mildsol} yields classical solutions of \eqref{logparabolic-parabolic-eq} on the maximal existence time interval $[0,T_{\max})$\footnote{for brevity we will abbreviate the maximal time of existence in both cases $\tau=0$ and $\tau>0$ by $T_{\max}$ instead of $T_{\max}(u_0)$ or $T_{\max}(u_0, v_0)$}. To prove global well-posedness, i.e., $T_{\max}=\infty$ we rule out finite time blow up of 	$t\mapsto(u(t), v(t))$ in ${\elq\times \hatt W^{1,q}(\Gamma)}$ for all $q\geq 1$. This is achieved by induction in $q$. The base case $q=1$ is proved in Lemma \ref{L1-boundedness-lm} by deriving from \eqref{logparabolic-parabolic-eq} the following differential inequality 
\begin{equation}
	\label{L1-estimate-eq1}
	\frac{d}{dt}\int_\Gamma u(t;u_0)dx\le k|\Gamma|+l\int_\Gamma u(t;u_0)dx-\frac{m}{|\Gamma|^{\varepsilon}}\left(\int_\Gamma u(t;u_0)dx\right)^{1+\varepsilon}.
\end{equation}
Then a bound on $\|u(t)\|_{\el{1}}$ follows from the comparison principle for ODEs. 
To make the induction step, we derive the following inequality
\begin{align}
\frac{d}{dt}\int_{\Gamma} u^{n}dx\leq C\left( 1+\int_{\Gamma} u^ndx-\left(\int_\Gamma u^{n}dx\right)^{1+1/n}\right),
\end{align}
where $C>0$ depends on  $\|(u(t), v(t))\|_{\el{n-1}\times \hatt W^{1,n-1}(\Gamma)}$. Using the induction hypothesis and the comparison principle we then obtain a bound on $\|(u(t), v(t))\|_{\el{n}\times \hatt W^{1,n}(\Gamma)}$. For $\tau=0$ and either $m>0$ or $m=k=l=0$ we show that the above norms are uniformly bounded with respect to $t$.  This argument is carefully carried out in Lemma \ref{L1-boundedness-lm}. 

To prove global boundedness in case $\tau=0$ with $m>0$ or $m=k=l=0$ we use the integral equation
\begin{align}\lb{reshur}
		u(t)=e^{t(\Delta-\sigma) }u_0&+\int_{0}^{t}  \chi e^{(t-s)(\Delta-\sigma)}\partial_x\left(u(s)\partial_xv(s)\right) ds\nonumber\\
		&+\int_{0}^{t} e^{(t-s)(\Delta-\sigma)}(g(u(s))+\sigma u(s))ds.
\end{align}
The estimates derived in Lemmas \ref{L1-boundedness-lm} and \ref{Lp-boundedness-lm} together with \eqref{reshur} yield bound on $\|u(t)\|_{\cX^{\beta}_q}$ in terms of $\|(u(t), v(t))\|_{\el{q}\times \hatt W^{1,q}(\Gamma)}$. Combining this bound with the embedding $\cX^{\beta}_{q}\hookrightarrow L^{\infty}(\Gamma)$ gives uniform boundedness of solutions. This argument is spelled out at the end of current section. 
\subsection{Norm estimates for solutions of the logistic type model.}
\begin{lemma}
\label{L1-boundedness-lm}
Assume hypotheses of Theorem \ref{global-existence-thm} and let $T>0$. 
\begin{enumerate}
	\item\lb{L1item1} If $\tau=0$ then one has
	\begin{equation}\lb{5.4}
\sup\limits_{t\in [0,\min\{T,T_{\max}(u_0)\})} \|u(t;u_0)\|_{\el{1}}\leq {  M_0},
	\end{equation}
	 where  
	\begin{equation}
		M_{0}{  =M_0(u_0,k,l,m,T)}:= \begin{cases}
			C(u_0, k,l,m), &{\rm if}\,m>0,\\
			e^{{  l\hat T}}\int u_0dx+\frac{k|\Gamma|}{l}\Big(e^{{  l\hat T}}-1\Big) ,& {\rm if}\,\, m=0,\,\, {  l}>0,\\
			\int_\Gamma u_0(x)dx+k|\Gamma| {  \hat T},& {\rm if}\,\, m={ l}=0,
		\end{cases}
	\end{equation}
for some $T$-independent constant $C(u_0, k,l,m)>0$, {  $\hat T=\min\{T,T_{\max}(u_0)\}$.}
	\item\lb{L1item2} If  $\tau>0$ then $0\leq \int u(t;u_0, v_0)dx\leq {  M_0}$, for all $t\in [0,\min\{T,T_{\max}(u_0, v_0)\})$ and ${  M_0}$ as above. Moreover,  for arbitrary $q\ge 1$ there is $M_1=M_1(u_0, v_0, k,l,m, {  T})>0$ such that
	\begin{equation}\lb{5.6}
	\sup\limits_{t\in [0,\min\{T,T_{\max}(u_0, v_0)\})}\|v(t;u_0,v_0)\|_{\hatt W^{1,q}(\Gamma)}\le M_1.
	\end{equation}
\end{enumerate}
\end{lemma}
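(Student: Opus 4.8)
The plan is to reduce everything to a scalar ODE for the total mass $y(t):=\int_\Gamma u(t)\,dx$, together with a separate Duhamel estimate for $v$ when $\tau>0$. Throughout I use that, by the non-negativity part of Theorem~\ref{Lp} (established via the local theory and therefore available here), $u(t)\ge 0$ on $[0,T_{\max})$, and $v(t)\ge 0$ as well when $\tau>0$. \emph{Step 1: the mass equation.} Integrating the first equation of \eqref{logparabolic-parabolic-eq} over $\Gamma$ and setting $J:=\partial_x u-\chi u\partial_x v$, one has $\int_\Gamma\partial_x J\,dx=\sum_e\big(J_e(|e|)-J_e(0)\big)$; regrouping the boundary values around each vertex $\vartheta$, the $\partial_x u$–part equals $-\sum_{e\sim\vartheta}\partial_\nu u_e(\vartheta)=0$ by \eqref{fluxcond}, and the $-\chi u\partial_x v$–part equals $\chi\,u(\vartheta)\sum_{e\sim\vartheta}\partial_\nu v_e(\vartheta)=0$, the value $u(\vartheta)$ being well defined thanks to the continuity condition \eqref{contcond}. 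Hence $y'(t)=\int_\Gamma g(u(t))\,dx$ for $t\in(0,T_{\max})$, with $y$ continuous up to $t=0$.

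\emph{Step 2: differential inequality and comparison.} Since $u\ge0$ and $g(s)\le k+ls$ on $[0,\infty)$, this gives $y'\le k|\Gamma|+ly$; if moreover \eqref{gest3} holds, Jensen's inequality for the convex map $s\mapsto s^{1+\varepsilon}$ on the probability space $(\Gamma,dx/|\Gamma|)$ yields $\int_\Gamma u^{1+\varepsilon}\,dx\ge|\Gamma|^{-\varepsilon}y^{1+\varepsilon}$, hence $y'\le k|\Gamma|+ly-m|\Gamma|^{-\varepsilon}y^{1+\varepsilon}$, which is \eqref{L1-estimate-eq1}. Comparing $y$ with the solution of the matching scalar ODE with the same initial value $y(0)=\int_\Gamma u_0$, and using monotonicity of that solution: for $m=l=0$, $y(t)\le y(0)+k|\Gamma|t$; for $m=0<l$, $y(t)\le e^{lt}y(0)+\tfrac{k|\Gamma|}{l}(e^{lt}-1)$; for $m>0$, the function $h(Y):=k|\Gamma|+lY-m|\Gamma|^{-\varepsilon}Y^{1+\varepsilon}$ has $h(0)\ge0$ and $h(Y)\to-\infty$, so writing $Y_*$ for its largest zero one obtains $y(t)\le\max\{y(0),Y_*\}$ for \emph{all} $t\ge0$. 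Evaluating at $\hat T:=\min\{T,T_{\max}\}$ yields \eqref{5.4} with the stated $M_0$ (the case $k=l=m=0$ being the sub-case $k=0$ of $m=l=0$); for $\tau>0$ the identical computation gives $0\le\int_\Gamma u(t)\,dx\le M_0$.

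\emph{Step 3: the $W^{1,q}$–bound on $v$ for $\tau>0$.} Rewrite $v_t=\tfrac1\tau(\Delta-1)v+\tfrac1\tau u$, so by Duhamel $v(t)=e^{\frac t\tau(\Delta-1)}v_0+\tfrac1\tau\int_0^t e^{\frac{t-s}{\tau}(\Delta-1)}u(s)\,ds$, with $e^{\frac s\tau(\Delta-1)}=e^{-s/\tau}e^{\frac s\tau\Delta}$. For the integral term, \eqref{new-norm-estimate2} of Theorem~\ref{lplq estimate} with the exponent pair $(1,q)$ together with $\|u(s)\|_{L^1(\Gamma)}=\int_\Gamma u(s)\,dx\le M_0$ bounds it, after the substitution $\sigma=(t-s)/\tau$, by $CM_0\int_0^\infty e^{-\sigma}\max\{\sigma^{-1+\frac1{2q}},\sigma^{-\frac12+\frac1{2q}}\}\,d\sigma<\infty$, uniformly in $t\in[0,\hat T)$, since both exponents exceed $-1$ and the exponential controls $\sigma\to\infty$. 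For the homogeneous term, when $1\le q\le p$ standard analytic‑semigroup estimates and the embedding $\hatt W^{1,p}(\Gamma)\hookrightarrow\hatt W^{1,q}(\Gamma)$ give $\|e^{\frac t\tau(\Delta-1)}v_0\|_{W^{1,q}(\Gamma)}\le C\|v_0\|_{W^{1,p}(\Gamma)}$ for all $t\ge0$; for larger $q$ one restarts the Duhamel representation at a fixed $t_0\in(0,T_{\max})$, where by Theorem~\ref{thm3.1} $v(t_0)\in\hatt C^{2+\nu}(\overline\Gamma)\subset\hatt W^{1,q}(\Gamma)$, and runs the same estimate on $[t_0,\hat T)$. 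This yields \eqref{5.6}.

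\emph{Main obstacle.} The only non‑mechanical point is the $T$–independent bound in the case $m>0$ of Step 2: this is precisely the logistic damping, and it rests on the elementary fact that the right‑hand side of \eqref{L1-estimate-eq1}, viewed as a function of $y\ge0$, is eventually strictly negative. Everything else — the vanishing of the boundary flux in Step 1 (which only uses \eqref{contcond}, \eqref{fluxcond} and the $C^1$ regularity of classical solutions up to the vertices), the ODE comparison, and the $v$–estimate — is routine, the one mild subtlety being the near‑$t=0$ behaviour of the homogeneous part of $v$ when $q>p$, handled by re‑initializing at a positive time.
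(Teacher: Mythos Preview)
Your Steps 1 and 2 are correct and coincide with the paper's argument. In Step 3 the overall strategy is right, and your treatment of the Duhamel integral term is fine --- in fact slightly simpler than the paper's, which first shifts the spectral parameter via Corollary~\ref{cor2.1}; you instead use the built-in $e^{-s/\tau}$ decay directly, and the resulting $T$-dependence of the constant is permitted by the statement of the lemma.

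The genuine gap is in your handling of the homogeneous term $e^{\frac t\tau(\Delta-1)}v_0$ for $1\le q\le p$. The uniform bound $\|e^{\frac t\tau(\Delta-1)}v_0\|_{\hatt W^{1,q}(\Gamma)}\le C\|v_0\|_{\hatt W^{1,p}(\Gamma)}$ does \emph{not} follow from ``standard analytic-semigroup estimates'': the space $\hatt W^{1,p}(\Gamma)$ is the edgewise direct sum and therefore contains functions that are discontinuous at the vertices, whereas the semigroup is uniformly bounded only on the fractional domains $\cX^\alpha_p$ (which for $\alpha$ near $\tfrac12$ encode vertex continuity), not on all of $\hatt W^{1,p}(\Gamma)$. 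Concretely, on the two-edge star graph (where Neumann--Kirchhoff is just the Neumann Laplacian on $[-1,1]$) take $v_0\equiv 1$ on one edge and $v_0\equiv 0$ on the other; then $v_0\in\hatt W^{1,p}(\Gamma)$ with $\partial_x v_0\equiv0$ edgewise, but the Neumann Fourier coefficients of the corresponding step function satisfy $|c_n|\sim n^{-1}$, so $\|\partial_x e^{s\Delta}v_0\|_{L^2}^2\sim\sum_n e^{-2\lambda_n s}\lambda_n|c_n|^2\to\infty$ as $s\to0^+$. This is exactly the phenomenon flagged in the Remark preceding Proposition~\ref{mild-solution-prop2}.

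The fix is the re-initialization you already invoke for $q>p$: either restart the Duhamel representation at some small $t_0>0$, where $v(t_0)\in\cX^{1/2+\beta}_p$ by Theorem~\ref{thm3.1} and the semigroup \emph{is} uniformly bounded; or --- as the paper does --- estimate the homogeneous term from $\|v_0\|_{L^1}$ via Corollary~\ref{cor2.1}, accept the singularity $t^{-\frac12-\frac12(1-\frac1q)}$ at $t=0$, and cover the short interval $[0,\varepsilon]$ by the a priori bound $\sup_{t\in(0,T]}\|v(t)\|_{\hatt W^{1,p}(\Gamma)}\le R$ from the local theory (Proposition~\ref{mild-solution-prop2}); see \eqref{5.14}--\eqref{5.15}.
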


\begin{proof}
\eqref{L1item1} Integrating both sides of the first equation in \eqref{logparabolic-parabolic-eq} over $\Gamma$ and using \eqref{gest3} we obtain
$$
\frac{d}{dt}\int_\Gamma u(t;u_0)dx\leq k|\Gamma|+l\int_\Gamma u(t;u_0)dx-m\int_\Gamma u^{1+\varepsilon}(t;u_0)dx.
$$
This inequality together with
$$
\int_\Gamma u^{1+\varepsilon}(t;u_0)dx\ge {|\Gamma|}^{-\varepsilon}\left(\int_\Gamma u(t;u_0)dx\right)^{1+\varepsilon}
$$
yields \eqref{L1-estimate-eq1}.

Then \eqref{L1-estimate-eq1} and comparison principle for scalar ODEs yield the following assertions. 
If $m>0$, then
\begin{equation}
\label{m-pos-eq}
\int_\Gamma u(t;u_0)dx\le \max\left\{\int_\Gamma u_0dx, r(k,l,m)\right\}, t\in [0,\min\{T,T_{\max}(u_0)\}),
\end{equation} where $r(k,l,m)>0$ is the positive zero of $\varphi(x):= k|\Gamma|+lx-m|\Gamma|^{-\varepsilon}x^{1+\varepsilon}$. If $m=0$ and $l>0$ then
\begin{equation}
\label{m-zero-l-pos-eq}
\int_\Gamma u(t;u_0)dx\le 
e^{lt}\int_{\Gamma}u_0dx+\frac{k|\Gamma|}{l}\Big(e^{lt}-1\Big),\  t\in [0,\min\{T,T_{\max}(u_0)\}).
\end{equation}
If $m=l=0$, then 
\begin{equation}
\label{m-zero-l-zero-eq}
\int_\Gamma u(t;u_0)\le \int_\Gamma u_0dx+k|\Gamma|t,\  t\in [0,\min\{T,T_{\max}(u_0)\}).
\end{equation}
{  The assertion \eqref{gest3} follows from \eqref{m-pos-eq}, \eqref{m-zero-l-pos-eq}, and \eqref{m-zero-l-zero-eq}.}

\eqref{L1item2}.  The first assertion is proved as in \eqref{L1item1}. Let us prove the second assertion. Integrating the second equation in \eqref{logparabolic-parabolic-eq} we obtain
\begin{equation}
\frac{d}{dt}\int_\Gamma v(t;u_0, v_0)dx\leq \int_\Gamma u(t;u_0, v_0)dx-\int_\Gamma v(t;u_0, v_0)dx\leq M_0-\int_\Gamma v(t)dx,
\end{equation}
hence, for some $C=C(v_0)>0$ one has
\begin{equation}\lb{5.1}
\int_\Gamma v(t;u_0, v_0)dx\leq \max\left\{\int_{\Gamma} v_0 dx, M_0  \right\}:={  M_0'}. 
\end{equation}
Next, let us re-write the second equation in \eqref{logparabolic-parabolic-eq} as follows
\begin{equation}
v_t=v_{xx}-(1+\sigma_0)v+u+\sigma_0v,
\end{equation}
where $\sigma_0>0$ is as in Corollary \ref{cor2.1}. Then denoting $\omega:=1+\sigma_0$ one obtains
\[\partial_xv(t)=\partial_xe^{(\Delta-\omega)t}v_0+\int_{0}^{t}\partial_xe^{(\Delta-\omega)(t-s)}\big(u(s)+\sigma_0v(s)\big)ds.\]
Since $\omega>\sigma_0$ we can employ Corollary \ref{cor2.1} to get some $\delta\in(0, \omega)$ and a $T$-independent constant $C=C(\omega, p, \Gamma, \delta)>0$ for which one has
\begin{align}
&	\| \partial_x v(t)\|_{\elq}  \leq \|\partial_xe^{(\Delta-\omega)t}v_0\|_{\elq}  +\int_{0}^{t}\|\partial_xe^{(\Delta-\omega)(t-s)}\big(u(s)+\sigma_0v(s))\|_{\elq}ds \nonumber \\
	&\leq  Ct^{-\frac12-\frac12\left(1-\frac1q\right)}e^{-\delta t}\|v_0\|_{\el{1}}\no\\
	&\qquad+C\sup_{t\in [0,\min\{T,T_{\max}\})}( \|u(t)\|_{L^1(\Gamma)}+\|v(t)\|_{L^1(\Gamma)}) \int_{0}^{t}(t-s)^{-\frac12-\frac12\left(1-\frac1q\right)}e^{-\delta (t-s)} ds\no\\
	&\leq Ct^{-\frac12-\frac12\left(1-\frac1q\right)}e^{-\delta t}\|v_0\|_{\el{1}}+C\sup_{t\in [0,\min\{T,T_{\max}\})}( \|u(t)\|_{L^1(\Gamma)}+\|v(t)\|_{L^1(\Gamma)})\no
\end{align}
for all  $t\in [0, \min\{T,T_{\max}\})$. Combining this with  \eqref{5.4}, \eqref{5.1} we get
\begin{align}
	&	\| \partial_x v(t)\|_{\elq} \leq Ct^{-\frac12-\frac12\left(1-\frac1q\right)}e^{-\delta t}\|v_0\|_{\el{1}}+C(1+{  M_0'}),
\end{align}
for some $T$-independent constant $C=C(u_0,v_0, \Gamma, \omega, \delta)>0$.  By a similar argument, one obtains
\begin{align}
	&	\| v(t)\|_{\elq} \leq Ct^{-\frac12\left(1-\frac1q\right)}e^{-\delta t}\|v_0\|_{\el{1}}+C(1+{  M_0'}).
\end{align}
Then for $0<\varepsilon\ll 1$ there exists $C=C(u_0,v_0, \Gamma, \omega, \delta, \varepsilon)>0$ such that
\begin{equation}\lb{5.14}
	\|v(t;u_0,v_0)\|_{W^{1,q}(\Gamma)}\le C ({  M_0'}+1), \text{\ for all\ } t\in[\varepsilon, \min(T, T_{\max})).
\end{equation}
By the second identity in \eqref{milduvsols}, for sufficiently small $\varepsilon$ we also have
\begin{equation}\lb{5.15}
	\|v(t;u_0,v_0)\|_{W^{1,q}(\Gamma)}\le C, \text{\ for all\ } t\in[0,\varepsilon].
\end{equation}
Combining \eqref{5.14}, \eqref{5.15} we obtain \eqref{5.6}.

\end{proof}

\begin{lemma}
	\label{Lp-boundedness-lm}
	Assume hypotheses of Theorem \ref{global-existence-thm}. Then the following assertions hold for arbitrary $q\geq1$.
	
	\begin{enumerate}
		\item\lb{Lpitem1} If $\tau=0$ then there exists  $M(q,T,u_0)>0$ such that
		
	\begin{equation}\lb{item1eq}
	\|(u(t;u_0), v(t;u_0))\|_{\elq\times \hatt W^{1,q}(\Gamma)}\le M(q,T,u_0),\ t\in [0,\min\{T,T_{\max}(u_0)\}).
	\end{equation}
		Moreover, if $m>0$ or $k=l=m=0$, then $M(q,T,u_0)$ can be chosen independently of $T$.
		\item\lb{Lpitem2} If $\tau>0$ then there exists  $M(q,T,u_0, v_0)>0$ such that
		\begin{equation}\lb{item2eq}
			\|(u(t;u_0), v(t;u_0))\|_{\elq\times \hatt W^{1,q}(\Gamma)}\le M(q,T,u_0, v_0),\ t\in [0,\min\{T,T_{\max}(u_0, v_0)\}).
		\end{equation}
		Moreover, if $m>0$ or $k=l=m=0$, then $M(q,T,u_0)$ can be chosen independently of $T$.
	\end{enumerate}
\end{lemma}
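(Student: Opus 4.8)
The plan is to establish \eqref{item1eq} and \eqref{item2eq} by induction on integer exponents $n\ge 1$, exactly along the lines sketched in Section~\ref{outline2}; the case of a general real $q\ge 1$ then follows from the embedding $\hatt L^n(\Gamma)\hookrightarrow\hatt L^q(\Gamma)$ for $n\ge q$ on the compact graph $\Gamma$, together with the companion bound for $v$: when $\tau=0$ one has $\|v(t)\|_{\hatt W^{1,q}(\Gamma)}\le C\|u(t)\|_{\hatt L^q(\Gamma)}$ by elliptic regularity for $(\sigma-\partial_{xx})^{-1}$ on $\Gamma$ (cf.\ Proposition~\ref{prop2.1}), while when $\tau>0$ the bound $\|v(t)\|_{\hatt W^{1,q}(\Gamma)}\le M_1$ for \emph{all} $q\ge 1$ is already contained in Lemma~\ref{L1-boundedness-lm}. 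Two further reductions will be used throughout: first, by the instantaneous smoothing built into Theorem~\ref{thm3.1} (namely $u\in C^\beta((0,T_{\max}),\cX^\beta_r)$ for every $r\ge 1$), for each $\varepsilon\in(0,\min\{T,T_{\max}\})$ the norm $\|u(\varepsilon)\|_{\hatt L^n(\Gamma)}$ is finite, so it suffices to propagate a bound on $t\mapsto\int_\Gamma u(t)^n\,dx$ starting from $t=\varepsilon$, the remaining interval $[0,\varepsilon]$ being handled by Proposition~\ref{mild-solution-prop2}/Theorem~\ref{thm3.1}; second, since $u_0\ge 0$ and $v_0\ge 0$, Theorem~\ref{Lp} gives $u,v\ge 0$, which legitimizes testing against powers of $u$ and the use of Jensen's inequality.

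The base case $n=1$ is Lemma~\ref{L1-boundedness-lm}. For the inductive step, assume the bound holds for $n-1$, so that $\|u(t)\|_{\hatt L^{n-1}(\Gamma)}\le M_{n-1}$ and hence (by the reductions above) $\|\partial_x v(t)\|_{\hatt C(\overline\Gamma)}\le\tilde M_{n-1}$ on the relevant interval, with constants independent of $T$ in the case $m>0$ or $k=l=m=0$. Testing the first equation of \eqref{logparabolic-parabolic-eq} against $nu^{n-1}$, integrating over $\Gamma$, and integrating by parts twice — all vertex contributions cancelling because $u$ is continuous at the vertices and $u,v$ both satisfy the Kirchhoff condition \eqref{fluxcond} — yields
\begin{align*}
\frac{d}{dt}\int_\Gamma u^n\,dx
&=-n(n-1)\int_\Gamma u^{n-2}(\partial_x u)^2\,dx \\
&\quad+n(n-1)\chi\int_\Gamma u^{n-1}(\partial_x u)(\partial_x v)\,dx+n\int_\Gamma u^{n-1}g(u)\,dx.
\end{align*}
Young's inequality absorbs the taxis term into the dissipative term and leaves a contribution bounded by $C\tilde M_{n-1}^{2}\int_\Gamma u^n\,dx$, while the growth bound $g(s)\le k+ls$ dominates the reaction term by $C\bigl(1+\int_\Gamma u^n\,dx\bigr)$. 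Hence $\tfrac{d}{dt}\int_\Gamma u^n\,dx\le -c_n\int_\Gamma(\partial_x u^{n/2})^2\,dx+C_1\int_\Gamma u^n\,dx+C_2$ with $C_1,C_2$ depending on $M_{n-1}$; discarding the gradient term and applying Grönwall gives a bound on $[\varepsilon,\min\{T,T_{\max}\})$, finite whenever $T<\infty$, which combined with the short-time bound proves \eqref{item1eq} and \eqref{item2eq}.

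For the uniform-in-$T$ statements one keeps the dissipative gradient term. With $w=u^{n/2}$, the Gagliardo--Nirenberg inequality on the compact graph together with the $T$-independent mass bound $\|u(t)\|_{\hatt L^1(\Gamma)}\le M_0$ from Lemma~\ref{L1-boundedness-lm} turns $\int_\Gamma(\partial_x w)^2\,dx$ into a term of the form $c\bigl(\int_\Gamma u^n\,dx\bigr)^{1+\rho}-C$ (modulo lower-order terms controlled by $M_0$) for some $\rho>0$; alternatively, when $m>0$ the logistic sink directly contributes $-nm|\Gamma|^{-\varepsilon/n}\bigl(\int_\Gamma u^n\,dx\bigr)^{1+\varepsilon/n}$ via Jensen's inequality (here one uses \eqref{gest3}). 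In either case the differential inequality takes the form $y'\le C_1y+C_2-c\,y^{1+\rho}$, whose solutions are bounded by a constant independent of $T$; when $k=l=m=0$ one checks in addition that $\int_\Gamma u\,dx$ stays bounded so that all the lower-order constants are themselves $T$-independent. Finally, feeding these $\hatt L^q\times\hatt W^{1,q}$ bounds (for $q$ large) back into the integral equation \eqref{reshur} when $\tau=0$, or its analogue when $\tau>0$, and using $\cX^\beta_q\hookrightarrow\hatt L^\infty(\Gamma)$ yields \eqref{1.18} and \eqref{1.20}; this closing step is carried out at the end of the section.

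The main obstacle is the chemotaxis term $\int_\Gamma u^{n-1}(\partial_x u)(\partial_x v)\,dx$: controlling it requires an instantaneous $\hatt C^1$ (or at least $\hatt W^{1,q}$ with $q$ large) bound on $v$ — supplied for $\tau>0$ by the exponentially decaying smoothing estimates of Corollary~\ref{cor2.1} already exploited in Lemma~\ref{L1-boundedness-lm}, and for $\tau=0$ by elliptic regularity — together with a careful Young's-inequality balance against the diffusive term. The second delicate point, needed only for the uniform-in-$T$ conclusion, is producing the superlinear sink $-c\,y^{1+\rho}$ out of the diffusion via the graph Gagliardo--Nirenberg inequality and the conserved (or dissipated) $\hatt L^1$ mass.
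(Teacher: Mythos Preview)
Your plan is correct and follows the paper's scheme: induction on integer $n$ via $L^n$ energy estimates, base case from Lemma~\ref{L1-boundedness-lm}, integration by parts using the Neumann--Kirchhoff conditions, Gagliardo--Nirenberg to manufacture a superlinear sink, and ODE comparison.

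The one genuine difference worth noting is the treatment of the taxis term when $\tau=0$. You bound $\|\partial_x v\|_{L^\infty}$ via elliptic regularity (Proposition~\ref{prop2.1}) and then Young's inequality, leaving $C\tilde M_{n-1}^2\int u^n$; the paper instead multiplies the elliptic equation $0=\partial_{xx}v-v+u$ by $u^n$ and integrates by parts to obtain the exact identity $n\int u^{n-1}u_x v_x\,dx=-\int u^n v\,dx+\int u^{n+1}\,dx$, so the chemotaxis contribution becomes $\tfrac{(n-1)\chi}{n}\int u^{n+1}$ directly. This feeds into Gagliardo--Nirenberg against $\int u^{n-1}$ and yields the single differential inequality $\tfrac{1}{n}\tfrac{d}{dt}\int u^n\le C\bigl(1+\int u^n-(\int u^n)^{1+1/n}\bigr)$ in both the finite-$T$ and uniform-in-$T$ cases at once, whereas your route treats the two cases separately. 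Both arguments close.

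One small caution: for $\tau>0$ you invoke $\|\partial_x v\|_{\hat C(\overline\Gamma)}\le\tilde M_{n-1}$, but Lemma~\ref{L1-boundedness-lm} only supplies $\|v\|_{\hat W^{1,q}}$ for each finite $q$ with $q$-dependent constant, so $L^\infty$ for $\partial_x v$ is not immediate. Your parenthetical ``or at least $\hat W^{1,q}$ with $q$ large'' is the correct fix; concretely one writes $\int u^n|\partial_x v|^2\le\int u^{n+1}+C\int|\partial_x v|^{2(n+1)}$, bounds the second term by $M_1(2(n+1))^{2(n+1)}$, and handles $\int u^{n+1}$ via Gagliardo--Nirenberg --- which is precisely the paper's $\tau>0$ argument.
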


\begin{proof}
First, for $\tau\geq0$ and $n\geq 2$ we claim the following auxiliary identity 
		\begin{equation}\lb{ibp3}
		\frac{1}{n} \frac{d}{dt}\int_{\Gamma} u^{n}dx=  -(n-1) \int_{\Gamma}  u^{n-2}|u_{x}|^2dx +(n-1)\chi  \int_{\Gamma}u^{n-1}u_x v_x dx +  \int_{\Gamma}u^{n-1} g(u) dx.
	\end{equation}
	
To begin the derivation of \eqref{ibp3} let us multiply both sides of the first equation in \eqref{logparabolic-parabolic-eq} by $u^{n -1}$ and  integrate over $\Gamma$
	\begin{equation} \lb{ibp1}
		\int_{\Gamma}  u^{n-1} u_tdx =  \int_{\Gamma}  u^{n-1}u_{xx}dx -\chi  \int_{\Gamma}u^{n-1}(u v_x)_x dx +  \int_{\Gamma}u^{n-1} g(u) dx. 
	\end{equation}
	Then using
	\begin{equation}
		\int_{\Gamma}  u^{n-1} u_t dx = \frac{1}{n} \frac{d}{dt}\int_{\Gamma} u^{n}dx,
	\end{equation}
	we infer
	\begin{equation} \lb{ibp2}
		\frac{1}{n} \frac{d}{dt}\int_{\Gamma} u^{n}dx=   \int_{\Gamma}  u^{n-1}u_{xx}dx-\chi  \int_{\Gamma}u^{n-1}(u v_x)_xdx +  \int_{\Gamma}u^{n-1} g(u) dx.
	\end{equation}
	Next, integrating by parts we obtain
	\begin{align}
		\int_{\Gamma}  u^{n-1}u_{xx}dx&= -\int_{\Gamma} ( u^{n-1})_xu_{x}dx+\sum_{\theta\in \cV}\sum_{e\sim \theta}u^{n-1}_e(\theta)\partial_{\nu}u_e(\theta)\nonumber\\
		& = -\int_{\Gamma} ( u^{n-1})_xu_{x}dx+\sum_{\theta\in \cV}u^{n-1}(\theta)\sum_{e\sim \theta}\partial_{\nu}u_e(\theta)=-\int_{\Gamma} ( u^{n -1})_xu_{x}dx,
	\end{align}
	where we used the fact that $u$ satisfies Neumann--Kirchhoff conditions, that is, $u_e(\vartheta)=u(\vartheta)$ for every edge $e$ adjacent to vertex $\vartheta$ and 
	\begin{equation}
		\sum_{e\sim \theta}\partial_{\nu}u_e(\theta)=0.
	\end{equation}
	Integrating by parts the second term in the right-hand side of \eqref{ibp2} in a similar fashion we arrive at \eqref{ibp3}.

	\eqref{Lpitem1} It suffices to prove  the assertion for $q=n\in\bbN$. We prove this by induction. 
	
	The base case $q=1$ is given by Lemma \ref{L1-boundedness-lm}. Assume that the it holds for $q=n-1$ ($n\ge 2$). To make the induction step we will derive the following inequality 	
	\begin{align}\label{LP-estimate-eq5}
		\frac{1}{n} \frac{d}{dt}\int_{\Gamma} u^{n}dx\leq C\left( 1+\int_{\Gamma} u^ndx-\left(\int_\Gamma u^{n}dx\right)^{1+1/n}\right), t\in (0,\min\{T,T_{\max}\}),
	\end{align}
 for some $C=C(M(n-1,T,u_0))>0$ (with $x\mapsto C(x)$ having no blow-ups).  Assuming \eqref{LP-estimate-eq5} one obtains the assertions item \eqref{Lpitem1} as follows: 
 \begin{itemize}
 \item 	if $m>0$ or $k=l=m=0$ then, by induction assumption, $M(n-1, T, u_0)$ can be chosen independently from $T$ in which case \eqref{LP-estimate-eq5} yields $M(n,u_0)>0$ such that
 $$
 \int_\Gamma u^n(t;u_0)dx\le M(n,u_0),
 $$
 \item if $m=0$ and $k^2+l^2\not =0$ then \eqref{LP-estimate-eq5} yields 
 $$
 \int_\Gamma u^n(t;u_0)dx\le M(n,T,u_0). 
 $$
 \end{itemize}
 
Let us now derive \eqref{LP-estimate-eq5}. To that end let us rewrite the second term in the right-hand side of \eqref{ibp3}.
Multiplying both sides of the second equation in \eqref{logparabolic-parabolic-eq} (recall that $\tau=0$) by $u^{n}$ and integrating by parts as before yields
\begin{equation} \notag
	n \int_{\Gamma} u^{n-1} u_{x}v_{x} dx=  -  \int_{\Gamma}u^{n} vdx +  \int_{\Gamma}u^{n+1} dx.
\end{equation}
Combining this with \eqref{ibp3} one obtains
\begin{align}\notag
	\frac{1}{n} \frac{d}{dt}\int_{\Gamma} u^{n}dx&=  -(n-1) \int_{\Gamma}  u^{n-2}|u_{x}|^2dx -\frac{(n-1)\chi}{n}  \int_{\Gamma}u^{n}vdx\\
	&\quad + \frac{(n-1)\chi}{n}  \int_{\Gamma}u^{n+1}dx+ \int_{\Gamma}u^{n-1} g(u) dx.
\end{align}
This together with \eqref{gest3} implies that
	\begin{align} 
		\frac{1}{n} \frac{d}{dt}\int_{\Gamma} u^{n}dx\leq & -(n-1) \int_{\Gamma}  u^{n -2}|u_{x}|^2dx  +\frac{(n -1)\chi}{n}  \int_{\Gamma}u^{n+1}dx\nonumber\\
		& + k \int_{\Gamma}u^{n-1}  dx+l\int_{\Gamma} u^ndx,
	\end{align}
here, and throughout the proof, the negative term $\int_{\Gamma}-mu^{n+\varepsilon}dx$ has been dropped. 
	Noticing that
	$$
	\int_\Gamma u^{n-2}|u_x|^2dx=\frac{4}{n^2}\int_\Gamma  \left|(u^{\frac{n}{2}})_x\right|^2 dx,   
	$$
	we proceed
	\begin{align} \label{gamma_est1}
		\frac{1}{n} \frac{d}{dt}\int_{\Gamma} u^{n}dx\leq & -\frac{4(n-1)}{n^2} \int_{\Gamma}  |(u^{\frac{n}{2}})_{x}|^2dx \nonumber\\
		& + k \int_{\Gamma}u^{n-1}  dx+l\int_{\Gamma} u^ndx+\frac{(n -1)\chi}{n}\int_\Gamma u^{n+1}dx.
	\end{align}
	Let us observe that
	\begin{equation}
	\int_\Gamma u^{n+1}dx= \int_\Gamma \big(u^{\frac{n}{2}}\big)^{\frac{2(n+1)}{n}}dx=\|u^{\frac{n}{2}}\|_{L^{\frac{2(n+1)}{n}}(\Gamma)}^{\frac{2(n+1)}{n}},
	\end{equation}
	and use the Gagliardo–Nirenberg interpolation inequality to obtain
	\begin{equation}
		\label{LP-estimate-eq1}
		\|u^{\frac{n}{2}}\|_{L^{\frac{2(n+1)}{n}}(\Gamma)}\le C\|(u^{\frac{n}{2}})_x\|_{L^2(\Gamma)}^\theta \|u^{\frac{n}{2}}\|_{L^{\frac{2(n-1)}{n}}(\Gamma)}^{1-\theta}+C\|u^{\frac{n}{2}}\|_{L^{\frac{2(n-1)}{n}}(\Gamma)},
	\end{equation}
	where $\theta=\frac{2n}{(n+1)(2n-1)}$ and $C=C(n, \Gamma)>0$. Therefore, one has
	\begin{equation}
		\label{LP-estimate-eq2}
		\int_\Gamma u^{n+1}dx\le C \Big(\int_\Gamma |(u^{\frac{n}{2}})_x|^2dx\Big)^{\frac{2}{2n-1}}
		\Big(\int_\Gamma u^{n-1}dx\Big)^{\frac{2(2n^2-n-1)}{(2n-1)n}}+C\Big(\int_\Gamma u^{n-1}dx\Big)^{\frac{n+1}{n-1}}.
	\end{equation}
	Applying Young's inequality to the first term in the right-hand side above we obtain $C=C(n)>0$ such that
	\begin{align}
		\label{LP-estimate-eq3}
		\int_\Gamma u^{n+1}dx\le \frac{2}{n\chi} \int_{\Gamma}  |(u^{\frac{n}{2}})_{x}|^2dx &+ C
		\Big(\int_\Gamma u^{n-1}dx\Big)^{\frac{2(2n^2-n-1)}{(2n-1)n}\frac{2n-1}{2n-3}}\nonumber\\
		&+C\Big(\int_\Gamma u^{n-1}dx\Big)^{\frac{n+1}{n-1}},
	\end{align}
	This together with \eqref{gamma_est1} yield
	\begin{align}
		\label{LP-estimate-eq4}
		\frac{1}{n} \frac{d}{dt}\int_{\Gamma} u^{n}dx\le & k\int_{\Gamma}u^{n-1}  dx+l\int_{\Gamma} u^ndx-\frac{(n -1)\chi}{2n}\int_\Gamma u^{n+1}dx\nonumber\\
		&\leq M(n-1,T, u_0)+l\int_{\Gamma} u^ndx-\frac{(n -1)\chi}{2n}\int_\Gamma u^{n+1}dx.
	\end{align}
	Then using 
	\begin{equation}\lb{hold}
		\int_{\Gamma} u^n dx\leq \left(\int_{\Gamma} u^{n+1}dx\right)^{\frac{n}{n+1}}|\Gamma|^{\frac1{n+1}},
	\end{equation}
	we obtain \eqref{LP-estimate-eq5}.
	
	\eqref{Lpitem2} \sel{It suffices to prove \eqref{item2eq} for $q=n\in\bbN$}. We prove this by induction. The base case $q=1$ is given by Lemma \ref{L1-boundedness-lm}. Assume that it holds for $q=n-1$ ($n\ge 2$). To make the induction step we will derive \eqref{LP-estimate-eq5}  with  $C=C( M(n-1, T, u_0, v_0))>0$. Given such an inequality, the assertions of  item \eqref{Lpitem2} follow as in the proof of item \eqref{Lpitem1}.
	
	Let us now derive \eqref{LP-estimate-eq5}  with  $C=C( M(n-1, T, u_0, v_0))>0$ for $\tau=1$ (the derivation for $\tau>0$ is analogous). Combining \eqref{gest3} and \eqref{ibp3} one obtains
	\begin{align*}
		\frac{1}{n}\frac{d}{dt}\int_{\Gamma}u^{n}dx&\le -(n-1)\int_{\Gamma}  u^{n-2} |u_x|^2dx+(n-1)\chi\int_{\Gamma}u^{n-1}u_x v_xdx \\  
		&\quad +\int_{\Gamma}u^{n-1}(k+lu)dx\\
		&\le      -\frac{(n-1)}{2}\int_{\Gamma}  u^{n-2} |u_x|^2dx+\frac{(n-1)\chi^2}{2}\int_{\Gamma}u^{n} |v_x|^2dx \\   &\quad +\int_{\Gamma}u^{n-1}(k+lu)dx,
	\end{align*}
	where in the second step we used Young's inequality for 
	\begin{equation}
		\chi u^{n-1} u_x v_x= (u^{\frac n2-1}u_x )(\chi u^{\frac n2} v_x). 
	\end{equation}
	Next, using Young's inequality and Lemma  \ref{L1-boundedness-lm}  \eqref{Lpitem2} one obtains
	\begin{align}
		\begin{split}\lb{ondt1}
			\frac{(n-1)\chi^2}{2}\int_\Gamma &u^n|v_x|^2dx\le \int_\Gamma u^{n+1}dx+C\int_\Gamma |v_x|^{2(n+1)}dx\\
			&\le  \sel{\int_\Gamma u^{n+1}dx +C\left(1+N\right)^{1/(2(n+1))}}\\
			&\leq\int_\Gamma u^{n+1}dx +C(M(1, T, u_0, v_0))^{1/(2(n+1))}, 
		\end{split}
	\end{align}
	for some $C>0$.  Therefore, one has
	\begin{align}
		\label{parabolic-parabolic-global-eq2-new}
		\frac{1}{n}\frac{d}{dt}\int_{\Gamma}u^{n}dx
		&\le       -\frac{(n-1)}{2}\int_{\Gamma}  u^{n-2} |u_x|^2dx+\int_\Gamma u^{n+1}dx  \nonumber\\
		&\quad + k \int_{\Gamma}u^{n-1}dx  +l \int_\Gamma u^ndx\nonumber \\
		&=-\frac{2(n-1)}{n^2}\int_\Gamma |( u^{\frac{n}{2}})_x|^2dx +\int_\Gamma u^{n+1}dx \nonumber  \\
		&\quad + k \int_{\Gamma}u^{n-1}dx +l \int_\Gamma u^ndx  + C(M(1, T, u_0, v_0))^{1/(2(n+1))},
	\end{align}
	for some $C>0$. Using \eqref{LP-estimate-eq2} and Young's inequality one obtains
	\begin{align}
		\lb{onedt2}
		\int_\Gamma u^{n+1}dx\le \frac{(n-1)}{n^2} \int_{\Gamma}  |(u^{\frac{n}{2}})_{x}|^2dx &+ C
		\Big(\int_\Gamma u^{n-1}dx\Big)^{\frac{2(2n^2-n-1)}{(2n-1)n}\frac{2n-1}{2n-3}}\nonumber\\
		&+C\Big(\int_\Gamma u^{n-1}dx\Big)^{\frac{n+1}{n-1}}.
	\end{align}
	Then combining \eqref{parabolic-parabolic-global-eq2-new}, \eqref{onedt2}, \eqref{hold} and the induction hypothesis we get  \eqref{LP-estimate-eq5}  with  $C=C( M(n-1, T, u_0, v_0))>0$.
\end{proof}
\subsection{Proof of Theorem \ref{global-existence-thm}}

\sel{\begin{proof}[Proof of Theorem \ref{global-existence-thm}] The finite time blow-up of $t\mapsto \|(u(s), v(s))\|_{L^{q}(\Gamma)\times W^{1,q}(\Gamma)}$ has been ruled out in Lemma \ref{Lp-boundedness-lm}, that is, $T_{\max}=\infty$. 
	
To prove \eqref{1.18} and \eqref{1.20} 	it suffices to obtain a uniform, with respect to time, bound on $L^{\infty}(\Gamma)$-norm of each term in the right-hand side of \eqref{reshur}, where we assume without loss of generality that $\sigma>\sigma_0$ for $\sigma_0$ is as in Corollary \ref{cor2.1}. This choice, in particular, allows us to use \eqref{3.22} with $t\in(0, T]$ replaced by $t\in(0,\infty)$, some $\delta\in(0, \sigma)$ and $t$-independent constant $C>0$. 

Let us now switch to the first term in \eqref{reshur}. Using comparison principle from Theorem \ref{general-existence-thm} with $f(t,x,u,v)=u$ one obtains
	\begin{equation}\lb{reshurest3}
		\|e^{t(\Delta -\sigma)}u_0\|_{L^{\infty}(\Gamma)}\leq \max_{x\in\overline{\Gamma}}|u_0(x)|, t>0. 
	\end{equation}
	Let us estimate $L^{\infty}-$norm of the second term in \eqref{reshur}. Let us pick  $q>2$, $\beta \in (0,1/8)$ with  $2\beta-q^{-1}>0$  and recall that $u(t)\in \cX^{\beta}_{q}$, $t\in (0, T_{\max})$. Using the embedding $\cX^{\beta}_{q}\hookrightarrow L^{\infty}(\Gamma)$ we obtain
		\begin{align}
			\begin{split}
			\label{reshurest1}
			& \int_0^t \left\|e^{(\Delta-\sigma)(t-s)}\partial_x \big( u(s)\partial_x v(s)\big)\right\|_{\cX_q^{\beta}}  ds\\
			& \leq C\int_0^t (t-s)^{^{-\beta-\frac{q+1}{2q}} }e^{{-\delta(t-s)}}\|(u(s), v(s))\|_{L^{q}(\Gamma)\times W^{1,q}(\Gamma)}ds\\
			&\leq C \sup\limits_{s\in[0, \infty)}\|(u(s), v(s))\|_{L^{q}(\Gamma)\times W^{1,q}(\Gamma)}<\infty, t>0,
			\end{split}
		\end{align}
	where $C$ is $t$-independent and the last inequality holds due to the forgoing assumption that either  $m>0$ or  $k=l=m=0$ and Lemma \ref{Lp-boundedness-lm}.
	
		Next, employing \eqref{gest3}, Lemma \ref{Lp-boundedness-lm} and the comparison principle we obtain
		\begin{align}
			\begin{split}
\lb{reshurest2}
\int_{0}^{t} e^{(t-s)(\Delta-\sigma)}(g(u(s))+\sigma u(s))ds&\leq  \int_{0}^{t} e^{(t-s)(\Delta-\sigma)}(C+l u(s))ds\\
&\leq C(u_0)\int_{0}^{t} e^{-(t-s)\sigma/2}ds, t>0
			\end{split}
		\end{align}
		Combining \eqref{reshur}, \eqref{reshurest3}, \eqref{reshurest1}, \eqref{reshurest2} one obtains
		\begin{equation}
				\limsup_{t\to\infty} \|u(t)\|_{L^\infty(\Gamma)}<\infty.
		\end{equation}

\end{proof}
}
\appendix 
\section{Functional spaces, interpolation spaces, Sobolev embedding theorems  on metric graphs}\label{app:FunctionalSpaces}
In this section we record several facts about fractional power spaces $\cX^{\alpha}_p$, $\alpha\in(0,1)$, $1\leq p< \infty$ generated by the Neumann--Kirchhoff Laplacian on compact metric graphs, more concretely, by the analytic semigroup in $e^{t\Delta}$. By definition, see, e.g., \cite[Section 1.3]{Henry}., $\cX^{\alpha}_p:=\dom(({ \sigma}I_{\elpee}-\Delta)^{\alpha})$  is equipped with the graph norm of $({\sigma}I_{\elpee}-\Delta)^{\alpha}$ { for some $\sigma>0$ (we note that $\dom(({ \sigma}I_{\elpee}-\Delta)^{\alpha})=\dom(({\tilde  \sigma}I_{\elpee}-\Delta)^{\alpha})$ and
the graph norms of $\dom(({ \sigma}I_{\elpee}-\Delta)^{\alpha})=\dom(({\tilde  \sigma}I_{\elpee}-\Delta)^{\alpha})$ are equivalent
 for $\sigma,\tilde\sigma>0$)}. Throughout this paper we use {  compact}  embeddings of $\cX^{\alpha}_p$ into various function spaces. Such embeddings are well known in the case of classical domains $\Omega\subset\bbR^n$, cf. \cite[Chapter 1]{Henry}. To the best of our knowledge, these type of embedding for metric  graphs, although expected, have not appeared in print. For completeness of exposition we present them in Theorem \ref{analytic-semigroup-thm2} below. Let us first introduce some notation.  The edge-wise direct sum of Banach spaces of functions will be denoted by  $\hatt\  $, in particular, we write
\begin{align}\lb{hats}
	\hatt C_0^{\infty}(\Gamma):= \bigoplus_{e\in\cE} C_0^{\infty}(e), \,\,\, &\hatt W^{k,p}(\cG):= \bigoplus_{e\in\cE}W^{k,p}(e), \,\, k\in\bbN_0,\,\,\, \hatt C^{\nu}(\overline{\cG}) := \bigoplus_{e\in\cE}\hatt C^{\nu}(\overline{e}),
\end{align}
where $\nu>0$ and $C^{\nu}(\overline{e})$ denotes the usual space of H\"older continuous functions defined on the closed interval $\overline{e}$ 
endowed with the norm
\begin{equation}
	\|u\|_{C^\nu(\bar e)}=\sum_{\alpha\in\bbN_0,\alpha\le [\nu]}\sup_{x\in\bar e} |u^{(\alpha)}(x)|+\sup_{x,y\in\bar e,x\not =y}
	\frac{|u^{([\nu])}(x)-u^{([\nu])}(y)|}{|x-y|^{\nu-[\nu]}}.
\end{equation}
Let us note that the edge-wise direct sums introduced in \eqref{hats} induce no vertex conditions as oppose to the  space of continuous functions on the closure $\overline{\cG}$ of the graph
\begin{equation}
	C(\overline \Gamma)=\{u\in \hatt C(\overline \Gamma)\, :\, u\,\, {\rm is \,\, continuous\,\, at\,\, the\,\, vertices\,\, of}\,\, \Gamma\}.
\end{equation}

In the following theorem $(L^p(\Gamma),\hatt{W}^{2,p}(\Gamma))_{\theta,q}$ denotes the interpolation space between $L^p(\Gamma)$ and
$\hatt{W}^{2,p}(\Gamma)$ via the $K$-method, where $0<\theta<1$ and $1\le q<\infty$ (see  \cite[Section 1.3.2]{Tri} for definition). 

\begin{theorem}
	\label{analytic-semigroup-thm2} Suppose that $1\leq p<\infty$. Then one has
	\begin{itemize}
		\item[(1)] For any $q\ge p$ and $s-\frac{1}{p}> t-\frac{1}{q}$, one has
			\begin{equation}
				\label{sobolev-embedding-eq3-0}
				\hatt W^{s,p} (\Gamma) \doublehookrightarrow
\hatt W^{t,q}(\Gamma),
			\end{equation}
			\begin{equation}
				\label{sobolev-embedding-eq2}
				\hatt{W}^{s,p}(\Gamma) \doublehookrightarrow \hatt{C}^r(\overline \Gamma)
				\quad r<s-\frac{1}{p},
			\end{equation} 
			and
			for $s\in (0,1)\setminus\{\frac{1}{2}\}$ one has
			\begin{equation}
				\label{sobolev-embedding-eq3}
				(L^p(\Gamma), \hatt{W}^{2,p}(\Gamma))_{s,p}=\hatt{W}^{2s,p}(\Gamma),
			\end{equation}
		where $\doublehookrightarrow$ denotes compact embedding. 
		\item[(2)] One has
		\begin{equation}
			\label{sobolev-embedding-eq4}
			(\hatt{L}^p(\Gamma), X_p^\alpha)_{\theta,p}=(\hatt{L}^p(\Gamma),\mathcal{D}(\cA_p))_{{\alpha}\theta,p},\quad \, 0<\theta<1,
		\end{equation}
		
		\begin{equation}
			\label{sobolev-embedding-eq5-0}
			X_p^\alpha \doublehookrightarrow \hatt {W}^{2\alpha\theta,p}(\Gamma),\quad \, 0<\theta<1,
		\end{equation}
		
		and 
		\begin{equation}
			\label{sobolev-embedding-eq5}
			X_p^\alpha \doublehookrightarrow \hatt C^\nu(\overline \Gamma),\quad \, 0<\theta<1,\,\,  0<\nu <2\alpha\theta -\frac{1}{p}.
		\end{equation}
	\end{itemize}
\end{theorem}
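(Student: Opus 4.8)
The strategy is to reduce every claim to classical facts about Besov/Sobolev spaces on a single interval together with the abstract theory of fractional power spaces from \cite{Henry} and interpolation theory from \cite{Tri}. Since all spaces appearing in the statement are edge-wise direct sums, $\hatt{\cX}(\Gamma)=\bigoplus_{e\in\cE}\cX(e)$, and the graph is compact (finitely many edges of finite length), each embedding, interpolation identity, and compactness assertion for $\Gamma$ follows from the corresponding statement on a bounded interval $e=(0,|e|)$ applied edge-by-edge and then summed over the finite index set $\cE$. Thus the proof is essentially a bookkeeping argument once the scalar (single-interval) facts are in place.

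\textbf{Step 1 (Sobolev embeddings on an interval).} For \eqref{sobolev-embedding-eq3-0} and \eqref{sobolev-embedding-eq2} I would invoke the standard Sobolev embedding theorems for fractional-order spaces $W^{s,p}(0,\ell)$ on a bounded interval: for $q\ge p$ and $s-\tfrac1p>t-\tfrac1q$ one has the compact embedding $W^{s,p}(0,\ell)\hookrightarrow\hookrightarrow W^{t,q}(0,\ell)$, and for $r<s-\tfrac1p$ one has $W^{s,p}(0,\ell)\hookrightarrow\hookrightarrow C^r([0,\ell])$; these are classical (see, e.g., \cite[Thm.\ 4.6.1]{Tri} for the embeddings and the Rellich--Kondrachov-type compactness on bounded domains in one dimension). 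Taking the finite direct sum over $e\in\cE$ preserves both boundedness and compactness (a finite product of compact operators is compact), which gives \eqref{sobolev-embedding-eq3-0} and \eqref{sobolev-embedding-eq2}. For \eqref{sobolev-embedding-eq3}, the identity $(L^p(0,\ell),W^{2,p}(0,\ell))_{s,p}=W^{2s,p}(0,\ell)$ for $s\in(0,1)\setminus\{\tfrac12\}$ is the classical interpolation-space description of Sobolev spaces via the real ($K$-) method, found in \cite[Thm.\ 4.3.1/2]{Tri}; the restriction $s\ne\tfrac12$ is exactly where the real-interpolation identity with second index $p$ differs from the Lions--Magenes space, so it must be stated. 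Again this passes to the direct sum because real interpolation commutes with finite direct sums.

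\textbf{Step 2 (Fractional power spaces and the abstract interpolation identity).} For part (2), I would first recall from \cite[Thm.\ 1.4.8]{Henry} the abstract identity $(\cX,\cX^1)_{\theta,p}=(\cX,\cX^1)_{\alpha\theta,p}$-type relations; more precisely, for a sectorial operator $\cA_p=\sigma I-\Delta$ on $\cX=\hatt L^p(\Gamma)$ with $\cX^\alpha_p=\dom(\cA_p^\alpha)$ one has the continuous inclusion $(\hatt L^p(\Gamma),\dom(\cA_p))_{\alpha\theta,p}\hookrightarrow \cX^\alpha_p$ when... and the reiteration theorem for real interpolation gives $(\hatt L^p(\Gamma),\cX^\alpha_p)_{\theta,p}=(\hatt L^p(\Gamma),\dom(\cA_p))_{\alpha\theta,p}$, which is \eqref{sobolev-embedding-eq4}; this combines \cite[Thm.\ 1.4.8]{Henry} with the reiteration theorem \cite[Thm.\ 1.10.2]{Tri}. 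Next, since $\dom(\cA_p)=\dom(\Delta)\subset\hatt W^{2,p}(\Gamma)$ with equivalent norms on $\dom(\Delta)$, combining \eqref{sobolev-embedding-eq4} with \eqref{sobolev-embedding-eq3} (applied with $s=\alpha\theta$, noting one may shrink $\theta$ slightly to avoid $\alpha\theta=\tfrac12$) yields
\[
\cX^\alpha_p \supseteq (\hatt L^p(\Gamma),\dom(\cA_p))_{\alpha\theta,p}=(\hatt L^p(\Gamma),\hatt W^{2,p}(\Gamma))_{\alpha\theta,p}=\hatt W^{2\alpha\theta,p}(\Gamma),
\]
and in fact one wants the embedding $\cX^\alpha_p\hookrightarrow\hookrightarrow\hatt W^{2\alpha\theta,p}(\Gamma)$ for every $\theta\in(0,1)$, which is \eqref{sobolev-embedding-eq5-0}. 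The compactness comes from the fact that $\cX^\alpha_p\hookrightarrow\hookrightarrow\cX^\beta_p$ for $\beta<\alpha$ (a standard consequence of compactness of the resolvent of $\Delta$ on a compact graph) composed with a bounded embedding into $\hatt W^{2\alpha\theta,p}$; alternatively one inserts a slightly larger exponent and uses Step 1. Finally, \eqref{sobolev-embedding-eq5} follows by chaining \eqref{sobolev-embedding-eq5-0} with \eqref{sobolev-embedding-eq2}: given $0<\nu<2\alpha\theta-\tfrac1p$, pick $s$ with $\nu<s-\tfrac1p$ and $s<2\alpha\theta$, so $\cX^\alpha_p\hookrightarrow\hookrightarrow\hatt W^{s,p}(\Gamma)\hookrightarrow\hookrightarrow\hatt C^\nu(\overline\Gamma)$.

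\textbf{Main obstacle.} The routine parts are genuinely routine. The one point that needs real care is the transition from the \emph{abstract} fractional power space $\cX^\alpha_p$ (defined via $\dom(\cA_p^\alpha)$) to a concrete Sobolev space: this requires knowing that $\dom(\Delta)$ sits inside $\hatt W^{2,p}(\Gamma)$ with equivalent graph norm (i.e.\ elliptic regularity for the Neumann--Kirchhoff Laplacian on the compact graph, which follows edge-by-edge from interior $W^{2,p}$-regularity plus the fact that the vertex conditions are "regular" in the sense of Lopatinskii--Shapiro), and then invoking the abstract result \cite[Thm.\ 1.4.8]{Henry} that interpolates between $\cX$ and $\dom(\cA_p)$. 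One must also be slightly delicate about the excluded value $\alpha\theta=\tfrac12$ in \eqref{sobolev-embedding-eq3}: since \eqref{sobolev-embedding-eq5-0} and \eqref{sobolev-embedding-eq5} are stated for \emph{all} $\theta\in(0,1)$, when the target exponent would force $\alpha\theta=\tfrac12$ one simply perturbs $\theta$ to a nearby value (the resulting Sobolev exponent only improves), so no genuine loss occurs. Everything else is a matter of citing the one-dimensional theorems and taking finite direct sums.
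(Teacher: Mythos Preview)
Your approach is essentially the same as the paper's: reduce part (1) to classical one-dimensional Sobolev/Besov results applied edge-by-edge, and for part (2) use the abstract interpolation identity $(\hatt L^p,\cX_p^\alpha)_{\theta,p}=(\hatt L^p,\dom(\cA_p))_{\alpha\theta,p}$, then pass through $(\hatt L^p,\hatt W^{2,p})_{\alpha\theta,p}=\hatt W^{2\alpha\theta,p}$ and chain with the Sobolev embeddings. The paper simply cites \cite[Thm.\ 11.5, 11.6]{Ama} for part (1) and \cite[\S 1.15.2]{Tri} directly for \eqref{sobolev-embedding-eq4}, whereas you go via \cite{Henry} plus reiteration; both routes are standard and equivalent.

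Two small corrections. First, your displayed chain has the inclusion reversed: you wrote $\cX_p^\alpha\supseteq(\hatt L^p,\dom(\cA_p))_{\alpha\theta,p}$, but the correct (and intended) direction is $\cX_p^\alpha\hookrightarrow(\hatt L^p,\cX_p^\alpha)_{\theta,p}=(\hatt L^p,\dom(\cA_p))_{\alpha\theta,p}\hookrightarrow\hatt W^{2\alpha\theta,p}(\Gamma)$, exactly as the paper writes it. Second, your ``main obstacle'' is not one: in this paper $\dom(\Delta)$ is \emph{defined} in \eqref{apnum} as a subspace of $\hatt W^{2,p}(\Gamma)$ cut out by the Neumann--Kirchhoff vertex conditions, so the inclusion $\dom(\cA_p)\subset\hatt W^{2,p}(\Gamma)$ with equivalent norms is immediate and no elliptic regularity argument is needed.
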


\begin{proof}
	(1) \eqref{sobolev-embedding-eq3-0} and \eqref{sobolev-embedding-eq2} follow from \cite[Theorem 11.5]{Ama},  and
		\eqref{sobolev-embedding-eq3} follows from \cite[Theorem 11.6]{Ama}.	
	(2) First, \eqref{sobolev-embedding-eq4} follows from   \cite[Section { 1.15.2}, (unlabaled) Theorem on page 101]{Tri}.
	
	To prove \eqref{sobolev-embedding-eq5-0},  for a given $0<\nu<2\alpha-\frac{1}{p}$, let us 
	choose $\theta\in (0,1)$ such that $\frac{\alpha}{2}\theta\not =1$ and $2 \alpha \theta-\frac{1}{p}>\nu$. Then one has
	\begin{align*}
		\cX_p^\alpha &\subset (L^p(\Gamma), \cX_p^\alpha)_{\theta,p}=(L^p(\Gamma),\mathcal{D}(\cA_p))_{{\alpha}\theta,p}\underset{\eqref{sobolev-embedding-eq4}}{\subset} ((L^p(\Gamma),\hatt{W}^{2,p}(\Gamma))_{{\alpha \theta},p}\underset{ \eqref{sobolev-embedding-eq3}}{=}\hatt{W}^{2\alpha\theta,p}(\Gamma).
	\end{align*}
The embedding	\eqref{sobolev-embedding-eq5} follows from \eqref{sobolev-embedding-eq2} and \eqref{sobolev-embedding-eq5-0}.
\end{proof}

\begin{proposition}{\cite[Theorem 1.4.3]{MR610244}}
Let $\sigma>0$, $\alpha\in(0,1)$, $p\in[1, \infty)$ and $\delta\in(0, \sigma)$. Then there exists $C>0$ such that for arbitrary $t>0$ and $u\in\elpee$, $v\in\cX^{\alpha}$ one has
\begin{align}
	&\|(\sigma-\Delta)^{\alpha}e^{(\Delta-\sigma)t}u\|_{\elpee}\leq C e^{-\delta t}t^{-\alpha} \|u\|_{\elpee},\lb{fracpow}\\
	&	\|(e^{(\Delta-\sigma)t}-I)v\|_{\elpee}\leq C t^{\alpha} \|v\|_{\cX^{\alpha}}. \lb{fracpow2}
\end{align}
\end{proposition}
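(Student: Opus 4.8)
The plan is to recognize this statement as the quantitative form of analyticity for the semigroup generated by the strictly positive sectorial operator $A:=\sigma I-\Delta$ on $\hatt L^p(\Gamma)$ — it is precisely \cite[Theorem~1.4.3]{MR610244} — and to reproduce the abstract argument, the only graph-specific input being a contractivity bound for the Neumann--Kirchhoff heat semigroup. First I would record the two facts about $A$ that drive the machinery. By \cite{BFM}, $\Delta$ generates an analytic semigroup on each $\hatt L^p(\Gamma)$, $1\le p<\infty$, so $A=\sigma I-\Delta$ is sectorial; and since $e^{t\Delta}$ is positivity preserving, mass conserving and $L^\infty$-contractive (sub-Markovian, see \cite{BFM}), interpolation gives $\|e^{t\Delta}\|_{\cB(\hatt L^p(\Gamma))}\le 1$, whence $\|e^{-tA}\|_{\cB(\hatt L^p(\Gamma))}=e^{-\sigma t}\|e^{t\Delta}\|_{\cB(\hatt L^p(\Gamma))}\le e^{-\sigma t}$ for $t\ge 0$. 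Fix $\delta\in(0,\sigma)$. Recall also that, by definition, $\cX^\alpha_p=\dom(A^\alpha)$ with the graph norm, so $\|A^\alpha v\|_{\hatt L^p(\Gamma)}\le C\|v\|_{\cX^\alpha_p}$, and that $0\in\rho(A)$.

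For \eqref{fracpow}: analyticity gives $\sup_{0<t\le 1}\|tAe^{-tA}\|_{\cB}<\infty$, while for $t\ge 1$ the factorization $Ae^{-tA}=(Ae^{-A/2})e^{-(t-1/2)A}$ together with the contraction bound yields $\|Ae^{-tA}\|_{\cB}\le Ce^{-\sigma t}$; combining these, $\|Ae^{-tA}\|_{\cB}\le Ct^{-1}e^{-\delta t}$ for all $t>0$, the polynomial loss for large $t$ being absorbed by the gap $\sigma-\delta>0$. Applying the moment inequality $\|A^\alpha w\|\le C\|Aw\|^\alpha\|w\|^{1-\alpha}$ (valid for $0<\alpha<1$ since $0\in\rho(A)$) to $w=e^{-tA}u\in\dom(A)$ gives $\|A^\alpha e^{-tA}u\|\le C(t^{-1}e^{-\delta t})^\alpha(e^{-\delta t})^{1-\alpha}\|u\|=Ct^{-\alpha}e^{-\delta t}\|u\|$, which is \eqref{fracpow}.

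For \eqref{fracpow2}: for $v\in\cX^\alpha_p=\dom(A^\alpha)$ and $t>0$ I would write $(e^{-tA}-I)v=-\int_0^t Ae^{-sA}v\,ds=-\int_0^t A^{1-\alpha}e^{-sA}(A^\alpha v)\,ds$, which is legitimate because $A^{1-\alpha}e^{-sA}\in\cB(\hatt L^p(\Gamma))$ for $s>0$, $e^{-sA}$ maps $\dom(A^\alpha)$ into $\dom(A)$, and $A^{1-\alpha}A^\alpha=A$ there. By \eqref{fracpow} with $1-\alpha\in(0,1)$ in place of $\alpha$, $\|A^{1-\alpha}e^{-sA}\|_{\cB}\le Cs^{\alpha-1}e^{-\delta s}\le Cs^{\alpha-1}$, hence $\|(e^{-tA}-I)v\|\le C\|A^\alpha v\|\int_0^t s^{\alpha-1}\,ds=\tfrac{C}{\alpha}t^\alpha\|A^\alpha v\|\le C't^\alpha\|v\|_{\cX^\alpha}$, which is \eqref{fracpow2}.

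The main obstacle is more a matter of bookkeeping than depth: the only place where the metric-graph structure enters, rather than pure semigroup theory, is the bound $\|e^{t\Delta}\|_{\cB(\hatt L^p(\Gamma))}\le 1$, which rests on the sub-Markovian property of the Neumann--Kirchhoff heat semigroup from \cite{BFM}; once that is in hand the argument is the verbatim abstract proof of \cite{MR610244}. A minor care point is to observe that the statement (and the spaces $\cX^\alpha_p$) are insensitive to the auxiliary shift $\sigma>0$, an equivalence already noted in Appendix~\ref{app:FunctionalSpaces}, and that the moment inequality constant can be taken uniform over the $\alpha$-range in use.
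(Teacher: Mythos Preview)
Your argument is correct and is precisely the standard proof from \cite[Theorem~1.4.3]{MR610244}; the paper itself does not supply a proof but simply cites that reference, so there is nothing to compare beyond noting that you have faithfully reproduced Henry's abstract argument, with the only graph-specific ingredient (the contractivity $\|e^{t\Delta}\|_{\cB(\hatt L^p(\Gamma))}\le 1$ from the sub-Markovian property in \cite{BFM}) correctly identified.
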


\section{The $L^p-L^q$ properties of auxiliary nonlinear mappings }\label{app:LpLq}
In this section we establish $L^p-L^q$ properties of nonlinear mappings in \eqref{phi}, \eqref{psi}, \eqref{geq}. 
\begin{hypothesis}\lb{hypA}
	Let $F: \bbR^2\rightarrow \bbR$, $p\geq 1$, $R>0$. Assume that  $F\in C^{1,1}(\bbR^2)$ and that the following polynomial growth bounds hold
	\begin{align}\lb{pol1}
		&{ \sum_{|\alpha|\leq 1}}|D^{\alpha} F(s,t)| \leq C(1+|s|^{\mu}+|t|^{\nu}),\ (s,t)\in\bbR^2,
	\end{align}
	for some $C>0$, $\mu, \nu\geq 0$ with $\gamma:=\mu+\nu\geq 1$. Assume that $p\geq 2 \gamma$. 
\end{hypothesis} 

\begin{proposition}\lb{lem3.1}
	Assume Hypothesis \ref{hypA}. For $U=(u_1, u_2)\in(\elpee)^2$ and let us  define 
	\begin{equation}\lb{cf}
		\cF(U)(x):=F(u_1(x), u_2(x)),\ x\in\Gamma.
	\end{equation} 
	Then the following assertions hold.
	\begin{enumerate}
		\item\lb{item1} $\cF$ maps $\elpee$ to $\elpgamma$. 
		\item\lb{item2} There exists $C>0$ such that for $\|U\|_{(\elpee)^2}\leq R$ one has
		\begin{equation}\lb{fbdd}
			\|\cF(U)\|_{\elpgamma}\leq C\left(1+\|U\|_{(\elpee)^2}\right),
		\end{equation}
		that is $\cF:\elpee\rightarrow\elpgamma$ is a bounded (but in general non-linear) mapping.
		\item \lb{item3} There exists $C>0$ such that for $\|U\|_{(\elpee)^2}\leq R$, $\|V\|_{(\elpee)^2}\leq R$ one has 
		\begin{equation}\lb{flip}
			\|\cF(U)-\cF(V)\|_{\elpgamma}\leq C\|U-V\|_{(\elpee)^2},
		\end{equation}
		that is, $\cF:\elpee\rightarrow\elpgamma$  is locally Lipschitz continuous. 
	\end{enumerate}
\end{proposition}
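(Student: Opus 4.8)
The plan is to prove the three assertions in sequence, since each builds on the previous one, and in all three cases the mechanism is the same: the polynomial growth bound \eqref{pol1} combined with H\"older's inequality on the compact graph $\Gamma$ (so that $L^a(\Gamma)\hookrightarrow L^b(\Gamma)$ whenever $a\ge b$, with the embedding constant depending on $|\Gamma|$). First I would fix $U=(u_1,u_2)\in(\widehat L^p(\Gamma))^2$ and estimate pointwise $|\cF(U)(x)|\le C(1+|u_1(x)|^{\mu}+|u_2(x)|^{\nu})$ using \eqref{pol1} with $\alpha=0$. Raising to the power $p/\gamma$ and integrating over $\Gamma$, I would bound $\int_\Gamma |u_1|^{\mu p/\gamma}\,dx$ by noting $\mu p/\gamma\le p$ (because $\mu\le\gamma$) and using the compactness of $\Gamma$ to pass from $L^{\mu p/\gamma}$ to $L^p$; similarly for the $u_2$ term with $\nu p/\gamma\le p$. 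This immediately gives assertion \eqref{item1}, namely $\cF(U)\in L^{p/\gamma}(\Gamma)$, and a quantitative version gives $\|\cF(U)\|_{L^{p/\gamma}}\le C(1+\|u_1\|_{L^p}^{\mu}+\|u_2\|_{L^p}^{\nu})$.

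For assertion \eqref{item2} I would take the bound just derived and observe that on the ball $\|U\|_{(\widehat L^p)^2}\le R$ one has $\|u_i\|_{L^p}^{\mu},\|u_i\|_{L^p}^{\nu}\le \max\{1,R^{\gamma}\}$ (using $\mu,\nu\le\gamma$), so $\|\cF(U)\|_{L^{p/\gamma}}\le C(R)$; rewriting $C(R)\le C(1+\|U\|_{(\widehat L^p)^2})$ on that ball — at the mild cost of enlarging $C$ — yields \eqref{fbdd} in the stated form. This step is essentially bookkeeping once \eqref{item1} is in hand.

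For assertion \eqref{item3}, the locally Lipschitz estimate, I would write $\cF(U)(x)-\cF(V)(x)=F(u_1,u_2)-F(v_1,v_2)$ and use the mean value theorem / fundamental theorem of calculus along the segment joining $(u_1,u_2)$ to $(v_1,v_2)$: $\cF(U)-\cF(V)=\int_0^1 \nabla F\big((1-\theta)V+\theta U\big)\cdot(U-V)\,d\theta$. Then $|\cF(U)(x)-\cF(V)(x)|\le \big(\sup_{\theta}|\nabla F((1-\theta)V+\theta U)(x)|\big)\,|U(x)-V(x)|$, and by \eqref{pol1} with $|\alpha|=1$ the first factor is bounded by $C(1+|u_1|^{\mu}+|v_1|^{\mu}+|u_2|^{\nu}+|v_2|^{\nu})$. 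Now apply H\"older with exponents chosen so the product lands in $L^{p/\gamma}$: the derivative factor is in $L^{p/\gamma}$ (same computation as above, since $\mu,\nu\le\gamma$ means $\mu p/\gamma,\nu p/\gamma\le p$), while $U-V\in L^p$; by H\"older, $\|(\text{deriv. factor})\cdot(U-V)\|_{L^{p/\gamma}}\le \|\text{deriv. factor}\|_{L^{r}}\|U-V\|_{L^p}$ where $\gamma/p = 1/r + 1/p$, i.e. $r = p/(\gamma-1)$ — valid when $\gamma>1$; and when $\gamma=1$ the derivative factor is simply bounded in $L^\infty$ on the ball of radius $R$, giving the same conclusion. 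The resulting constant depends on $R$ through $\|U\|,\|V\|\le R$, which is exactly the asserted local Lipschitz property \eqref{flip}.

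The main obstacle — though a mild one — is keeping the exponents straight in the H\"older step for \eqref{item3}: one needs to verify that the exponent $r=p/(\gamma-1)$ assigned to the derivative factor is at least as large as what the polynomial growth forces (namely $\mu p/\gamma$ and $\nu p/\gamma$ must both be $\le r$, which follows from $\mu,\nu\le\gamma$ and the hypothesis $p\ge 2\gamma$, hence $p\ge\gamma$), and to handle the borderline case $\gamma=1$ separately via an $L^\infty$ bound on the ball. Everything else is a routine application of \eqref{pol1}, the compactness of $\Gamma$, and H\"older's inequality; no PDE machinery is needed here.
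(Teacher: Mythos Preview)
Your argument is essentially the paper's: items (1)--(2) via the pointwise growth bound \eqref{pol1} and H\"older on the finite-measure graph, and item (3) via the fundamental theorem of calculus followed by H\"older pairing $U-V\in L^p$ against the gradient factor in $L^{p/(\gamma-1)}$; the only cosmetic difference is that the paper first splits $F(u_1,u_2)-F(v_1,v_2)$ into two one-variable increments before applying the FTC, whereas you integrate along the full segment in $\bbR^2$, and the paper does not isolate the case $\gamma=1$ as you do. One caution on your last paragraph: the condition actually required for the gradient factor to lie in $L^{p/(\gamma-1)}$ is $\mu,\nu\le\gamma-1$, not the inequality ``$\mu p/\gamma\le r$'' you wrote (which is neither equivalent nor a consequence of $\mu,\nu\le\gamma$ and $p\ge 2\gamma$) --- but the paper's own proof passes over exactly this point (writing $\hat\mu,\hat\nu$ without comment), so your treatment is at the same level of rigor as the original.
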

\begin{proof}
	(\ref{item1})\&(\ref{item2}). Using \eqref{pol1} one infers
	\begin{align}
		\int_{\Gamma}\left|\cF(U)\right|^{\frac{p}{\gamma}}dx&\leq 3^{\frac{p}{\gamma}}C(F)\int_{\Gamma}\big(1+|u_1|^{\frac{\mu p}{\gamma}}+|u_1|^{\frac{\nu p}{\gamma}}\big)dx\nonumber\\
		&\leq C(F, \Gamma,p)(1+\|u_1\|_{\elpee}^p+\|u_2\|_{\elpee}^p).
	\end{align}
	
	(\ref{item3}) One has
	\begin{align}
		\lb{triin}
		\int_{\Gamma}\left|\cF(U)-\cF(V)\right|^{\frac{p}{\gamma}}dx\leq& \ 2^{{\frac{p}{\gamma}}}\int_{\Gamma}\left|F(u_1,u_2)-F(v_1,u_2)\right|^{\frac{p}{\gamma}}dx\nonumber\\
		&+2^{{\frac{p}{\gamma}}}\int_{\Gamma}\left|F(v_1,u_2)-F(v_1,v_2)\right|^{\frac{p}{\gamma}}dx.
	\end{align}
	Let us estimate the second term and note that the first term can be estimated analogously. We have 
	\begin{align}
		\lb{triin1}
		&\int_{\Gamma}\left|F(v_1(x),u_2(x))-F(v_1(x),v_2(x))\right|^{\frac{p}{\gamma}}dx=\int_{\Gamma}\left|\int_{v_2(x)}^{u_2(x)}\partial_2F(v_1(x),\xi)d\xi \right|^{\frac{p}{\gamma}} dx\nonumber\\
		&\quad=\int_{\Gamma}|u_2(x)-v_2(x))|^{\frac{p}{\gamma}}\left|\int_0^1\partial_2F(v_1(x),v_2(x)-\rho(v_2(x)-u_2(x)))d\rho \right|^{\frac{p}{\gamma}} dx.
	\end{align}
	Using H\"older's inequality with exponents $\gamma$, $\gamma(\gamma-1)^{-1}$ we obtain 
	\begin{align}
		\lb{triin2}
		&\left\|F(v_1,u_2)-F(v_1,v_2)\right\|_{\elpgamma}^{\frac{p}{\gamma}}dx\leq \int_{\Gamma}\left|\int_{v_2(x)}^{u_2(x)}\partial_2F(v_1(x),\xi)d\xi \right|^{\frac{p}{\gamma}} dx\nonumber\\
		&\quad=\|u_2(x)-v_2(x)\|_{\elpee}^{\frac{p}{\gamma}}\left\|\left|\int_0^1\partial_2F(v_1(x),v_2(x)-\rho(v_2(x)-u_2(x)))d\rho\right|^{\frac{p}{\gamma}} \right\|_{L^{\frac{\gamma}{\gamma-1}}(\Gamma)}\
	\end{align}
	Let us estimate the second term in the above product
	\begin{align}
		\begin{split}\lb{triin3}
			&\left\|\left|\int_0^1 \partial_2F(v_1,v_2-\rho(v_2-u_2))d\rho  \right|^{p/\gamma}\right\|_{L^{\frac \gamma{\gamma-1}}(\Gamma)}=\left\|\int_0^1 \partial_2F(v_1,v_2-\rho(v_2-u_2))d\rho\right\|_{L^{\frac p{\gamma-1}}(\Gamma)}^{p/\gamma}\\
			&\leq\left(\int_0^1 \|\partial_2F(v_1,v_2-\rho(v_2-u_2))\| _{L^{\frac p{\gamma-1}}(\Gamma)} d\rho \right)^{p/\gamma}\\
			&\leq C(F, \Gamma, p)\left(\int_0^1\left( \int_{\Gamma} 1+|v_1|^{\frac{\hatt \mu p}{\gamma-1}} +|v_2-\rho(v_2-u_2)|^{\frac{\hatt \nu p}{\gamma-1}}  dx\right)^{\frac{\gamma-1}{p}} d\rho \right)^{p/\gamma}\\
			&\leq C(F, \Gamma, p)\left(\int_0^1 \left(|\Gamma|+ \|v_1 \|^p_{\elpee}+ \|v_2-\rho(v_2-u_2) \|^p_{\elpee}\right)^{\frac{\gamma-1}{p}}d\rho \right)^{p/\gamma}\leq C(F, \Gamma, p, R),
		\end{split}
	\end{align}
	Combining \eqref{triin}, \eqref{triin1}, \eqref{triin2}, \eqref{triin3} we arrive at \eqref{flip}. 
\end{proof}

\begin{proposition}\lb{kprop3.1new}
	Assume Hypothesis \ref{hypA}, let \sel{$T\in(0, \infty)$}, $q \geq r\geq 2\gamma$, $\sigma>0$, $\delta\in(0, \sigma)$, fix $\phi\in L^q(\Gamma)$, $\psi\in L^r(\Gamma)$ and $\rho>\max\{\|\phi\|_{L^q(\Gamma)}, \|\psi\|_{L^r(\Gamma)}\}$.  For $u\in L^q(\Gamma)$ define a mapping
	\begin{equation}
		\cK (u)(x):=F\left(u(x), \phi(x)\right) \psi(x), x\in\Gamma. 
	\end{equation}
	Then $ \cK$ maps $L^q(\Gamma)$ into $L^{\frac{r}{2\gamma}}(\Gamma)$, $e^{(\Delta-\sigma I)t}\cK$ maps $L^q(\Gamma)$ into $L^q(\Gamma)$ and both mappings are bounded and locally Lipschitz continuous. That is, for $R>0$ there exists a constant $C>0$, \sel{which in general depends on $T$}, such that for ${ u}, v\in \elpee$ with $\|u\|_{L^q(\Gamma)}\leq R$, $\|v\|_{L^q(\Gamma)}\leq R$ one has
	\begin{align}
		\|\cK(u)\|_{{L^{\frac{r}{2\gamma}}(\Gamma)}}&\leq C{ (1+\|u\|_{L^q(\Gamma)})},\label{k322newn}\\
		\|\cK(u)-\cK(v)\|_{{L^{\frac{r}{2\gamma}}(\Gamma)}}&\leq C\|u-v\|_{L^q(\Gamma)},\label{k322new}\\
		\|e^{(\Delta-\sigma I)t}\cK(u)-e^{(\Delta-\sigma I)t}\cK(v)\|_{L^q(\Gamma)}&\leq Ce^{-\delta t}t^{-\frac12\left(\frac{2\gamma}{r}-\frac1q\right)}\|u-v\|_{{L^q(\Gamma)}}, \sel{ t\in(0,T]}.\label{kehu}
	\end{align}
	Furthermore, the mapping $e^{(\Delta-\sigma I)t}\partial_x$, originally defined on $C_0^{\infty}(\Gamma)$, can be extended to a bounded linear operator in $\cB(  L^{\frac{r}{2\gamma}}(\Gamma), L^q(\Gamma))$ so that $e^{(\Delta-\sigma I)t}\partial_{x}\cK$ maps $L^q(\Gamma)$ into $L^q(\Gamma)$ and, for $u, v$ as above, one has
	\begin{equation}\label{klipcontedh}
		\|e^{(\Delta-\sigma I)t}\partial_x\cK(u)-e^{(\Delta-\sigma I)t}\partial_x\cK(v)\|_{L^q(\Gamma)}\leq Ce^{-\delta t}t^{-\frac12\left(\frac{2\gamma}{r}-\frac1q+1\right)}\|u-v\|_{{L^q(\Gamma)}}, \sel{ t\in(0,T]}.
	\end{equation}
\sel{Furthermore,  if $\sigma>\sigma_0$, where $\sigma_0>0$ is as in Corollary \ref{cor2.1}, then there exists a $T$-independent constant $C>0$ and $0<\delta\ll\sigma$ such that \eqref{kehu}, \eqref{klipcontedh} hold with $ t\in(0,T]$ replaced by $ t\in(0,\infty)$. 
}
\end{proposition}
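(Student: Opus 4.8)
\textbf{Plan for the proof of Proposition \ref{kprop3.1new}.}

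The overall strategy is to split the statement into two logically separate parts and handle each with the tools already developed. The first part concerns the \emph{Nemytskii-type} operator $\cK$ and its mapping/Lipschitz properties between $L^q(\Gamma)$ and $L^{r/(2\gamma)}(\Gamma)$; the second part concerns the composition of $\cK$ (and of $\partial_x\cK$) with the semigroup $e^{(\Delta-\sigma)t}$, and for this we simply combine the first part with the $L^p$--$L^q$ smoothing estimates from Theorem \ref{lplq estimate} (or its version with exponential decay, Corollary \ref{cor2.1}, for the $T$-independent statement). So the substantive work is isolated in the first part, and it is essentially an application of Proposition \ref{lem3.1}.

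First I would record that, under Hypothesis \ref{hypA} applied to $F$ with $q\geq r\geq 2\gamma$, Proposition \ref{lem3.1} already tells us that $U\mapsto \cF(U)=F(u,\phi)$ is a bounded and locally Lipschitz map from $(L^r(\Gamma))^2$ into $L^{r/\gamma}(\Gamma)$; since $\phi\in L^q(\Gamma)\hookrightarrow L^r(\Gamma)$ on a compact graph and $u\in L^q(\Gamma)\hookrightarrow L^r(\Gamma)$, with the relevant norms bounded by $\rho$ and $R$, we get $\|F(u,\phi)\|_{L^{r/\gamma}(\Gamma)}\leq C(1+\|u\|_{L^q(\Gamma)})$ and $\|F(u,\phi)-F(v,\phi)\|_{L^{r/\gamma}(\Gamma)}\leq C\|u-v\|_{L^q(\Gamma)}$, with $C=C(F,\Gamma,p,q,r,\rho,R)$. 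Then multiplying by $\psi\in L^r(\Gamma)$ and applying H\"older's inequality with the exponent relation $\frac{2\gamma}{r}=\frac{\gamma}{r}+\frac1r$ gives $\cK(u)=F(u,\phi)\psi\in L^{r/(2\gamma)}(\Gamma)$ with
\begin{align}
\|\cK(u)\|_{L^{r/(2\gamma)}(\Gamma)}&\leq \|F(u,\phi)\|_{L^{r/\gamma}(\Gamma)}\|\psi\|_{L^r(\Gamma)}\leq C(1+\|u\|_{L^q(\Gamma)}),\\
\|\cK(u)-\cK(v)\|_{L^{r/(2\gamma)}(\Gamma)}&\leq \|F(u,\phi)-F(v,\phi)\|_{L^{r/\gamma}(\Gamma)}\|\psi\|_{L^r(\Gamma)}\leq C\|u-v\|_{L^q(\Gamma)},
\end{align}
which are exactly \eqref{k322newn} and \eqref{k322new}.

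For the semigroup estimates, I would invoke \eqref{new-norm-estimate1} (with $p$ there equal to $r/(2\gamma)$ and $q$ there equal to our $q$; note $r/(2\gamma)\leq q$ since $r\leq q$ and $\gamma\geq 1$, so the estimate applies) to get $\|e^{(\Delta-\sigma)t}w\|_{L^q(\Gamma)}\leq Ce^{-\sigma t}t^{-\frac12(\frac{2\gamma}{r}-\frac1q)}\|w\|_{L^{r/(2\gamma)}(\Gamma)}$ for $t\in(0,T]$, and apply this to $w=\cK(u)-\cK(v)$ together with \eqref{k322new}; this yields \eqref{kehu} (the factor $e^{-\sigma t}$ coming from the $\sigma$-shift, and one can absorb it into $e^{-\delta t}$ for any $\delta\in(0,\sigma)$). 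For \eqref{klipcontedh} I would instead use \eqref{normtdx}, i.e. $\|e^{(\Delta-\sigma)t}\partial_x\|_{\cB(L^{r/(2\gamma)}(\Gamma),L^q(\Gamma))}\leq Ce^{-\sigma t}t^{-\frac12-\frac12(\frac{2\gamma}{r}-\frac1q)}$, which also justifies that $e^{(\Delta-\sigma)t}\partial_x$ extends to a bounded operator from $L^{r/(2\gamma)}(\Gamma)$ to $L^q(\Gamma)$; composing with $\cK$ and using \eqref{k322new} again gives the claim. Finally, the $T$-independent version with $t\in(0,\infty)$ follows verbatim but with Theorem \ref{lplq estimate} replaced by Corollary \ref{cor2.1}, which provides the same bounds with an extra genuine exponential decay $e^{-\delta t}$, $0<\delta\ll\sigma$, valid for all $t>0$ once $\sigma>\sigma_0$.

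The only mildly delicate bookkeeping — and the step I expect to be the ``main obstacle'' in the sense of requiring care rather than ideas — is tracking the integrability exponents: one must check that all the compact-graph embeddings $L^q\hookrightarrow L^r$, the hypothesis $p\geq 2\gamma$ feeding into Proposition \ref{lem3.1}, and the H\"older pairing $\tfrac{2\gamma}{r}=\tfrac{\gamma}{r}+\tfrac1r$ are mutually consistent, and that $r/(2\gamma)\leq q$ so that the smoothing estimates of Theorem \ref{lplq estimate} are applicable with the claimed exponent $\tfrac12(\tfrac{2\gamma}{r}-\tfrac1q)$. Once these indices are lined up, the proof is a direct concatenation of Proposition \ref{lem3.1}, H\"older's inequality, and Theorem \ref{lplq estimate}/Corollary \ref{cor2.1}, with no new analytic input required.
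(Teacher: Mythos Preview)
Your approach is essentially identical to the paper's: prove the Lipschitz bound \eqref{k322new} via Proposition \ref{lem3.1} and H\"older's inequality, then compose with the $L^p$--$L^q$ smoothing estimates \eqref{new-norm-estimate1}, \eqref{normtdx} (resp.\ Corollary \ref{cor2.1}) to obtain \eqref{kehu}, \eqref{klipcontedh}. One minor slip: the H\"older relation you wrote, $\tfrac{2\gamma}{r}=\tfrac{\gamma}{r}+\tfrac1r$, is only correct when $\gamma=1$; the pairing that actually works is $\tfrac{2\gamma}{r}=\tfrac{\gamma}{r}+\tfrac{\gamma}{r}$, giving $\|F(u,\phi)\psi\|_{L^{r/(2\gamma)}}\le \|F(u,\phi)\|_{L^{r/\gamma}}\|\psi\|_{L^{r/\gamma}}$, and then one uses the compact-graph embedding $L^r\hookrightarrow L^{r/\gamma}$ for $\psi$---this is exactly what the paper does, and your conclusion is unaffected.
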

\begin{proof}
	To show \eqref{k322new} (hence, \eqref{k322newn}) we note
	\begin{align}
		&\|\cK(u)- \cK(v)\|_{{L^{\frac{r}{2\gamma}}}(\Gamma)}\leq  \|\big(F(u, \phi)-F(v, \phi)\big)\psi\|_{{L^{\frac{r}{2\gamma}}(\Gamma)}}\nonumber\\
		&\leq \|F(u, \phi)-F(v, \phi)\|_{{L^{\frac{r}{\gamma}}}(\Gamma)}\|\psi\|_{{L^{\frac{r}{\gamma}}}(\Gamma)}\leq C \|u-v\|_{L^q(\Gamma)},
	\end{align}
	where we used Proposition \ref{lem3.1} item \eqref{item3}.  Then \eqref{kehu} and \eqref{klipcontedh} follow from \eqref{k322new} combined with \eqref{new-norm-estimate1}  and \eqref{tgradlplq} correspondingly.\sel{The last statement follows from Corollary \ref{cor2.1}.}
	
\end{proof}
\begin{proposition}\lb{cor3.1} Assume the setting of Proposition \ref{lem3.1}, fix \sel{$T\in(0,\infty)$}, $\sigma>0$ and $\delta\in(0, \sigma)$. 
	\begin{enumerate}
		\item\lb{item1cor31}   $e^{(\Delta-\sigma I)t}\cF(U)$ maps $(\elpee)^2$ into $\wpee$ boundedly, in fact, this mapping is locally Lipschitz continuous. That is, for $R>0$ there exists $C>0$,  which in general depends on $T$, such that for $\|U\|_{(\elpee)^2}\leq R$, $\|V\|_{(\elpee)^2}\leq R$, one has  
		\begin{align}
			&\|e^{(\Delta-\sigma I)t}\cF(U)\|_{\wpee}\leq Ce^{-\delta t}\max\{ t^{-\frac{\gamma-1}{2p}}, t^{-\frac{p+\gamma-1}{2p}}\}\left(1+\|U\|_{(\elpee)^2}\right), \sel{ t\in(0,T]}, \lb{efbdd}
		\end{align}
		\begin{align}
			\begin{split}\lb{eflip}
				\|e^{(\Delta-\sigma I)t}\cF(U)&-e^{(\Delta-\sigma I)t}\cF(V)\|_{\wpee}\\
				&\leq Ce^{-\delta t}\max( t^{-\frac{\gamma-1}{2p}}, t^{-\frac{p+\gamma-1}{2p}})\|U-V\|_{(\elpee)^2}, \sel{ t\in(0,T]}. 
			\end{split}
		\end{align}
		\item\lb{item2newcor31} For $\beta>0$, $h> 0$ one has 
		\begin{align}
			&\hspace{1cm}\|e^{(\Delta-\sigma I)t}\cF(U)\|_{\cX^{\beta}_p}\leq Ce^{-\delta t} t^{-\beta-\frac{\gamma-1}{2p}} \left(1+\|U\|_{(\elpee)^2}\right),\sel{ t\in(0,T]}, \lb{3.19}\\
			&\|(e^{(\Delta-\sigma I)h}-I)e^{(\Delta-\sigma I)t}\cF(U)\|_{\cX^{\beta}_p}\leq Ch^{\beta}e^{-\delta t} t^{-2\beta-\frac{\gamma-1}{2p}} \left(1+\|U\|_{(\elpee)^2}\right), \sel{ t\in(0,T]}.\label{eminusone}
		\end{align}
		\item\lb{item2cor31} The mapping $e^{(\Delta-\sigma I)t}\partial_x$ originally defined on $C_0^{\infty}(\Gamma)$ can be extended to a bounded linear operator in $\cB(\elpgamma, \elpee)$ so that $e^{(\Delta-\sigma I)t}\partial_{x}\cF(U)$ maps $(\elpee)^2$ into $\elpee$ boundedly, this mapping is locally Lipschitz continuous, that is, for $R\geq 0$ there exists $C>0$, \sel{which in general depends on T} such that for $\|U\|_{(\elpee)^2}\leq R$, $\|V\|_{(\elpee)^2}\leq R$, one has   
		\begin{align}
			&\hspace{1.5cm}\|e^{(\Delta-\sigma I)t}\partial_x\cF(U)\|_{\elpee}\leq Ce^{-\delta t}t^{-\frac{p+\gamma-1}{2p}}\left(1+\|U\|_{(\elpee)^2}\right), \sel{ t\in(0,T]}, \lb{efdbdd}\\
			&\|e^{(\Delta-\sigma I)t}\partial_x\cF(U)-e^{(\Delta-\sigma I)t}\partial_x\cF(V)\|_{\elpee}\leq Ce^{-\delta t}t^{-\frac{p+\gamma-1}{2p}}\|U-V\|_{(\elpee)^2}, \sel{ t\in(0,T]}. \lb{efdlip}
		\end{align}
	\end{enumerate}
Furthermore,  if $\sigma>\sigma_0$, where $\sigma_0>0$ is as in Corollary \ref{cor2.1}, then there exists a $T$-independent constant $C>0$ and  $0<\delta\ll\sigma$ such that \eqref{efbdd}--\eqref{efdlip} hold with $ t\in(0,T]$ replaced by $ t\in(0,\infty)$. 

\end{proposition}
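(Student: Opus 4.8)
The plan is to prove Proposition \ref{cor3.1} by combining the elementary $L^p$--$L^q$ mapping properties of the nonlinear substitution operator $\cF$ from Proposition \ref{lem3.1} with the smoothing/decay estimates for the heat semigroup obtained in Section \ref{Sec:LptoLq}. The key observation is that $\cF$ maps $(\elpee)^2$ boundedly and locally Lipschitz-continuously into $\elpgamma=L^{p/\gamma}(\Gamma)$, so all three items reduce to estimating the operator norms of $e^{(\Delta-\sigma)t}$, $(\sigma-\Delta)^\beta e^{(\Delta-\sigma)t}$, and $e^{(\Delta-\sigma)t}\partial_x$ as maps from $L^{p/\gamma}(\Gamma)$ into $\wpee$, $\cX^\beta_p$, and $\elpee$ respectively. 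Note $p\ge 2\gamma$ (Hypothesis \ref{hypA}), so $p/\gamma\ge 2 > 1$ and also $p/\gamma\le p$, hence $q:=p\ge p/\gamma=:\tilde p$ and the smoothing estimates apply with the pair $(\tilde p, p)$.

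\textbf{Step 1: item \eqref{item1cor31}.} For \eqref{efbdd}, write $\|e^{(\Delta-\sigma)t}\cF(U)\|_{\wpee}\le \|e^{(\Delta-\sigma)t}\cF(U)\|_{\elpee}+\|\partial_x e^{(\Delta-\sigma)t}\cF(U)\|_{\elpee}$. For the first term use the $L^{\tilde p}$--$L^p$ bound; for the second use \eqref{tgradlplqnew} (resp. Corollary \ref{cor2.1} when $\sigma>\sigma_0$ for the $e^{-\delta t}$ factor and time-independent constant) with $u$ replaced by $\cF(U)\in L^{\tilde p}(\Gamma)$. This yields the factor $t^{-\frac12-\frac12(\frac{\gamma}{p}-\frac1p)}=t^{-\frac{p+\gamma-1}{2p}}$ from the gradient term and $t^{-\frac12(\frac{\gamma}{p}-\frac1p)}=t^{-\frac{\gamma-1}{2p}}$ from the zeroth-order term, whence the $\max$. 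Then invoke \eqref{fbdd} to bring in $(1+\|U\|)$. The Lipschitz estimate \eqref{eflip} is identical with $\cF(U)-\cF(V)$ in place of $\cF(U)$, using \eqref{flip} instead of \eqref{fbdd}. For the unbounded-time version one replaces the semigroup bounds by those in Corollary \ref{cor2.1}, which carry the exponential decay $e^{-\delta t}$ with $T$-independent constant.

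\textbf{Step 2: items \eqref{item2newcor31} and \eqref{item2cor31}.} For \eqref{3.19} one uses the moment inequality \eqref{fracpow} written as $\|(\sigma-\Delta)^\beta e^{(\Delta-\sigma)t}w\|_{\elpee}\le Ce^{-\delta t}t^{-\beta}\|w\|_{\elpee}$ combined with the smoothing step $\|e^{(\Delta-\sigma)t}w\|_{\elpee}\le Ct^{-\frac{\gamma-1}{2p}}\|w\|_{L^{\tilde p}}$ — that is, split $t$ into $t/2+t/2$, apply smoothing on the first half and the fractional-power bound on the second, then set $w=\cF(U)$ and use \eqref{fbdd}. Inequality \eqref{eminusone} follows by additionally inserting \eqref{fracpow2}: $\|(e^{(\Delta-\sigma)h}-I)z\|_{\cX^\beta_p}\le Ch^\beta\|z\|_{\cX^{2\beta}_p}$, applied to $z=e^{(\Delta-\sigma)t}\cF(U)$, after which \eqref{3.19} with $2\beta$ in place of $\beta$ finishes the argument and produces the $t^{-2\beta-\frac{\gamma-1}{2p}}$ factor. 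For item \eqref{item2cor31}, the extension of $e^{(\Delta-\sigma)t}\partial_x$ to $\cB(\elpgamma,\elpee)$ and the bound \eqref{efdbdd} are exactly \eqref{normtdxnew} (or Corollary \ref{cor2.1}) applied with the pair $(\tilde p,p)$, again composed with \eqref{fbdd}; \eqref{efdlip} uses \eqref{flip}.

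\textbf{Main obstacle.} None of the steps is deep — the proposition is essentially a bookkeeping exercise assembling Proposition \ref{lem3.1}, Theorem \ref{lplq estimate}, Corollary \ref{cor2.1}, and the moment inequalities \eqref{fracpow}, \eqref{fracpow2}. The one point requiring care is the arithmetic of exponents: making sure that all the substitutions $\tilde p=p/\gamma$, the pair $(\tilde p,p)$ in the $L^p$--$L^q$ estimates, and the $t/2+t/2$ splittings combine to give precisely the stated powers $t^{-\frac{\gamma-1}{2p}}$, $t^{-\frac{p+\gamma-1}{2p}}$, $t^{-\beta-\frac{\gamma-1}{2p}}$, and $t^{-2\beta-\frac{\gamma-1}{2p}}$, and that these exponents are all $<1$ so the estimates are locally integrable in $t$ near $0$ (which is where $p>8\gamma$, or more modestly $p\ge 2\gamma$, is used downstream). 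The second mildly delicate point is the transition from $t\in(0,T]$ with a $T$-dependent constant to $t\in(0,\infty)$ with a $T$-independent constant and genuine exponential decay: here one must replace every use of Theorem \ref{lplq estimate} by the corresponding statement in Corollary \ref{cor2.1}, which requires $\sigma>\sigma_0$ and produces some $0<\delta\ll\sigma$, and then propagate this $\delta$ through the $t/2+t/2$ splittings (noting $e^{-\delta t/2}\le 1$ so no constants blow up).
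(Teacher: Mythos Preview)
Your proof is correct and follows essentially the same approach as the paper: compose the $L^{p/\gamma}$ mapping properties of $\cF$ from Proposition \ref{lem3.1} with the semigroup smoothing estimates of Theorem \ref{lplq estimate} (resp.\ Corollary \ref{cor2.1}) and the moment inequalities \eqref{fracpow}, \eqref{fracpow2}. One minor slip: in item \eqref{item2cor31} the relevant reference for $e^{(\Delta-\sigma)t}\partial_x$ is \eqref{normtdx}, not \eqref{normtdxnew}.
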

\begin{proof}
	\eqref{item1cor31} The inequalities \eqref{efbdd}, \eqref{eflip} follow from Theorem \ref{lplq estimate} combined with \eqref{fbdd}, \eqref{flip} correspondingly.
	
	\eqref{item2newcor31} The inequality \eqref{3.19}  follows  \eqref{fbdd} and \eqref{fracpow}. To prove \eqref{eminusone} we write
	\begin{align}
		\|(e^{(\Delta-\sigma I)h}-I)e^{(\Delta-\sigma I)t}\cF(U)\|_{\cX^{\beta}_p}&\leq Ch^{\beta}e^{-\delta t} \|e^{(\Delta-\sigma I)t}\cF(U)\|_{\cX^{2\beta}_p}\nonumber\\
		&\leq Ch^{\beta}e^{-\delta t} t^{-2\beta-\frac{\gamma-1}{2p}} \left(1+\|U\|_{(\elpee)^2}\right).
	\end{align}
	\eqref{item2cor31}  The inequalities \eqref{efdbdd}, \eqref{efdlip} follow from \eqref{normtdx} combined with \eqref{fbdd}, \eqref{flip} correspondingly.
	
	\sel{The last statement follows from Corollary \ref{cor2.1}.}
\end{proof}

\begin{proposition}\lb{prop2.1} Let $\sigma>0$, $p\geq 1$ then there exists $C>0$ such that
	\begin{align}
		&\|(\Delta-\sigma)^{-1}u\|_{L^{\infty}(\Gamma)}\leq C \|u\|_{\elpee}, \lb{lplinf}\\
		&\|\partial_x(\Delta-\sigma)^{-1}u\|_{L^{\infty}(\Gamma)}\leq C\|u\|_{\elpee}, \lb{lplinf2}\\
		&\|(\Delta-\sigma)^{-1}u\|_{\hatt W^{2,p}(\Gamma)}\leq C\|u\|_{\elpee},\lb{resw2p}
	\end{align}
	for all $t>0$. 
\end{proposition}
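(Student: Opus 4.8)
The plan is to reduce both $L^\infty$-bounds \eqref{lplinf}, \eqref{lplinf2} to the single elliptic regularity estimate \eqref{resw2p}, and to prove \eqref{resw2p} by hand from the defining vertex conditions.

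I would begin with two preliminaries. (i) $(\sigma-\Delta)^{-1}$ is a bounded operator on $\hatt L^p(\Gamma)$ for every $\sigma>0$; this is already used implicitly in the paper (e.g.\ in \eqref{duh}) and may be obtained, for instance, from $(\sigma-\Delta)^{-1}=\int_0^\infty e^{-\sigma t}e^{\Delta t}\,dt$ together with the fact that the heat semigroup of \cite{BFM} is an $\hatt L^p(\Gamma)$-contraction for each $p\in[1,\infty)$ (being sub-Markovian), which gives $\|(\sigma-\Delta)^{-1}\|_{\cB(\hatt L^p(\Gamma))}\le\sigma^{-1}$. (ii) On a bounded interval one has, for every $\varepsilon>0$, the interpolation inequality $\|w'\|_{L^p}\le\varepsilon\|w''\|_{L^p}+C_\varepsilon\|w\|_{L^p}$; summing over the finitely many edges of $\Gamma$ shows that on $\dom(\Delta)$ the graph norm $\|w\|_{\hatt L^p(\Gamma)}+\|\lap w\|_{\hatt L^p(\Gamma)}$ is equivalent to the $\hatt W^{2,p}(\Gamma)$-norm.

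To prove \eqref{resw2p}, I would put $w:=(\Delta-\sigma)^{-1}u$. By the definition of the Neumann--Kirchhoff Laplacian in \eqref{apnum}, $w\in\dom(\Delta)\subset\hatt W^{2,p}(\Gamma)$ and $\lap w=u+\sigma w$ edge-wise, so $\|\lap w\|_{\hatt L^p(\Gamma)}\le\|u\|_{\hatt L^p(\Gamma)}+\sigma\|w\|_{\hatt L^p(\Gamma)}\le 2\|u\|_{\hatt L^p(\Gamma)}$ by (i). The norm equivalence from (ii) then gives $\|w\|_{\hatt W^{2,p}(\Gamma)}\le C\big(\|\lap w\|_{\hatt L^p(\Gamma)}+\|w\|_{\hatt L^p(\Gamma)}\big)\le C\|u\|_{\hatt L^p(\Gamma)}$, which is \eqref{resw2p}.

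Finally, \eqref{lplinf} and \eqref{lplinf2} would follow from \eqref{resw2p} by the (edge-wise, one-dimensional) Sobolev embedding $\hatt W^{2,p}(\Gamma)\hookrightarrow\hatt C^1(\overline{\Gamma})$ — valid for all $p\ge1$, cf.\ \eqref{sobolev-embedding-eq2} — together with the trivial inclusions $\hatt C^1(\overline{\Gamma})\subset\hatt C(\overline{\Gamma})\subset\hatt L^\infty(\Gamma)$, applied to $w$ for \eqref{lplinf} and to $w_x$ for \eqref{lplinf2}. I do not foresee a real obstacle; the only thing to watch is that the relevant embeddings still hold at $p=1$, where $W^{1,1}\hookrightarrow C$ and $W^{2,1}\hookrightarrow C^1$ sit at the edge of the ranges in Theorem \ref{analytic-semigroup-thm2} but remain valid in one dimension by the fundamental theorem of calculus. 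The substantive content of the proposition is the elliptic regularity estimate \eqref{resw2p}, into which the vertex conditions enter only through the inclusion $\dom(\Delta)\subset\hatt W^{2,p}(\Gamma)$.
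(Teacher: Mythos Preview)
Your proposal is correct and essentially follows the same route as the paper. Both arguments hinge on the identity $\lap w=u+\sigma w$ for $w=(\Delta-\sigma)^{-1}u$, the $\hatt L^p$-boundedness of the resolvent, and a one-dimensional Sobolev-type embedding; the paper invokes the Burenkov inequality $\|f\|_{L^\infty}\le C(\|f\|_{\elpee}+\|\Delta f\|_{\elpee})$ to get \eqref{lplinf} first and then deduces \eqref{resw2p}, whereas you reverse the order by establishing \eqref{resw2p} via the interpolation inequality $\|w'\|_{\elpee}\le\varepsilon\|w''\|_{\elpee}+C_\varepsilon\|w\|_{\elpee}$ and then reading off \eqref{lplinf}, \eqref{lplinf2} from $\hatt W^{2,p}\hookrightarrow\hatt C^1$. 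Your ordering is arguably cleaner, since the paper's derivation of \eqref{resw2p} loops back through \eqref{lplinf2}, while yours keeps the elliptic estimate self-contained.
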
	
\begin{proof}
	Let us recall, e.g. from \cite[Corollary 7 in Section 4.2]{MR1622690}, the following inequality
	\begin{equation}\lb{burenkov}
		\|f\|_{L^{\infty}(\Gamma)}\leq C\left(\|f\|_{\elpee}+\|\Delta f\|_{\elpee}\right), f\in \hatt W^{2,p}(\Gamma), C>0.
	\end{equation}
	To prove \eqref{lplinf} we denote $f:=(\Delta-\sigma)^{-1}u$, that is, $\Delta f-\sigma f=u$, $u\in \elpee$, and employ \eqref{burenkov} to obtain
	\begin{align}
		\|f\|_{L^{\infty}(\Gamma)}&\leq C\left(\|f\|_{\elpee}+\|\Delta f\|_{\elpee}\right)= C\left((1+|\sigma|)\|f\|_{\elpee}+\|u\|_{\elpee}\right)\nonumber\\
		&\leq  C\left((1+|\sigma|)\|(\Delta-\sigma)^{-1}\|_{\cB(\elpee)}\|u\|_{\elpee}+\|u\|_{\elpee}\right),
	\end{align}
	where we implicitly used $p-$independence of the spectrum and resolvent of $\cA_p$ obtained in \cite[Theorem 3, Section 4.2]{BFM}. The third inequality follows from  \eqref{lplinf2} and 
	\begin{equation}
		\|\Delta(\Delta-\sigma)^{-1}u\|_{\elpee}\leq C\|u\|_{\elpee}.
	\end{equation}
\end{proof}

\begin{corollary}\label{cor3.2}Assume Hypothesis \ref{hyp} and suppose that $\tau=0$. Define
	\begin{equation}\lb{bigf3}
		\cF_3(u):=(\Delta-\sigma)^{-1}f(u)\text{ for } u\in \elpee.
	\end{equation}
	Then $\cF_3$ is a bounded mapping between $\elpee$ and $\wpee$, that is, there exists a constant $C>0$ such that if $\|u\|_{\elpee}\leq R$ then one has
	\begin{equation}
		\|\cF_3(u)\|_{\wpee}\leq C(1+\|u\|_{\elpee}). 
	\end{equation}
	Moreover, $\cF_3$ is locally Lipschitz continuous, that is, for $\|u\|_{\elpee}\leq R$, $\|v\|_{\elpee}\leq R$ one has
	\begin{equation}
		\|\cF_3(u)-\cF_3(v)\|_{\wpee}\leq C\|u-v\|_{\elpee}. 
	\end{equation}
\end{corollary}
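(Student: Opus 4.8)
The plan is to reduce Corollary \ref{cor3.2} to two ingredients already established: the elliptic regularity/resolvent estimates for the Neumann--Kirchhoff Laplacian from Proposition \ref{prop2.1}, and the $L^p$--$L^{p/\gamma}$ mapping properties of the nonlinear Nemytskii operator from Proposition \ref{lem3.1}. Recall that under Hypothesis \ref{hyp} with $\tau=0$ we have $f\in C^{1,1}(\bbR)$ with the polynomial growth bound $|f(s)|+|f'(s)|\le C(1+|s|^{\gamma})$, so the scalar function $f$ fits the framework of Proposition \ref{lem3.1} with the second variable absent (equivalently, take $F(s,r)=f(s)$, $\mu=\gamma$, $\nu=0$). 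Since $p>8\gamma\ge 2\gamma$, Hypothesis \ref{hypA} is satisfied.

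First I would apply Proposition \ref{lem3.1}\eqref{item2} to the map $u\mapsto f(u)$: for $\|u\|_{\elpee}\le R$ it gives $f(u)\in L^{p/\gamma}(\Gamma)$ with $\|f(u)\|_{L^{p/\gamma}(\Gamma)}\le C(1+\|u\|_{\elpee})$, and Proposition \ref{lem3.1}\eqref{item3} gives the local Lipschitz bound $\|f(u)-f(v)\|_{L^{p/\gamma}(\Gamma)}\le C\|u-v\|_{\elpee}$. Next, since $\gamma\ge 1$ and $\Gamma$ is compact we have the continuous embedding $L^{p/\gamma}(\Gamma)\hookrightarrow L^{p/\gamma}(\Gamma)$, and I would feed $f(u)\in L^{p/\gamma}(\Gamma)$ into the elliptic estimate \eqref{resw2p} (with the exponent $p$ there replaced by $p/\gamma$), obtaining $(\Delta-\sigma)^{-1}f(u)\in \hatt W^{2,p/\gamma}(\Gamma)$ with norm controlled by $\|f(u)\|_{L^{p/\gamma}(\Gamma)}$. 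To land in $\wpee$ rather than merely $\hatt W^{2,p/\gamma}(\Gamma)$, I would then invoke the Sobolev embedding \eqref{sobolev-embedding-eq3-0} from Theorem \ref{analytic-semigroup-thm2}: with $s=2$, the source exponent $p/\gamma$, target order $t=1$ and target exponent $p$, the condition $2-\tfrac{\gamma}{p}>1-\tfrac1p$ reads $1>\tfrac{\gamma-1}{p}$, i.e. $p>\gamma-1$, which holds since $p>8\gamma$. Hence $\hatt W^{2,p/\gamma}(\Gamma)\hookrightarrow \hatt W^{1,p}(\Gamma)=\wpee$, and composing the three bounds yields $\|\cF_3(u)\|_{\wpee}\le C(1+\|u\|_{\elpee})$. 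The Lipschitz estimate follows identically: $\|\cF_3(u)-\cF_3(v)\|_{\wpee}\le C\|f(u)-f(v)\|_{L^{p/\gamma}(\Gamma)}\le C\|u-v\|_{\elpee}$, using linearity of $(\Delta-\sigma)^{-1}$ together with \eqref{resw2p} and the embedding, and then Proposition \ref{lem3.1}\eqref{item3}.

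I expect no serious obstacle here; the only point requiring a moment of care is bookkeeping the exponents so that the chain $\elpee\xrightarrow{f} L^{p/\gamma}(\Gamma)\xrightarrow{(\Delta-\sigma)^{-1}} \hatt W^{2,p/\gamma}(\Gamma)\hookrightarrow \wpee$ is valid — specifically checking $p/\gamma\ge 1$ (true, as $p>8\gamma$) so that $L^{p/\gamma}$ is a genuine Lebesgue space on which \eqref{resw2p} applies, and checking the Sobolev gain inequality $2-\gamma/p>1-1/p$. A small alternative, if one prefers to avoid the Sobolev embedding step, is to note that $p>8\gamma$ easily gives $p/\gamma>1$ and to apply \eqref{resw2p} with exponent $p$ directly after observing $L^{p/\gamma}(\Gamma)\supset L^p(\Gamma)$ is the wrong direction — so the embedding route above is the clean one. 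Either way, the substance of the corollary is entirely contained in Propositions \ref{lem3.1} and \ref{prop2.1}, and the proof is a two-line composition once the exponents are lined up.
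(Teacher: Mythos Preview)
Your proposal is correct and follows essentially the same approach as the paper: both factor $\cF_3=(\Delta-\sigma)^{-1}\circ f$, use Proposition~\ref{lem3.1} to control the Nemytskii map $u\mapsto f(u)$ from $\elpee$ to $L^{p/\gamma}(\Gamma)$, and then use Proposition~\ref{prop2.1} for the resolvent. The only cosmetic difference is in the second step: the paper invokes the $L^\infty$ bounds \eqref{lplinf}--\eqref{lplinf2} to land in $\hatt W^{1,\infty}(\Gamma)\hookrightarrow\wpee$ directly, whereas you use the $\hatt W^{2,p/\gamma}$ estimate \eqref{resw2p} followed by the Sobolev embedding \eqref{sobolev-embedding-eq3-0}; both routes are valid and equally short.
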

\begin{proof}
	By Proposition \ref{lem3.1} with $F(u,v)=f(u)$, the mapping $u\mapsto f(u)$ from $\elpee$ to $L^{\frac{p}{\gamma}}(\Gamma)$ is bounded and locally Lipschitz. Next, by Proposition \ref{prop2.1}, $(\Delta-\sigma)^{-1}$ maps $L^{\frac{p}{\gamma}}(\Gamma)$ into $\hatt W^{1,\infty}(\Gamma)$ which is boundedly embedded into $\wpee$. The assertions follow from the fact that $\cF_3$ is a composition of these mappings. 
\end{proof}

\begin{proposition}\lb{prop3.1new}
	Assume Hypothesis \ref{hypA} and let \sel{$T\in(0,\infty)$}, $\sigma> 0$, $\delta\in(0, \sigma)$.  For $U=(u_1, u_2)\in \elpee\times\wpee$ let
	\begin{equation}
		\cH (U)(x):=F\left(u_1(x), u_2(x)\right)\partial_x u_2(x), x\in\Gamma. 
	\end{equation}
	Then $ \cH$ maps $\elpee\times\wpee$ into $L^{\frac{p}{2\gamma}}(\Gamma)$, $e^{(\Delta-\sigma I)t}\cH$ maps $\elpee\times\wpee$ into $\elpee$ and both mappings are bounded and locally Lipschitz continuous. That is, for $R>0$ there exists $C>0$,\sel{which in general depends on $T$}, such that for $U, V\in \elpee\times\wpee$ with $\|U\|_{\elpee\times\wpee}\leq R$, $\|V\|_{\elpee\times\wpee}\leq R$ one has
	\begin{equation}\lb{322new}
		\|\cH(U)-\cH(V)\|_{{L^{\frac{p}{2\gamma}}(\Gamma)}}\leq C\|U-V\|_{\elpee\times\wpee}.
	\end{equation}
	In addition, one has
	\begin{equation}
		\|e^{(\Delta-\sigma I)t}\cH(U)-e^{(\Delta-\sigma I)t}\cH(V)\|_{\elpee}\leq Ce^{-\delta t}t^{-\frac{2\gamma-1}{2p}}\|U-V\|_{_{\elpee\times\wpee}}, \sel{t\in(0,T]}.\lb{ehu}
	\end{equation}
	Furthermore, the mapping $e^{(\Delta-\sigma I)t}\partial_x$, originally defined on $C_0^{\infty}(\Gamma)$, can be extended to a bounded linear operator in $\cB(L^{\frac{p}{2\gamma}}(\Gamma), \elpee)$ so that $e^{(\Delta-\sigma I)t}\partial_{x}\cH$ maps $\elpee\times\wpee$ into $\elpee$ and, for $U, V$ as above, one has
	\begin{equation}\lb{lipcontedh}
		\|e^{(\Delta-\sigma I)t}\partial_x\cH(U)-e^{(\Delta-\sigma I)t}\partial_x\cH(V)\|_{\elpee}\leq Ce^{-\delta t}t^{-\frac{p+2\gamma-1}{2p}}\|U-V\|_{{\elpee\times\wpee}}, \sel{t\in(0,T]}.
	\end{equation}
	In addition,  for $\beta\geq 0$, $h>0$ one has $e^{(\Delta-\sigma I)t}\partial_x \cH(U)\in \cX^{\beta}_p$ and the following inequalities hold 
	\begin{align}
		&\hspace{1cm}\|e^{(\Delta-\sigma I)t}\partial_x \cH(U)\|_{\cX^{\beta}_p}\leq Ce^{-\delta t} t^{-\beta-\frac{p+2\gamma-1}{2p}}\|U\|_{{\elpee\times\wpee}}, \sel{t\in(0,T]}, \lb{3.22}\\
		&\|(e^{(\Delta-\sigma I)h}-I)e^{(\Delta-\sigma I)t}\partial_x \cH(U)\|_{\cX^{\beta}_p}\leq Ch^{\beta}e^{-\delta t} t^{-2\beta-\frac{p+2\gamma-1}{2p}}\|U\|_{{\elpee\times\wpee}}, \sel{t\in(0,T]}.\lb{3.23}
	\end{align}
\sel{Furthermore,  if $\sigma>\sigma_0$, where $\sigma_0>0$ is as in Corollary \ref{cor2.1}, then there exists a $T$-independent constant $C>0$ and  $0<\delta\ll\sigma$ such that \eqref{ehu}--\eqref{3.23} hold with $ t\in(0,T]$ replaced by $ t\in(0,\infty)$. 
}
\end{proposition}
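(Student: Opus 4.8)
The plan is to deduce every assertion of Proposition~\ref{prop3.1new} by composing the nonlinear estimates of Proposition~\ref{lem3.1} with the linear smoothing estimates of Theorem~\ref{lplq estimate} (respectively Corollary~\ref{cor2.1} when $\sigma>\sigma_0$) and the fractional-power bounds \eqref{fracpow}, \eqref{fracpow2}; no genuinely new idea is needed, and the whole proof is essentially exponent bookkeeping.

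First I would prove \eqref{322new}. Write $\cH(U)-\cH(V)=\big(F(u_1,u_2)-F(v_1,v_2)\big)\partial_xu_2+F(v_1,v_2)\big(\partial_xu_2-\partial_xv_2\big)$ and apply H\"older's inequality with the splitting $\tfrac{2\gamma}{p}=\tfrac{\gamma}{p}+\tfrac{\gamma}{p}$, reducing the first term to $\|F(u_1,u_2)-F(v_1,v_2)\|_{L^{p/\gamma}(\Gamma)}\,\|\partial_xu_2\|_{L^{p/\gamma}(\Gamma)}$ and the second to $\|F(v_1,v_2)\|_{L^{p/\gamma}(\Gamma)}\,\|\partial_xu_2-\partial_xv_2\|_{L^{p/\gamma}(\Gamma)}$. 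Since $\gamma\ge1$ and $\Gamma$ is compact, $L^p(\Gamma)\hookrightarrow L^{p/\gamma}(\Gamma)$, so the $\partial_x$-factors are controlled by $\|u_2\|_{\wpee}$ and $\|u_2-v_2\|_{\wpee}$, while Proposition~\ref{lem3.1}\eqref{item2},\eqref{item3} control the $F$-factors by $1+\|V\|_{(\elpee)^2}\le1+R$ and $\|U-V\|_{(\elpee)^2}$ respectively. Collecting terms gives \eqref{322new}; taking $V=0$ and noting $\cH(0)=F(0,0)\cdot0=0$ yields $\|\cH(U)\|_{L^{p/2\gamma}(\Gamma)}\le C\|U\|_{\elpee\times\wpee}$, the boundedness needed for \eqref{3.22}.

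Next, \eqref{ehu} follows by applying the ultracontractivity estimate \eqref{new-norm-estimate1} (times $e^{-\sigma t}$, or \eqref{nnew-norm-estimate1} when $\sigma>\sigma_0$) to $\cH(U)-\cH(V)\in L^{p/2\gamma}(\Gamma)$ together with \eqref{322new}, using $\tfrac12\big(\tfrac{2\gamma}{p}-\tfrac1p\big)=\tfrac{2\gamma-1}{2p}$. For the operator $e^{(\Delta-\sigma I)t}\partial_x$: by Theorem~\ref{lplq estimate}(1) with the pair $(p/(2\gamma),p)$ (legitimate since $2\gamma\ge1$), $e^{\Delta t}\partial_x$ extends to $\cB(L^{p/2\gamma}(\Gamma),\elpee)$ with norm $\le Ct^{-\frac12-\frac{2\gamma-1}{2p}}=Ct^{-\frac{p+2\gamma-1}{2p}}$; multiplying by $e^{-\sigma t}$ (or invoking Corollary~\ref{cor2.1} for the $T$-independent, exponentially decaying version) gives the bound on $\|e^{(\Delta-\sigma I)t}\partial_x\|_{\cB(L^{p/2\gamma},\elpee)}$, and composing with \eqref{322new} produces \eqref{lipcontedh}.

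Finally, the fractional-power estimates \eqref{3.22}, \eqref{3.23} come from a semigroup split. Writing $e^{(\Delta-\sigma I)t}\partial_x\cH(U)=e^{(\Delta-\sigma I)t/2}\big(e^{(\Delta-\sigma I)t/2}\partial_x\cH(U)\big)$, applying \eqref{fracpow} with exponent $\beta$ to the outer factor, and using the bound on $e^{(\Delta-\sigma I)t/2}\partial_x$ just obtained together with $\|\cH(U)\|_{L^{p/2\gamma}}\le C\|U\|$, gives $\|e^{(\Delta-\sigma I)t}\partial_x\cH(U)\|_{\cX^\beta_p}\le Ce^{-\delta t}t^{-\beta-\frac{p+2\gamma-1}{2p}}\|U\|$; in particular $e^{(\Delta-\sigma I)t}\partial_x\cH(U)\in\cX^\beta_p$ for all $\beta\ge0$. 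For \eqref{3.23}, since $(\sigma-\Delta)^\beta$ commutes with the semigroup, $\|(e^{(\Delta-\sigma I)h}-I)e^{(\Delta-\sigma I)t}\partial_x\cH(U)\|_{\cX^\beta_p}=\|(e^{(\Delta-\sigma I)h}-I)(\sigma-\Delta)^\beta e^{(\Delta-\sigma I)t}\partial_x\cH(U)\|_{\elpee}\le Ch^\beta\|e^{(\Delta-\sigma I)t}\partial_x\cH(U)\|_{\cX^{2\beta}_p}$ by \eqref{fracpow2}, and \eqref{3.22} applied with $2\beta$ closes the estimate (for $\beta=0$ the bound is trivial from uniform boundedness of the semigroup). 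The $\sigma>\sigma_0$ statement is obtained by replacing Theorem~\ref{lplq estimate}/\eqref{new-norm-estimate1} with Corollary~\ref{cor2.1} throughout, which provides a $T$-independent constant and a rate $0<\delta\ll\sigma$, while \eqref{fracpow}, \eqref{fracpow2} already carry $e^{-\delta t}$ for any $\delta\in(0,\sigma)$. The only mildly delicate point---and the closest thing to an obstacle---is keeping the H\"older exponents and compact-graph embeddings aligned in the first step so that the time exponent comes out exactly as $\tfrac{p+2\gamma-1}{2p}$ (and its shifts by $\beta$, $2\beta$); everything else is a routine composition of the cited inequalities.
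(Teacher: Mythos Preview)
Your proposal is correct and follows essentially the same approach as the paper: the same add-and-subtract decomposition of $\cH(U)-\cH(V)$ (with a symmetric choice of pivot), H\"older's inequality with the $L^{p/\gamma}\times L^{p/\gamma}$ splitting combined with Proposition~\ref{lem3.1}, then composition with \eqref{new-norm-estimate1} and \eqref{normtdx} for \eqref{ehu} and \eqref{lipcontedh}, the semigroup split $e^{(\Delta-\sigma)t}=e^{(\Delta-\sigma)t/2}e^{(\Delta-\sigma)t/2}$ with \eqref{fracpow} for \eqref{3.22}, and \eqref{fracpow2} applied at exponent $2\beta$ for \eqref{3.23}. The $\sigma>\sigma_0$ upgrade via Corollary~\ref{cor2.1} is likewise identical.
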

\begin{proof}
	For $\cF$ as in \eqref{cf} one has
	\begin{align}
		&\|\cH(U)- \cH(V)\|_{{L^{\frac{p}{2\gamma}}}(\Gamma)}\leq \|\cF\big(U\big)(\partial_xu_2-\partial_xv_2)\|_{L^{\frac{p}{2\gamma}}(\Gamma)}+ \|\big(\cF(U)-\cF(V)\big)\partial_xv_2\|_{{L^{\frac{p}{2\gamma}}(\Gamma)}}.
	\end{align}
	Using the Cauchy--Schwartz inequality we obtain
	\begin{align}
		\lb{fpar1}
		\|\big(\cF(U)-\cF(V)\big)\partial_xv_2\|_{{L^{\frac{p}{2\gamma}}}(\Gamma)}^{\frac p{2\gamma}}&\leq \|\big(\cF(U)-\cF(V)\big)\|_{\elpgamma}^{\frac{p}{2\gamma}}\|\partial_xv_2\|^{\frac{p}{2\gamma}}_{\elpgamma}\nonumber\\
		&\leq C\|U-V\|_{(\elpee)^2}^{\frac{p}{2\gamma}}\|v_2\|^{\frac{p}{2\gamma}}_{\wpee},
	\end{align}
	where we used Proposition \ref{lem3.1} item \eqref{item3} and $p/\gamma\leq p$. By a similar argument one obtains
	\begin{align}
		\lb{fpar2}
		\| \cF\big(U\big)(\partial_xu_2-\partial_xv_2)\|_{{L^{\frac{p}{2\gamma}}}(\Gamma)}^{\frac p{2\gamma}}&\leq \|\cF(U)\|_{\elpgamma}^{\frac{p}{2\gamma}}\|\partial_xu_2-\partial_xv_2\|^{\frac{p}{2\gamma}}_{\elpgamma}\nonumber\\
		&\leq C\left(1+\|U\|_{(\elpee)^2}^{\frac{p}{2\gamma}}\right)\|u_2-v_2\|^{\frac{p}{2\gamma}}_{\wpee},
	\end{align}
	where we used Proposition \ref{lem3.1} item \eqref{item2} and $p/\gamma\leq p$.
	
	The inequality \eqref{ehu} follows from \eqref{322new} and \eqref{new-norm-estimate1}.

	The inequality \eqref{lipcontedh} follows from \eqref{322new} and \eqref{normtdx}.  
	
	To prove \eqref{3.22} we write 
	\begin{align}
		&\|e^{(\Delta-\sigma I)t}\partial_x \cH(U)\|_{\cX^{\beta}_p}= \|e^{\frac12(\Delta-\sigma I)t}e^{\frac12(\Delta-\sigma I)t}\partial_x \cH(U)\|_{\cX^{\beta}_p}\nonumber\\
		&\quad \underset{\eqref{fracpow}}{\leq }Ce^{-\frac{\delta t}2} t^{-\beta}\|e^{\frac12(\Delta-\sigma I)t}\partial_x \cH(U)\|_{\elpee}\underset{\eqref{ehu}}{\leq}C e^{-\delta t}t^{-\beta-\frac{p+2\gamma-1}{2p}}\|U\|_{{\elpee\times\wpee}}. 
	\end{align}
	The inequality \eqref{3.23} is proved analogously by using \eqref{fracpow2}. 
	
	\sel{The last statement follows from Corollary \ref{cor2.1}.}
\end{proof}


\section{General theory of semilinear parabolic equations on graphs}\lb{appA}
\label{nonlinear-equations}
In this section, we  discuss local existence of classical solutions for
\begin{align}
&u_t=u_{xx}+f(t,x,u, u_x),\quad x\in \Gamma, 	\label{new-nonlinear-eq1}\\
& \sum\limits_{\vartheta \sim e} \partial_{\nu}u_e(\vartheta)=0,\ u_e(\vartheta)=u_{e'}(\vartheta), e\sim \vartheta, e'\sim\vartheta,\\
&u{ (t_0,\cdot)}=u_0(\cdot)\in \cX^{\alpha}_p, p\in[1,\infty).
\end{align}
To that end, we first focus on local existence of strong solutions for
\begin{equation}
\label{new-nonlinear-eq2}
u_t=\Delta u+F(t,u), u(t_0)=u_0\in \cX^{\alpha}_p, 
\end{equation}
where $\Delta=\cA_p$, cf. \eqref{apnum}, and  { $\alpha\ge 0$}.

\begin{definition}{\cite[Definition 3.3.1]{MR610244}}
	\label{strong-solution-def}
	{\it A {\rm  strong solution}  of \eqref{new-nonlinear-eq2} on $[t_0,t_1)$ with initial condition $u(t_0)=u_0\in \cX_p^\alpha$
	is a continuous function  $u: [t_0, t_1) \to L^p(\Gamma)$ such that $u(t_0)=u_0$,  $u(t) \in \dom(\cA_p)$ for $t\in (t_0,t_1)$,  $\frac{d u}{d t}$ exists for $t\in (t_0,t_1)$, $(t_0,t_1)\ni t \to F(t,u(t))\in L^p(\Gamma)$ is locally H\"older continuous, and $\int^{t_0+\sigma}_{t_0} \|F(t,u(t))\|_{L^p(\Gamma)} dt < \infty$ for some $\sigma>0,$ and the differential equation $u_t+A_p u=F(t,u) $ is satisfied on $(t_0,t_1)$ in { $L^p(\Gamma)$.}}
\end{definition}

In the following theorem we adopt classical results regarding strong solutions of \eqref{new-nonlinear-eq2} to the setting of metric graphs.

\begin{theorem}
	\label{strong-solu-thm}
	Let $p\in [1,\infty)$ and $\alpha\in { [}0,1)$. Assume that
	$F:[0,\infty)\times X_p^\alpha \to L^p(\Gamma)$ is locally H\"older continuous in $t$ with exponent $\nu\in (0,1)$  and locally  Lipschitz continuous in $u$, that is, for every $(t, u)\in [0,\infty)\times \cX^{\alpha}_p$ there exists a neighborhood  $V\subset  [0,\infty)\times \cX^{\alpha}_p$ containing $(t,u)$ and a constant $L=L(V)$ such that 
	\begin{equation}
\label{holder-lipschitz-cond}
	\|F(t_1, u_1)-F(t_2, u_2)\|_{\elpee}\leq L(|t_1-t_2|^{\nu}+\|u_1-u_2\|_{\cX^{\alpha}_p}), (t_k, u_k)\in V, k=1,2.
	\end{equation} Then the following assertions hold.
	
	\begin{enumerate}
		\item\lb{thma1item1}  For any $t_0\in[0,\infty)$ and $u_0\in \cX_p^\alpha$,   there exists $T_{\max}=T_{\max}(t_0,u_0)>0$  such that { \eqref{new-nonlinear-eq2}}  has a unique strong solution
		$u(t;t_0,u_0)$ on $[t_0,t_0+T_{\max})$ with initial value $u(t_0;t_0,u_0)=u_0.$  Moreover, 
		\begin{equation}
		\label{classical-solu-eq1-1}
		u(\cdot;t_0,u_0)\in C([t_0,t_0+T_{\max}),\cX_p^\alpha),
		\end{equation}
		and
		if $T_{\max}< \infty,$ then
		\begin{equation}
		\label{classical-solu-eq1-2}
		\limsup_{t \nearrow T_{\max}}  \left\|u(t+t_0;t_0,u_0) \right\|_{{X_p^{\alpha}}}  =\infty.
		\end{equation}

		\item\lb{thma1item2} For any $t_0\in[0,\infty)$ and $u_0\in \cX_p^\alpha$, if  $u(t)$ is a strong  solution of  {\eqref{new-nonlinear-eq2}}  on $[t_0,t_1)$
		with initial condition $u(t_0)=u_0$,  then  $u$ satisfy the following  integral equation 
		\begin{equation}
		\label{mild-solution}
		u(t)=e^{\Delta( t-t_0)}u_0 +\int_{t_0}^te^{\Delta( t-s)} F(s,u(s))ds.
		\end{equation}
		Conversely, if $t\mapsto u(t)$ is continuous as a mapping from $(t_0,t_1)$ into $ X^\alpha,$ one has $\int^{t_0+\rho}_{t_0} \|F(t,u(t))\|_{\elpee}dt < \infty$ for some $\rho>0,$ and if the integral equation \eqref{mild-solution} holds for $t_0< t<t_1,$ then $u(t)$ is a strong solution of the differential equation  \eqref{new-nonlinear-eq2}  on $(t_0,t_1).$ Furthermore,
		\begin{equation}
		\label{classical-solu-eq1-3}
		u\in C^\delta((t_0,T_{\max}), \cX_p^\alpha), \delta\in(0, 1-\alpha).
		\end{equation}

		\item For any $t_0\in\mathbb{R}$ and $u_0\in X_p^\alpha$, 
		\begin{equation}
		\label{classical-solu-eq1-4}
		u(\cdot;t_0,u_0)\in C^1((t_0,t_0+T_{\max}(t_0,u_0)), \cX_p^\beta), \beta\in (0, \nu).
		\end{equation}
		
		\item  If $u_n,u_0\in \cX_p^\alpha$ and $\|u_n-u_0\|_{\cX_p^\alpha}\to 0$ as $n\to\infty$, then for any 	$t_0\in\mathbb{R}$, one has
		\begin{equation}
		\liminf_{n\to\infty} T_{\max}(t_0,u_n)\ge T_{\max}(t_0,u_0),
		\end{equation}
		and 
		\begin{equation}
		\lim_{n\to\infty}\|u(t;t_0,u_n)-u(t;t_0,u_0)\|_{\cX_p^\alpha}=0,
		\end{equation}
		uniformly in compact intervals of $[t_0,t_0+T_{\max}(t_0,u_0))$.
	\end{enumerate}
\end{theorem}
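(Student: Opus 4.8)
\textbf{Proof proposal for Theorem \ref{strong-solu-thm}.} The final statement is item (4): continuous dependence of strong solutions on the initial datum in $\cX_p^\alpha$, together with lower semicontinuity of the maximal existence time. The plan is to reduce everything to the integral equation \eqref{mild-solution} and a Gronwall argument in the $\cX_p^\alpha$--norm, using the smoothing bound \eqref{fracpow} for $(\sigma-\Delta)^\alpha e^{(\Delta-\sigma)t}$ and the local Lipschitz hypothesis \eqref{holder-lipschitz-cond}.

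First I would fix $t_0$ and $u_0\in\cX_p^\alpha$, and pick any $T<T_{\max}(t_0,u_0)$. Since by (1) the orbit $\{u(t;t_0,u_0):t\in[t_0,t_0+T]\}$ is a compact subset of $\cX_p^\alpha$, I cover it by finitely many of the neighborhoods $V$ from \eqref{holder-lipschitz-cond}, obtaining a uniform Lipschitz constant $L$ and a uniform bound $M$ for $\|F(t,u(t;t_0,u_0))\|_{L^p(\Gamma)}$ on a tube $\cN$ of radius $\rho>0$ around the orbit. For $u_n$ close to $u_0$ in $\cX_p^\alpha$, the solution $u(t;t_0,u_n)$ exists and stays in $\cN$ at least on a short initial interval; I want to propagate this up to $t_0+T$. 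Writing $w_n(t):=u(t;t_0,u_n)-u(t;t_0,u_0)$ and subtracting the two instances of \eqref{mild-solution}, then applying $(\sigma-\Delta)^\alpha$ and the bound \eqref{fracpow}, one gets, as long as both solutions remain in $\cN$,
\begin{equation}
\|w_n(t)\|_{\cX_p^\alpha}\le C\|u_n-u_0\|_{\cX_p^\alpha}+CL\int_{t_0}^t (t-s)^{-\alpha}e^{-\delta(t-s)}\|w_n(s)\|_{\cX_p^\alpha}\,ds .
\end{equation}
The singular-kernel Gronwall inequality (the same one invoked as \eqref{gronwell1}, \eqref{gronwell2}, cf. \cite[Lemma 7.1.1]{Henry}) then yields $\|w_n(t)\|_{\cX_p^\alpha}\le \widetilde C\|u_n-u_0\|_{\cX_p^\alpha}$ for $t\in[t_0,t_0+T]$, with $\widetilde C=\widetilde C(T,L,\alpha,\sigma)$ independent of $n$.

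The one delicate point — and what I expect to be the main obstacle — is the bootstrapping/continuation argument needed to \emph{justify} that $u(t;t_0,u_n)$ actually stays in the tube $\cN$ all the way to $t_0+T$ (so that the uniform Lipschitz bound applies), rather than merely on some a priori short interval whose length could shrink. The standard device is a continuity (open--closed) argument: let $\tau_n:=\sup\{t\le t_0+T : u(s;t_0,u_n)\in\cN\ \forall s\in[t_0,t]\}$; on $[t_0,\tau_n)$ the displayed estimate holds, forcing $\sup_{s<\tau_n}\|w_n(s)\|_{\cX_p^\alpha}\le\widetilde C\|u_n-u_0\|_{\cX_p^\alpha}<\rho$ once $n$ is large, which by \eqref{classical-solu-eq1-1} prevents the orbit from reaching $\partial\cN$; hence $\tau_n=t_0+T$ for $n$ large. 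This simultaneously gives $\liminf_n T_{\max}(t_0,u_n)\ge T$, and since $T<T_{\max}(t_0,u_0)$ was arbitrary, $\liminf_n T_{\max}(t_0,u_n)\ge T_{\max}(t_0,u_0)$; and it gives $\sup_{t\in[t_0,t_0+T]}\|u(t;t_0,u_n)-u(t;t_0,u_0)\|_{\cX_p^\alpha}\to 0$, i.e. uniform convergence on compact subintervals. The remaining ingredients — local Hölder continuity of $s\mapsto F(s,u(s))$ so that \eqref{mild-solution} is legitimate, and the blow-up alternative \eqref{classical-solu-eq1-2} — are already furnished by parts (1)–(2) of the theorem, so no new work is needed there.
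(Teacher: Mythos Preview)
Your argument is correct and is precisely the standard continuous-dependence proof for semilinear parabolic equations: compactness of the reference orbit to get a uniform Lipschitz bound on a tube, subtraction of the variation-of-constants formulas, the singular Gronwall inequality, and the open--closed continuation to propagate the tube confinement up to any $T<T_{\max}(t_0,u_0)$. The paper's own proof of part (4) is a one-line citation of \cite[Theorem 3.4.1]{MR610244} (Henry's \emph{Geometric Theory of Semilinear Parabolic Equations}), which contains exactly this argument; so you have essentially written out what the paper invokes by reference. One small bibliographic slip: in this paper the key \texttt{Henry} points to Henry's 2005 book on boundary perturbation, not to the 1981 semilinear parabolic monograph; the Gronwall lemma you want is in \cite{MR610244}, and the paper records a special case of it as Proposition~\ref{propHenry}.
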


\begin{proof}
	(1) It follows from \cite[Theorem 3.3.3 ]{MR610244} and  \cite[Theorem 3.3.4 ]{MR610244} together with the fact that $e^{t\Delta}$ is analytic in $\elpee$.

(2) The first statement in part (2) follows from \cite[Lemma 3.3.2]{MR610244}, the fact that $u \in C^\delta((t_0,t_1);X^\alpha)$ is obtained in the course of the proof of \cite[Lemma 3.3.2]{MR610244}.
	
(3)	The proof of part (3) stems from that  of \cite[Theorem 3.5.2]{MR610244}. For completeness, we provide the details. It suffices to prove
	$$
	u(\cdot;t_0,u_0)\in C^1((\tau,\tau+h),\cX_p^\beta),  t_0<\tau<\tau+h<t_0+T_{\max}(t_0,u_0),\,\,0<\beta<\nu,
	$$
	for all $\tau\in(t_0, t_0+T_{\max}(t_0, u_0))$ and some $h=h(\tau)>0$. 
{ For any fixed $\tau\in (t_0,  T_{\max}(t_0,u_0))$},
	let $u_1=u(\tau;t_0,u_0)$. Then one has
	$
	u(t;t_0,u_0)=u(t;\tau,u_1),\  \tau<t<T_{\max}(t_0,u_0)
	$	and { for any $\sigma>0$},
	\begin{equation}
	u(t;t_0,u_0)=u(t;\tau,u_1)={ e^{(\Delta-\sigma)(t-\tau)}u_1+\int_{\tau}^t e^{(\Delta-\sigma)(t-s)}g(s)ds,}
	\end{equation}
	where $g(t):={ u(t;\tau,u_1)}+F(t,u(t;\tau,u_1))$. By  \cite[Lemma 3.5.1]{MR610244},  in order to prove that the mapping
	\begin{equation}
	(\tau, \tau+h)\ni t\mapsto G(t):=\int_{\tau}^t e^{(\Delta-\sigma)(t-s)}g(s)ds 
	\end{equation} 
	is continuously differentiable into $\cX^{\beta}_p$ for some $h=h(\tau)>0$, hence, $u(\cdot , t_0, u_0)\in C^1((\tau,\tau+h),\cX_p^\beta)$ it suffices to show that 
	\begin{equation}\lb{keq}
	\|g(s)-g(t)\|_{L^p(\Gamma)}\le K(t) (s-t)^{\nu},\   \tau <t<s<\tau+h,
	\end{equation}
	for some continuous $K(\cdot)$ and $\int_{\tau}^{\tau+h}K(t)dt<\infty$. Our objective, therefore, is to derive \eqref{keq}.
	To that end,  { by \eqref{holder-lipschitz-cond}, there exist  a neighborhood  $V\subset  [0,\infty)\times \cX^{\alpha}_p$ containing $(\tau,u_1)$,  and positive constants $L>0$,  $h>0$ such that }
	\begin{align}
	\label{classical-solu-eq2}
	\|g(t)-g(s)\|_{L^p(\Gamma)}& \le L\big(|t-s|^\nu +\|u(t;\tau,u_0)-u(s;\tau,u_0)\|_{X_p^\alpha}\big),
	\end{align}
	for  $s,t\in(\tau, \tau+h)$ and $t<s$.
 Note that
	\begin{align*}
	u(s;\tau,u_1)-u(t;\tau,u_1)=&\big(e^{(\Delta-\sigma) (s-t)}-I\big)e^{(\Delta-\sigma)(t-\tau)}u_1\\
	&+\int_{\tau}^t e^{(\Delta-\sigma)(t-\kappa)}\big(g(\kappa+s-t)-g(\kappa)\big)d\kappa +\int_{\tau}^{\tau+s-t} e^{(\sigma-\Delta)(s-\kappa)}g(\kappa)d\kappa.
	\end{align*}
	Then there exist constants $C_1, C_2>0$ such that
	\begin{align*}
	&\|u(s;\tau,u_1)-u(t;\tau,u_1)\|_{\cX_p^\alpha}\\
	&\le C_1\big( (s-t)\cdot  \|(\sigma-\Delta)^{1+\alpha}  e^{(\Delta-\sigma)(t-\tau)} {  u_1}\|_{L^p(\Gamma)}
	+\int_{\tau}^t (t-\kappa)^{-\alpha}
	\|g(\kappa+s-t)-g(\kappa)\|_{L^p(\Gamma)}d\kappa\\
	&\qquad +\int_{\tau}^{\tau+s-t}\|(\sigma-\Delta)^\alpha e^{\Delta(s-\kappa)}
	g(\kappa)\|_{L^p(\Gamma)}d\kappa\big)\\
	&\le C_2\big ((s-t) (t-\tau)^{-\alpha}\|{ (\sigma-\Delta) u_1}\|_{\elpee}+ \int_{\tau}^t (t-\kappa)^{-\alpha}\|g(\kappa+s-t)-g(\kappa)\|_{L^p(\Gamma)}d\kappa\\
	&\qquad+(s-t)(t-\tau)^{-\alpha}
	\big).
	\end{align*}
	This together with \eqref{classical-solu-eq2} imply
	\begin{align}
	\label{classical-solu-eq3}
	\|g(s)-g(t)\|_{L^p(\Gamma)}\le &L (s-t)^\nu +  C_2\big ((s-t) (t-\tau)^{-\alpha}\|{ (\sigma-\Delta)u_1}\|_{\elpee}+ (s-t) (t-\tau)^{-\alpha}\big)\nonumber\\
	&+C_2\int_{\tau}^t (t-\kappa)^{-\alpha}\|g(k+s-t)-g(\kappa)\|_{L^p(\Gamma)}d\kappa.
	\end{align}
	Then Gronwall's inequality (see \cite[Section 1.2.1]{MR610244} yields $L_1>0$ such that
	\begin{align}
	\|g(s)-g(t)\|_{\elpee}\le  (s-t)^{\nu} K(t),
\end{align}
where
$$
	K(t):=L_1 \Big( 1+ { h}^{1-\nu} (t-\tau)^{-\alpha}\|{ (\sigma-\Delta)u_1}\|_{\elpee}+  { h}^{1-\nu}(t-\tau)^{-\alpha}\Big),
$$
	as required in \eqref{keq}. 

(4)  Follows from \cite[Theorem 3.4.1]{MR610244}.
\end{proof}

Equation \eqref{new-nonlinear-eq1} is related to \eqref{new-nonlinear-eq2} via the following elementary proposition. 
\begin{lemma}
	\label{classical-solu-lm1}  
	Let $f:[0,\infty)\times\Gamma\times \bbR\times\bbR\rightarrow\bbR$. Assume that there exists $\nu\in(0,1)$, such that for every triple $(t,u,v)\in [0,\infty)\times \bbR\times \bbR$ there exists an open neighborhood $U\subset [0,\infty)\times \bbR\times \bbR$ and a constant $L=L(U)$ such that for all $(t_k, u_k, v_k)\in U$, $k=1,2$ one has
	\begin{equation}\lb{lipeq}
	|f(t_1, x, u_1, v_1)-f(t_2, x, u_2, v_2)|\leq L(|t_1-t_2|^{\nu}+|u_1-u_2|+|v_1-v_2|),
	\end{equation}
	that is, $f=f(t,x,u,v)$ is locally H\"older continuous in $t$ with exponent $\nu\in (0,1)$, locally Lipschitz continuous in $u$ and $v$, uniformly for $x\in\Gamma$. Assume in addition that $f(t, \cdot, u,v)\in \elpee$. Let $p\in [1,\infty)$ and $\alpha\in(0,1)$ be such that $2\alpha-p^{-1}>1$. For $u\in\cX^{\alpha}_p$ let $F(t,u)=f(t,\cdot ,u,u_x)$ then the mapping $F:[0,\infty)\times \cX_p^\alpha \to L^p(\Gamma)$ is locally H\"older continuous in $t$ { with exponent $\nu$} and Lipschitz continuous in $u$.
\end{lemma}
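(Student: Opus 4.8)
\textbf{Proof plan for Lemma \ref{classical-solu-lm1}.}
The plan is to verify the two required continuity properties of $F(t,u)=f(t,\cdot,u,u_x)$ directly from the pointwise hypothesis \eqref{lipeq}, using the Sobolev embedding $\cX^{\alpha}_p\hookrightarrow \hatt C^1(\overline\Gamma)$ that holds precisely under the assumption $2\alpha-p^{-1}>1$ (this is \eqref{sobolev-embedding-eq5} with a suitable choice of $\theta$). The key point is that this embedding turns control of the $\cX^{\alpha}_p$-norm of $u$ into a \emph{uniform} pointwise bound on both $u(x)$ and $u_x(x)$ over $\overline\Gamma$, which is exactly what is needed to feed $u,u_x$ into the pointwise Lipschitz/H\"older estimate for $f$.

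First I would fix $(t_0,u_0)\in[0,\infty)\times\cX^{\alpha}_p$. By the embedding there is $M>0$ with $\|u_0\|_{\hatt C^1(\overline\Gamma)}\le M$, so the values $(t,u_0(x),u_0'(x))$ all lie in a fixed compact subset $Q$ of $[0,\infty)\times\bbR\times\bbR$ as $x$ ranges over $\overline\Gamma$ and $t$ stays near $t_0$. Cover $Q$ by finitely many of the neighborhoods $U$ from \eqref{lipeq}, extract a single Lipschitz/H\"older constant $L$ and a single radius $\delta>0$ so that \eqref{lipeq} holds whenever $|t_i-t_0|<\delta$, $|u_i-u_0(x)|<\delta$, $|v_i-u_0'(x)|<\delta$; then take the $\cX^{\alpha}_p$-neighborhood $V:=\{(t,u): |t-t_0|<\delta,\ \|u-u_0\|_{\cX^{\alpha}_p}<\delta/(2C_{\mathrm{emb}})\}$, where $C_{\mathrm{emb}}$ is the embedding constant, so that $(t,u)\in V$ forces $\|u-u_0\|_{\hatt C^1(\overline\Gamma)}<\delta/2$ and hence $(t,u(x),u_x(x))$ stays in the region where \eqref{lipeq} is valid, for every $x$. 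For $(t_1,u_1),(t_2,u_2)\in V$ I would then estimate, for a.e.\ $x\in\Gamma$,
\begin{equation}
|f(t_1,x,u_1(x),u_{1,x}(x))-f(t_2,x,u_2(x),u_{2,x}(x))|\le L\big(|t_1-t_2|^{\nu}+|u_1(x)-u_2(x)|+|u_{1,x}(x)-u_{2,x}(x)|\big),
\end{equation}
and take the $L^p(\Gamma)$-norm in $x$. Since $|\Gamma|<\infty$ the constant function $|t_1-t_2|^\nu$ has $L^p$-norm $|\Gamma|^{1/p}|t_1-t_2|^\nu$, while $\|u_1-u_2\|_{L^p(\Gamma)}\le C\|u_1-u_2\|_{\cX^{\alpha}_p}$ and $\|u_{1,x}-u_{2,x}\|_{L^p(\Gamma)}\le C\|u_1-u_2\|_{\cX^{\alpha}_p}$ again by the embedding $\cX^{\alpha}_p\hookrightarrow \hatt W^{1,p}(\Gamma)$. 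Collecting these bounds gives
\begin{equation}
\|F(t_1,u_1)-F(t_2,u_2)\|_{L^p(\Gamma)}\le L'\big(|t_1-t_2|^{\nu}+\|u_1-u_2\|_{\cX^{\alpha}_p}\big),
\end{equation}
which is exactly the local H\"older-in-$t$, Lipschitz-in-$u$ condition \eqref{holder-lipschitz-cond}. It remains only to check that $F(t,u)$ actually lies in $L^p(\Gamma)$ for each $(t,u)$: this follows by the same argument applied to $f(t,\cdot,u,u_x)-f(t,\cdot,0,0)$ together with the standing assumption $f(t,\cdot,0,0)\in \hatt L^p(\Gamma)$ (or, if $0\notin$ the relevant neighborhood, by comparing with $f(t,\cdot,u_0,u_{0,x})$, which is in $\hatt L^p(\Gamma)$ by hypothesis).

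I do not expect a serious obstacle here; the only mild subtlety is the bookkeeping of the neighborhoods, namely arguing that a single $(L,\delta)$ works uniformly in $x\in\overline\Gamma$. This is where compactness of $\overline\Gamma$ (finitely many edges of finite length) and of the parameter set $Q$ is used, reducing a pointwise-in-$x$ family of local estimates to one global estimate. Everything else is the chain $\cX^{\alpha}_p\hookrightarrow\hatt C^1(\overline\Gamma)\hookrightarrow\hatt W^{1,p}(\Gamma)$ plus the finiteness of $|\Gamma|$.
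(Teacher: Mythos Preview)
Your proposal is correct and follows essentially the same approach as the paper: fix $(t_0,u_0)$, invoke the embedding $\cX^{\alpha}_p\hookrightarrow \hatt C^1(\overline\Gamma)$ from \eqref{sobolev-embedding-eq5}, choose a neighborhood $V$ in $[0,\infty)\times\cX^{\alpha}_p$ small enough that $(t,u(x),u_x(x))$ stays in the region where \eqref{lipeq} applies, and then take the $L^p$-norm of the pointwise estimate. Your write-up is in fact more careful than the paper's on the one point it elides---the paper simply asserts that such a $V$ can be chosen, whereas you spell out the compactness argument (finite cover of the compact range set $Q$) that produces a single pair $(L,\delta)$ working uniformly in $x\in\overline\Gamma$.
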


\begin{proof}
	Fix $(t_0, u_0)\in [0,\infty)\times \cX^{\alpha}_p$. Then due to $2\alpha-\frac{1}{p}>1$, one has $	X_p^\alpha \hookrightarrow \hatt C^1(\hatt \Gamma)$, see \eqref{sobolev-embedding-eq5}. Therefore we may choose an open neighborhood $V\subset[0,\infty)\times \cX^{\alpha}_p$ containing $(t_0, u_0)$ such that for $(t_k, u_k)\in V$, $k=1,2$, we have $(t_k, u_k(x), \partial_xu_k(x))\in U$ for all $x\in\Gamma$. Then by \eqref{lipeq} we have
	\begin{align}\no
	&|f(t,x,u(x),u_x(x))-f(s,x,v(x),v_x(x))|\leq\\
	&\quad \leq L  \big(|s-t|^\nu +|u(x)-v(x)|+|u_x(x)-v_x(x)|,\ x\in\Gamma.
	\end{align}
	Hence, $f(t,\cdot,u(\cdot),u_x(\cdot))-f(s,\cdot,v(\cdot),v_x(\cdot))\in L^p(\Gamma)$ and
	\begin{align*}
	&\|f(t,\cdot,u(\cdot),u_x(\cdot))-f(s,\cdot,v(\cdot),v_x(\cdot))\|_{L^p(\Gamma)}\\
	&\le L \big(|t-s|^\nu \sum_{e\in\mathcal{E}}|e|^{\frac{1}{p}}
	+\|u(\cdot)-v(\cdot)\|_{L^p(\Gamma)}+\|u_x(\cdot)-v_x(\cdot)\|_{L^p(\Gamma)}\big)\\
	&\le \tilde L \big(|t-s|^\nu+\|u-v\|_{X_p^\alpha} \big),
	\end{align*}
	as required.
\end{proof}

Applying the above theorem, we have

\begin{theorem}
	\label{general-existence-thm}
	Assume that $f=f(t,x,u,v)$ is as in Lemma \ref{classical-solu-lm1} and suppose in addition that  $f(t, \cdot, u, v)\in \hatt C(\overline{\Gamma})$. Assume that $0<\alpha<1$ and  $1<p<\infty$ are such that $2\alpha-p^{-1}>1$ and $2\nu-p^{-1}>0$. Then the following assertions hold. 
	
\medskip

\noindent (1)  $($Existence of classical solutions$)$
	For given $t_0\in\bbR$ and $u_0\in \cX_p^\alpha$, \eqref{new-nonlinear-eq1}
	has a unique classical solution $u(t,x;t_0,u_0)$ on some interval $[t_0,t_0+T)$  with initial condition
	$u(t_0,x;t_0,u_0)=u_0(x)$ in $\cX_p^\alpha$, and
	\begin{equation}
	\label{local-1-eq1}
	u(\cdot,\cdot;t_0,u_0)\in C([t_0,t_0+T_{\max}),\cX_p^\alpha)\cap C^\delta((t_0,t_0+T_{\max}),\cX_p^\alpha)\cap C^1((t_0,t_0+T_{\max},\cX_p^\beta)
	\end{equation}
	for all  $\delta\in (0,1-\alpha)$ and $\beta\in (0,\nu)$.
	Moreover if $T_{\max}< \infty,$ then
	\begin{equation}
	\label{local-1-eq2}
	\limsup_{t \rightarrow T^-_{\max}}   \left\| u(t+t_0,\cdot;t_0,u_0) \right\|_{\cX_p^\alpha}=\infty.
	\end{equation}

\noindent (2)
 $($Continuity with respect to initial conditions$)$ If $u_n,u_0\in \cX_p^\alpha$ and $\|u_n-u_0\|_{\cX_p^\alpha}\to 0$ as $n\to\infty$, then for any $t_0\in\mathbb{R}$,
	$\liminf_{n\to\infty} T_{\max}(t_0,u_n)\ge T_{\max}(t_0,u_0)$ and
	$$
	\lim_{n\to\infty}\|u(t,\cdot;t_0,u_n)-u(t,\cdot;t_0,u_0)\|_{\cX_p^\alpha}=0
	$$
	uniformly in compact intervals of $[t_0,t_0+T_{\max}(t_0,u_0))$.
	
\medskip

\noindent (3)   $($Comparison principle$)$
	If $u_1, u_2\in\hatt C(\overline{\Gamma})$ and $u_1(x)\le u_2(x)$, $x\in\Gamma$ then $u(t,x;t_0,u_1)\le u(t,x;t_0,u_2)$, $x\in \Gamma$ for $t>t_0$ such that both $u(t,x;t_0,u_1)$ and $u(t,x;t_0,u_2)$ exist.

\end{theorem}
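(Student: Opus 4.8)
The plan is to deduce all three parts from the abstract results of Appendix~\ref{appA}: Theorem~\ref{strong-solu-thm} applied to the abstract equation \eqref{new-nonlinear-eq2}, together with Lemma~\ref{classical-solu-lm1} and the Sobolev embeddings of Theorem~\ref{analytic-semigroup-thm2}. I expect parts (1) and (2) to be essentially bookkeeping, while part (3) is where the real work lies.

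\textbf{Part (1).} First I would invoke Lemma~\ref{classical-solu-lm1}: since $2\alpha-p^{-1}>1$, it shows that $F(t,u):=f(t,\cdot,u,u_x)$ maps $[0,\infty)\times\cX_p^\alpha\to L^p(\Gamma)$ and is locally H\"older in $t$ (exponent $\nu$) and locally Lipschitz in $u$, so Theorem~\ref{strong-solu-thm} applies to \eqref{new-nonlinear-eq2} with this $F$. This produces a unique strong solution on a maximal interval $[t_0,t_0+T_{\max})$, the blow-up alternative \eqref{local-1-eq2}, and the regularity asserted in \eqref{local-1-eq1}: $u\in C([t_0,\cdot),\cX_p^\alpha)\cap C^\delta((t_0,\cdot),\cX_p^\alpha)\cap C^1((t_0,\cdot),\cX_p^\beta)$ for $\delta\in(0,1-\alpha)$, $\beta\in(0,\nu)$. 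It then remains to upgrade the strong solution to a classical one. The argument: for $t>t_0$ one has $u(t)\in\dom(\Delta)$ (so the Neumann--Kirchhoff conditions hold and $u(t)\in\hatt W^{2,p}(\Gamma)$) and $u_t(t)=\Delta u(t)+f(t,\cdot,u(t),u_x(t))$ in $L^p(\Gamma)$. Picking $\beta\in(1/(2p),\nu)$, which is possible because $2\nu-p^{-1}>0$, the embedding \eqref{sobolev-embedding-eq5} gives $\cX_p^\beta\hookrightarrow\hatt C^{\nu'}(\overline\Gamma)$ for some $\nu'>0$, hence $(t,x)\mapsto u_t(t,x)$ is jointly continuous on $(t_0,\cdot)\times\overline\Gamma$; likewise $2\alpha-p^{-1}>1$ and \eqref{sobolev-embedding-eq5} give $\cX_p^\alpha\hookrightarrow\hatt C^1(\overline\Gamma)$, so $(t,x)\mapsto(u(t,x),u_x(t,x))$ is jointly continuous, and by the standing hypothesis $f(t,\cdot,u,v)\in\hatt C(\overline\Gamma)$ and continuity of $f$, so is $(t,x)\mapsto f(t,x,u(t,x),u_x(t,x))$. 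Reading off $u_{xx}=u_t-f(t,\cdot,u,u_x)$ yields $u\in\hatt C^{1,2}((t_0,t_0+T_{\max})\times\overline\Gamma)$, i.e.\ a classical solution; uniqueness among classical solutions follows because every classical solution is a strong solution (Theorem~\ref{strong-solu-thm}(2)) and strong solutions are unique.

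\textbf{Part (2).} This is immediate from Theorem~\ref{strong-solu-thm}(4) applied to \eqref{new-nonlinear-eq2} with $F(t,u)=f(t,\cdot,u,u_x)$.

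\textbf{Part (3).} This is the substantive part and the main obstacle. Fix $t_0$ and $T_1<\min\{T_{\max}(t_0,u_1),T_{\max}(t_0,u_2)\}$; it suffices to show $w:=u(\cdot;t_0,u_2)-u(\cdot;t_0,u_1)\ge 0$ on $[t_0,T_1]\times\overline\Gamma$. From part (1), $w\in C([t_0,T_1],\hatt C^1(\overline\Gamma))\cap\hatt C^{1,2}((t_0,T_1]\times\overline\Gamma)$, $w(t_0,\cdot)\ge 0$, $w$ satisfies the Neumann--Kirchhoff conditions, and on the interior of each edge $w_t-w_{xx}=g(t,x)$ with $g(t,x)=f(t,x,u_2,\partial_x u_2)-f(t,x,u_1,\partial_x u_1)$. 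Since $u_1,u_2\in C([t_0,T_1],\hatt C^1(\overline\Gamma))$ their graphs over the compact set $[t_0,T_1]\times\overline\Gamma$ lie in a compact subset of $[0,\infty)\times\bbR\times\bbR$, so the local Lipschitz bound \eqref{lipeq} gives a single $L>0$ with $|g(t,x)|\le L(|w(t,x)|+|w_x(t,x)|)$ there. I would then run the perturbed maximum principle: fix $\mu>L$ and $\epsilon>0$, set $w^\epsilon:=w+\epsilon e^{\mu(t-t_0)}$ (still Neumann--Kirchhoff, as the added term is constant in $x$), and let $t^*:=\inf\{t\in(t_0,T_1]:\min_{\overline\Gamma}w^\epsilon(t,\cdot)\le 0\}$. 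Since $\min_{\overline\Gamma}w^\epsilon(t_0,\cdot)\ge\epsilon>0$ and $t\mapsto\min_{\overline\Gamma}w^\epsilon(t,\cdot)$ is continuous on the compact $\overline\Gamma$, either no such $t^*$ exists (done) or $t^*>t_0$ and $w^\epsilon(t^*,x^*)=0$ at some $x^*$ minimizing $w^\epsilon$ over $[t_0,t^*]\times\overline\Gamma$. At $(t^*,x^*)$ one has $w^\epsilon_t\le 0$, $w^\epsilon_x=0$, $w^\epsilon_{xx}\ge 0$ — the derivative conditions being the delicate point when $x^*$ is a vertex: there the inward derivatives $\partial_\nu w^\epsilon_e(t^*,x^*)\ge 0$ over $e\sim x^*$ sum to zero by the Kirchhoff condition, forcing each to vanish, and then each incident edge has non-negative second inward derivative since $x^*$ is an edge-wise minimum with vanishing first derivative. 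Substituting into the PDE (valid up to the vertices by continuity of all derivatives) and using $w(t^*,x^*)=-\epsilon e^{\mu(t^*-t_0)}$, $w_x(t^*,x^*)=0$, we get $0\ge w^\epsilon_t-w^\epsilon_{xx}=g(t^*,x^*)+\epsilon\mu e^{\mu(t^*-t_0)}\ge(\mu-L)\epsilon e^{\mu(t^*-t_0)}>0$, a contradiction. Hence $w^\epsilon>0$ on $[t_0,T_1]\times\overline\Gamma$ for all $\epsilon>0$; letting $\epsilon\to 0$ gives $w\ge 0$, and since $T_1<T_{\max}$ was arbitrary the comparison principle follows. The hard part is precisely the vertex case of this pointwise calculus (reducing the Kirchhoff condition at a boundary minimum to vanishing first and non-negative second normal derivatives on each edge); the rest is routine once Theorem~\ref{strong-solu-thm} and the embeddings of Theorem~\ref{analytic-semigroup-thm2} are available.
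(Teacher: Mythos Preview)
Parts (1) and (2) of your proposal match the paper's proof essentially verbatim: both reduce to Theorem~\ref{strong-solu-thm} via Lemma~\ref{classical-solu-lm1}, then upgrade the strong solution to a classical one by feeding the embeddings $\cX_p^\alpha\hookrightarrow\hatt C^1(\overline\Gamma)$ and $\cX_p^\beta\hookrightarrow\hatt C(\overline\Gamma)$ into the identity $u_{xx}=u_t-f(t,\cdot,u,u_x)$.

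For part (3) your argument is correct but genuinely different from the paper's. The paper first linearizes: it writes $w=u(\cdot;u_2)-u(\cdot;u_1)$ as a solution of $-w_t+w_{xx}+B\,w_x+C\,w=0$ with coefficients $B,C$ obtained by integrating $\partial_u f,\partial_v f$ along the segment from $(U_1,V_1)$ to $(U_2,V_2)$, then substitutes $\hat w=e^{\lambda t}w$ with $\lambda=\sup|C|$ to make the zeroth-order coefficient nonnegative. Assuming strict ordering $\inf(u_2-u_1)>0$, it argues that at a first zero $(t_1,x_1)$ of $\hat w$ the strong parabolic maximum principle on each edge forces $x_1$ to a vertex, where Hopf's lemma gives $\partial_\nu\hat w_e(t_1,x_1)<0$ on every incident edge, contradicting the Kirchhoff condition; the non-strict case $u_1\le u_2$ is then handled by approximation with strictly ordered data. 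Your route avoids all three of the strong maximum principle, Hopf's lemma, and the approximation step: the perturbation $w^\epsilon=w+\epsilon e^{\mu t}$ with $\mu>L$ already gives a strictly positive function at $t_0$, and at a first zero the Kirchhoff condition together with the elementary first- and second-order necessary conditions at a vertex minimum (nonnegative inward derivatives summing to zero, hence all zero, hence nonnegative second inward derivatives) yields the contradiction directly. Your argument is more self-contained; the paper's has the advantage of invoking named classical tools and making the underlying linear structure explicit.
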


\begin{proof} [Proof of Theorem \ref{general-existence-thm}] 
	(1) First, let us define a mapping $F:[0,\infty)\times \cX^{\alpha}_p\rightarrow \elpee$ as follows 
	\begin{equation}
	F(t,u)(x)=f(t,x,u(x),u_x(x)), u\in \cX^{\alpha}_p.
	\end{equation}
	Then by Lemma \ref{classical-solu-lm1} $F=F(t,u)$ satisfies the hypotheses of Theorem \ref{strong-solu-thm}. Therefore for every $u_{0}\in \cX^{\alpha}_p$ there exists a { unique} strong solution $u(t;t_0,u_0)$ of  { \eqref{new-nonlinear-eq2}}  defined on the  maximal time interval $(t_0, T(t_0, u_0))$ and satisfying the initial condition $u(t_0;t_0,u_0)=u_0$. Let us denote 
	\begin{equation}
	u(t, x; t_0, u_0):=u(t; t_0, u_0)(x), x\in \Gamma. 
	\end{equation}
	Then \eqref{local-1-eq1} follows from \eqref{classical-solu-eq1-1}, \eqref{classical-solu-eq1-3} and \eqref{classical-solu-eq1-4}, while \eqref{local-1-eq2} follows from \eqref{classical-solu-eq2}. 
	
	{ Next, we} prove that $u(t,x;t_0,u_0)$ is a classical solution
	of \eqref{new-nonlinear-eq1}. To that end we note that $2\alpha-p^{-1}>1$ and $2\nu-p^{-1}>0$. Then there is  $\beta\in (0,\nu)$ such that
	$2\beta-p^{-1}>0$, hence, by \eqref{sobolev-embedding-eq5} one has
	\begin{equation*}
	\cX^{\alpha}_p\hookrightarrow \hatt C^{1}(\overline \Gamma),\quad \cX_p^\beta \hookrightarrow \hatt C(\overline \Gamma).
	\end{equation*}
	This together with \eqref{classical-solu-eq1-1} and \eqref{classical-solu-eq1-4} yield
	\begin{equation}
	\label{classical-solu-eq4}
	u(\cdot,\cdot;t_0,u_0)\in \hatt C^{0,1}([t_0,t_0+T_{\max}(t_0,u_0))\times \overline\Gamma)\cap \hatt C^{1,0}((t_0,t_0+T_{\max}(t_0,u_0))\times\overline \Gamma).
	\end{equation}
	Next, defining   
	$$
	\tilde f(t,x)=f(t,x,u(t,x;t_0,u_0),u_x(t,x;t_0,u_0)).
	$$
	and using  \eqref{classical-solu-eq4} one obtains
	\begin{equation}
	\label{classical-solu-eq5}
	\tilde f\in \hatt C^{0,0}((t_0,t_0+T_{\max}(t_0,u_0))\times \overline\Gamma),
	\end{equation}
	Re-writing  { \eqref{new-nonlinear-eq1}}  we obtain
	$$
	-u_{xx}(t,x;t_0,u_0)=-u_t(t,x;t_0,u_0)+\tilde f(t,x).
	$$
	This together with \eqref{classical-solu-eq4} and \eqref{classical-solu-eq5} yield
	$$
	u(\cdot,\cdot;t_0,u_0)\in   \hatt C^{1,2}((t_0,t_0+T_{\max}(t_0,u_0))\times \overline\Gamma).
	$$
	Therefore, $u(t,x;t_0,u_0)$ is a classical solution of \eqref{new-nonlinear-eq1} with initial condition $u(t_0,x;t_0,u_0)=u_0(x)\in \cX_p^\alpha$.

{ Now, we claim that  \eqref{new-nonlinear-eq1} has a unique classical solution  $u(t,x;t_0,u_0)$ satisfying $u(t_0,x;t_0,u_0)=u_0(x)$ and \eqref{local-1-eq1}.  In fact, if $u(t,x;t_0,u_0)$ is a classical solution of   \eqref{new-nonlinear-eq1} satisfying $u(t_0,x;t_0,u_0)=u_0(x)$ and \eqref{local-1-eq1},  then
$u(t,\cdot;t_0,u_0)$ satisfies 
\begin{equation*}
		u(t,\cdot;t_0,u_0)=e^{\Delta( t-t_0)}u_0(\cdot) +\int_{t_0}^te^{\Delta( t-s)} F(s,u(s,\cdot;t_0,u_0))ds,
		\end{equation*}
and hence is a strong solution of \eqref{new-nonlinear-eq2} with initial condition $u(t_0,\cdot;t_0,u_0)=u_0(\cdot)$. The claim then follows from the uniqueness of strong solutions of \eqref{new-nonlinear-eq2}.
}
	
\smallskip

	(2) Follows from  Theorem \ref{strong-solu-thm} (4).
	
\smallskip

	(3) We divide the proof into 3 steps. 
\smallskip

	\noindent {\bf Step 1}.   
	Let us introduce the following notation
	\begin{align}
	&w_0:=u_2-u_1, w(t,x):=u(t,x;t_0,u_2)-u(t,x;t_0,u_1), \\
	&U_k:=u(t,x;t_0,u_k), V_k:=\partial_x U_k,  k=1,2.  
	\end{align}
	Then by assumption $w_0\ge 0$ and one has
	\begin{align} 
	&-w_t+w_{xx}+f(t,x,U_1,V_1) -f(t,x,U_2,V_2)=0, \label{non_linear_equality}\\
	& \sum\limits_{\vartheta \sim e} \partial_{\nu}w_e(\vartheta)=0,\lb{kirw}
	\end{align}
	For given $U_1=u(t,x;t_0,u_1)$ and $U_2=u(t,x;t_0,u_2)$ we will rewrite \eqref{non_linear_equality} as a linear parabolic equation for $w(x,t)$. To that end, let 
	\begin{equation} \notag
	\Phi(x,t,s):=f\left(t,x,sU_1+(1-s)U_2,sV_1+(1-s)V_2\right).
	\end{equation}
	Then one has
	\begin{equation}
	\Phi(x,t,1)-\Phi(x,t,0)=f(t,x,U_1,(U_1)_{x}) -f(t,x,U_2,(U_2)_{x}),	
	\end{equation}
	and
	\begin{align*} 
	\Phi(x,t,1)-\Phi(x,t,0)&=\int_0^1 \frac{\partial \Phi}{\partial s}(t,x,s)ds \\
	&=\int_0^1  \frac{\partial f}{\partial u}(t,x,sU_1+(1-s)U_2,sV_1+(1-s)V_2)(U_1-U_2)ds \\ &\quad + \int_0^1 \frac{\partial f}{\partial v}(t,x,sU_1+(1-s)U_2,sV_1+(1-s)V_2)(V_1-V_2) ds \\ &
	= B(x,t)(U_1-U_2) + C(x,t)(V_1-V_2),
	\end{align*}
	where we denoted
	\begin{align*}
	B(x,t)=\int_0^1  \frac{\partial f}{\partial u}(t,x,sU_1+(1-s)U_2,sV_1+(1-s)V_2)ds \quad \text{ and } \\
	C(x,t)=\int_0^1 \frac{\partial f}{\partial v}(t,x,sU_1+(1-s)U_2,sV_1+(1-s)V_2) ds
	\end{align*}
	
	Therefore \eqref{non_linear_equality} can be rewritten as follows
	\begin{equation} \notag
	L(w):=-w_t+w_{xx}+B(x,t)w_x+C(x,t)w=0.
	\end{equation}
\smallskip

\noindent 	{\bf Step 2}. Let us define $ \hatt{w}(x,t)$via $ w(x,t) = e^{-\lambda t}\hat{w}(x,t) $, where $\lambda = \sup_{x,t}|C(x,t)|$. Then one has
	\begin{align*} 
	L(w) &= -( \hatt{w}_t e^{-\lambda t} - \lambda \hatt{w} e^{-\lambda t}) +\hatt{w}_{xx} e^{-\lambda t} + B(x,t)\hatt{w}_x e^{-\lambda t} + \hatt{w} e^{-\lambda t} \\
	&= e^{-\lambda t} \left(- \hatt{w}_t   +\hatt{w}_{xx}  + B(x,t)\hatt{w}_x  + (C(x,t) + \lambda)\hatt{w} \right)=0.
	\end{align*}
	Thus $\hat{w}$ satisfies $\hat{L}(\hatt{w})=0$ where $\hatt{L}$ is defined as
	\begin{equation} \label{CP-linear-parabolic-eq}
	\hatt{L}(\hatt{w})= - \hatt{w}_t   +\hatt{w}_{xx}  + B(x,t)\hatt{w}_x  + \hatt{C}(x,t) \hatt{w}, \text{ where  } \hatt{C}(x,t)= C(x,t) + \lambda \geq 0
	\end{equation}
	
\smallskip

\noindent 	{\bf Step 3}.  In this step we show that for any $u_1,u_2 \in \cX_p^\alpha$ with $u_2-u_1 \geq 0$ we have $u(t,x;t_0,u_2)-u(t,x;t_0,u_1)\geq 0$, $t\geq t_0$
	
	
	First, assume that $u_1,u_2 \in C(\overline\Gamma)\cap \cX_p^\alpha$ and  $\inf_{x\in\Gamma}(u_2-u_1)(x)>0$. Then by definition $\hatt{w}_0 \in C(\overline\Gamma)\cap \cX_p^\alpha$ and 
	$\inf_{x\in\Gamma}\hatt{w}_0(x)=\inf_{x\in\Gamma}e^{\lambda t}(u_2-u_1)(x)>0$.  
	We claim that $\hatt{w}(t,x;\hat{w}_0)>0$ for all $t>t_0$ and $x\in\Gamma$. Assuming the contrary, there exist  $t_1>t_0$ and $x_1\in\Gamma$ such that
	$\hatt{w}(t_1,x_1;t_0,\hatt{w}_0)=0$ and $\hatt{w}(t,x;t_0,\hatt{w}_0)>0$ for all $t_0<t<t_1$ and $x\in\Gamma$. Let $e_1\in\mathcal{E}$ be such that $x_1\in\bar e_1$.
	Using the maximum principle for parabolic equations (on bounded intervals) we infer that $x_1$ is an end point of edge $e_1$, that is, $x_1\in \partial e_1$. Then Hopf's Lemma gives
	$$
	\partial_{\nu} \hatt{w_e}(t_1,x_1;t_0,\hat{w}_0)< 0,\text{\ for every edge\ }\,\, e \in\mathcal{E}\, {\rm with}\,\, x_1\in\partial e,
	$$ 
	where $\frac{\partial \hatt{w}}{\partial \nu}$, as usual, denotes the derivative of $\hatt{w}$ with respect to the unit outer normal. This implies, in particular, that 
	$$\sum_{e\sim x_1}{\partial_{\nu} \hatt w_e(x_1)} < 0.$$
	This contradicts the vertex condition \eqref{kirw}. Thus  $u(t,x;t_0,u_2)-u(t,x;t_0,u_1)\geq 0$ as asserted. 
	
	Next, for any $u_1,u_2\in \cX_p^\alpha$, due to $2\alpha-p^{-1}>0$ and \eqref{sobolev-embedding-eq5},  there exist $u_{1,n},u_{2, n}\in C(\overline \Gamma)\cap \cX_p^\alpha$ such that
	$$
	\lim\limits_{n\to\infty}(u_{2, n}-u_{1, n})= u_2-u_1\quad {\rm in}\quad L^p(\Gamma).
	$$
	Moreover, since $\inf_{x\in\Gamma}(u_2-u_1)\geq 0$ we may assume that $\inf_{x\in\Gamma}(u_{2,n}-u_{1,n})>0$. Therefore one has
	\begin{equation}
	\hatt{w}_n:=e^{\lambda t}(u_{2, n}-u_{1, n})\in C(\overline \Gamma)\cap \cX_p^\alpha,\  \inf_{x\in\Gamma} \hatt{w}_n(x)>0, 	\hatt{w}_n\to \hatt{w}\text{ in } L^p(\Gamma),
	\end{equation}
	and, by the previous argument,
	\begin{equation}
	\hatt{w}(t,x; t_0, \hat{w}_n)>0, t>t_0, x\in\Gamma, n\geq 1.
	\end{equation}
	Finally, using 
	\begin{equation}
	\hatt{w}(t,\cdot;t_0, \hat{w}_n)\to \hatt{w}(t,\cdot;t_0,\hat{w}_n)\quad {\rm as}\quad n\to\infty\quad {\rm in}\quad \elpee,
	\end{equation}
	we obtain  $u(t,x;t_0,u_2)-u(t,x;t_0,u_1)\geq 0$, $t\geq t_0$ as required.
	
\end{proof}

\begin{proposition}{\lb{propHenry}}{\cite[p. 190]{MR610244}}. Let $\alpha\in[0,1)$ $a_1, a_2>0$, $T>0$. Assume that an integrable function $u:[0,T]\rightarrow [0,\infty)$ satisfies 
	\begin{equation}\label{gronwell1}
		0\leq u(t)\leq a_1 t^{-\alpha}+a_2 \int_0^t(t-s)^{-\alpha}u(s)ds,
	\end{equation}	
	for $t\in [0,T]$. Then there exists $C=C(\alpha, a_2, T)>0$ such that
	\begin{equation}
		0\leq u(t)\leq \frac{a_1C}{1-\alpha} t^{-\alpha}, \ t\in(0, T). \label{gronwell2}
	\end{equation}
\end{proposition}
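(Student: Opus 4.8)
\textbf{Proof proposal for Proposition \ref{propHenry}.}

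The statement is the classical singular Gronwall (Henry) inequality, so the plan is to run the standard iteration argument, carefully tracking constants, and to exploit the Beta-function identity that makes the singular kernels compose nicely. First I would normalize the problem: set $\beta:=1-\alpha\in(0,1]$ and rewrite the hypothesis as $u(t)\le a_1t^{-\alpha}+a_2\int_0^t(t-s)^{-\alpha}u(s)\,ds$. The key computational fact I would isolate at the outset is the convolution identity
\begin{equation*}
\int_0^t (t-s)^{-\alpha}s^{-\alpha+k\beta}\,ds = t^{-\alpha+(k+1)\beta}\,\frac{\Gamma(1-\alpha)\Gamma(1-\alpha+k\beta)}{\Gamma(1-\alpha+(k+1)\beta)} = t^{-\alpha+(k+1)\beta}\,\frac{\Gamma(\beta)\Gamma(k\beta+\beta)}{\Gamma((k+1)\beta+\beta)},
\end{equation*}
valid for $k\ge 0$, which is just the Beta integral after the substitution $s=t\sigma$. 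This identity is what lets the iterated kernels stay in the scale $t^{-\alpha+k\beta}$ rather than blowing up.

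Next I would iterate the inequality $n$ times: substituting the bound for $u$ into the right-hand side repeatedly produces
\begin{equation*}
u(t)\le a_1\sum_{j=0}^{n-1} a_2^{j}\, c_j\, t^{-\alpha+j\beta} + a_2^{n}\int_0^t k_n(t,s)\,u(s)\,ds,
\end{equation*}
where the coefficients $c_j$ are the products of Gamma-function ratios coming from the convolution identity and $k_n$ is the $n$-fold iterated kernel. A short induction using the identity above shows $c_j = \Gamma(\beta)^{j}\,\Gamma(1-\alpha)/\Gamma(1-\alpha+j\beta)$ (with $c_0=1$), and likewise that the remainder kernel satisfies $\int_0^t k_n(t,s)\,ds \le \Gamma(\beta)^{n}\,t^{n\beta}/\Gamma(n\beta+1)$ uniformly for $t\in[0,T]$. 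Since $u$ is integrable on $[0,T]$, the remainder term is bounded by $a_2^n\Gamma(\beta)^n T^{n\beta}\|u\|_{L^1(0,T)}/\Gamma(n\beta+1)\to 0$ as $n\to\infty$, because $\Gamma(n\beta+1)$ grows faster than any geometric factor. Letting $n\to\infty$ therefore yields
\begin{equation*}
u(t)\le a_1\,t^{-\alpha}\sum_{j=0}^{\infty}\frac{\big(a_2\Gamma(\beta)\big)^{j}\,t^{j\beta}}{\Gamma(1-\alpha+j\beta)}\cdot\frac{\Gamma(1-\alpha)}{1}\Big/\Gamma(1-\alpha)\cdot\Gamma(1-\alpha),
\end{equation*}
which, after cleaning up, is $u(t)\le a_1\Gamma(1-\alpha)\,t^{-\alpha}\,E_{\beta,\,1-\alpha}\big(a_2\Gamma(\beta)t^{\beta}\big)$, a Mittag-Leffler function in the variable $t^\beta$. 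Since $E_{\beta,1-\alpha}$ is entire and nondecreasing on $[0,\infty)$, its value on $[0,T^\beta]$ is bounded by a constant depending only on $\alpha$, $a_2$, $T$; absorbing $\Gamma(1-\alpha)$ and $(1-\alpha)$ into that constant gives \eqref{gronwell2} with $C=C(\alpha,a_2,T)$.

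The main obstacle is purely bookkeeping: getting the Gamma-function constants $c_j$ correct through the induction and verifying the factorial-type decay $\Gamma(n\beta+1)^{-1}(a_2\Gamma(\beta)T^\beta)^n\to0$ so that the remainder genuinely vanishes — one must be careful that the integrability hypothesis on $u$ (not boundedness) is exactly what is needed to control the remainder, since $u$ may be singular at $t=0$. An alternative, if one wishes to avoid Mittag-Leffler bookkeeping entirely, is to dominate crudely: after one iteration $u(t)\le a_1t^{-\alpha}(1+\text{bounded})+a_2'\int_0^t(t-s)^{-\alpha}s^{-\alpha}(\cdots)$, and then on each dyadic subinterval use a continuation/patching argument reducing to the nonsingular Gronwall lemma; this is less explicit about $C$ but still yields the stated form. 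Either way, no genuinely hard analysis is involved — this is a known lemma and I would simply cite \cite[p.~190]{MR610244} in the paper while sketching the above for completeness.
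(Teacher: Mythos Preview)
The paper does not supply its own proof of this proposition; it is simply quoted from \cite[p.~190]{MR610244}. Your iteration argument via the Beta-function identity and the Mittag--Leffler bound is precisely the standard proof found there, so the approach is the same. Two small points of bookkeeping: the displayed equation immediately after ``Letting $n\to\infty$'' is garbled (the trailing $\cdot\Gamma(1-\alpha)/1\big/\Gamma(1-\alpha)\cdot\Gamma(1-\alpha)$ makes no sense as written, though your subsequent cleaned-up formula is correct), and the remainder estimate $a_2^n\int_0^t k_n(t,s)u(s)\,ds\le a_2^n\Gamma(\beta)^nT^{n\beta}\|u\|_{L^1}/\Gamma(n\beta+1)$ does not follow from the bound on $\int_0^t k_n(t,s)\,ds$ alone---you need instead that $k_n(t,s)=(t-s)^{n\beta-1}\Gamma(\beta)^n/\Gamma(n\beta)$ is bounded by $T^{n\beta-1}\Gamma(\beta)^n/\Gamma(n\beta)$ once $n\beta\ge 1$, which suffices since only the tail of the series matters for the limit.
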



\end{document}